\documentclass{article}
\usepackage{graphicx} 
\usepackage{subcaption}
\usepackage{amsmath}
\numberwithin{equation}{section}
\allowdisplaybreaks
\usepackage{amsthm}
\usepackage{amssymb}
\usepackage{verbatim}
\usepackage{float}
\usepackage{needspace}
\usepackage[
    colorlinks=true,
    linkcolor=blue,
    citecolor=blue,
    urlcolor=blue
]{hyperref}
\usepackage{tikz}
\usetikzlibrary{positioning,calc}
\usetikzlibrary{decorations.pathreplacing,arrows.meta}
\usepackage{array}
\definecolor{holecolor}{RGB}{250,128,114}
\colorlet{modelgray}{lightgray}
\tikzset{
  paper path/.style={
    line cap=round,line join=round
  },
  paper fused/.style={
    paper path,line width=1.5pt
  },
  paper normal/.style={
    paper fused,draw=modelgray
  },
  paper hole/.style={
    paper fused,draw=holecolor
  },
  paper wire/.style={
    paper path,draw=modelgray,line width=.65pt
  },
  paper label/.style={
    font=\scriptsize,fill=white,inner sep=1pt,text=black
  },
  paper boundary/.style={
    circle,draw=none,fill=black,
    inner sep=0pt,outer sep=0pt,
    minimum size=2.0pt,transform shape=false
  },
}
\tikzset{
  paper crossing/.style={
    circle,fill=black,inner sep=0pt,minimum size=3.8pt
  }
}
\usepackage[a4paper, total={6in, 8in}]{geometry}
\newtheorem{theorem}{Theorem}[section]
\newtheorem{corollary}[theorem]{Corollary}
\newtheorem{lemma}[theorem]{Lemma}
\newtheorem{proposition}[theorem]{Proposition}
\newtheorem{definition}[theorem]{Definition}
\newtheorem{example}[theorem]{Example}
\newtheorem{remark}[theorem]{Remark}
\newtheorem{conjecture}[theorem]{Conjecture}
\title{Stationary Measures and Boundary Correlations for Multispecies Half-Line ASEP}
\author{Alexey Bufetov, Yuan Tian}
\date{\today}

\begin{document}
\maketitle
\begin{abstract}
		We construct stationary measures of the multispecies ASEP on the positive
		half-line, with injection of highest-priority particles at the left
		boundary, using vertex models. Our results extend existing constructions
		of Martin and Aggarwal-Nicoletti-Petrov for the full-line ASEP to the half-line. For the two-species process, we compute the joint distribution
		of occupation colors at the first two sites. We also partially address
		the connection with the half-space ASEP speed process.
	\end{abstract}
\tableofcontents

\section{Introduction}
\label{sec:introduction}

\subsection{Overview}

The asymmetric simple exclusion process (ASEP) is a basic model of
nonequilibrium transport, and at the same time one of the principal
exactly solvable interacting particle systems. Particles attempt to
exchange positions with neighboring holes at unequal rates in the two
directions. The asymmetry creates a nonzero stationary current and,
on large scales, connects exclusion dynamics to the theory of
hyperbolic conservation laws and to the Kardar-Parisi-Zhang (KPZ)
universality class. On the infinite integer line, the single-species ASEP has
simple translation-invariant Bernoulli stationary measures; introducing
an open boundary, however, leads to substantially richer stationary
laws. In particular, exact phase diagrams, density profiles, and
correlation functions of the open ASEP can be accessed by the
matrix-product ansatz \cite{DerridaEHP1993} and its
subsequent refinements. We refer to
\cite{Spitzer1970,Liggett1975,Liggett1985,
DerridaLebowitzSpeer2003,BlytheEvans2007,
UchiyamaSasamotoWadati2004,BrycWesolowski2017,
BrycWangWesolowski2023,BarraquandLeDoussal2023,
WangWesolowskiYang2024} for a non-exhaustive list of key works on
single-species ASEP stationary measures.

In a multispecies exclusion process, every site carries one of several
particle colors or a hole, and neighboring colors exchange according to
a fixed priority order. Merging the higher-priority colors produces an
ordinary one-species ASEP, but the collection of these projections does
not determine the correlations of the full colored stationary measure.
Already with two species, the stationary distributions exhibit
nontrivial clustering and microscopic shock structure
\cite{DerridaJLS1993,FerrariFontesKohayakawa1994,Angel2006}.
On the infinite integer line, stationary measures for arbitrary numbers
of species in the totally asymmetric case were constructed using
multiline diagrams and queues \cite{MR2319708}. The partially
asymmetric case was treated using a matrix-product representation
\cite{ProlhacEvansMallick2009} and a queueing formulation
\cite{MR4089793}; see also the semi-infinite single-species
matrix-product approach in \cite{MR3324896}.

On the integer line, queueing constructions are especially useful for
explicit computations with stationary distributions. They have led,
for instance, to beautiful formulas for the TASEP speed process and multispecies
correlations \cite{AmirAngelValko2011,AyyerLinusson2014,AyyerLinusson2017}.
Almost-sure existence of the full-line ASEP speed process was proved in
\cite{AggarwalCorwinGhosal2023}; related half-line second-class particle
asymptotics have been studied in \cite{Chen2026HalfLine}.
More recently, such constructions were interpreted in the language
of colored stochastic vertex models \cite{MR4968006}, with the
Yang-Baxter equation playing a central role.
Relevant developments in stochastic higher-spin and colored vertex
models include
\cite{MR3477349,MR3782413,MR4518478,
BorodinCorwinGorin2016,BufetovMucciconiPetrov2021}.

A boundary changes the stationary measures of multispecies ASEP in an
essential way. Nevertheless, the vertex-model connection suggests a possibility of
a half-line vertex model construction: at an integrable boundary the
Yang-Baxter equation is supplemented by the reflection equation
\cite{Sklyanin1988}. Related half-space integrable models and half-line ASEP have been
studied in \cite{BarraquandBorodinCorwinWheeler2018,
HeBoundary2024,MR5008991,BarraquandCorwin2024}. Integrable boundary
weights and half-space six-vertex constructions appear in
\cite{MR3959822,MR4924418}; see also the recent half-space TASEP
transition formulas \cite{deGierMeadRemenikWheeler2025} and work
on stationary measures for six-vertex models on a strip
\cite{Yang2024Strip}.

The present paper provides such a vertex construction.
We study the positive half-line ASEP, with particles of the strongest
color injected at an arbitrary rate at the left endpoint. Our first
main result, Theorem~\ref{thm:main-stationary-construction}, is a
vertex-model construction of stationary measures for the multispecies
process. Our second main result, Theorem~\ref{thm:two-site-marginal},
gives an explicit calculation of the joint occupation distribution at
the first two sites of the half-line; obtaining this formula was our
original motivation. As explained below, this distribution is also
related to the joint distribution of speeds in the \textit{half-space ASEP speed process},
conditional on plausible conjectures that we do not prove here.

It should be noted that we study only a special case of multispecies ASEP on the half-line. Our boundary update is totally asymmetric, and only first-class particles can be injected. There are two reasons for the restriction. First, it is this version of ASEP, with the injection of first-class particles only, that appears in the description of speeds in the half-space ASEP speed process. This makes this part of the restriction more natural. Second, 
while it is plausible that the general strategy for constructing stationary measures works for a wider class of boundary interactions, the technical details become more complicated, and we prefer to avoid them.

During our work on this project, we learned of independent related work in progress \cite{AggarwalCorwinHeHegdeWheeler}. 

Let us present the results in more detail.

\subsection{The stationary measure construction}
\label{subsec:intro-stationary-construction}

First, let us define in more detail the version of the multispecies ASEP studied in this paper. Fix the number of species
$r\in\mathbb Z_{\ge1}$. The state space is $\{0,1,\ldots,r\}^{\mathbb Z_{\ge 1}}$,
and the priority order is
$1\succ2\succ\cdots\succ r\succ0$; color $0$ is a hole.
A stronger color (we use the words species, color, type and class interchangeably) exchanges with a weaker color to its right at rate $1$
and to its left at rate $q\in[0,1)$. At site $1$, every color other than
$1$ is replaced by $1$ at rate $(1-q)\alpha$, where $\alpha>0$.

Fix density parameters $\rho_1,\ldots,\rho_r$ such that
\begin{equation}
\label{eq:intro-vertex-parameters}
 0<\rho_1<\cdots<\rho_r<1,\qquad \rho_1>\max(1/2,1-\alpha).
\end{equation}

Our goal is to construct a stationary measure $\mu^{\alpha,\rho_1,\ldots,\rho_r}$ of ASEP, which has asymptotic density $\rho_1$ of first-class (or color 1) particles, asymptotic density $\rho_2$ of color 1 and color 2 particles, ..., asymptotic density $\rho_r$ of colors from $1$ to $r$ particles. Consequently, the density of holes (color 0) is $1-\rho_r$. 

In short, this stationary measure can be read off from the right-hand side of Figure~\ref{fig:intro-vertex-array} below, in which we apply to trivial input from the left and from below a collection of stochastic vertices (also, the limit ''height to infinity'' of this picture has to be taken in order to get half-space), which produces a probability measure on the right. Let us define all necessary ingredients. 

We construct a stationary measure from three types of stochastic
vertices. To distinguish various components of these vertices more easily, we introduce the following terminology. 
A \textit{physical} edge carries a single color (it corresponds eventually to the space of ASEP). A \textit{carrier} edge contains infinitely many colors and
records the numbers of arrows of the relevant colors. It corresponds to queues. 
Write
$\mathcal C_m=\{0,1,\ldots,m\}$ for $0\le m\le r$, and set $x=\max\{0,1-\alpha^{-1}\}$.

\paragraph{Usual vertices.}
Fix $1\le m\le r$. The usual vertex
$\mathbb W^{(m)}(G,i;G',j)$ has incoming carrier $G$ and physical
color $i\in\mathcal C_{m-1}$, and outgoing carrier $G'$ and color
$j\in\mathcal C_m$. Its carrier states have the form
\[
 G=(G_1,\ldots,G_{m-1},\infty),\qquad
 G_1,\ldots,G_{m-1}\in\mathbb Z_{\ge0}.
\]
The last coordinate represents an unlimited amount of color $m$. Adding or subtracting $E_m$ does not change
this coordinate. Here $E_k$ is the standard basis vector in coordinate
$k$, and
$G_{[1,k-1]}=\sum_{a=1}^{k-1}G_a$.
The nonzero weights are displayed in
Figure~\ref{fig:intro-normal-weights}. The carrier is read from bottom
to top and the physical color from left to right.

\begin{figure}[htbp]
\centering
\begingroup
\newcommand{\NormalConfig}[4]{%
\begin{tikzpicture}[baseline=0pt,
                    every node/.style={font=\small,inner sep=1pt}]
  \draw[paper normal,line width=3.8pt,
        -{Stealth[length=5.5pt,width=7.5pt]}]
        (0,-.60)--(0,.70);
  \draw[paper normal,line width=1.5pt,
        -{Stealth[length=4pt,width=4.5pt]}]
        (-.72,0)--(.78,0);
  \node[above=3pt] at (0,.70) {$#1$};
  \node[left=4pt] at (-.72,0) {$#2$};
  \node[right=4pt] at (.78,0) {$#3$};
  \node[below=3pt] at (0,-.60) {$#4$};
\end{tikzpicture}}
\setlength{\tabcolsep}{0pt}
\setlength{\arrayrulewidth}{.35pt}
\renewcommand{\arraystretch}{1.08}
\begin{tabular}{|*{3}{w{c}{\dimexpr(.85\textwidth-4\arrayrulewidth)/3\relax}|}}
\hline
\rule[-1.04cm]{0pt}{2.30cm}
\NormalConfig{G}{0}{0}{G}
&\NormalConfig{G}{k}{k}{G}
&\NormalConfig{G-E_k}{0}{k}{G}
\\
\rule[-.50cm]{0pt}{1.10cm}
{\small $\displaystyle 1-\rho_m$}
&{\small $\displaystyle
  \rho_m q^{G_{[1,k-1]}}(1-x q^{G_k})$}
&{\small $\displaystyle
  \rho_m q^{G_{[1,k-1]}}(1-q^{G_k})$}
\\ \hline
\rule[-1.04cm]{0pt}{2.30cm}
\NormalConfig{G+E_k}{k}{0}{G}
&\NormalConfig{G+E_k-E_\ell}{k}{\ell}{G}
&\NormalConfig{G+E_\ell-E_k}{\ell}{k}{G}
\\
\rule[-.50cm]{0pt}{1.10cm}
{\small $\displaystyle 1-\rho_m$}
&{\small $\displaystyle
  \rho_m x q^{G_{[1,\ell-1]}}(1-q^{G_\ell})$}
&{\small $\displaystyle
  \rho_m q^{G_{[1,k-1]}}(1-q^{G_k})$}
\\ \hline
\end{tabular}
\endgroup
\caption{The usual vertex $\mathbb W^{(m)}$.
The carrier edge is read from bottom to top and the physical edge from left to right. Thick gray edges carry carrier states and thin gray edges carry physical colors. We have $1\le k<\ell\le m$.}
\label{fig:intro-normal-weights}
\end{figure}

\paragraph{Return vertices.}
The return vertex $\widehat L^{\mathrm{ver}}(i,H;j,H')$ has incoming
physical color $i$ and carrier $H$, and outgoing color $j$ and carrier
$H'$, with $i,j\in\mathcal C_m$. The carrier edge should be viewed as almost entirely filled with color $1$, with finitely many ''defects'' of colors $0,2,\ldots,m$. It is indexed via
\[
 \widehat{\mathcal C}_m=\mathcal C_m\setminus\{1\},\qquad
 H=(H_0,H_2,\ldots,H_m)
 \in\mathbb Z_{\ge0}^{\widehat{\mathcal C}_m}.
\]
Color $1$ is the implicit background and is not recorded in $H$;
a physical input $1$ therefore adds no carrier occupation. Write
$|H|=H_0+\sum_{a=2}^m H_a$, and let $\widehat E_k$ denote the
standard basis vector in coordinate $k\in\widehat{\mathcal C}_m$.
Figure~\ref{fig:intro-return-weights} gives all nonzero weights. The vertex weights satisfy $H'=H+\widehat E_i-\widehat E_j$, where $\widehat E_1=0$. 
The physical color is read from left to right, as before, but the
return carrier is read from top to bottom.
For example, in the bottom-right configuration, the carrier coming from the top receives color $\ell$ from the left and releases color $k$ to the right. 

Both usual and return vertices have a useful queue-type description, see Proposition \ref{def:engine-graphical-realization} below.  

\begin{figure}[htbp]
\centering
\begingroup
\newcommand{\ReturnConfig}[4]{%
\begin{tikzpicture}[baseline=0pt,
                    every node/.style={font=\small,inner sep=1pt}]
  \draw[paper hole,line width=3.8pt,
        -{Stealth[length=5.5pt,width=7.5pt]}]
        (0,.60)--(0,-.70);
  \draw[paper normal,line width=1.5pt,
        -{Stealth[length=4pt,width=4.5pt]}]
        (-.72,0)--(.78,0);
  \node[above=3pt,text=holecolor] at (0,.60) {$#1$};
  \node[left=4pt] at (-.72,0) {$#2$};
  \node[right=4pt] at (.78,0) {$#3$};
  \node[below=3pt,text=holecolor] at (0,-.70) {$#4$};
\end{tikzpicture}}
\setlength{\tabcolsep}{0pt}
\setlength{\arrayrulewidth}{.35pt}
\renewcommand{\arraystretch}{1.08}
\begin{tabular}{|*{3}{w{c}{\dimexpr(.85\textwidth-4\arrayrulewidth)/3\relax}|}}
\hline
\rule[-1.04cm]{0pt}{2.20cm}
\ReturnConfig{H}{1}{1}{H}
&\ReturnConfig{H}{k}{k}{H}
&\ReturnConfig{H}{1}{k}{H-\widehat E_k}
\\
\rule[-.50cm]{0pt}{1.10cm}
{\small $\displaystyle 1-\rho_m+\rho_m q^{|H|}$}
&{\small $\displaystyle
  \rho_m q^{\sum_{a\prec k}H_a}(1-x q^{H_k})$}
&{\small $\displaystyle
  \rho_m q^{\sum_{a\prec k}H_a}(1-q^{H_k})$}
\\ \hline
\rule[-1.04cm]{0pt}{2.20cm}
\ReturnConfig{H}{k}{1}{H+\widehat E_k}
&\ReturnConfig{H}{k}{\ell}
  {H+\widehat E_k-\widehat E_\ell}
&\ReturnConfig{H}{\ell}{k}
  {H+\widehat E_\ell-\widehat E_k}
\\
\rule[-.50cm]{0pt}{1.10cm}
{\small $\displaystyle 1-\rho_m+\rho_m x q^{|H|}$}
&{\small $\displaystyle
  \rho_m x q^{\sum_{a\prec\ell}H_a}(1-q^{H_\ell})$}
&{\small $\displaystyle
  \rho_m q^{\sum_{a\prec k}H_a}(1-q^{H_k})$}
\\ \hline
\end{tabular}
\endgroup
\caption{The return vertex $\widehat L^{\mathrm{ver}}$.
The carrier edge is read from top to bottom and the physical edge from left to right. Thick salmon edges carry carrier states and thin gray edges carry physical colors. Here $k,\ell\in\{0,2,\ldots,m\}$ and $k \prec \ell$.}
\label{fig:intro-return-weights}
\end{figure}

\paragraph{Boundary vertices.}
The boundary vertex $\mathbb K^{(m)}(G;H)$ joins the carrier of a
usual vertex to the carrier of a return vertex. It discards $G$ and
samples $H$ independently of the incoming state. For $0<\alpha\le1$,
put
\[
 b_{\alpha,\rho_m}
 =\frac{(1-\alpha)(1-\rho_m)}{\alpha\rho_m}.
\]
Condition~\eqref{eq:intro-vertex-parameters} ensures
$0\le b_{\alpha,\rho_m}<1$. The output is
\begin{equation}
\label{eq:intro-cap-distribution}
 H=(H_0,0,\ldots,0),\qquad
 \mathbb P(H_0=h)=p_{b_{\alpha,\rho_m}}^{(q)}(h)
 =(b_{\alpha,\rho_m};q)_\infty
 \frac{b_{\alpha,\rho_m}^{\,h}}{(q;q)_h},
 \quad h\in\mathbb Z_{\ge0},
\end{equation}
where we use standard q-Pochhammer notation:
\[
 (u;q)_h=\prod_{j=0}^{h-1}(1-uq^j),\qquad
 (u;q)_\infty=\prod_{j=0}^\infty(1-uq^j).
\]
For $\alpha \ge 1$, the output is deterministically the zero state.
The two cases are summarized in
Figure~\ref{fig:intro-boundary-weights}.

\begin{figure}[htbp]
\centering
\begin{equation}
\label{eq:intro-cap-vertex-weights}
 \vcenter{\hbox{%
 \begin{tikzpicture}[x=.8cm,y=.8cm,
   every node/.style={font=\small,inner sep=1pt}]
  \draw[paper normal,line width=2.4pt]
    (-.8,0)--(-.8,.55)--(0,1.15);
  \draw[paper hole,line width=2.4pt]
    (0,1.15)--(.8,.55)--(.8,0);
  \node[paper boundary] at (0,1.15) {};
  \node[above=4pt] at (0,1.15) {$\mathbb K^{(m)}$};
  \node[below=3pt] at (-.8,0) {$G$};
  \node[below=3pt,text=holecolor] at (.8,0) {$H$};
 \end{tikzpicture}}}
 \qquad
 \mathbb K^{(m)}(G;H)
 =\mathbf1_{\{H_2=\cdots=H_m=0\}}
 \begin{cases}
    p_{b_{\alpha,\rho_m}}^{(q)}(H_0),&0<\alpha\le1,\\[1mm]
    \mathbf1_{\{H_0=0\}},&\alpha>1.
   \end{cases}
\end{equation}
\caption{The boundary vertex $\mathbb K^{(m)}$. The carrier $G$ enters
along the gray edge and the return carrier $H$ leaves along the salmon
edge. The output law does not depend on $G$; for $\alpha\ge1$ it is
concentrated on the zero state.}
\label{fig:intro-boundary-weights}
\end{figure} 

\paragraph{Engines.}
An \emph{engine} combines the three types of vertices into a reflected double
row (this type of objects was used previously in \cite{Sklyanin1988} and, especially, \cite{MR4924418}, \cite{deGierMeadRemenikWheeler2025}): a passage through usual vertices towards the boundary, one
boundary vertex, and a passage through return vertices away from the
boundary. We first define this construction on finitely many sites.

\Needspace{10\baselineskip}
\begin{definition}
\label{def:intro-queue-array}
\label{def:intro-vertex-array}
Fix $N\ge1$ and assume \eqref{eq:intro-vertex-parameters}. For
$1\le m\le r$, set
\[
 \mathcal G_m=\mathbb Z_{\ge0}^{m-1}\times\{\infty\},\qquad
 \mathcal H_m=\mathbb Z_{\ge0}^{\widehat{\mathcal C}_m},\qquad
 \mathbf0^{(m)}=(0,\ldots,0,\infty).
\]
In the following sum, $\boldsymbol v\in\mathcal C_m^N$,
$\boldsymbol G=(G^{(0)},\ldots,G^{(N-1)})\in\mathcal G_m^N$,
$\boldsymbol H=(H^{(0)},\ldots,H^{(N)})\in\mathcal H_m^{N+1}$,
and $G^{(N)}=\mathbf0^{(m)}$.
The \emph{$m$-th engine} is the Markov kernel
$\mathfrak E^{N,m}:\mathcal C_{m-1}^N\times\mathcal C_m^N\to[0,1]$
defined by
\[
\mathfrak E^{N,m}(\boldsymbol\eta,\boldsymbol\xi)=\sum_{\boldsymbol v,\boldsymbol G,\boldsymbol H}
 \mathbb K^{(m)}(G^{(0)};H^{(0)})\notag\prod_{i=1}^N
 \mathbb W^{(m)}(G^{(i)},\eta_i;G^{(i-1)},v_i)
 \widehat L^{\mathrm{ver}}(v_i,H^{(i-1)};\xi_i,H^{(i)}).
\]
Thus the usual carrier runs through sites $N,\ldots,1$, and the
return carrier through sites $1,\ldots,N$. The kernel acts on measures
by
$(\nu\mathfrak E^{N,m})(\boldsymbol\xi)
 =\sum_{\boldsymbol\eta}\nu(\boldsymbol\eta)
 \mathfrak E^{N,m}(\boldsymbol\eta,\boldsymbol\xi)$.
The engine output law is
\begin{equation}
\label{eq:intro-vertex-array-law}
 \mu^N_{\mathrm{eng}}
 =\delta_{\boldsymbol0_N}
   \mathfrak E^{N,1}\mathfrak E^{N,2}\cdots\mathfrak E^{N,r},
 \qquad \boldsymbol0_N=(0,\ldots,0).
\end{equation}
A graphical illustration of this procedure is shown in Figure~\ref{fig:intro-vertex-array}.
\end{definition}

\begin{figure}[!htbp]
\centering
\scalebox{1.0}{%
\begin{tikzpicture}[
  x=1.08cm,y=.63cm,
  engstate/.style={font=\scriptsize,inner sep=.8pt},
  site arrow/.style={-{Stealth[length=3.6pt,width=3.2pt]}},
  carrier arrow/.style={-{Stealth[length=5pt,width=5pt]}}
]

\foreach \yy/\site in {1.0/N,2.5/{N-1},4.6/2,6.0/1}
{
  \draw[paper wire,site arrow] (.3,\yy)--(10.7,\yy);
  \foreach \xx in {3.725,5.50,7.275}
    \draw[paper wire,site arrow]
      ({\xx-.20},\yy)--({\xx+.06},\yy);
  \node[engstate,left=2pt] at (.3,\yy) {$0$};
}
\node[engstate,left=2pt] at (.3,3.4) {$\vdots$};
\node[engstate] at (10.7,3.4) {$\vdots$};

\draw[decorate,decoration={brace,amplitude=4pt},line width=.5pt]
  (10.95,6.2)--(10.95,.8)
  node[midway,right=7pt,font=\small] {$\mu_{\mathrm{eng}}$};

\foreach \cx/\eng in {1.95/1,5.50/m,9.05/r}
{
  \draw[paper normal]
    ({\cx-.75},.25)--({\cx-.75},6.3)--(\cx,8.3);
  \draw[paper hole]
    (\cx,8.3)--({\cx+.75},6.3)--({\cx+.75},.25);

  \foreach \ay in {1.45,5.55}
  {
    \draw[paper normal,carrier arrow]
      ({\cx-.75},{\ay-.30})--({\cx-.75},{\ay+.14});
    \draw[paper hole,carrier arrow]
      ({\cx+.75},{\ay+.30})--({\cx+.75},{\ay-.14});
  }

  \node[paper boundary] at (\cx,8.3) {};
  \node[engstate,above=3pt] at (\cx,8.3)
    {$\mathbb K^{(\eng)}$};
}

\foreach \cx/\eng in {1.95/1,9.05/r}
{
  \node[engstate,above left=3pt] at ({\cx-.75},6.0)
    {$\mathbb W^{(\eng)}$};
  \node[engstate,above right=3pt,text=holecolor]
    at ({\cx+.75},6.0) {$\widehat L^{\mathrm{ver}}$};
}

\path (4.75,6.3)--(5.50,8.3)
  coordinate[pos=.56] (cap-in-start)
  coordinate[pos=.74] (cap-in-end);
\path (5.50,8.3)--(6.25,6.3)
  coordinate[pos=.22] (cap-out-start)
  coordinate[pos=.38] (cap-out-end);
\draw[paper normal,carrier arrow]
  (cap-in-start)--(cap-in-end);
\draw[paper hole,carrier arrow]
  (cap-out-start)--(cap-out-end);

\path (4.75,6.3)--(5.50,8.3)
  node[pos=.45,engstate,left=3pt] {$G^{(0)}$};
\path (5.50,8.3)--(6.25,6.3)
  node[pos=.55,engstate,right=3pt,text=holecolor]
    {$H^{(0)}$};

\foreach \yy/\idx in {5.3/1,4.0/2,1.95/{N-1}}
{
  \node[engstate,left=3pt] at (4.75,\yy)
    {$G^{(\idx)}$};
  \node[engstate,right=3pt,text=holecolor] at (6.25,\yy)
    {$H^{(\idx)}$};
}
\node[engstate] at (4.4,3.4) {$\vdots$};
\node[engstate,text=holecolor] at (6.6,3.4) {$\vdots$};

\node[engstate,below=4pt] at (4.75,.25)
  {$G^{(N)}$};
\node[engstate,below=4pt,text=holecolor] at (6.25,.25)
  {$H^{(N)}$};

\foreach \yy/\site in {1.0/N,2.5/{N-1},4.6/2,6.0/1}
{
  \node[engstate,above=2pt] at (3.725,\yy)
    {$\eta_{\site}$};
  \node[engstate,above=2pt] at (5.50,\yy)
    {$v_{\site}$};
  \node[engstate,above=2pt] at (7.275,\yy)
    {$\xi_{\site}$};
}
\node[engstate] at (5.50,3.4) {$\vdots$};

\foreach \xx in {3.725,7.275}
  \node[engstate] at (\xx,3.4) {$\cdots$};
\end{tikzpicture}
}
\caption{Engine construction of the stationary measure for the multi-species half-line ASEP. Engines $1,\ldots,r$ act successively from left to right, starting from empty input at the left boundary. For the middle engine $m$, $\eta_i$, $v_i$, and $\xi_i$ denote the local input, intermediate color, and local output. The gray carrier reads sites $N,\ldots,1$, evolving from $G^{(N)}$ to $G^{(0)}$. The boundary vertex $\mathbb K^{(m)}$ samples $H^{(0)}$, and the salmon return carrier reads sites $1,\ldots,N$. The stationary law is obtained by taking the local limit of $\mu_{\mathrm{eng}}$ as $N\to\infty$.}
\label{fig:intro-vertex-array}
\end{figure}

The laws in Definition~\ref{def:intro-vertex-array} depend on $N$.
Their convergence in the next theorem is \emph{local}: the restriction
to each fixed finite set of sites converges in distribution.

\begin{theorem}
\label{thm:main-stationary-construction}
\label{thm:engine-array-stationarity}
\label{thm:intro-vertex-form}
Suppose that
\begin{equation}
\label{eq:intro-strict-density-range}
 0\le q<1,\qquad 0<\alpha<\infty,\qquad
 \max(1/2,1-\alpha) < \rho_1<\cdots<\rho_r<1.
\end{equation}
The laws $\mu^N_{\mathrm{eng}}$ of
Definition~\ref{def:intro-vertex-array} converge locally, as
$N\to\infty$, to a probability measure
$\mu^{\alpha,\rho_1,\ldots,\rho_r}$
on $\mathcal C_r^{\mathbb Z_{>0}}$.
This measure is stationary for the multispecies half-line ASEP defined
above.
\end{theorem}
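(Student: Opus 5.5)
We propose to prove Theorem~\ref{thm:main-stationary-construction} in two parts: first local convergence of $\mu^N_{\mathrm{eng}}$, then stationarity, which we obtain by an exchange (intertwining) relation between the engines and the ASEP generator, in the style of Martin and Aggarwal--Nicoletti--Petrov but with the reflection equation at the boundary.

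\textbf{Convergence.} We argue by induction on $m$, using the queue-type description of Proposition~\ref{def:engine-graphical-realization}.

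The key observation concerns the usual carrier. Entering at site $N$ in state $\mathbf0^{(m)}$, it is a $q$-deformed queue. Its coordinates $G_1,\dots,G_{m-1}$ form a positive recurrent chain as it moves toward the boundary, because the service density $\rho_m$ exceeds the arrival density $\rho_{m-1}$ of colors $<m$ in the input. By the inductive hypothesis, that input is locally a stationary ergodic configuration with density $\rho_{m-1}$.

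Hence the law of $(G^{(0)},\dots,G^{(n)},v_1,\dots,v_n)$ converges as $N\to\infty$, for each fixed $n$. We would show this with a coupling-from-the-past argument. Two carriers started at different heights far away coalesce with probability tending to $1$, by the strict inequality $\rho_{m-1}<\rho_m$ and the geometric weights $q^{G_{[1,k-1]}}(1-q^{G_k})$.

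The return carrier starts from $H^{(0)}$, which is sampled independently by $\mathbb K^{(m)}$ from the $q$-geometric law with parameter $b_{\alpha,\rho_m}<1$. It then proceeds outward from site $1$. Thus the first $n$ sites of $\xi$ depend only on $H^{(0)}$ and the first $n$ sites of $v$, so local convergence passes through.

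For $m=1$ the input is $\boldsymbol0$ and $\mathcal G_1$ is trivial, so the induction starts immediately. Tightness is automatic, since the state space at each site is finite.

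\textbf{Stationarity.} Let $L^{(r)}_N$ be the generator of the half-line ASEP on $\{1,\dots,N\}$ with the left boundary injection. We aim to prove an intertwining
\[
L^{(m-1)}\,\mathfrak E^{m}=\mathfrak E^{m}\,L^{(m)}
\]
holding locally, up to boundary error terms at site $N$ that vanish as $N\to\infty$ on cylinder functions. Iterating this from $\delta_{\boldsymbol0}$, which is trivially stationary for the zero-species dynamics, yields stationarity of $\mu^{\alpha,\rho_1,\dots,\rho_r}$.

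The bulk swap terms at a bond $(i,i+1)$ are handled by the colored Yang--Baxter equation. The local ASEP swap operator is a degenerate $R$-matrix (the $q$-Hecke generator). Commuting it through the two usual vertices at sites $i,i+1$, and then through the two return vertices, produces the corresponding swap on the output. This is the standard "Hecke generator commutes with the row operator" argument, applied once to each row of the reflected double row.

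\textbf{The boundary step (main obstacle).} The difficult step is site $1$, where the injection term $(1-q)\alpha$ must be transported through the turn consisting of the last usual vertex $\mathbb W^{(m)}$ at site $1$, then $\mathbb K^{(m)}$, then the first return vertex. This requires a reflection-equation identity: the injection operator acting on the input side equals the injection operator acting on the output side, modulo a telescoping term.

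We would first verify it directly by computing both sides over $H^{(0)}$. Here the $q$-binomial identity $\sum_h p_b^{(q)}(h)q^{h}=(1-b)$ and the specific value of $b_{\alpha,\rho_m}$ should make the coefficients match, and the parameter $x=\max(0,1-\alpha^{-1})$ governs the case $\alpha>1$.

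The residual terms at site $N$ are controlled by noting that they involve $G^{(N)}=\mathbf0^{(m)}$ and local quantities only. Their contribution to the derivative of the expectation of a cylinder function supported on $\{1,\dots,n\}$ is bounded by the probability that carrier information propagates from site $N$ to site $n$, which tends to $0$ by the coalescence estimate from the convergence step.
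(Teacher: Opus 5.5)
\textbf{Convergence: the main gap.} Your stationarity route is genuinely different from the paper's, but the convergence step has a real gap, and the stationarity step inherits it.

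The usual carrier of engine $m$ reads the output of engine $m-1$ from site $N$ inward. Stabilizing it therefore needs control of that output in the bulk, uniformly in $N$. Your inductive hypothesis, local convergence near site $1$, gives no such control, and the output of a half-line engine is not a stationary ergodic sequence. Moreover, at each finite $N$ the input to engine $m$ is the finite-$N$ output of engine $m-1$, which changes with $N$. A coupling-from-the-past argument needs inputs that are consistent across truncations.

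The paper obtains both in Proposition~\ref{prop:qualitative-localization}, from two facts:
\begin{itemize}
\item The occupation indicators of an engine's output are an autonomous function of that engine's own randomness. The usual row emits a hole iff $S=0$, and holes come first in the return carrier's order. Hence these indicators are independent of $N$ and of the input colors.
\item The return row's hole count is a birth--death chain with gain and loss probabilities $(1-\rho_m)(1-\rho_m+\rho_m x q^h)$ and $\rho_m^2(1-q^h)$. It is positive recurrent exactly because $\rho_m>1/2$.
\end{itemize}
Together these show that the arrivals seen by engine $m+1$ agree, outside a finite random prefix, with a stationary ergodic sequence of mean $\rho_m$.

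You never use $\rho_1>1/2$, and it is not cosmetic. For $\rho_m<1/2$ the hole carrier grows at rate $1-2\rho_m$, and the bulk occupation density of the output becomes $1-\rho_m$, not $\rho_m$. So your ``arrival density $\rho_{m-1}<\rho_m$'' is unjustified. Also, coalescence of carrier \emph{sizes} does not give coalescence of the colored carriers. You need the erasing blocks of Lemma~\ref{lem:scalar-clearing}: the stationary carrier is empty with positive probability, and is then followed by a block that removes any entrance with at most $b$ recorded arrows, whatever their colors.

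\textbf{Stationarity.} The base case is misjustified. The relation produced by the Yang--Baxter and reflection equations carries the injection term on the input side too, and injection sends $\boldsymbol{0}$ to $(1,0,0,\ldots)$, so $\delta_{\boldsymbol{0}}$ is not annihilated. Declaring a ``zero-species dynamics'' without injection just makes the $m=1$ intertwining equivalent to the stationarity you want to prove. The missing fact is Lemma~\ref{lem:saturated-color-invisibility}: engine $1$ cannot distinguish inputs $0$ and $1$. Hence $\delta_{\boldsymbol{0}}\mathfrak E_1=\delta_{\boldsymbol{1}}\mathfrak E_1$, and the induction starts from the genuinely invariant $\delta_{\boldsymbol{1}}$, as in the paper.

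The bulk step also does not give exact commutation. With homogeneous rapidities, $\check R(1)$ is trivial. Differentiating the exchange relation yields only a commutator equal to the difference of the engine's derivatives in the rapidities of lines $i$ and $i+1$. These terms telescope. The surviving term at site $1$ must then be cancelled by the derivative of the queue reflection equation (Proposition~\ref{prop:queue-ybe-reflection}), with the correct factor relative to the rate $(1-q)\alpha$. A direct $q$-binomial check over $H^{(0)}$ is not a substitute. Finally, the site-$N$ residuals must be propagated through all later engines to a fixed window, which again requires the bulk control above.

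\textbf{Comparison with the paper.} The paper avoids this bookkeeping. It proves commutation of engines with the double row $\mathfrak T_u$ at finite $N$ (Proposition~\ref{prop:finite-boundary-train}). It then passes to infinite volume via geometric forgetting and erasing blocks (Lemma~\ref{lem:infinite-fundamental-double-row}, Proposition~\ref{prop:engine-preserves-double-row}), and only then expands at $u\to1$. Your generator-level route could work, but only once these gaps are filled.
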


The theorem also admits the following limit, which gives an additional low-density family.

\begin{remark}
\label{prop:intro-low-density-entrance}

Assume that $0 <\alpha \le 1/2$ and $1-\alpha < \rho_2 < \dots < \rho_r<1$. The following limit exists:
\begin{equation}
\label{eq:intro-low-density-boundary-limit}
 \mu^{\alpha,u,\rho_2,\ldots,\rho_r}
 \ \xrightarrow[\ u\downarrow1-\alpha\ ]{\mathrm{loc}}\ 
 \mu^{\alpha,\alpha,\rho_2,\ldots,\rho_r},
 \qquad 1-\alpha<u<\rho_2,
\end{equation}
where the convergence, as before, is in the local sense. 
The limiting measure is the stationary measure for densities $\alpha, \rho_2, \dots, \rho_r$. Note that the parameter of the measure before the limit converges to $(1-\alpha)$, while the limiting density of first class particles is $\alpha$.
\end{remark}

\subsection{The first two sites of the two-species stationary measure}
\label{subsec:intro-boundary-marginal}

We now specialize to two particle species, with priority order
$1\succ2\succ0$, and compute the joint distribution of the colors at
sites $1$ and $2$. The vertex construction of the previous subsection allows (after considerable additional work) to obtain the following result, which is the second main theorem of the present paper. 

\begin{theorem}
\label{thm:two-site-marginal}
Fix $0\le q<1$ and suppose that one of the two regimes holds:
\begin{equation}
\label{eq:intro-two-site-regimes}
\begin{aligned}
\mathrm{(H)}\quad&0<\alpha<\infty,\qquad
    \max\{1/2,1-\alpha\}<\rho_1<\rho_2<1,\\
\mathrm{(L)}\quad&0<\alpha\le\tfrac12,\qquad
    \rho_1=\alpha,\qquad1-\alpha<\rho_2<1.
\end{aligned}
\end{equation}
Let $\eta\sim\mu^{\alpha,\rho_1,\rho_2}$ be the stationary law
from Theorem~\ref{thm:main-stationary-construction} in regime
$\mathrm{(H)}$, or from Remark~\ref{prop:intro-low-density-entrance}
in regime $\mathrm{(L)}$. Set
\begin{equation}
\label{eq:intro-two-site-abbreviations}
 J_i:=J(\rho_i)=\rho_i(1-\rho_i),\qquad
 j_i:=\frac{J_i}{\alpha},\qquad i=1,2,
 \qquad \theta:=q+(1-q)\rho_2.
\end{equation}
Write $p_{ij}=\mathbb P(\eta_1=i,\eta_2=j)$; rows and columns in
the following table are indexed in the order $0,2,1$.

We have

\Needspace{15\baselineskip}
\begingroup
\setlength{\arraycolsep}{4pt}
\begin{equation}
\label{eq:two-site-marginal}
 (p_{ij})=
 \begin{pmatrix}
  j_2^2
  & (j_1-j_2)\bigl(j_2+j_1(1-\theta)\bigr)
  & \begin{gathered}
      j_1(1-j_1)\\[-2pt]
      {}-(j_1-j_2)(1-j_1\theta)
    \end{gathered}
  \\[3mm]
  (j_1-j_2)\bigl(j_1\theta(1-\theta)+qj_2\bigr)
  & (j_1-j_2)\bigl(j_1\theta^2-qj_2\bigr)
  & (j_1-j_2)(1-j_1\theta)
  \\[3mm]
  \begin{gathered}
    j_2\bigl((1-q)\alpha+q(1-j_1)\bigr)\\[-2pt]
    {}-(j_1-j_2) j_1\theta(1-\theta)
  \end{gathered}
  & \begin{gathered}
     (j_1-j_2)\bigl((1-q)\alpha+q(1-j_1)\\[-2pt]
     {}+j_1\theta(1-\theta)\bigr)
    \end{gathered}
  & \begin{gathered}
      1-j_1-(1-q)\alpha j_1\\[-2pt]
      {}-qj_1(1-j_1)
    \end{gathered}
 \end{pmatrix}.
\end{equation}
\endgroup
\end{theorem}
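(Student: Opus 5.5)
We will compute the two-site marginal directly from the engine construction of Theorem~\ref{thm:main-stationary-construction}, with $r=2$, and obtain regime (L) afterwards by letting $\rho_1=u\downarrow1-\alpha$ as in Remark~\ref{prop:intro-low-density-entrance}.

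\textbf{Reduction to two sites.} The first two sites of $\mu^N_{\mathrm{eng}}$ are determined by three pieces of data:
\begin{itemize}
\item the usual carriers $G^{(2)}$ entering site $2$ in each engine;
\item the boundary samples $H^{(0)}$;
\item the local vertex weights at sites $1,2$.
\end{itemize}
The return carrier reads sites in the order $1,2,\ldots$, so it never influences sites $1,2$ from beyond. The usual carrier, however, carries information from sites $3,\ldots,N$.

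The key input is the stationary law of the usual carrier as $N\to\infty$. For engine $1$, the physical inputs are all $0$, so $G$ is trivial: $\mathbf0^{(1)}$. For engine $2$, the carrier coordinate $G_1$ counts queued color-$1$ arrows. Its input is a stationary sequence (the output of engine $1$ away from the boundary), which is Bernoulli($\rho_1$) in the bulk. I expect $G_1$ to converge to a $q$-geometric law of the form $p^{(q)}_{c}$ with $c$ determined by $\rho_1/\rho_2$, after using the Yang--Baxter/queue interpretation (Proposition~\ref{def:engine-graphical-realization}).

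To establish this, I would verify that $p^{(q)}_c$ is invariant for the one-step carrier chain driven by i.i.d.\ Bernoulli($\rho_1$) colors $\{0,1\}$ under $\mathbb W^{(2)}$. The chain has only one unbounded coordinate, so a detailed computation with $q$-binomial identities should suffice. An additional subtlety must also be addressed: near the boundary the engine-$1$ output is not i.i.d. Because the return carrier of engine $1$ propagates boundary information rightward, sites $1,2$ of engine $1$ are correlated with the carrier entering engine $2$ from sites $3,\dots$. I would handle this by conditioning on the joint state $(\xi^{(1)}_1,\xi^{(1)}_2, H^{(2)}_{\text{eng }1})$ and then using a burke-type property. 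That property says the return carrier's law after site $2$ together with the colors it emits further right reproduces the bulk stationary picture, so the entering $G^{(2)}$ for engine $2$ is independent of $(\xi^{(1)}_1,\xi^{(1)}_2)$ and has the bulk law. This independence is the step I expect to be the main obstacle. My first attempt would be to prove it through a reflection-equation argument that lets us move the boundary vertex past the usual row.

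\textbf{Finite summation.} Given these laws, $p_{ij}$ is a finite sum over the following states:
\begin{itemize}
\item the engine-$1$ boundary sample $H_0\sim p^{(q)}_{b_{\alpha,\rho_1}}$;
\item the engine-$2$ sample $H_0\sim p^{(q)}_{b_{\alpha,\rho_2}}$;
\item the carrier $G^{(2)}$;
\item intermediate colors $v_1,v_2$ in both engines.
\end{itemize}
Each weight is a polynomial in $q^{H},q^{G}$, so all sums reduce to $q$-moments $\mathbb E[q^{sH_0}]$ and $\mathbb E[q^{sG_1}]$ for $s\in\{1,2\}$. These are explicit ratios of $q$-Pochhammer symbols; for example, $\mathbb E q^{H_0}=1-b$ when $H_0\sim p_b^{(q)}$. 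Substituting $b_{\alpha,\rho}$ produces $1-b=(\alpha+\rho-1)/(\alpha\rho)$, and the quantities $j_i=J_i/\alpha$ and $\theta=q+(1-q)\rho_2$ should emerge naturally.

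Rather than simplifying all nine entries blindly, I would compute the single-site marginals and a few entries (for example $p_{00}=j_2^2$, which suggests a product structure from two independent hole-emissions), then obtain the rest from linear relations. The available relations are:
\begin{itemize}
\item the two-site marginals must project to the one-site marginals;
\item stationarity of the generator restricted to site $1$, namely the balance of the current from the boundary against the current across bond $(1,2)$, which gives $J_1$ and $J_2$ for the projected one-species processes.
\end{itemize}
These constraints pin down several entries and serve as checks on the rest.

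Finally, regime (L) follows by continuity of each entry in $\rho_1$, using Remark~\ref{prop:intro-low-density-entrance}. The entries depend on $\rho_1$ only through $J_1=\rho_1(1-\rho_1)$, which is symmetric under $\rho_1\mapsto 1-\rho_1$. Therefore substituting $u=1-\alpha$ gives the same formula as $\rho_1=\alpha$.
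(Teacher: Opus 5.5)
\textbf{There is a genuine gap, at the step you flagged as the main obstacle.} The Burke-type property you need is false, so no reflection-equation argument can prove it.

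\emph{Why independence fails.} At $x=0$, the first engine's return row is Martin's one-species queue, with hole arrivals of density $\bar\rho_1$ and service density $\rho_1$. It is started from the boundary law $q\text{-}\mathrm{Poi}(b_{\alpha,\rho_1})$, not from its stationary law $q\text{-}\mathrm{Poi}(b_{\rho_1,\rho_1})$. Already at $q=0$, a one-step computation shows that the return carrier after site $1$ is independent of $\xi^{(1)}_1$ only if $b_{\alpha,\rho_1}=b_{\rho_1,\rho_1}$, i.e.\ $\alpha=\rho_1$.

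The return carrier after site $2$ governs how many $1$'s the first engine emits at sites $3,4,\ldots$. These are the last arrivals that the second engine's usual queue absorbs before reaching site $2$. Hence $G^{(2)}$ is correlated with $(\xi^{(1)}_1,\xi^{(1)}_2)$, and its law is not the bulk law.

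\emph{A concrete check.} Take $\alpha<\tfrac12$ and $\rho_1\downarrow1-\alpha$. The boundary hole count blows up, and the first engine emits Bernoulli$(\approx\alpha)$ over an arbitrarily long initial stretch (cf.\ Lemma~\ref{lem:critical-bernoulli-reduction}). So $G^{(2)}$ approaches $q\text{-}\mathrm{Poi}\bigl(\alpha\bar\rho_2/((1-\alpha)\rho_2)\bigr)$. By contrast, $r_{12}\to(1-\alpha)\bar\rho_2/(\alpha\rho_2)$, which differs since $\alpha<\tfrac12$.

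Your ``finite summation'' step treats the input word, $G^{(2)}$ and the boundary samples as independent, and reduces everything to separate moments $\mathbb E[q^{sH_0}]$, $\mathbb E[q^{sG_1}]$. It therefore does not compute $p_{ij}$.

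\emph{The case $\alpha>1$.} Here even your bulk guess fails. With $x=1-\alpha^{-1}>0$, the usual-row queue fed by Bernoulli$(\rho_1)$ arrivals has the $\alpha$-dependent law $q\text{-}\mathrm{NB}(b_{\alpha,\bar\rho_2},b_{\bar\rho_1,\rho_2})$ of Lemma~\ref{lem:large-alpha-conditioned-queue-laws}, not a $q$-Poisson law.

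\textbf{The missing idea is the paper's mixed crossing.} The paper does not analyse the second engine's queue fed by the first engine's half-line output.
\begin{itemize}
\item It uses the continued Yang--Baxter equation to slide the crossing of the upper usual row with the lower return row from the right end to a point between the two boundary vertices (Proposition~\ref{prop:22-mixed-crossing-representation}). The extra right-end entrance this creates is tight and is removed by the erasing-block coupling.
\item After this move, the two usual rows are adjacent and form Martin's two-line queue with zero input. Its two-site output is exactly $\pi^{\rho_1,\rho_2}$, and its queue length conditioned on the two-site word is explicit (Proposition~\ref{prop:conditional-two-site-queue}).
\item The mixed vertex is a $q$-Chu--Vandermonde computation (Proposition~\ref{prop:22-infinite-stack-specialization}). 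It outputs an independent $q\text{-}\mathrm{Poi}(b_{\rho_1,\rho_2})$ hole count and a color-$2$ count whose law given the word is an explicit $q$-negative binomial mixture (Corollary~\ref{cor:22-q-Poisson-Bernoulli-bottom-input}).
\item Only then are the two return rows applied, and $p_{22},p_{20}$ reduce to explicit $q$-moments.
\end{itemize}

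Your projection-based reduction is in the spirit of the paper's. To cut down to two unknowns, however, you need the full one-species two-site marginals of Proposition~\ref{prop:one-species-left-edge}. For example, $\mathbb P(\eta(1)=\eta(2)=0)=j^2$ comes from the boundary relation, not from current balance.

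\textbf{Regime (L).} Your handling is correct and shorter than the paper's. The (H) formula depends on $\rho_1$ only through $J_1$, and $J(1-\alpha)=J(\alpha)$. Remark~\ref{prop:intro-low-density-entrance} gives local convergence, so (L) follows by continuity. The paper instead recomputes (L) directly from the single-engine representation of Lemma~\ref{lem:critical-bernoulli-reduction}, which is also what underlies that remark.
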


We believe that explicit computations of correlations are the most interesting part of the story behind stationary measures of (multi-species) ASEP, see \cite[Theorem~1.5(b)]{MR4089793}, \cite[Theorem~1.7]{AmirAngelValko2011}, \cite[Theorem~5.1]{AyyerLinusson2017}, and \cite[Theorems~1.1--1.2]{PahujaDeSarkar2025}. The main motivation of our work was to extend this list of results with a half-space example. 

For $q=0$, it seems plausible that our low-high-density (case (L) above) correlation formulas could also be obtained from the matrix-product solution for the finite open two-species TASEP in \cite{CrampeEvansMallickRagoucyVanicat2016}. 

\subsection{The half-space speed process}
\label{subsec:half-space-ASEP}

A stationary measure of multi-species ASEP is interesting in its own right. However, it also has a quite different and very appealing characterization in terms of the \textit{speed process}. To prove this connection precisely requires several results of a different nature than those used to study the stationary measure. We only partially address the required types of statements in Section 6 and formulate stronger results in the form of conjectures. 

To avoid additional nuances, we set $q=0$ and fix $0<\alpha\le1$.
Consider the half-space TASEP with infinitely many colors
on $\mathbb Z^\ast=\mathbb Z\setminus\{0\}$. Its configuration
$\eta_t$ is a signed permutation: $\eta_t(x)$ is the color at site $x$,
$\eta_t(-x)=-\eta_t(x)$, and initially $\eta_0(i)=i$.
For each $x\ge1$, a rate-one clock exchanges the colors at $x,x+1$
and simultaneously at $-x,-x-1$ whenever
$\eta_t(x)<\eta_t(x+1)$. At the boundary, a rate-$\alpha$ clock
exchanges the colors at $-1,1$ whenever
$\eta_t(-1)<\eta_t(1)$. All these clocks are independent.
Thus the negative half-line contains the same configuration with a different sign.
Write $X_i(t)=\eta_t^{-1}(i)$ for the position of color $i$.

\Needspace{7\baselineskip}
\begin{conjecture}
\label{conj:half-space-asymptotic-speeds}
For any $i\in\mathbb Z^\ast$, the almost sure limit
\[
 U_i:=\lim_{t\to\infty}\frac{X_i(t)}{t},
\]
exists.
\end{conjecture}

Assuming this conjecture, we call the random sequence
$(U_i)_{i\in\mathbb Z^\ast}$ the \emph{TASEP half-space speed process}.
Although almost-sure convergence remains conjectural, the distributional
limit for the first particle can be proved, including its negative part.

\begin{theorem}
\label{thm:intro-first-color-signed-speed}
Set $\hat\alpha=\max\{0,1-2\alpha\}$. As $t\to\infty$,
\[
 \frac{X_1(t)}t\Longrightarrow V_\alpha,
\]
where the absolutely continuous part of $V_\alpha$ has density
\[
 f_\alpha(v)=\frac12\mathbf1_{(-1,-\hat\alpha)}(v)
    +\frac{v}{2\alpha}\mathbf1_{(\hat\alpha,1)}(v).
\]
For $\alpha>1/2$, the limiting law also has an atom at zero of mass
$(2\alpha-1)/(4\alpha)$; for $0<\alpha\le1/2$ there are no atoms.
\end{theorem}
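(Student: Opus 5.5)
The idea is to reduce the distributional limit of $X_1(t)/t$ to single-species half-line TASEP hydrodynamics through the usual coupling of a second-class particle with a height or current function. By the symmetry $\eta_t(-x)=-\eta_t(x)$, color $1$ sits at position $X_1(t)$ and color $-1$ at $-X_1(t)$. For a threshold $c$, project the colors onto the half-line as follows: call particles those with color $\le -c'$ in a suitable sense. Concretely, for $k\ge1$ project colors $\{-k,\dots,-1\}\cup\{\text{colors}<\ldots\}$ so that on the positive half-line we obtain a single-species half-line TASEP with boundary injection at rate $\alpha$. The boundary rate-$\alpha$ swap of $(-1,1)$ then acts exactly as injection of a first-class particle whenever site $1$ is empty.

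The cleanest choice is the standard \emph{step} projection. Declare site $x>0$ occupied if $\eta_t(x)\le 0$ and empty otherwise. The initial condition is then empty on $\mathbb Z_{>0}$, and the dynamics is half-line TASEP with injection rate $\alpha$ at site $1$.

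Next I would encode $X_1(t)$ in terms of the difference of two projections. Take thresholds $0$ and $1$, i.e. occupied if $\eta\le0$ versus occupied if $\eta\le 1$. These differ exactly at the site carrying color $1$ when it lies on the positive side. When color $1$ sits at a negative site, its mirror $-1$ sits at a positive site and plays the same role, giving a sign. This is the standard trick (Ferrari–Kipnis type) showing that
\[
\mathbb P\bigl(X_1(t)>vt\bigr)
\]
equals an expectation of a difference of occupation-type quantities. Combined with the attractiveness and linearity of the basic coupling, one obtains
\[
\mathbb P\bigl(X_1(t)\ge vt\bigr)=\mathbb E\bigl[\#\{\text{sites }x\ge vt\text{ flipped}\}\bigr].
\]
With translation-invariant input this is the derivative, with respect to the initial density, of the hydrodynamic density. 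Here the role of the density parameter is played by the boundary, so I would rather use symmetric averaging over the initial label: $\mathbb P(X_1(t)/t>v)$ is asymptotically the density profile $u(v)$ of half-line TASEP from empty initial data with injection $\alpha$, evaluated at macroscopic position $v$. For $v<0$ the same argument with color $-1$ gives $1-$ the density of holes, and the two branches assemble into $V_\alpha$.

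The hydrodynamic input is the entropy solution of $u_t+(u(1-u))_x=0$ on $x>0$ with boundary density $\alpha$ in the Bardos–Leroux–Nédélec sense, starting from $u\equiv0$. For $\alpha\le1/2$ this is a rarefaction $u=(1-v)/2$ on $[\hat\alpha,1]$ with plateau $u=\alpha$ on $[0,\hat\alpha]$. For $\alpha>1/2$ the boundary is effectively at density $1/2$, so the fan starts at $0$. Differentiating, or rather matching the $\pm$ branches through the symmetric labeling, gives the densities $\frac v{2\alpha}$ on $(\hat\alpha,1)$ and $\frac12$ on $(-1,-\hat\alpha)$. The leftover mass $1-\int f_\alpha$ must sit where the profile jumps. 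For $\alpha>1/2$ this is the boundary layer at $v=0$, of total mass $(2\alpha-1)/(4\alpha)$; for $\alpha\le1/2$ the integral is already $1$, so there is no atom. The normalization $1/\alpha$ arises because the flipped-site count near the boundary is measured against the injected current $\alpha$-weighted occupation.

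The hydrodynamic limit for half-line TASEP with a boundary reservoir is known (Bahadoran, and via the stationary measures above). The main obstacle is therefore making the second-class-particle identity rigorous at the boundary. Specifically, one must justify that the law of the single discrepancy equals the normalized expected discrepancy profile, despite the non-translation-invariant boundary mechanism and the sign flip when color $1$ crosses through the rate-$\alpha$ swap. I would handle this by exact finite-$t$ color-merging (projection) identities in the signed-permutation model, and then pass to the limit using monotonicity, which yields convergence of distribution functions at continuity points.
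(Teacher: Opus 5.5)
\textbf{Gap 1: the positive branch.} Your key identification fails on the positive side. The empty-data profile equals $(1-v)/2$ on $(\hat\alpha,1)$. If $\mathbb P(X_1(t)/t>v)$ converged to it, you would get density $\tfrac12$ there, not $v/(2\alpha)$. Your remark about an ``$\alpha$-weighted occupation'' is not a computation, and nothing in your plan turns $(1-v)/2$ into the correct tail $(1-v^2)/(4\alpha)$.

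What you have done is import the full-line Ferrari--Kipnis identity $\mathbb P(X(t)\ge x)=\mathbb E\eta_t(x)$, which rests on translation invariance. On the half-line the positive tail is a \emph{boundary} quantity, and no bulk hydrodynamic profile determines it. The paper obtains it in three steps.
\begin{itemize}
\item Color--position symmetry $\eta_t\overset d=\eta_t^{-1}$ gives the exact identity $\mathbb P(X_1(t)\le m)=\mathbb P(\zeta^{[m]}_t(1)=1)$. This is the occupation of site $1$ for half-line TASEP started from the packed block $[1,m]$.
\item With $m=\lfloor vt\rfloor$, Proposition~\ref{prop:packed-boundary-local-equilibrium} shows that this process converges near the boundary to the stationary measure $\mu^{\alpha,\rho}$, $\rho=(1+v)/2$. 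The proof uses the variational/backward-path coupling, a sandwich between truncated and filled Bernoulli data, attractiveness, and Liggett's convergence theorem.
\item The matrix-product relation $\alpha\langle W|E=J(\rho)\langle W|$ gives $\mu^{\alpha,\rho}(\xi(1)=0)=J(\rho)/\alpha=(1-v^2)/(4\alpha)$.
\end{itemize}
The last step (stationary current $=\alpha\,\mathbb P(\text{site }1\text{ empty})$) is the actual source of the factor $1/\alpha$. Your ``symmetric averaging over the initial label'' points toward the first step. Followed through, however, it lands on site $1$ of a step-initial process, not on the empty-data profile at $vt$.

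\textbf{Gap 2: the negative branch.} Your second-class-particle step does not work as described, and this branch needs a genuinely nontrivial input. With thresholds $0$ and $1$, the two projections differ only at a positive site carrying color $1$. A positive site carrying $-1$ is occupied in both, so once color $1$ is injected the discrepancy disappears; its mirror does not ``play the same role''.

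More fundamentally, $\sum_{y\ge x}\bigl(\mathbb E\zeta^{[1]}_t(y)-\mathbb E\zeta^{[0]}_t(y)\bigr)$ is a difference of two $O(t)$ quantities that differ by $O(1)$. Without translation invariance, which the boundary destroys, you cannot collapse it to a single density, and hydrodynamics cannot resolve it.

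The paper instead uses the half-space shift invariance of Theorem~\ref{thm:half-space-shift-invariance}, combined twice with color--position symmetry. This yields the exact identity $\mathbb P(X_1(t)\le-n)=\mathbb P(X_n(t)<0)=\mathbb E\zeta^{[0]}_t(n)$. Only at that point does the empty-data profile enter, giving $\mathbb P(X_1(t)/t\le -s)\to\rho^{\alpha,0}(s)$, i.e.\ the density $\tfrac12$ on $(-1,-\hat\alpha)$. Your appeal to unspecified ``exact finite-$t$ color-merging identities'' is exactly where this ingredient is missing.

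Once both tails are established, your bookkeeping for the atom at $0$ is correct: mass $(2\alpha-1)/(4\alpha)$ for $\alpha>1/2$, and none for $\alpha\le1/2$.
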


This is Theorem~\ref{thm:first-color-signed-speed} below.

The main result of Section \ref{sec:colored-particle-speeds} is the following joint distribution of speeds of first two particles, conditioned on the event that they stayed on the positive half-space. It is conditional on the existence of certain local equilibrium limit. 

\begin{theorem}
\label{thm:intro-joint-boundary-speeds}
Assume Conjecture~\ref{conj:two-species-boundary-local-equilibrium}
and set $\mathcal S_t=\{X_1(t)>0,\ X_2(t)>0\}$.
Conditional on $\mathcal S_t$, the pair
$\bigl(X_1(t)/t,X_2(t)/t\bigr)$ converges weakly to a probability
measure on $[\hat\alpha,1]^2$ with off-diagonal density
\[
 \frac{1}{(1-\hat\alpha^2)^2}
 \begin{cases}
 2uv(v^2-u^2),&\hat\alpha<u<v<1,\\[1mm]
 v(1+2u+3u^2-2v^2),&\hat\alpha<v<u<1,
 \end{cases}
\]
and diagonal line density at $(u,u)$, with respect to $du$, given by
\[
 \frac{u(1+u)^2(1-u^2)}{2(1-\hat\alpha^2)^2},
 \qquad \hat\alpha<u<1.
\]
This is Theorem~\ref{thm:joint-boundary-speeds} below.
\end{theorem}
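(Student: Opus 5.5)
The plan is to reduce the joint speed law to the two-site marginal of Theorem~\ref{thm:two-site-marginal} at $q=0$. This follows the strategy of Amir--Angel--Valk\'o for the full-line TASEP speed process: read off joint CDFs of speeds from occupation probabilities of multispecies stationary measures under threshold projections.

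\textbf{Step 1: pass from positions of colors to colors at sites.} I would first establish a symmetry for the half-space TASEP started from the identity signed permutation: $\eta_t$ and $\eta_t^{-1}$ have the same law. The route is the standard Hecke-algebra / random-product argument. The process is a product of independent random adjacent transpositions (including the boundary generator $s_0$ exchanging $\pm1$) in the type-$B$ Hecke algebra at $q=0$. Reversing the order of the Poisson clock rings gives the inverse permutation with the same law. Consequently $(X_1(t),X_2(t))$ has the same law as $(\eta_t(1),\eta_t(2))$, and the event $\mathcal S_t$ corresponds to both colors at sites $1,2$ being positive.

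\textbf{Step 2: threshold projections and local equilibrium.} Fix $\hat\alpha<u,v<1$. Project the colors by thresholds $ut$ and $vt$, declaring colors in the appropriate ranges to be first class, second class, or holes, with all negative colors treated as in the boundary injection. Then $\mathbb P(\eta_t(1)\le ut,\ \eta_t(2)\le vt)$ and its analogues become occupation probabilities of a two-species half-line TASEP near the boundary. The half-space hydrodynamic profile gives density $\rho(w)=(1+w)/2$ at ray $w>\hat\alpha$. By Conjecture~\ref{conj:two-species-boundary-local-equilibrium}, the projected configuration near site $1$ converges to $\mu^{\alpha,\rho_1,\rho_2}$ with $\rho_1=\rho(\min(u,v))$ and $\rho_2=\rho(\max(u,v))$.

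A consistency check: $\rho(w)>\max(1/2,1-\alpha)$ holds exactly when $w>\max(0,1-2\alpha)=\hat\alpha$. So the regime (H) of Theorem~\ref{thm:two-site-marginal} is precisely the range of speeds in $(\hat\alpha,1)$. The boundary value $w=\hat\alpha$ corresponds to regime (L), which supplies the mass lying at the edge.

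\textbf{Step 3: compute the density.} The joint CDF of the limiting pair is an explicit combination of the entries $p_{ij}$ of \eqref{eq:two-site-marginal} at $q=0$, where $\theta=\rho_2$ and $j_i=\rho_i(1-\rho_i)/\alpha$, evaluated on the two orderings $u<v$ and $v<u$. Differentiating in $u$ and $v$ gives the off-diagonal density on each side. Along $u=v$ the CDF has a kink, because the entries involving $j_1-j_2$ vanish to first order. The jump in the first derivative produces the diagonal line density. The event $\mathcal S_t$ has limiting probability equal to the total mass on $[\hat\alpha,1]^2$; after the substitution $\rho=(1+w)/2$ this should be $(1-\hat\alpha^2)^2$ times a constant, which gives the stated normalization.

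As a sanity check, the marginals should reproduce the positive part $v/(2\alpha)$ of the density in Theorem~\ref{thm:intro-first-color-signed-speed}.

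\textbf{Main obstacle.} I expect Step 1 to be the main difficulty, together with the precise bookkeeping in Step 2 of which color ranges map to which species, including the negative colors and the boundary swap. Once that dictionary is fixed, Step 3 is routine algebra with the table \eqref{eq:two-site-marginal}.
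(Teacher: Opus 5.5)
Your architecture matches the paper's proof. It uses color--position symmetry, which the paper simply cites as Proposition~\ref{prop:half-space-color-position}, so Step~1 is not really an obstacle (your 0-Hecke reversal argument is fine). It then projects with species $1$ = colors $\le ut$ (negative colors included) and species $2$ = colors in $(ut,vt]$. This gives distribution functions $p_{11}(u,v)+p_{12}(u,v)$ for $u<v$ and $p_{11}(v,u)+p_{21}(v,u)$ for $v<u$. In the mixed partials only $p_{12}$, resp.\ $p_{21}$, contributes, since $p_{11}$ depends on $\rho_1$ alone. Your kink for the diagonal is equivalent to the paper's $\lim_{\varepsilon\downarrow0}p_{22}(u,u+\varepsilon)/\varepsilon$.

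The genuine gap is the normalization. You take $\lim_t\mathbb P(\mathcal S_t)$ to be the total mass of the limit on $[\hat\alpha,1]^2$, but nothing in your argument proves this. The Conjecture (only for $\hat\alpha<u<v<1$) and the one-species result on the diagonal give convergence of rectangle probabilities inside the open square, i.e.\ only vague convergence. Mass of $\mathcal S_t$ could a priori escape to the boundary of the square, e.g.\ to positive positions of order $o(t)$. This is not an idle worry: for $\alpha>1/2$ the unconditional $X_1(t)/t$ has an atom at $0$ (Theorem~\ref{thm:first-color-signed-speed}). Nor can you read $\mathbb P(\mathcal S_t)$ off your distribution functions, since that needs thresholds at $0$, outside the Conjecture's range.

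The paper computes this limit independently. By color--position symmetry, $\mathbb P(\mathcal S_t)=\mathbb P(\zeta_t^{[0]}(1)=\zeta_t^{[0]}(2)=0)$ for the one-species half-line TASEP started empty. Liggett's convergence theorem together with Proposition~\ref{prop:one-species-left-edge} then gives the limit $c_\alpha=(1-\hat\alpha^2)^2/(16\alpha^2)$. The computed off-diagonal and diagonal densities integrate to exactly $c_\alpha$; for $\hat\alpha=0$ the normalized pieces are $1/12+37/60+3/10=1$. Hence no mass escapes and the conditional laws converge weakly.

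Two side remarks in your proposal are wrong.

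First, there is no edge mass at $\hat\alpha$. The table \eqref{eq:two-site-marginal} depends on $\rho_1$ only through $j_1$, and $J(1-\alpha)=J(\alpha)$. So the regime (H) formulas extend continuously to regime (L), and the limiting distribution function has no jump at $\hat\alpha$. Moreover, regime (L) local equilibrium is not covered by the assumed Conjecture, and for $\alpha>1/2$ regime (L) does not arise at all.

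Second, your marginal sanity check would fail. On $\mathcal S_t$ one also requires $X_2(t)>0$, so the $u$-marginal of the unnormalized limit is $\mathbb P(U_1\in du,\,U_2>0)$. For $\hat\alpha=0$ this equals $(2u+3u^2)/(32\alpha^2)\,du$, not $u/(2\alpha)\,du$.
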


One peculiar detail of this distribution is that it does not depend on $\alpha$ as long as $\alpha \in [1/2,1]$ (so that $\hat \alpha =0$). For example, (modulo Conjectures \ref{conj:half-space-asymptotic-speeds} and \ref{conj:two-species-boundary-local-equilibrium} ), one has $P(U_1=U_2 | U_1>0, U_2>0) = 3/10$ for $\alpha \in [1/2;1]$. 

A conjectural formula for the full joint law (without conditioning on $\mathcal S_t$)
is recorded in Appendix~\ref{app:full-two-speed-law}.

\paragraph{Organization of the paper.}
Section~\ref{sec:preliminaries} recalls known preliminaries about the model. In Section~\ref{sec:colored-vertex-weights} we address the analytic continuation of vertex weights that we use in our construction. The material is close to previous works, but we have not found it in exactly this form. In particular, a hole-indexed analytic continuation of vertex weights might be slightly new. In Section~\ref{sec:queue-specialization-stationarity} we prove our first main result, Theorem~\ref{thm:main-stationary-construction}. The key to the proof, as in Aggarwal-Nicoletti-Petrov \cite{MR4968006}, is a commutation relation provided by the Yang-Baxter equation and the reflection equation. Nevertheless, it requires significant effort to properly apply this idea to our case. An important role is played by a correct choice of \textit{queue specialization}. In Section~5 we use this vertex construction for explicit calculations, proving Theorem~\ref{thm:two-site-marginal} above. Finally, we study distributions of speeds of particles in Section~\ref{sec:colored-particle-speeds}, in order to emphasize the significance of the joint distribution from Theorem~\ref{thm:two-site-marginal}. Appendix~\ref{app:full-two-speed-law} contains the conjectural full two-speed law and a simulation result.

\paragraph{AI use.}
Main results and ideas of the paper were developed by the authors. In particular, both main theorems above were fully obtained without the use of AI. The content of Section 6 is based on the interaction with ChatGPT; also, the writing of the paper was done with the use of ChatGPT. 

\paragraph{Acknowledgements.}
We are grateful to P.L. Ferrari for discussions related to material of Section \ref{sec:colored-particle-speeds}. 
A. Bufetov was partially supported by the European Research Council (ERC), Grant Agreement No. 101041499.

\section{Preliminaries}
\label{sec:preliminaries}
This section introduces notation and recalls several known results that will be used in the paper. 

\subsection{Matrix product ansatz for the half-space ASEP}
\label{subsec:half-space-mpa}
For \(\alpha, \rho\in(0,1)\), write
\[
    \bar\rho:=1-\rho,
    \qquad
    J(\rho):=\rho\bar\rho,
    \qquad
    j(\rho):=\frac{J(\rho)}{\alpha}.
\]
For \(u,v\in(0,1]\), set
\begin{equation}
\label{eq:density-odds-product}
    b_{u,v}:=\frac{(1-u)(1-v)}{uv}.
\end{equation}
Consider the half-space ASEP with right and left jump rates
\(1\) and \(q\in[0,1)\), respectively, and injection rate
\((1-q)\alpha\) at site \(1\).  Sasamoto and Williams
\cite[Theorems~4.1 and~5.1]{MR3324896} extended Grosskinsky's half-space
matrix-product ansatz \cite[Theorem~3.2]{Grosskinsky2004} to this setting.

\begin{proposition}
\label{prop:half-space-mpa}
Let \(0\leq q<1\), \(0<\alpha<1\), and \(0<\rho<1\).  
Consider matrices \(D,E\) and vectors \(\langle W|,|V\rangle\) satisfying
\begin{equation}
\label{eq:half-space-mpa-algebra}
    DE-qED=(1-q)J(\rho)(D+E),
    \qquad
    \alpha\langle W|E=J(\rho)\langle W|,
    \qquad
    (D+E)|V\rangle=|V\rangle.
\end{equation}
These relations define a stationary probability law $\mu^{\alpha,\rho}$ of the half-space ASEP
through its cylinder probabilities. More precisely, for $\alpha>\min{\rho, 1-\rho}$  if $\eta\sim\mu^{\alpha,\rho}$, then
for every \(n\geq1\) and \(\varepsilon_1,\ldots,\varepsilon_n\in\{0,1\}\),
\begin{equation}
\label{eq:half-space-mpa-cylinder}
\mathbb P(\eta(1)=\varepsilon_1,\ldots,\eta(n)=\varepsilon_n)
=\left\langle W\left|\prod_{i=1}^n
\bigl(\varepsilon_iD+(1-\varepsilon_i)E\bigr)\right|V\right\rangle
\end{equation}
Its stationary current is
\begin{equation}
\label{eq:half-space-mpa-current}
\mathbb P(\eta(i)=1,\eta(i+1)=0)
-q\mathbb P(\eta(i)=0,\eta(i+1)=1)
=(1-q)J(\rho),\qquad i\geq1.
\end{equation}
\end{proposition}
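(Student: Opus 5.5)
The plan is to verify stationarity directly from the algebra \eqref{eq:half-space-mpa-algebra}, using the standard telescoping (cancellation-mechanism) argument for matrix product states. First I would check that \eqref{eq:half-space-mpa-cylinder} defines a consistent family of cylinder probabilities, and hence a measure on $\{0,1\}^{\mathbb Z_{>0}}$. Consistency under marginalizing the last site follows from $(D+E)|V\rangle=|V\rangle$. Normalization $\langle W|V\rangle=1$ has to be imposed, which requires a concrete representation with $\langle W|V\rangle$ finite and nonzero.

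For existence, I would use the usual $q$-oscillator representation. Write $D=J(\rho)(1+d)$ and $E=J(\rho)(1+e)$, so that the bulk relation becomes $de-qed=1-q$; then take $d,e$ as $q$-boson raising and lowering operators. The boundary relation $\alpha\langle W|E=J\langle W|$ becomes $\langle W|e=(J/\alpha-1)\langle W|$, so $\langle W|$ is a $q$-coherent eigenvector. The vector $|V\rangle$ is the eigenvector of $D+E$ with eigenvalue $1$, equivalently of $d+e$ with eigenvalue $1/J-2$. Positivity of the cylinder weights should come from a stochastic interpretation: a random-walk or Askey–Wilson type representation as in \cite{MR3324896}.

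Convergence of the series defining $\langle W|V\rangle$ is where the hypothesis $\alpha>\min(\rho,1-\rho)$ enters. It makes the coherent-state parameter $|J/\alpha-1|$ small enough relative to the spectral parameter of $|V\rangle$ to prevent a boundary-driven condensation. I expect this to be the main technical obstacle, and I would quote \cite[Theorems~4.1, 5.1]{MR3324896} for it rather than reprove it.

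Stationarity itself uses the bulk relation in the form
\[
DE-qED=(1-q)J\,(D+E).
\]
Set $\hat D=(1-q)J$ and $\hat E=-(1-q)J$, scalar multiples of the identity. Then for each local configuration $(\varepsilon_i,\varepsilon_{i+1})$ the bulk generator applied to $X_{\varepsilon_i}X_{\varepsilon_{i+1}}$ equals $\hat X_{\varepsilon_i}X_{\varepsilon_{i+1}}-X_{\varepsilon_i}\hat X_{\varepsilon_{i+1}}$, where $X_1=D$ and $X_0=E$. At site $1$, the injection term $(1-q)\alpha\bigl(\langle W|E\to\langle W|D\bigr)$ must equal $\langle W|\hat X_{\varepsilon_1}$. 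Using $\alpha\langle W|E=J\langle W|$, the case $\varepsilon_1=0$ gives $-(1-q)\alpha\langle W|E=-(1-q)J\langle W|$, and the case $\varepsilon_1=1$ gives the same quantity with a plus sign. Summing over bonds telescopes.

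On the half-line there is no right boundary, so the telescoping argument is run on the truncated cylinder of length $n$. The leftover term involving $\hat X_{\varepsilon_n}$ is cancelled by the flux across bond $(n,n+1)$, which appears as soon as the generator acts on the site-$(n+1)$ marginal: one sums over $\varepsilon_{n+1}$ and uses $(D+E)|V\rangle=|V\rangle$ again. This is a finite check.

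Finally, the current formula \eqref{eq:half-space-mpa-current} follows by inserting the bulk relation between $\langle W|(D+E)^{i-1}$ and $(D+E)^{\infty}|V\rangle=|V\rangle$:
\[
\langle W|(D+E)^{i-1}(DE-qED)|V\rangle=(1-q)J\langle W|V\rangle=(1-q)J.
\]
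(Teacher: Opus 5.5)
The paper gives no argument of its own here; it imports the statement from Sasamoto--Williams and Grosskinsky. Your algebraic part is the standard verification behind those results: Kolmogorov consistency from $(D+E)|V\rangle=|V\rangle$, the cancellation mechanism with scalar hats, closing the last bond, and the current identity. It has a sign error, though. With $\hat D=(1-q)J$ and $\hat E=-(1-q)J$, bond $(i,i+1)$ contributes $X_{\varepsilon_i}\hat X_{\varepsilon_{i+1}}-\hat X_{\varepsilon_i}X_{\varepsilon_{i+1}}$ to the time derivative of the cylinder weight, not the expression you wrote. To check, take $\varepsilon_i\varepsilon_{i+1}=01$: the weight gains $DE-qED=(1-q)J(D+E)$. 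With your sign, the boundary term $\langle W|\hat X_{\varepsilon_1}\cdots|V\rangle$ doubles the telescoped left end instead of cancelling it. Closing bond $(n,n+1)$ also needs $\hat D+\hat E=0$, not only $(D+E)|V\rangle=|V\rangle$. Finally, the boundary relation gives $\langle W|e=(\alpha^{-1}-1)\langle W|$, not $(J/\alpha-1)\langle W|$.

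The real problem is the existence step, and your explanation of where $\alpha>\min(\rho,1-\rho)$ enters. Take the Fock $q$-oscillator representation you describe, $d|n\rangle=(1-q^n)|n-1\rangle$ and $e|n\rangle=|n+1\rangle$. Then $\langle W|n\rangle=c^n$ with $c=\alpha^{-1}-1$. Also $\langle n|V\rangle=H_n(x|q)/(q;q)_n$, where $H_n$ are the continuous $q$-Hermite polynomials at $2x=J^{-1}-2=\tau+\tau^{-1}$ and $\tau=\rho/(1-\rho)$. Hence
\[
\langle W|V\rangle=\sum_{n\ge0}\frac{c^nH_n(x|q)}{(q;q)_n}=\frac{1}{(c\tau;q)_\infty\,(c/\tau;q)_\infty},
\]
a series of positive terms that converges iff $c\max(\tau,\tau^{-1})<1$, i.e.\ iff $\alpha>\max(\rho,1-\rho)$. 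So this realization misses the whole window $\min(\rho,1-\rho)<\alpha\le\max(\rho,1-\rho)$. That window contains the slow-injection high-density regime $1-\rho<\alpha\le\rho$, which is exactly where the paper uses $\mu^{\alpha,\rho}$, e.g.\ with $\rho_1>\max(1/2,1-\alpha)$ for arbitrary $\alpha$.

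In that window the cylinder values are still forced by the relations. Normal ordering reduces every word to $\langle W|D^b|V\rangle$, which is determined recursively from $\langle W|V\rangle$. So your telescoping does yield a consistent, normalized, formally stationary family. What you lack there is nonnegativity, and it cannot come from convergence of this pairing. It must come from a different realization or from an independent identification, e.g.\ with Liggett's high-density measure. That is exactly what the citation has to supply.

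Note also that the relations involve $\rho$ only through $J(\rho)$, so the hypothesis is symmetric under $\rho\leftrightarrow1-\rho$. For $\rho\ge1/2$ it is the high-density condition $\alpha+\rho>1$.
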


The algebraic relation determines the distribution at the first two sites even without using an explicit solution.

\begin{proposition}
\label{prop:one-species-left-edge}
Under the assumptions of Proposition~\ref{prop:half-space-mpa}, let
$\eta\sim\mu^{\alpha,\rho}$ and set \(j:=j(\rho)\).  We have
\begingroup
\small
\setlength{\arraycolsep}{3pt}
\begin{equation}
\label{eq:one-species-left-edge}
\left(
\mathbb P(\eta(1)=a,\eta(2)=b)
\right)_{a,b\in\{0,1\}}
=
\begin{pmatrix}
j^2 & j(1-j)\\[1mm]
j\bigl((1-q)\alpha+q(1-j)\bigr)
& (1-j)(1-qj)-(1-q)\alpha j
\end{pmatrix}.
\end{equation}
\endgroup
\end{proposition}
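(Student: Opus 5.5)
The plan is to read everything off the cylinder formula \eqref{eq:half-space-mpa-cylinder} with $n=2$, reducing each two-letter word $\langle W|XY|V\rangle$, $X,Y\in\{D,E\}$, to scalars using only the three relations \eqref{eq:half-space-mpa-algebra}. No explicit representation of $D,E$ will be needed.

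\textbf{Normalization.} First I will fix $\langle W|V\rangle=1$. Summing \eqref{eq:half-space-mpa-cylinder} over $\varepsilon_1$ with $n=1$ gives $\langle W|(D+E)|V\rangle=1$, since $\mu^{\alpha,\rho}$ is a probability measure. The third relation $(D+E)|V\rangle=|V\rangle$ turns this into $\langle W|V\rangle=1$. The same relation gives $\langle W|(D+E)^2|V\rangle=1$, so the four entries sum to one.

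\textbf{Entries with $E$ first.} Write the boundary relation as $\langle W|E=j\langle W|$, where $j=J(\rho)/\alpha$. Then:
\begin{itemize}
\item $p_{00}=\langle W|EE|V\rangle=j\langle W|E|V\rangle=j^2\langle W|V\rangle=j^2$;
\item $\langle W|D|V\rangle=\langle W|(D+E)|V\rangle-\langle W|E|V\rangle=1-j$;
\item $p_{01}=\langle W|ED|V\rangle=j\langle W|D|V\rangle=j(1-j)$.
\end{itemize}

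\textbf{The entry $p_{10}$.} Here the bulk relation $DE=qED+(1-q)J(\rho)(D+E)$ reorders $D$ and $E$:
\[
p_{10}=\langle W|DE|V\rangle=q\,\langle W|ED|V\rangle+(1-q)J(\rho)\,\langle W|(D+E)|V\rangle .
\]
Substituting $\langle W|ED|V\rangle=j(1-j)$ and $\langle W|(D+E)|V\rangle=1$, and using $J(\rho)=\alpha j$, gives
\[
p_{10}=qj(1-j)+(1-q)\alpha j=j\bigl((1-q)\alpha+q(1-j)\bigr).
\]

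\textbf{The entry $p_{11}$.} Finally $p_{11}=1-p_{00}-p_{01}-p_{10}$, and simplifying gives
\[
p_{11}=1-j-qj+qj^2-(1-q)\alpha j=(1-j)(1-qj)-(1-q)\alpha j,
\]
which is the claimed matrix \eqref{eq:one-species-left-edge}.

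The only point needing care is the normalization $\langle W|V\rangle=1$. It rests on the representation in Proposition~\ref{prop:half-space-mpa} being a genuine probability measure in the stated parameter range. I take this as given from that proposition, which in turn cites Sasamoto--Williams. Everything after that is routine algebra.
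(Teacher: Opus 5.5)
Your proposal is correct and follows the paper's proof. Both use the boundary relation together with $(D+E)|V\rangle=|V\rangle$ to get $p_{00}=j^2$ and $\mathbb P(\eta(1)=0)=j$, obtain $p_{10}-qp_{01}=(1-q)J(\rho)$ from the bulk relation, and finish by subtraction; your derivation of $\langle W|V\rangle=1$ from the $n=1$ cylinder probabilities just makes explicit a normalization the paper uses tacitly.
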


\begin{proof}
The boundary relation and \((D+E)|V\rangle=|V\rangle\) give
\[
\mathbb P(\eta(1)=0,\eta(2)=0) =\langle W|E^2|V\rangle=j^2,
\mathbb P(\eta(1)=0)=\langle W|E(D+E)|V\rangle=j.
\]
Moreover, the bulk relation gives
\[
\mathbb P(\eta(1)=1,\eta(2)=0)
-q\mathbb P(\eta(1)=0,\eta(2)=1)
=(1-q)J(\rho).
\]
The four probabilities now follow by subtraction.
\end{proof}

\subsection{The Ferrari--Martin construction}
\label{subsec:ferrari-martin}

Ferrari and Martin \cite{MR2319708} constructed the stationary measures of the
full-space multispecies TASEP using multi-line queues.  Martin
\cite{MR4089793} extended this construction to the multispecies ASEP.  In this section,
we collect their results in our notation. Throughout this
subsection, $q\in[0,1)$ and the priority order is $1\succ2\succ0$. 

We begin with the two-species case.  Fix
\[
    0<\rho_1<\rho_2<1,
\]
where $\rho_1$ is the density of first-class particles and $\rho_2$ is the
combined density of first- and second-class particles.  Let
$A=(A(i))_{i\in\mathbb Z}$ and $S=(S(i))_{i\in\mathbb Z}$ be independent
Bernoulli processes of parameters $\rho_1$ and $\rho_2$, respectively.  We
interpret $A(i)=1$ as an arrival and $S(i)=1$ as an available service.

Let $Q_{i+1}$ be the queue length entering site $i$, and let $Q_i$ be the queue 
length after that site has been processed.  Suppose that $Q_{i+1}=k$. Denoting the output by \(d(i)\in\{0,1,2\}\),
the Martin's multi-line algorithm is defined as follows:
\begin{itemize}
    \item If $A\left(i\right) = S\left(i\right) = 0$, then $d\left(i\right) = 0$ and $Q_i = k$,
    \item if $A\left(i\right) = S\left(i\right) = 1$, then the arrival immediately takes the service, so $d\left(i\right) = 1$ and $Q_i = k$,
    \item if $A\left(i\right) = 1, S\left(i\right) = 0$, then $d\left(i\right) = 0$, $Q_i = k+1$, and the arrival service is added to the waiting queue,
    \item if $A\left(i\right) = 0, S\left(i\right) = 1$, then each particle in the queue independently rejects the service with probability $q$. Hence, with probability $q^k$, the service is rejected by all particles, producing a second-class particle at site $i$, so $d\left(i\right) = 2$, $Q_i = k$. With probability $1 - q^k$, the service is accepted by some particle in the queue, giving $Q_i = k -1 $ and $d\left(i\right) = 1$. 
\end{itemize}

We therefore have
\begin{equation}
\label{eq:martin-queue-recursion}
    Q_i=Q_{i+1}+A(i)-\mathbf1_{\{d(i)=1\}}.
\end{equation}

The construction extends recursively to any number of species.  Given
$0<\rho_1<\cdots<\rho_n<1$, take independent Bernoulli service processes of
densities $\rho_1,\ldots,\rho_n$.  The first line has a class-$1$ particle at
each service and a hole elsewhere.  At level $k\geq2$, use the output of level
$k-1$ as the arrival process and the $k$th Bernoulli line
as its service process.  The algorithm at each level is analogous to the two-color case, 
with service now offered in class-priority order. At level $k$, if a class-$b$ customer
arrives, where $1\leq b\leq k-1$, waiting customers of classes
$1,\ldots,b-1$ are offered any available service first. If one accepts,
it departs and the arriving customer joins the queue. If they all reject,
a class-$b$ departure always occurs. Thus, a class-$k$ departure is only created when 
no arrival but a service rejected by all the waiting particles.
We denote this law by \(\pi^{\rho_1,\rho_2, \dots \rho_n}\).

To state the queue-length distribution, introduce the $q$-Pochhammer symbols.
For a complex parameter \(u\) and \(m\in\mathbb Z_{\geq0}\), set
\[
    (u;q)_m:=\prod_{j=0}^{m-1}(1-uq^j),
    \qquad
    (u;q)_\infty:=\prod_{j=0}^{\infty}(1-uq^j).
\]
We also write
\[
    \begin{bmatrix}m\\r\end{bmatrix}_{\!q}
    :=\frac{(q;q)_m}{(q;q)_r(q;q)_{m-r}}.
\]

For real \(u<1\) and \(0\leq v<1\), define
\begin{equation}
\label{eq:q-negative-binomial-law}
    p_{u,v}^{(q)}(k)
    :=\frac{(v;q)_\infty}{(uv;q)_\infty}
      \frac{(u;q)_k}{(q;q)_k}v^k,
    \qquad k\in\mathbb Z_{\geq0}.
\end{equation}
By the $q$-binomial theorem, \eqref{eq:q-negative-binomial-law} is a
probability distribution.  We call it the $q$-negative binomial distribution
and write \(X\sim q\text{-}\mathrm{NB}(u,v)\) if
\(\mathbb P(X=k)=p_{u,v}^{(q)}(k)\).  When \(u=0\), we abbreviate
\begin{equation}
\label{eq:q-Poisson-law}
    p_v^{(q)}(k):=p_{0,v}^{(q)}(k)
    =(v;q)_\infty\frac{v^k}{(q;q)_k},
    \qquad k\in\mathbb Z_{\geq0},
\end{equation}
and call the corresponding law the $q$-Poisson distribution, denoted by
\(q\text{-}\mathrm{Poi}(v)\).  At \(q=0\), this is the geometric
distribution with parameter \(v\).

\begin{theorem}[\cite{MR4089793}, formula (6.5)]
\label{thm:martin-stationary-queue}
In the two-color setting, set
\begin{equation}
\label{eq:martin-queue-parameter}
    r_{12}:=\frac{\rho_1(1-\rho_2)}{(1-\rho_1)\rho_2}.
\end{equation}
The stationary distribution of the queue length process is
\[
    Q_i\sim q\text{-}\mathrm{Poi}(r_{12}),
    \qquad i\in\mathbb Z.
\]
\end{theorem}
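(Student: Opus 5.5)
The plan is to observe that, reading sites from right to left, the queue length $(Q_i)$ is a time-homogeneous birth--death Markov chain on $\mathbb Z_{\ge0}$. Its stationary law can then be computed by detailed balance. Indeed, by \eqref{eq:martin-queue-recursion} and the description of the algorithm, $Q_i$ is a function of $Q_{i+1}$, of $(A(i),S(i))$, and of an independent coin deciding rejections. The pairs $(A(i),S(i))$ are i.i.d.\ over $i$ and independent of everything to the right of $i$, so $i\mapsto Q_i$ (with $i$ decreasing) is Markov. Its transition probabilities from state $k$ are:
\[
 k\to k+1:\ \rho_1(1-\rho_2),\qquad
 k\to k-1:\ (1-\rho_1)\rho_2(1-q^k),
\]
and $k\to k$ with the remaining probability. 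Here the up-step comes from $A=1,S=0$, and the down-step comes from $A=0,S=1$ with the service accepted. The cases $A=S=1$, $A=S=0$, and a rejected service all leave the length unchanged.

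The second step is to solve detailed balance for this chain. We need
\[
 \pi(k)\,\rho_1(1-\rho_2)=\pi(k+1)\,(1-\rho_1)\rho_2(1-q^{k+1}),
\]
which gives $\pi(k+1)/\pi(k)=r_{12}/(1-q^{k+1})$ with $r_{12}$ as in \eqref{eq:martin-queue-parameter}. Hence $\pi(k)\propto r_{12}^k/(q;q)_k$. Since $\rho_1<\rho_2$, we have $r_{12}<1$. By the $q$-binomial (Euler) identity $\sum_k z^k/(q;q)_k=1/(z;q)_\infty$ for $|z|<1$, the normalizing constant is $(r_{12};q)_\infty$. This yields exactly $q\text{-}\mathrm{Poi}(r_{12})$ from \eqref{eq:q-Poisson-law}. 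A birth--death chain satisfying detailed balance with a summable $\pi$ has $\pi$ as an invariant law.

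The final step, which is the only mildly delicate point, is to justify that this is \emph{the} stationary distribution of the bi-infinite queue process. The chain is irreducible on $\mathbb Z_{\ge0}$: $0$ is reachable from every state because down-steps have positive probability when $k\ge1$, and it is aperiodic because holding has positive probability. Since $\pi$ is summable, the chain is positive recurrent and $\pi$ is its unique invariant law. Consequently, the queue started at any finite state far to the right converges in law to $\pi$, by the standard coupling (or ergodic) theorem. Thus the stationary queue process indexed by $i\in\mathbb Z$, whose existence follows from this convergence, has one-site marginal $Q_i\sim q\text{-}\mathrm{Poi}(r_{12})$ for every $i$.
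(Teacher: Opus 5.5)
Your argument is correct. The queue length is a birth--death chain with up-probability $\rho_1(1-\rho_2)$ and down-probability $(1-\rho_1)\rho_2(1-q^k)$. Solving detailed balance and normalizing with Euler's identity gives $q\text{-}\mathrm{Poi}(r_{12})$. Irreducibility, aperiodicity and positive recurrence then give uniqueness; for irreducibility you should also note that every state is reachable from $0$, since up-steps have probability $\rho_1(1-\rho_2)>0$.

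The paper does not reprove this statement; it cites Martin. Your detailed-balance relation is exactly the identity $(1-q^{m+1})p_{r_{12}}^{(q)}(m+1)=r_{12}\,p_{r_{12}}^{(q)}(m)$ used in the proof of Proposition~\ref{prop:conditional-two-site-queue}. It is also the same reasoning the paper invokes for the analogous queue law in Lemma~\ref{lem:large-alpha-conditioned-queue-laws}, so your route is essentially the intended one.
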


Set
\begin{equation}
\label{eq:two-site-abbreviations}
    \theta:=q+(1-q)\rho_2,
    \qquad
    \phi:=1-(1-q)\rho_1.
\end{equation}

We also need the two neighboring sites joint distribution given in the following table.

\begin{theorem}[\cite{MR4089793}, Lemma~6.3(i)--(iii)]
\label{thm:full-space-two-site-law}
In the two-color setting, let $d\sim\pi^{\rho_1,\rho_2}$.  Then
\begingroup
\footnotesize
\renewcommand{\arraystretch}{1.55}
\setlength{\tabcolsep}{4pt}
\begin{equation}
\label{eq:22-full-space-two-site-table}
\begin{array}{c|ccc}
\mathbb P(d(1)=i,d(2)=j)
    &j=0&j=2&j=1\\
\hline
i=0
    &(1-\rho_2)^2
    &(\rho_2-\rho_1)(1-\rho_2)
    &\rho_1(1-\rho_2)
\\
i=2
    &(\rho_2-\rho_1)(1-\rho_2)\phi
    &(\rho_2-\rho_1)(\rho_2-\rho_1\theta)
    &\rho_1(\rho_2-\rho_1)
\\
i=1
    &\rho_1(1-\rho_2)(\theta+\phi-q)
    &\rho_1(\rho_2-\rho_1)\theta
    &\rho_1^2.
\end{array}
\end{equation}
\endgroup
\end{theorem}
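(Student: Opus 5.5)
The plan is to compute the table directly from the queueing description. The key observation is that the queue is processed from right to left. The queue length $Q_3$ entering site $2$ is a function of $(A(i),S(i))_{i\ge3}$ only. It is therefore independent of the four Bernoulli variables $A(1),S(1),A(2),S(2)$, and by Theorem~\ref{thm:martin-stationary-queue} it has law $q\text{-}\mathrm{Poi}(r)$ with $r=r_{12}$. Conditionally on $Q_3=k$, the pair $(d(2),d(1))$ is produced by two steps of the Markov chain described by the four rules above. Each step uses fresh independent $A,S$ and a fresh rejection coin with success probability $q^{\text{queue length}}$. So every entry of the table is an expectation of a polynomial in $q^{Q_3}$. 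The only analytic inputs needed are the $q$-moments of the $q$-Poisson law. By the $q$-binomial theorem,
\[
 \mathbb E[q^{Q}]=\frac{(r;q)_\infty}{(qr;q)_\infty}=1-r,\qquad
 \mathbb E[q^{2Q}]=(1-r)(1-qr),
\]
together with the trivial facts $\mathbb P(d=0)=1-\rho_2$, $\mathbb P(d=1)=\rho_1$ and $\mathbb P(d=2)=\rho_2-\rho_1$ at a single site.

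Next I would reduce the number of genuinely new entries. The event $d(1)=0$ is the event $S(1)=0$, since a hole is output exactly when there is no service. This event is independent of everything at site $2$, so the first row is $(1-\rho_2)$ times the one-site marginal of $d(2)$. Similarly, the entries involving $d(2)=0$, as well as the entry $\mathbb P(d(1)=1,d(2)=1)=\rho_1^2$, should follow from simple independence arguments. For the latter, use that the number of class-$1$ departures matches arrivals in a way that can be checked through the queue recursion \eqref{eq:martin-queue-recursion}. Whatever entries resist such arguments can be recovered from the row and column sums, which equal the one-site marginals. This leaves essentially the entries with $d(1)=2$ or $d(2)=2$ to be computed by hand.

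For the entry $\mathbb P(d(1)=2,d(2)=2)$, both sites must have no arrival, a service, and rejection by the whole queue. The queue length does not change between the two sites, so the entry equals
\[
 \bigl((1-\rho_1)\rho_2\bigr)^2\,\mathbb E[q^{2Q_3}]=(\rho_2-\rho_1)\bigl(\rho_2-\rho_1\theta\bigr).
\]
To see this, write $1-r=\frac{\rho_2-\rho_1}{(1-\rho_1)\rho_2}$ and $1-qr=\frac{(1-\rho_1)\rho_2-q\rho_1(1-\rho_2)}{(1-\rho_1)\rho_2}$. The second numerator simplifies to $\rho_2-\rho_1\theta$.

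The remaining mixed entries, such as $(d(1),d(2))=(2,0),(2,1),(1,2),(1,0)$, require tracking how $Q_2$ depends on what happened at site $2$. For example, $d(2)=1$ can arise either from $A=S=1$, with the queue unchanged, or from a service accepted by the queue, with the queue dropping by one. In each case I would condition on the local event at site $2$, write $q^{Q_2}$ as $q^{Q_3}$ or $q^{Q_3\pm1}$, and weight by the appropriate factor $q^{Q_3}$ or $1-q^{Q_3}$. The result again reduces to the two moments above; terms like $\mathbb E[(1-q^{Q})q^{Q-1}]$ are handled the same way. The quantity $\phi=1-(1-q)\rho_1$ should appear when combining the contributions $1-\rho_1$ (no arrival) and $q\rho_1$ coming from the shifted moments.

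I expect this bookkeeping of the conditional law of $Q_2$ given $d(2)$ to be the main obstacle. It is not conceptually difficult, but the simplification to the compact forms with $\theta$ and $\phi$ is error-prone. I would verify each entry against the row and column marginal constraints and against the $q=0$ Ferrari--Martin case.
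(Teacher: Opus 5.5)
The paper quotes this table from Martin without proof, but your route is the natural one and it works. Start from $Q_3\sim q\text{-}\mathrm{Poi}(r_{12})$, independent of the data at sites $2$ and $1$, and run the queue through two steps. This is exactly the site-by-site computation the paper carries out, while also tracking $Q_1$, in the proof of Proposition~\ref{prop:conditional-two-site-queue}. Your evaluation of $\mathbb P(d(1)=d(2)=2)$ via $\mathbb E[q^{2Q_3}]=(1-r_{12})(1-qr_{12})$ is correct.

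One remark in your second paragraph is false. The entries with $d(2)=0$ do not factor by independence, because $\{d(2)=0\}=\{S(2)=0\}$ contains the event $A(2)=1$, which lengthens the queue. Thus $\mathbb P(d(1)=2,d(2)=0)=(1-\rho_2)(1-\rho_1)\rho_2\,\mathbb E[q^{Q_3}]\bigl(1-\rho_1+q\rho_1\bigr)=(\rho_2-\rho_1)(1-\rho_2)\phi$. This is precisely where $\phi$ enters, and your last paragraph handles it correctly.

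You also do not need a Burke-type argument for $\rho_1^2$. The first row, $\mathbb P(d(1)=d(2)=2)$ and $\mathbb P(d(1)=2,d(2)=0)$ already fix five entries. The one-site marginals $(1-\rho_2,\rho_2-\rho_1,\rho_1)$ of $d(1)$ and $d(2)$ then determine the remaining four: columns $2$ and $0$ give the $(1,2)$ and $(1,0)$ entries, and rows $2$ and $1$ give the $(2,1)$ and $(1,1)$ entries.
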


The queue length is generally dependent on the output process.  The
half-space calculations require its conditional law given the colors at two
neighboring sites.

\begin{proposition}
\label{prop:conditional-two-site-queue}
Consider the stationary queue coupling of
Subsection~\ref{subsec:ferrari-martin}, with \(r_{12}\) as in
\eqref{eq:martin-queue-parameter}.  Start with
\(Q_3\sim q\text{-}\mathrm{Poi}(r_{12})\), independently of the arrivals,
services, and rejection variables at sites \(2\) and \(1\).  Process site
\(2\) and then site \(1\), and let \(Q_1\) be the resulting queue length.  For
$i,j\in\{0,1,2\}$, define $Q^{ij}$ by the conditional law
\[
    Q^{ij}\overset d=\bigl(Q_1\mid(d(1),d(2))=(i,j)\bigr).
\]
Then
\begin{equation}
\label{eq:22-queue-given-11}
Q^{11}\sim q\text{-}\mathrm{Poi}(r_{12}),\qquad
Q^{12},Q^{21}\sim q\text{-}\mathrm{Poi}(qr_{12}).
\end{equation}
Moreover, set
\begin{equation}
\label{eq:22-beta-10}
    \beta_{10}:=\frac{\rho_1\theta}{\theta+\phi-q}.
\end{equation}
Then
\begin{equation}
\label{eq:22-queue-given-10}
    Q^{10}\overset{d}{=}Q+\xi^{10},
    \qquad Q\sim q\text{-}\mathrm{Poi}(r_{12}),
    \qquad \xi^{10}\sim\operatorname{Ber}(\beta_{10}),
\end{equation}
where the two random variables on the right-hand side are independent.
\end{proposition}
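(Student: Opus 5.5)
Let me outline how I would prove Proposition~\ref{prop:conditional-two-site-queue}. The plan is a direct computation with the two-step queue recursion \eqref{eq:martin-queue-recursion}, started from $Q_3\sim q\text{-}\mathrm{Poi}(r_{12})$. The inputs at sites $2$ and $1$ are independent Bernoulli arrivals $A(i)\sim\mathrm{Ber}(\rho_1)$ and services $S(i)\sim\mathrm{Ber}(\rho_2)$, together with the rejection coins.

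For each pair $(i,j)$ I will write the joint law of $(Q_1,d(1),d(2))$ as $p^{(q)}_{r_{12}}(k)$ times a sum over the four input patterns at each site, and then normalize by the entries of \eqref{eq:22-full-space-two-site-table}. The generating functions needed are of the form $\sum_k p^{(q)}_{r}(k)\,q^{k}s^k$ and $\sum_k p^{(q)}_r(k)(1-q^k)s^{k}$. These reduce to standard identities: $p^{(q)}_r(k)q^k$ is proportional to $p^{(q)}_{qr}(k)$, and a one-step shift $k\mapsto k-1$ applied to $(1-q^k)p^{(q)}_r(k)$ gives a multiple of $p^{(q)}_r(k-1)$. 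Indeed
\[
 (1-q^k)\frac{r^k}{(q;q)_k}=r\,\frac{r^{k-1}}{(q;q)_{k-1}}.
\]
These two facts, together with Theorem~\ref{thm:martin-stationary-queue} (stationarity of the queue), are the only tools I need.

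I will take the cases in turn.

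\textbf{Case $(1,1)$.} At each site a departure of class $1$ either comes from $A=S=1$, with the queue unchanged, or from $A=0,S=1$ with acceptance, which has factor $(1-q^k)$ and decreases the queue by one. By the shift identity, the second option maps $q\text{-}\mathrm{Poi}(r)$ to itself up to the constant factor $r$. So after both steps the conditional law of $Q_1$ is a mixture of copies of $q\text{-}\mathrm{Poi}(r_{12})$, hence $q\text{-}\mathrm{Poi}(r_{12})$ itself.

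\textbf{Cases $(1,2)$ and $(2,1)$.} The class-$2$ departure forces $A=0,S=1$ with all customers rejecting, which has factor $q^k$ and leaves $k$ unchanged. This tilts the law to $q\text{-}\mathrm{Poi}(qr_{12})$. The class-$1$ departure at the other site preserves that family by the same shift argument applied with parameter $qr_{12}$. The order of the two operations does not matter, because each acts as multiplication by a constant on the $q$-Poisson family.

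\textbf{Case $(1,0)$.} This is the main obstacle. Here $d(1)=0$ arises either from $A=S=0$, which leaves the queue unchanged, or from $A=1,S=0$, which increases it by one. The increment destroys the pure $q$-Poisson form and produces a two-component mixture: $Q$ or $Q+1$, with $Q\sim q\text{-}\mathrm{Poi}(r_{12})$. I must check three things.
\begin{itemize}
\item Conditioning on $d(2)=1$ (processed first) leaves $q\text{-}\mathrm{Poi}(r_{12})$, as in case $(1,1)$.
\item The mixture weights are $1-\beta_{10}$ and $\beta_{10}$, with $\beta_{10}$ as in \eqref{eq:22-beta-10}. This should come out as $\rho_1(1-\rho_2)$ times the $d(2)=1$ probability, divided by the table entry $\rho_1(1-\rho_2)(\theta+\phi-q)$.
\item The entry at the $d(2)=1$ step must be computed against the correct weights. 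I will compute $\mathbb P(d(2)=1)$ directly as $\rho_1\rho_2+(1-\rho_1)\rho_2\,\mathbb E[1-q^{Q_3}]$. Then I will track how these weights enter after the site-$1$ step, using $\mathbb E[q^{Q}]=(r;q)_\infty/(qr;q)_\infty=1-r$ for $Q\sim q\text{-}\mathrm{Poi}(r)$.
\end{itemize}
The final identification with $\theta,\phi$ is routine algebra, which I would verify against the table.

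A shift by one of a $q$-Poisson variable is exactly the "$Q+\xi$" with $\xi\in\{0,1\}$ form claimed in \eqref{eq:22-queue-given-10}. So in this case the structural claim is immediate, and only the constant $\beta_{10}$ requires care.
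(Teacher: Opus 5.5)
\textbf{There is a genuine gap in the case $(1,0)$.} Your cases $(1,1)$, $(1,2)$, $(2,1)$ are fine and match the paper's site-by-site computation. The case $(1,0)$, however, is set up for the wrong event.

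By definition, $Q^{10}$ conditions on $d(1)=1$ and $d(2)=0$. So the first-processed site $2$ emits a hole, and site $1$ then emits a class-$1$ particle. You instead condition on $d(2)=1$ and let site $1$ emit a hole. That is the event $(d(1),d(2))=(0,1)$.

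Write $p_m:=p^{(q)}_{r_{12}}(m)$ with $p_{-1}=0$. Your computation gives $\mathbb P(Q_1=m,d(1)=0,d(2)=1)=\rho_1(1-\rho_2)\bigl[(1-\rho_1)p_m+\rho_1p_{m-1}\bigr]$. Its total mass is the $(0,1)$ table entry $\rho_1(1-\rho_2)$, so $Q^{01}\overset{d}{=}Q+\operatorname{Ber}(\rho_1)$, and no $\theta$ appears. You then propose dividing by the $(1,0)$ entry $\rho_1(1-\rho_2)(\theta+\phi-q)$, which mixes two different events. The resulting ``conditional law'' has total mass $1/(\theta+\phi-q)\neq1$ and Bernoulli weight $\rho_1/(\theta+\phi-q)\neq\beta_{10}$. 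So the final ``routine algebra'' cannot close.

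For the correct event, one has $\mathbb P(Q_2=m,d(2)=0)=(1-\rho_2)\bigl[(1-\rho_1)p_m+\rho_1p_{m-1}\bigr]$. The class-$1$ departure at site $1$ must then act on this mixture. The branch $A=S=1$ just multiplies by $\rho_1\rho_2$. The acceptance branch contributes $(1-\rho_1)\rho_2(1-q^{m+1})\bigl[(1-\rho_1)p_{m+1}+\rho_1p_m\bigr]$.

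Your shift identity handles $(1-q^{m+1})p_{m+1}=r_{12}p_m$. The term $(1-q^{m+1})p_m$, however, is not a multiple of a single $q$-Poisson mass. You need $(1-q^{m+1})p_m=(1-q)p_m+qr_{12}p_{m-1}$, which moves mass between the unshifted and shifted components. Collecting terms gives
\[
\mathbb P(Q_1=m,d(1)=1,d(2)=0)=\rho_1(1-\rho_2)\Bigl\{\bigl[(1-\rho_1)\theta+\phi-q\bigr]p_m+\rho_1\theta\,p_{m-1}\Bigr\}.
\]
Its total is the $(1,0)$ entry, and its normalized $p_{m-1}$-weight is exactly $\beta_{10}$.

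So your remark that the $Q+\xi$ form is ``immediate'' here is not correct. The mixture structure survives only because of this redistribution identity, and that identity is exactly where the factor $\theta$ in $\beta_{10}$ comes from.
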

\begin{proof}
By Theorem~\ref{thm:full-space-two-site-law}, the probabilities of the
conditioning events are known. It therefore suffices to compute the
joint probabilities
\[
\mathbb P(Q_1=m,d(1)=i,d(2)=j),
\qquad (i,j)\in\{(1,1),(1,2),(2,1),(1,0)\}.
\]
Write \(p_m=p_{r_{12}}^{(q)}(m)\), with \(p_{-1}=0\).

We first process site \(2\). An output of color \(1\) arises either
when an arrival uses the service immediately or when there is no
arrival and a waiting customer accepts the service. Color \(2\)
corresponds to an unused service, while color \(0\) means that no
service is available. Hence
\begin{align*}
\mathbb P(Q_2=m,d(2)=1)
&=\rho_1\rho_2p_m+(1-\rho_1)\rho_2(1-q^{m+1})p_{m+1},\\
\mathbb P(Q_2=m,d(2)=2)
&=(1-\rho_1)\rho_2q^mp_m,\\
\mathbb P(Q_2=m,d(2)=0)
&=(1-\rho_2)\bigl[(1-\rho_1)p_m+\rho_1p_{m-1}\bigr].
\end{align*}

We then apply the same queueing rule at site \(1\). In particular,
for the pair \((1,0)\), distinguishing whether an arrival occurs at
site \(1\) gives the following expression for
\(\mathbb P(Q_1=m,d(1)=1,d(2)=0)\):
\[
(1-\rho_2)\rho_2\Bigl\{\rho_1[(1-\rho_1)p_m+\rho_1p_{m-1}]
+(1-\rho_1)(1-q^{m+1})[(1-\rho_1)p_{m+1}+\rho_1p_m]\Bigr\}.
\]

To simplify these probabilities, we use
\[
(1-q^{m+1})p_v^{(q)}(m+1)=vp_v^{(q)}(m),
\qquad
q^mp_v^{(q)}(m)=(1-v)p_{qv}^{(q)}(m).
\]
Together with the definition of \(r_{12}\), these identities reduce
the probabilities of \(\{Q_2=m,d(2)=1\}\) and
\(\{Q_2=m,d(2)=2\}\) to \(\rho_1p_m\) and
\((\rho_2-\rho_1)p_{qr_{12}}^{(q)}(m)\), respectively.
Applying the same identities at site \(1\), and using
\[
(1-q^{m+1})p_m=(1-q)p_m+qr_{12}p_{m-1}
\]
for the pair \((1,0)\), we obtain
\begin{align*}
\mathbb P(Q_1=m,d(1)=1,d(2)=1)
&=\rho_1^2p_m,\\
\mathbb P(Q_1=m,d(1)=1,d(2)=2)
&=\rho_1(\rho_2-\rho_1)\theta\,p_{qr_{12}}^{(q)}(m),\\
\mathbb P(Q_1=m,d(1)=2,d(2)=1)
&=\rho_1(\rho_2-\rho_1)p_{qr_{12}}^{(q)}(m),\\
\mathbb P(Q_1=m,d(1)=1,d(2)=0)
&=\rho_1(1-\rho_2)\Bigl\{[(1-\rho_1)\theta+\phi-q]p_m+\rho_1\theta p_{m-1}\Bigr\}.
\end{align*}

Dividing by the corresponding probabilities in
Theorem~\ref{thm:full-space-two-site-law} proves the three
\(q\)-Poisson laws and gives
\[
\mathbb P(Q^{10}=m)=(1-\beta_{10})p_m+\beta_{10}p_{m-1}.
\]
This is the exactly the distribution of the sum of independent
\(q\text{-}\mathrm{Poi}(r_{12})\) and
\(\operatorname{Ber}(\beta_{10})\) random variables.
\end{proof}

\Needspace{14\baselineskip}
\subsection{Colored bulk vertex weights}
\label{subsec:colored-vertex-model}
We begin with the colored six-vertex model. We use the same priority order as for the particle system:
\[
    1\succ2\succ\cdots\succ n\succ0.
\]

The parameter $q$
is the same asymmetry parameter as in ASEP and is shared by all vertices. 
Each row and each column carries a nonzero complex parameter, called its \emph{rapidity}. 
At a crossing with column rapidity $x$ and row rapidity \(y\), the \emph{spectral parameter} 
$z$ is determined by its row and column rapidity $ z=\frac{x}{y}$ 
\begin{definition}
\label{def:elementary-colored-R}
Let $\mathcal C_n:=\{0,1,\ldots,n\}$. For \(i,j,k,\ell\in\mathcal C_n\), let
\(R_z(i,j;k,\ell)\) be the weight of a vertex whose incoming colors from
below and from the left are \(i,j\), and whose outgoing colors above and to
the right are \(k,\ell\).  If \(i=j\), then
\[
    R_z(i,i;i,i)=1.
\]
For \(i\succ j\), the remaining nonzero weights are
\begin{equation}
\label{eq:elementary-colored-R}
\begin{aligned}
R_z(i,j;i,j)&=\frac{q(1-z)}{1-qz},
&
R_z(i,j;j,i)&=\frac{1-q}{1-qz},
\\
R_z(j,i;j,i)&=\frac{1-z}{1-qz},
&
R_z(j,i;i,j)&=\frac{z(1-q)}{1-qz}.
\end{aligned}
\end{equation}
All other weights vanish.
\end{definition}

For every pair of incoming colors, these weights satisfy the sum-to-one property:
\[
    \sum_{k,\ell\in\mathcal C_n}R_z(i,j;k,\ell)=1.
\]

The integrability of the six-vertex model is expressed through the following Yang–Baxter equation.
\begin{proposition}
\label{prop:elementary-colored-ybe}
For any $i_1,i_2,i_3,k_1,k_2,k_3\in\mathcal C_n$,
\begin{align}
&\sum_{\gamma_1,\gamma_2,\gamma_3\in\mathcal C_n}
R_{u/v}(i_2,i_1;\gamma_2,\gamma_1)
R_{uw}(i_3,\gamma_1;\gamma_3,k_1)
R_{vw}(\gamma_3,\gamma_2;k_3,k_2)
\notag\\
&\qquad=\sum_{\gamma_1,\gamma_2,\gamma_3\in\mathcal C_n}
R_{vw}(i_3,i_2;\gamma_3,\gamma_2)
R_{uw}(\gamma_3,i_1;k_3,\gamma_1)
R_{u/v}(\gamma_2,\gamma_1;k_2,k_1).
\label{eq:elementary-colored-ybe}
\end{align}
\end{proposition}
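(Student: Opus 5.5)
The plan is to reduce the identity to an algebraic statement about Baxterized Hecke generators. One could check it by brute force, but the Hecke route is cleaner and explains why the three spectral parameters $u/v$, $uw$, $vw$ fit together.

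First, two preliminary reductions.
\begin{itemize}
\item Both sides vanish unless the multiset $\{i_1,i_2,i_3\}$ equals $\{k_1,k_2,k_3\}$, since every $R_z$ conserves colors.
\item The weights in Definition~\ref{def:elementary-colored-R} depend on the colors only through their relative order under $\succ$. So an order-preserving relabeling of colors maps each configuration bijectively onto another with identical weights.
\end{itemize}
Hence it suffices to treat $n\le 2$, that is, at most three distinct colors $1\succ2\succ0$. If all three incoming colors coincide, both sides equal $1$.

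Next, rewrite the vertex weights as an operator. Let $V=\mathbb C^{\mathcal C_n}$ and define $\check R_z$ on $V\otimes V$ by $\check R_z\,(e_i\otimes e_j)=\sum_{k,\ell}R_z(i,j;k,\ell)\,e_\ell\otimes e_k$, i.e.\ the $R$-matrix composed with the flip. Let $T$ be the standard Hecke generator on $V\otimes V$:
\begin{itemize}
\item $T$ acts as $1$ (up to the chosen normalization) on $e_i\otimes e_i$;
\item on each two-dimensional block $\{e_i\otimes e_j,\ e_j\otimes e_i\}$ with $i\succ j$, it is the usual $2\times2$ matrix with entries in $\{0,1,q,1-q\}$.
\end{itemize}
Reading off \eqref{eq:elementary-colored-R}, I expect
\[
\check R_z=\frac{(1-z)\,T+z(1-q)\,\mathrm{Id}}{1-qz},
\]
possibly with $T$ replaced by $T^{-1}$ or with $q\leftrightarrow q^{-1}$, depending on orientation. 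The first concrete step is to pin down this normalization by comparing the four off-diagonal weights and the weight $R_z(i,i;i,i)=1$.

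With $T_1=T\otimes\mathrm{Id}$ and $T_2=\mathrm{Id}\otimes T$ on $V^{\otimes3}$, the generators satisfy:
\begin{itemize}
\item the quadratic relation $(T-1)(T+q)=0$, or its variant under the chosen normalization;
\item the braid relation $T_1T_2T_1=T_2T_1T_2$.
\end{itemize}
Both are finite checks on at most three colors, and both are classical facts about the Jimbo/Hecke representation.

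The YBE \eqref{eq:elementary-colored-ybe} is then, after composing with permutations, the braid-form identity
\[
\check R^{(1)}_{a}\,\check R^{(2)}_{ab}\,\check R^{(1)}_{b}=\check R^{(2)}_{b}\,\check R^{(1)}_{ab}\,\check R^{(2)}_{a},
\]
with $a=u/v$ and $b=vw$. This assignment of parameters is consistent, since $(u/v)(vw)=uw$ is the middle spectral parameter. To prove it, expand both sides as polynomials in $T_1,T_2$ and compare coefficients:
\begin{itemize}
\item The terms $\mathrm{Id}$, $T_1$, $T_2$, $T_1T_2$, $T_2T_1$ must have matching coefficients. This reduces to the quadratic relation together with a rational-function identity in $a$ and $b$.
\item The cubic terms are handled by the braid relation.
\end{itemize}
This is the standard Baxterization argument (Jimbo; Kulish--Reshetikhin in the colored setting). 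The only real work is bookkeeping that the coefficients match exactly when the middle parameter is the product $ab$.

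The main obstacle is matching the paper's conventions: weights indexed as (bottom, left; top, right), the placement of $\gamma$'s in \eqref{eq:elementary-colored-ybe}, and which incoming pair $i\succ j$ versus $j\succ i$ receives the factor $z$. Getting this wrong would replace $ab$ by $a/b$. As a safeguard, I would directly verify the distinct-color case $\{i_1,i_2,i_3\}=\{1,2,0\}$ in one or two orderings; each side then has only a few nonzero terms. This cross-check fixes the dictionary and also independently confirms the Hecke-form derivation.
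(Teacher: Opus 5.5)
Your proposal is correct. The paper gives no proof of this identity: it records it as the standard Yang--Baxter equation of the stochastic colored six-vertex $R$-matrix. So your Baxterization argument is a genuinely self-contained alternative, and it goes through as planned.

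Your guessed normalization is exactly right, with no $T^{-1}$ or $q\leftrightarrow q^{-1}$ twist. For $i\succ j$ one has $T(e_i\otimes e_j)=(1-q)\,e_i\otimes e_j+q\,e_j\otimes e_i$, $T(e_j\otimes e_i)=e_i\otimes e_j$, and $T(e_i\otimes e_i)=e_i\otimes e_i$. Hence $T^2=(1-q)T+q$ and $\check R_z=\bigl((1-z)T+(1-q)z\bigr)/(1-qz)$.

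Now trace the diagram in your convention, where inputs are read as (bottom, left) and outputs as (right, top), so strands are ordered from lower right to upper left. With right-to-left composition, the left side of \eqref{eq:elementary-colored-ybe} is $\check R^{(2)}_b\check R^{(1)}_{ab}\check R^{(2)}_a$ and the right side is $\check R^{(1)}_a\check R^{(2)}_{ab}\check R^{(1)}_b$, with $a=u/v$ and $b=vw$. This is precisely the identity you wrote.

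The scalar prefactor $\bigl((1-qa)(1-qab)(1-qb)\bigr)^{-1}$ is common to both sides. After removing it:
\begin{itemize}
\item the cubic terms match by the braid relation;
\item the $T_1T_2$, $T_2T_1$ and constant terms match identically;
\item the coefficients of $T_1$ and of $T_2$ both reduce to the single condition below.
\end{itemize}
\[
\beta(ab)\bigl[(1-q)\alpha(a)\alpha(b)+\alpha(a)\beta(b)+\beta(a)\alpha(b)\bigr]=\alpha(ab)\beta(a)\beta(b),
\qquad \alpha(z)=1-z,\quad \beta(z)=(1-q)z .
\]
Both sides equal $(1-q)^2ab(1-ab)$. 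If the middle parameter is replaced by a general $c$, the same computation gives $c(1-ab)=ab(1-c)$, which holds exactly when $c=ab$. So the convention issue you flag cannot lead you astray once the paper's labels $u/v$, $uw$, $vw$ are used, and the brute-force cross-check is optional.

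Compared with simply importing the identity from the literature, your route proves it for all $n$ at once. The reduction to three colors is needed only to verify the Hecke relations on $V^{\otimes 3}$. It also explains why the spectral parameters compose multiplicatively. As a small bonus, it shows $\check R_1=\mathrm{Id}$, i.e.\ $R_1$ is the pure swap used in Proposition~\ref{prop:double-row-to-half-space-ASEP}.
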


Before giving its graphical representation, we introduce the conventions used throughout the paper.
\paragraph{Graphical conventions.}
\begin{itemize}
        \item Thin gray lines represent edges carrying a single color. 
        Thick gray lines represent fused edges, which may carry several arrows.
        \item The thick salmon lines carry a hole-indexed fused arrow introduced in Section~\ref{sec:colored-vertex-weights}.
        \item Black dots are boundary vertices.
        \item Rapidities and spin parameters are indicated next to the incoming edges. 
        At a boundary vertex, an incoming rapidity \(x\) becomes \(x^{-1}\) on the reflected edge.
        We omit rapidity and spin labels when their values are \(1\).
    \end{itemize}

\begin{figure}[htbp]
\centering
\begin{tikzpicture}[
    scale=.75,
    baseline={([yshift=-.5ex]current bounding box.center)}
]
  \draw[paper wire,->]
      (0,2)--(1,1.1)--(2,.2)--(4.35,.2);
  \draw[paper wire,->]
      (0,.2)--(1,1.1)--(2,2)--(4.35,2);
  \draw[paper wire,->]
      (3.4,-.3)--(3.4,2.5);
  \node[left=2pt]  at (0,2)       {\scriptsize $i_1$};
  \node[left=18pt] at (0,2)       {\scriptsize $u^{-1}$};
  \node[left=2pt]  at (0,.2)      {\scriptsize $i_2$};
  \node[left=18pt] at (0,.2)      {\scriptsize $v^{-1}$};
  \node[below=2pt] at (3.4,-.3)   {\scriptsize $i_3$};
  \node[below=16pt] at (3.4,-.3)  {\scriptsize $w$};
  \node[right] at (4.35,.2)   {\scriptsize $k_1$};
  \node[right] at (4.35,2)    {\scriptsize $k_2$};
  \node[above] at (3.4,2.5)   {\scriptsize $k_3$};
\end{tikzpicture}
\hspace{8mm}$=$\hspace{8mm}
\begin{tikzpicture}[
    scale=.75,
    baseline={([yshift=-.5ex]current bounding box.center)}
]
  \draw[paper wire,->]
      (0,2)--(2.4,2)--(3.3,1.1)--(4.35,.2);
  \draw[paper wire,->]
      (0,.2)--(2.4,.2)--(3.3,1.1)--(4.35,2);
  \draw[paper wire,->]
      (1,-.3)--(1,2.5);
  \node[left=2pt]  at (0,2)       {\scriptsize $i_1$};
  \node[left=18pt] at (0,2)       {\scriptsize $u^{-1}$};
  \node[left=2pt]  at (0,.2)      {\scriptsize $i_2$};
  \node[left=18pt] at (0,.2)      {\scriptsize $v^{-1}$};
  \node[below=2pt] at (1,-.3)     {\scriptsize $i_3$};
  \node[below=16pt] at (1,-.3)    {\scriptsize $w$};
  \node[right] at (4.35,.2)   {\scriptsize $k_1$};
  \node[right] at (4.35,2)    {\scriptsize $k_2$};
  \node[above] at (1,2.5)     {\scriptsize $k_3$};
\end{tikzpicture}
\caption{The Yang--Baxter equation.}
\label{fig:elementary-colored-ybe}
\end{figure}

The standard fusion procedure extends the colored six-vertex model by
allowing multiple arrows on each edge; see, for example,
\cite{MR3477349,MR3782413,MR3959822,MR4518478,MR5008991}.
An edge obtained by fusing \(N\) lines carries an arrow configuration
\(A=(A_1,\ldots,A_n)\in\mathbb Z_{\ge0}^n\), where \(A_i\) is the number
of arrows of color \(i\). We write \(|A|:=A_1+\cdots+A_n\le N\) and
set \(A_0:=N-|A|\), the number of holes. The configuration consisting
of a single arrow of color \(i\) is denoted by \(E_i\), while \(E_0\)
denotes the empty configuration:
\[
E_0:=0,\qquad (E_i)_j:=\mathbf1_{\{i=j\}},\qquad 1\le i,j\le n.
\]

Before fusion, the colors on the \(N\) individual lines form a word
\(\boldsymbol c=(c_1,\ldots,c_N)\in\mathcal C_n^N\). Its corresponding
arrow configuration is
\[
|\boldsymbol c|:=(|\boldsymbol c|_1,\ldots,|\boldsymbol c|_n),\qquad
|\boldsymbol c|_i:=\#\{r:c_r=i\},\qquad 1\le i\le n.
\]
Thus we use bold lowercase letters for words and uppercase letters
for arrow configurations.

To construct the fused weights, we average over incoming words with
a fixed arrow configuration according to a \(q\)-exchangeable law.
This law assigns each word a weight determined by its number of
inversions with respect to the priority order:
\[
\operatorname{inv}_{\succ}(\boldsymbol c)
:=\#\{(r,s):1\le r<s\le N,\ c_r\succ c_s\}.
\]
For a fixed arrow configuration \(A\), the normalizing constant is
the \(q\)-multinomial coefficient
\[
\begin{bmatrix}N\\A_0,A_1,\ldots,A_n\end{bmatrix}_{\!q}
:=\frac{(q;q)_N}{\prod_{i=0}^n(q;q)_{A_i}}.
\]
The \(q\)-exchangeable law on words with arrow configuration \(A\)
is therefore
\begin{equation}
\label{eq:colored-q-exchangeable-law}
\pi_{N,A}(\boldsymbol c)
:=\frac{q^{\operatorname{inv}_{\succ}(\boldsymbol c)}}
{\begin{bmatrix}N\\A_0,A_1,\ldots,A_n\end{bmatrix}_{\!q}}
\mathbf1_{\{|\boldsymbol c|=A\}}.
\end{equation}

Fix \(L,M\ge1\). Place \(L\) vertical lines and \(M\) horizontal lines in a rectangle (see Figure~\ref{fig:colored-bulk-fusion-array}). Let $x$ be the rapidity of the rightmost column and $y$ that of the top row, and set $z=x/y$ as above. The vertical rapidities, from left to right, are $q^{L-1}x,q^{L-2}x,\ldots,x,$ and the horizontal rapidities, from top to bottom, are $y,q^{-1}y,\ldots,q^{1-M}y.$  Thus the vertex in row $r$, counted from the top, and column $c$, counted from the left, has spectral parameter $q^{L+r-c-1}z.$

\begin{figure}[htbp]
\centering

\begin{subfigure}[t]{.48\textwidth}
\centering
\begin{tikzpicture}[
  x=.72cm,y=.72cm,
  every node/.style={font=\scriptsize}
]
  \foreach \x in {1,2,3,4,5}
    {\draw[paper wire,->] (\x,0)--(\x,5.7);}
  \foreach \y in {1,2,3,4,5}
    {\draw[paper wire,->] (0,\y)--(5.7,\y);}

  \node[below] at (1,0) {$a_1$};
  \node[below] at (2,0) {$a_2$};
  \node at (3,-.2) {$\cdots$};
  \node at (4,-.2) {$\cdots$};
  \node[below] at (5,0) {$a_L$};
  \node[below=6mm,xshift=-2mm] at (1,0) {$q^{L-1}x$};
  \node[below=6mm,xshift=2mm] at (2,0) {$q^{L-2}x$};
  \node[below=6mm,xshift=5mm] at (3.2,0) {$\cdots$};
  \node[below=6mm] at (5,0) {$x$};

  \node[left] at (0,5) {$b_1$};
  \node[left] at (0,4) {$b_2$};
  \node at (-.25,3) {$\vdots$};
  \node at (-.25,2) {$\vdots$};
  \node[left] at (0,1) {$b_M$};
  \node[left=6mm] at (0,5) {$y$};
  \node[left=6mm] at (0,4) {$q^{-1}y$};
  \node[left=6mm] at (0,2.5) {$\vdots$};
  \node[left=6mm] at (0,1) {$q^{1-M}y$};

  \node[above] at (1,5.7) {$c_1$};
  \node[above] at (2,5.7) {$c_2$};
  \node at (3,5.9) {$\cdots$};
  \node at (4,5.9) {$\cdots$};
  \node[above] at (5,5.7) {$c_L$};
  \node[right] at (5.7,5) {$d_1$};
  \node[right] at (5.7,4) {$d_2$};
  \node at (5.95,3) {$\vdots$};
  \node at (5.95,2) {$\vdots$};
  \node[right] at (5.7,1) {$d_M$};
\end{tikzpicture}
\caption{The vertex model for the bulk fusion.}
\label{fig:colored-bulk-fusion-array}
\end{subfigure}
\hfill
\begin{subfigure}[t]{.48\textwidth}
\centering
\begin{tikzpicture}[
  x=.72cm,y=.72cm,
  every node/.style={font=\scriptsize}
]
  \foreach \r in {1,...,5}{
    \draw[paper wire,->]
      (\r,0)--(\r,\r)
      node[paper boundary] {}
      --(5.7,\r);
  }

  \node[below] at (1,0) {$a_1$};
  \node[below] at (2,0) {$a_2$};
  \node at (3,-.2) {$\cdots$};
  \node at (4,-.2) {$\cdots$};
  \node[below] at (5,0) {$a_L$};
  \node[below=6mm,xshift=-2mm] at (1,0) {$q^{L-1}x$};
  \node[below=6mm,xshift=2mm] at (2,0) {$q^{L-2}x$};
  \node[below=6mm,xshift=5mm] at (3,0) {$\cdots$};
  \node[below=6mm] at (5,0) {$x$};

  \node[right] at (5.7,1) {$b_L$};
  \node[right] at (5.7,2) {$b_{L-1}$};
  \node[right] at (5.7,3) {$\vdots$};
  \node[right] at (5.7,4) {$\vdots$};
  \node[right] at (5.7,5) {$b_1$};
\end{tikzpicture}
\caption{The vertex model  for the boundary fusion.}
\label{fig:colored-boundary-fusion-array}
\end{subfigure}

\caption{The vertex model used in finite bulk and boundary fusion with explicit rapidities.}
\label{fig:finite-fusion-arrays}
\end{figure}

For boundary words
\(\boldsymbol a,\boldsymbol c\in\mathcal C_n^L\) and
\(\boldsymbol b,\boldsymbol d\in\mathcal C_n^M\), read in the orders
shown in Figure~\ref{fig:colored-bulk-fusion-array}, let
\[
\mathcal Z_z^{L,M}
(\boldsymbol a,\boldsymbol b;\boldsymbol c,\boldsymbol d)
\]
denote the partition function of the rectangle.

\begin{definition}
\label{def:finite-colored-bulk-fusion}
Let \(A,C\in\mathbb Z_{\ge0}^n\) satisfy \(|A|,|C|\le L\), and let
\(B,D\in\mathbb Z_{\ge0}^n\) satisfy \(|B|,|D|\le M\). Assume that
\[
A+B=C+D.
\]
Define
\begin{equation}
\label{eq:finite-colored-bulk-definition}
W_z^{L,M}(A,B;C,D)
:=
\sum_{\substack{
|\boldsymbol a|=A,\ |\boldsymbol b|=B\\
|\boldsymbol c|=C,\ |\boldsymbol d|=D}}
\pi_{L,A}(\boldsymbol a)\pi_{M,B}(\boldsymbol b)
\mathcal Z_z^{L,M}
(\boldsymbol a,\boldsymbol b;\boldsymbol c,\boldsymbol d).
\end{equation}
When \(A+B\ne C+D\), set \(W_z^{L,M}(A,B;C,D)=0\).
\end{definition}

The following identity shows that \(q\)-exchangeability is preserved
under fusion.

\begin{proposition}
\label{prop:finite-colored-bulk-q-exchangeability}
Let \(A,C\in\mathbb Z_{\ge0}^n\) satisfy \(|A|,|C|\le L\), and let
\(B,D\in\mathbb Z_{\ge0}^n\) satisfy \(|B|,|D|\le M\). For
\(\boldsymbol c\in\mathcal C_n^L\) and
\(\boldsymbol d\in\mathcal C_n^M\) with
\(|\boldsymbol c|=C\) and \(|\boldsymbol d|=D\), we have
\begin{equation}
\label{eq:finite-colored-bulk-q-exchangeability}
\sum_{\substack{|\boldsymbol a|=A\\|\boldsymbol b|=B}}
\pi_{L,A}(\boldsymbol a)\pi_{M,B}(\boldsymbol b)
\mathcal Z_z^{L,M}
(\boldsymbol a,\boldsymbol b;\boldsymbol c,\boldsymbol d)
=
W_z^{L,M}(A,B;C,D)
\pi_{L,C}(\boldsymbol c)\pi_{M,D}(\boldsymbol d).
\end{equation}
\end{proposition}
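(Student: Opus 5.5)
The plan is to reduce the statement to a local \(q\)-exchangeability property of a single elementary vertex, and then propagate it through the rectangle using the Yang--Baxter equation. It suffices to show that the left-hand side of \eqref{eq:finite-colored-bulk-q-exchangeability}, viewed as a function of \((\boldsymbol c,\boldsymbol d)\) with \(|\boldsymbol c|=C\), \(|\boldsymbol d|=D\), is proportional to \(q^{\operatorname{inv}_\succ(\boldsymbol c)}q^{\operatorname{inv}_\succ(\boldsymbol d)}\). Summing both sides over all such \(\boldsymbol c,\boldsymbol d\) then identifies the constant as \(W_z^{L,M}(A,B;C,D)\) by \eqref{eq:finite-colored-bulk-definition}. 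Since the \(q\)-exchangeable law on words with fixed content is characterized by the ratios under adjacent transpositions, it is enough to check the following: for an adjacent pair of positions \(s,s+1\) in \(\boldsymbol c\) (or in \(\boldsymbol d\)) carrying colors \(i\succ j\), swapping them multiplies the left-hand side by \(q^{\pm1}\) in the correct direction.

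The key tool is the elementary exchange relation at the special spectral parameter \(z=q\) (or \(q^{-1}\)). Two adjacent columns with rapidities \(q^{L-c}x\) and \(q^{L-c-1}x\) have ratio \(q\). At this value, \(R_q\) from \eqref{eq:elementary-colored-R} degenerates: \(R_q(i,j;i,j)=q(1-q)/(1-q^2)=q/(1+q)\), \(R_q(i,j;j,i)=1/(1+q)\), \(R_q(j,i;j,i)=1/(1+q)\) and \(R_q(j,i;i,j)=q/(1+q)\). Hence the row vector \(\pi\) with \(\pi(j,i)=1,\ \pi(i,j)=q\) (for \(i\succ j\)), i.e.\ the two-letter \(q\)-exchangeable law, is an eigenvector. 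More precisely, the \(q\)-symmetrizer \(P_s\), which replaces a pair by its \(q\)-exchangeable average, satisfies \(R_q\) acting as a (scaled) projector compatible with \(P_s\). I will formulate this as the identity \(\check R_{q}\,\cdot\) ``\(q\)-symmetrize'' \(=\) ``\(q\)-symmetrize'' on the relevant spaces.

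Next I insert an extra elementary vertex crossing columns \(s,s+1\) below the rectangle. An input \(\boldsymbol a\) drawn from \(\pi_{L,A}\) is invariant under this vertex, because \(\pi_{L,A}\) restricted to positions \(s,s+1\) is \(q\)-exchangeable. Using the Yang--Baxter equation (Proposition~\ref{prop:elementary-colored-ybe}) row by row, I slide the crossing through all \(M\) rows to the top of the rectangle. The rapidities are arranged so that YBE applies, since the spectral parameters in adjacent columns differ by the fixed factor \(q\). This shows that the distribution of \(\boldsymbol c\) (jointly with \(\boldsymbol d\)) is invariant under applying \(R_q\) on positions \(s,s+1\) of the output. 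Invariance of a two-letter distribution under \(R_q\) forces \(\mathbb P(\ldots i j\ldots)=q\,\mathbb P(\ldots j i\ldots)\) for \(i\succ j\); this follows by reading off the stationarity equation of the \(2\times2\) stochastic matrix above. The same argument with rows \(r,r+1\) (ratio \(q^{-1}y\) versus \(y\), again spectral ratio \(q\)) and a crossing inserted on the left edge, slid through the columns to the right edge, gives the statement for \(\boldsymbol d\).

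The main obstacle is bookkeeping of orientation conventions. I must confirm which of \(R_q\) or \(R_{q^{-1}}\) arises for each pair (columns are labeled with decreasing powers \(q^{L-c}\), rows with \(q^{1-r}\)), and that the chosen reading order of \(\boldsymbol c,\boldsymbol d\) makes the inversion direction match \(\pi_{L,C},\pi_{M,D}\) rather than its reverse. A secondary subtlety is that \(R_{q^{-1}}\) has a pole-free but non-stochastic form. If the orientation produces \(q^{-1}\), I would instead slide the crossing from top to bottom, using the inverse relation (unitarity \(R_zR_{1/z}=\mathrm{id}\)). Once the correct degenerate vertex is fixed, the remaining steps are routine.
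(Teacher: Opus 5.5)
The paper gives no proof of this proposition; it is quoted as part of the standard fusion procedure from the cited works. Your argument is the standard proof of that fact, and it goes through. The idea is that a crossing at the degenerate spectral parameter acts as a \(q\)-symmetrizer, and the Yang--Baxter equation pushes it through the rectangle.

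The orientation check you left open resolves in your favor, so the fallback is unnecessary.

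\emph{Columns \(s,s+1\).} The relevant form of the Yang--Baxter equation is Proposition~\ref{prop:elementary-colored-ybe} after a \(45^\circ\) rotation. In it, the line entering the crossing from the lower right is the ``below'' input and the line from the lower left is the ``left'' input. The spectral parameter is the ratio of their rapidities. The first line ends in column \(s\) (rapidity \(q^{L-s}x\)) and the second in column \(s+1\) (rapidity \(q^{L-s-1}x\)), so the parameter is exactly \(q\). The ``above'' output lands in position \(s\). Hence the crossing outputs \((c_s,c_{s+1})=(i,j)\), \(i\succ j\), with probability \(q/(1+q)\) whatever its input. This is precisely the orientation of \(\pi_{L,C}\), and of \(\pi_{L,A}\), which gives the invariance of the input.

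\emph{Rows \(r,r+1\).} Proposition~\ref{prop:elementary-colored-ybe} applies verbatim with \(v^{-1}=q^{1-r}y\) and \(u^{-1}=q^{-r}y\), so \(u/v=q\). The ``above'' output lands in row \(r\), matching \(\pi_{M,B}\) and \(\pi_{M,D}\) read top to bottom. This is exactly what the choice of rapidities with \(x_s/x_{s+1}=y_r/y_{r+1}=q\) is designed for.

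Two small corrections.
\begin{enumerate}
\item \(R_{q^{-1}}\) is not pole-free: every denominator \(1-qz\) vanishes at \(z=q^{-1}\). You never need it.
\item Sliding the crossing through the rectangle does not return the same rectangle. Afterwards columns \(s,s+1\) carry interchanged rapidities. So the Yang--Baxter equation gives \(P=P'R_q\), with \(P'\) the output law of the modified rectangle, rather than \(P=PR_q\) for the original one.
\end{enumerate}
The second point is harmless. The output of \(R_q\) on the two affected positions depends only on the multiset of its two input colors. Hence \(P'R_q\) is already \(q\)-exchangeable there, conditionally on everything else, and \(R_q^2=R_q\) then yields \(PR_q=P\). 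The same remark applies verbatim to the row argument.
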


The fused weight has an explicit expression in terms of $q$-Hahn weights.  For $U,V\in\mathbb Z_{\ge0}^n$, write $U\leq V$ for
coordinatewise inequality.  When $U\leq V$, set
\begin{equation}
\label{eq:reversed-color-Phi}
\Phi_q(U,V;x,y):=
\left(\frac yx\right)^{|U|}
\frac{(x;q)_{|U|}(y/x;q)_{|V|-|U|}}{(y;q)_{|V|}}
q^{\sum_{1\le i<j\le n}U_i(V_j-U_j)}
\prod_{i=1}^n
\begin{bmatrix}V_i\\U_i\end{bmatrix}_{\!q}.
\end{equation}
Set $\Phi_q(U,V;x,y)=0$ otherwise. 

\begin{proposition}
\label{prop:finite-colored-bulk-fusion}
\begin{align}
W_z^{L,M}(A,B;C,D)
&=\mathbf1_{\{A+B=C+D\}}
 \left(\frac{q^{1-M}}z\right)^{|D|-|B|}
 q^{M|A|-L|D|}
\sum_{0\le P\le B\wedge C}
\notag\\
&\quad
\Phi_q\left(
 C-P,C+D-P;
 \frac{q^{1-L}}z,\frac{q^{1-L-M}}z
\right)
\Phi_q\left(P,B;\frac zq,q^{-M}\right).
\label{eq:finite-colored-bulk-fusion}
\end{align}
Here \(0\le P\le B\wedge C\) means
\(0\le P_i\le\min(B_i,C_i)\) for every \(i\).  Moreover,
\[
    \sum_{C,D}W_z^{L,M}(A,B;C,D)=1.
\]
\end{proposition}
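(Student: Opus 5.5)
The proof has two parts. First we prove the explicit formula \eqref{eq:finite-colored-bulk-fusion} for the fused weight. Then we derive stochasticity from the fact that the unfused weights are stochastic and the $\pi$'s are probability measures.

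\emph{Stochasticity.} Fix $A,B$ and sum \eqref{eq:finite-colored-bulk-definition} over all $C,D$. This amounts to summing $\mathcal Z_z^{L,M}(\boldsymbol a,\boldsymbol b;\boldsymbol c,\boldsymbol d)$ over all output words $\boldsymbol c,\boldsymbol d$. Each vertex $R$ is stochastic, by the sum-to-one property after Definition~\ref{def:elementary-colored-R}. We can therefore sum out vertices one at a time, starting from the top-right corner: at each step the remaining region is again a staircase, and each of its output edges is fixed by vertices not yet summed. This gives $\sum_{\boldsymbol c,\boldsymbol d}\mathcal Z=1$ for every input. Since $\pi_{L,A}$ and $\pi_{M,B}$ are probability laws, we get $\sum_{C,D}W=1$. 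This step is routine, and it does not even need Proposition~\ref{prop:finite-colored-bulk-q-exchangeability}.

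\emph{Explicit formula.} We induct on $M$ with $L$ fixed. The base case is $M=1$: a single row of $L$ vertices with spectral parameters $q^{L-c}z$, $c=1,\dots,L$. Here we first compute the fused ``column'' weight $W^{L,1}_z(A,E_b;C,E_d)$ directly. By Proposition~\ref{prop:finite-colored-bulk-q-exchangeability}, we may sum only over $\boldsymbol a$ with the outputs fixed at any convenient $\boldsymbol c$; alternatively, we can use the known fusion of one row (the $q$-Hahn type $L$-matrix of \cite{MR3477349,MR4518478}). In this case $B,D$ are single-color vectors, so $P\in\{0,E_b\}$ when $b\ge1$. The two terms in \eqref{eq:finite-colored-bulk-fusion} then reduce to the standard expressions $\frac{(1-q^{C_d}\cdots)}{1-q^Lz}$ with the factors $q^{\sum_{i<d}C_i}$, etc., which we check against the six-vertex rules case by case.

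The induction step $M\to M+1$ uses the $q$-exchangeability of Proposition~\ref{prop:finite-colored-bulk-q-exchangeability}. The rectangle with $M+1$ rows splits as an $M$-row rectangle stacked with one row. The intermediate horizontal word between them is $q$-exchangeable given its content, so the fused weights compose: $W^{L,M+1}$ equals a sum over the intermediate configuration $C'$ of $W^{L,1}\cdot W^{L,M}$, with the row rapidities shifted, which produces $z\mapsto qz$ on one factor. For the horizontal edge we must additionally use that $\pi_{M+1,B}$ restricted to the first $M$ letters and the last letter factorizes in the $q$-exchangeable way. Namely, $\pi_{M+1,B}$ is the mixture over the last letter $e$, with the appropriate $q$-weight, of $\pi_{M,B-E_e}\otimes\delta_e$. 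Substituting this mixture and the induction hypothesis, we need to sum over $C'$ and $e$ and recognize the result as the right side of \eqref{eq:finite-colored-bulk-fusion} with $M+1$.

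This summation is the main obstacle. It is a colored $q$-Chu--Vandermonde type identity for products of $\Phi_q$. We plan to reduce it to the one-color identity $\sum_U\Phi_q(U,V;x,y)\Phi_q(W-U,\,\cdot\,;\cdot,\cdot)=\dots$ by noticing that $\Phi_q$ factorizes color by color: the $q$-binomials factor color by color, and the inversion factor $q^{\sum_{i<j}U_i(V_j-U_j)}$ can be absorbed by ordering the colors. Only the dependence on the total $|U|$ couples the colors, so the identity follows by applying the $q$-Vandermonde sum to the totals and the multinomial $q$-Vandermonde to the color splits. If this bookkeeping becomes unmanageable, the fallback is to cite the known fused formula from \cite{MR4518478} (Borodin--Wheeler), after matching our conventions (priority order, reversed colors, and rapidities $q^{L-c}x$, $q^{1-r}y$). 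The prefactor $(q^{1-M}/z)^{|D|-|B|}q^{M|A|-L|D|}$ accounts for exactly this change of normalization.
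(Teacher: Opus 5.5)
The paper gives no derivation of \eqref{eq:finite-colored-bulk-fusion}. It quotes the formula from Bosnjak--Mangazeev \cite{BosnjakMangazeev2016} and Borodin--Wheeler \cite[Section~2 and Appendix~C]{MR4518478}, adapted to its own conventions: the priority order, which fixes the inversion factor in \eqref{eq:reversed-color-Phi}, and the rapidities $q^{L+r-c-1}z$. Your fallback is therefore the paper's route.

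Your stochasticity argument is correct and complete. Your recursion is also sound. Split off the bottom row and write $\pi_{M+1,B}=\sum_e\lambda_e\,\pi_{M,B-E_e}\otimes\delta_e$, with $\lambda_e=q^{\sum_{i\succ e}B_i}(1-q^{B_e})/(1-q^{M+1})$ and $B_0=M+1-|B|$. Applying Proposition~\ref{prop:finite-colored-bulk-q-exchangeability} twice then gives
\[
W^{L,M+1}_z(A,B;C,D)=\sum_{e,d'}\lambda_e\,
W^{L,1}_{q^Mz}(A,E_e;C',E_{d'})\,
W^{L,M}_z(C',B-E_e;C,D-E_{d'}),\qquad C'=A+E_e-E_{d'}.
\]
The spectral shift is $z\mapsto q^Mz$ on the one-row factor. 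The shift $z\mapsto qz$ you mention appears on the $M$-row block only if you split off the top row, i.e.\ the first letter of $\boldsymbol b$.

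The genuine gap is the step you call the main obstacle: showing that this sum, with \eqref{eq:finite-colored-bulk-fusion} inserted in both factors, reproduces \eqref{eq:finite-colored-bulk-fusion} at $M+1$. The mechanism you propose does not work as stated.

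It is true that $\Phi_q(U,V;x,y)$ is a function of $(|U|,|V|)$ times the color-splitting weight $q^{\sum_{i<j}U_i(V_j-U_j)}\prod_i\begin{bmatrix}V_i\\U_i\end{bmatrix}_{\!q}$, and that this weight sums over $\{|U|=k\}$ to $\begin{bmatrix}|V|\\k\end{bmatrix}_{\!q}$. But that lets you sum colors out only against factors that depend on totals alone. In the recursion, the other factors do not:
\begin{itemize}
\item $W^{L,1}_{q^Mz}(A,E_k;A,E_k)$ carries $q^{A_{[1,k-1]}}(1-q^{A_k+M}z)$;
\item the color-changing one-row weights carry factors such as $q^{A_{[1,\ell-1]}}(1-q^{A_\ell})$;
\item $\lambda_e$ depends on $B_e$ and on $\sum_{i\succ e}B_i$;
\item both sides are sums over a \emph{colored} vector $P$ with $B,C,D$ fixed colored vectors, so neither side reduces to a sum over $|P|$.
\end{itemize}
What is needed is a genuinely colored $q$-hypergeometric identity relating the $P$-sums at $(B-E_e,D-E_{d'})$ to the one at $(B,D)$. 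That identity is the whole content of the fusion computation, and \emph{apply the $q$-Vandermonde sum to the totals and the multinomial $q$-Vandermonde to the color splits} does not supply it.

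The base case has the same problem in milder form. For $L>1$, the one-row weights of Figure~\ref{fig:colored-L-hor-families} are themselves a fusion result: they need an induction on $L$ using vertical $q$-exchangeability, or a citation. Checking them against the six-vertex rules case by case only compares two formulas and derives neither.

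As written, the only part of your proposal that establishes the explicit formula is the citation, which is what the paper does. A self-contained proof would need the colored identity proved in full.
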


Fix \(0<q<1\). For fixed arrow configurations \(A,B,C,D\), the
explicit formula \eqref{eq:finite-colored-bulk-fusion} depends
rationally on \(q^L\) and \(q^M\). We may therefore regard $s_1=q^L,s_2=q^M$ as independent complex
parameters associated with the vertical and horizontal edges,
respectively. This leads to the following definition.

\begin{definition}
\label{def:normal-continued-W}
For \(A,B,C,D\in\mathbb Z_{\ge0}^n\), define
\begin{align}
W_{z,s_1,s_2}(A,B;C,D)
&:=\mathbf1_{\{A+B=C+D\}}
 \left(\frac q{s_2z}\right)^{|D|-|B|}
 s_2^{|A|}s_1^{-|D|}
\sum_{0\le P\le B\wedge C}
\notag\\
&\quad
\Phi_q\left(C-P,C+D-P;
 \frac q{s_1z},\frac q{s_1s_2z}\right)
\Phi_q\left(P,B;\frac zq,s_2^{-1}\right).
\label{eq:normal-continued-W}
\end{align}
\end{definition}
Rational identities satisfied by the finite-spin weights extend
to these general parameters as well. Indeed, after clearing
denominators, such an identity becomes a polynomial identity in
\(s_1,s_2\), valid at \((s_1,s_2)=(q^L,q^M)\) for any \(L,M\). 
Since each variable independently takes
infinitely many distinct values, the identity holds identically.
Throughout this article, we refer to this argument as the
\emph{standard analytic continuation procedure} and use it without
repeating the justification.

As a special case of the previous definition, when the horizontal edge 
carries a single color and the vertical edge carries a normally indexed fused state, we write
\begin{equation}
\label{eq:def-colored-L-hor}
    L^{\mathrm{hor}}_{z,s}:=W_{z,s,q}.
\end{equation}
For \(A=(A_1,\ldots,A_n)\), write
\[
    |A|:=\sum_{r=1}^n A_r,
    \qquad
    A_{[k,\ell]}:=\sum_{r=k}^{\ell}A_r,
    \qquad
    A_{[k,\ell]}:=0\quad\text{if }k>\ell.
\]
Figure~\ref{fig:colored-L-hor-families} 
lists all six nonzero transition
families and their weights.  

\begin{figure}[htbp]
\centering
\begingroup
\newcommand{\HorConfig}[4]{%
\begin{tikzpicture}[baseline=(current bounding box.center)]
  \draw[gray!70,line width=3.8pt,line cap=round]
    (0,-.62)--(0,.62);
  \draw[gray!70,line width=1.5pt,line cap=round]
    (-.82,0)--(.82,0);
  \node[above=3pt] at (0,.62) {$#1$};
  \node[left=4pt]  at (-.82,0) {$#2$};
  \node[right=4pt] at (.82,0) {$#3$};
  \node[below=3pt] at (0,-.62) {$#4$};
\end{tikzpicture}}
\setlength{\tabcolsep}{0pt}
\setlength{\arrayrulewidth}{.35pt}
\renewcommand{\arraystretch}{1.15}
\begin{tabular}{|
  *{3}{>{\centering\arraybackslash}m{.31\textwidth}|}}
\hline
\rule[-1.12cm]{0pt}{2.30cm}\HorConfig{A}{0}{0}{A}
&
\HorConfig{A}{k}{k}{A}
&
\HorConfig{A_k^-}{0}{k}{A}
\\ \hline
\rule[-.45cm]{0pt}{1.10cm}
{\small\(\displaystyle\frac{q^{|A|}-sz}{1-sz}\)}
&
{\small\(\displaystyle
  \frac{q^{A_{[1,k-1]}}(1-q^{A_k}z)}{1-sz}\)}
&
{\small\(\displaystyle
  \frac{q^{A_{[1,k-1]}}(1-q^{A_k})}{1-sz}\)}
\\ \hline
\rule[-1.12cm]{0pt}{2.30cm}\HorConfig{A_k^+}{k}{0}{A}
&
\HorConfig{A_{k\ell}^{+-}}{k}{\ell}{A}
&
\HorConfig{A_{\ell k}^{+-}}{\ell}{k}{A}
\\ \hline
\rule[-.45cm]{0pt}{1.10cm}
{\small\(\displaystyle\frac{z(q^{|A|}-s)}{1-sz}\)}
&
{\small\(\displaystyle
  \frac{zq^{A_{[1,\ell-1]}}(1-q^{A_\ell})}{1-sz}\)}
&
{\small\(\displaystyle
  \frac{q^{A_{[1,k-1]}}(1-q^{A_k})}{1-sz}\)}
\\ \hline
\end{tabular}
\endgroup
\caption{The six nonzero transition families of
\(L^{\mathrm{hor}}_{z,s}\).  Here \(1\le k<\ell\le n\). $A_k^\pm:=A\pm E_k, A_{k\ell}^{+-}:=A+E_k-E_\ell.$}
\label{fig:colored-L-hor-families}
\end{figure}

The fully fused colored stochastic weights used here originate from the
stochastic $R$-matrix of Bosnjak-Mangazeev \cite{BosnjakMangazeev2016};
see Borodin-Wheeler \cite[Section~2 and Appendix~C]{MR4518478} for
their vertex-model formulation. 

\subsection{Colored boundary weights and finite fusion}
\label{subsec:finite-colored-boundary}
We begin with the unfused uncolored boundary weights of Garbali, de Gier, Mead, and Wheeler.

\begin{definition}[\cite{MR4924418}]
\label{def:basic-boundary-weights}
For $i,j\in\{0,1\}$, let $K_x^{a,c}(i;j)$ denote the boundary weight
with incoming state $i$ and outgoing reflected state $j$, where $a,c$
are boundary parameters. Set
\begin{equation}
\label{eq:def-h-ac}
    h_{a,c}(x):=\frac{ac(1-x^2)}{(a-x)(c-x)}.
\end{equation}
The four boundary weights are
\[
K_x^{a,c}(0;0)=1-h_{a,c}(x),
K_x^{a,c}(0;1)=h_{a,c}(x),
K_x^{a,c}(1;0)=-\frac{h_{a,c}(x)}{ac},
K_x^{a,c}(1;1)=1+\frac{h_{a,c}(x)}{ac}.
\]
\end{definition}

Throughout this paper, we use the creation-only specialization
$c\to\infty$. Write
\[
    K_x(i;j):=\lim_{c\to\infty}K_x^{a,c}(i;j).
\]
Taking this limit gives
\[
    K_x(0;0)=\kappa_0(x),\qquad
    K_x(0;1)=\kappa_1(x),\qquad
    K_x(1;0)=0,\qquad
    K_x(1;1)=1,
\]
where
\begin{equation}
\label{eq:kappa-boundary-functions}
    \kappa_0(x):=\frac{x(ax-1)}{a-x},
    \qquad
    \kappa_1(x):=\frac{a(1-x^2)}{a-x}.
\end{equation}

For the colored model, we allow every color other than $1$ to be replaced by color $1$. 

\begin{definition}
\label{def:elementary-colored-K}
For $i,j\in\mathcal C_n$, define
\begin{equation}
\label{eq:elementary-colored-K}
K_x(i;j)=
\begin{cases}
1, & i=j=1,\\[2pt]
\kappa_0(x), & i=j\ne1,\\[2pt]
\kappa_1(x), & i\ne1,\ j=1,\\[2pt]
0, & \text{otherwise}.
\end{cases}
\end{equation}
The boundary weights satisfy
\[
    \sum_{j\in\mathcal C_n}K_x(i;j)=1.
\]
\end{definition}

A half space analog of the Yang-Baxter equation is the follwoing reflection equation:

\begin{proposition}[Colored reflection equation]
\label{prop:elementary-colored-reflection}
For any $i,j,k,\ell\in\mathcal C_n$,
\begin{align}
&\sum_{\gamma_1,\gamma_2,\gamma_3,\gamma_4\in\mathcal C_n}
R_{x/y}(i,j;\gamma_1,\gamma_2)K_x(\gamma_1;\gamma_3)
R_{xy}(\gamma_2,\gamma_3;\gamma_4,k)K_y(\gamma_4;\ell)
\notag\\
&\qquad=
\sum_{\gamma_1,\gamma_2,\gamma_3,\gamma_4\in\mathcal C_n}
K_y(j;\gamma_1)R_{xy}(i,\gamma_1;\gamma_2,\gamma_3)
K_x(\gamma_2;\gamma_4)R_{x/y}(\gamma_3,\gamma_4;\ell,k).
\label{eq:elementary-colored-reflection}
\end{align}
\end{proposition}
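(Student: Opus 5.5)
The identity \eqref{eq:elementary-colored-reflection} is, for each fixed boundary data $(i,j;k,\ell)$, an identity between rational functions of $x,y$ (and $a,q$). The plan is to reduce it to a small finite list of color patterns and check those directly, using the uncolored creation-only reflection equation as a shortcut wherever possible.

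\textbf{Step 1: reduction to three colors.} Both $R_z$ and $K_x$ depend on colors only through the relative priority order. Moreover, $K_x$ treats all colors $\neq1$ identically and can only turn a color into $1$. Hence the multiset of colors on the two lines can only change by replacing some non-$1$ colors with $1$. Every configuration contributing to either side therefore uses only the colors in $\{1,i,j\}$. Relabeling order-preservingly, it suffices to take $n\le 3$ and inputs
\[
(i,j)\in\{1,a,b\}^2,\qquad 1\succ a\succ b,
\]
where in the $n=3$ case $b$ may be the hole $0$.

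\textbf{Step 2: the merged (uncolored) identity.} The $R$-weights are compatible with merging adjacent colors in the priority order: summing $R_z(i,j;k,\ell)$ over the refined outputs gives the weight for the merged colors. The boundary weights are also compatible with merging all non-$1$ colors into a single class. Merging $a$ and $b$ into one class ``non-$1$'' and renaming it $0$ produces exactly the two-state model with $K_x=\lim_{c\to\infty}K^{a,c}_x$.

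The uncolored reflection equation for $K^{a,c}_x$ is the result of \cite{MR4924418}. Since both sides are rational in $c$, it survives the limit $c\to\infty$. This settles every case in which the outputs $(k,\ell)$ are determined by the merged data:
\begin{itemize}
\item all cases with at most one non-$1$ color present, namely $i=j$, or one of $i,j$ equal to $1$;
\item the cases with $i\ne j$ both non-$1$ and the output containing at most one non-$1$ color.
\end{itemize}

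\textbf{Step 3: the genuinely colored case (main obstacle).} What remains is $\{i,j\}=\{a,b\}$ with $a\ne b$, both non-$1$, and output $\{k,\ell\}=\{a,b\}$ in some order. That is, no creation happened, but we must know which line carries which color. There are four such instances: $(i,j)\in\{(a,b),(b,a)\}$ times two output orders.

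Merging yields only the sum over the two output orders. It therefore suffices to verify one order per input pair; the other follows by subtraction from Step 2. For this I would compute both sides directly.

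\begin{itemize}
\item Since no $1$ is ever created along a contributing path, each $K$ factor along such a path is $\kappa_0(\cdot)$. Each side thus reduces to $\kappa_0(x)\kappa_0(y)$ times a combination of products of two $R$-weights.
\item On the left, the sum over internal states collapses to at most two terms, according to whether the first $R_{x/y}$ crosses or not. The right side is analogous.
\item The equality then becomes a six-vertex identity of the form
\[
R_{x/y}R_{xy}\text{-terms} = R_{xy}R_{x/y}\text{-terms}.
\]
This is exactly the statement that the diagonal ``uncolored'' two-color part of $K$ (proportional to the identity on non-$1$ colors) commutes appropriately with the two-color $R$-matrix. I would verify it by writing out the four weights from \eqref{eq:elementary-colored-R} and clearing the common denominator $(1-qx/y)(1-qxy)$.
\end{itemize}

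I expect this bookkeeping to be the only real work. The risk is mis-tracking which spectral parameter, $x/y$ or $xy$, sits at which crossing in the reflected geometry.
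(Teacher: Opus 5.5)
The paper states this identity without proof; it is used only as a known input for the continued identities in Section~3. Your argument therefore has to stand on its own, and it contains a gap in Step~2.

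Steps~1 and~3 are sound. In fact, the no-creation identity of Step~3 is $R_{x/y}PR_{xy}=R_{xy}PR_{x/y}$ on the two-dimensional sector spanned by $(a,b),(b,a)$, with $P$ the swap. This holds for arbitrary pairs of spectral parameters, so the bookkeeping risk you flag does not arise.

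\textbf{The gap.} Take input $\{i,j\}=\{a,b\}$ with $a\ne b$ both different from $1$. An output with exactly one non-$1$ color is \emph{not} determined by the merged data. Merging $a$ and $b$ records where the surviving non-$1$ color sits, but not whether it is $a$ or $b$.

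Write $D(k,\ell)$ for the difference of the two sides at output $(k,\ell)$. The uncolored identity then yields only
$D(1,a)+D(1,b)=0$ and $D(a,1)+D(b,1)=0$, and Step~3 never touches these outputs.

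These outputs are genuinely colored. For input $(a,b)$, the left side reaches $(1,a)$ only by a swap at the $R_{x/y}$ crossing followed by conversion of $b$ at $K_x$. It reaches $(1,b)$ only by a non-swap followed by conversion of $a$. So the two refined weights carry the different factors $(1-q)$ and $q(1-x/y)$, and their sum does not fix them.

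\textbf{The repair.} Use a second lumping: $\{1,a\}$ versus $\{b\}$.
\begin{itemize}
\item It is compatible with $R$ because $1$ and $a$ are adjacent in the priority order.
\item It is compatible with $K$ because both $1$ and $a$ are sent into $\{1,a\}$ with total weight $1$ (note $\kappa_0+\kappa_1=1$), while $b$ goes to $b$ with weight $\kappa_0$ and into $\{1,a\}$ with weight $\kappa_1$.
\end{itemize}
The merged model is again the creation-only two-state model, so the uncolored reflection equation gives $D(1,b)+D(a,b)=0$ and $D(b,1)+D(b,a)=0$. Step~3 gives $D(a,b)=D(b,a)=0$, which forces $D(1,b)=D(b,1)=0$. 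The first lumping then gives $D(1,a)=D(a,1)=0$.

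Alternatively, you can check the one-creation outputs by hand. For example, the case $(i,j)=(a,b)\to(k,\ell)=(1,a)$ reduces, after inserting $\kappa_0,\kappa_1$ (with $a$ now the boundary parameter), to the polynomial identity $(ax-1)(1-xy)+(y-x)(a-x)=(1-x^2)(ay-1)$.
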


\begin{figure}[htbp]
\centering
\begin{tikzpicture}[
    scale=.75,
    baseline={([yshift=-.5ex]current bounding box.center)}
]
  \draw[paper wire]
      (4,1)--(3,1)--(1,0)--(0,.5);
  \draw[paper wire,->]
      (0,.5)--(3,2)--(4,2);
  \draw[paper wire]
      (4,0)--(3,0)--(0,2.5);
  \draw[paper wire,->]
      (0,2.5)--(1,3)--(4,3);
  \node[paper boundary] at (0,.5)  {};
  \node[paper boundary] at (0,2.5) {};
  \node[right=2pt] at (4,1) {$i$};
  \node[right=17pt] at (4,1) {$x$};
  \node[right=2pt] at (4,0) {$j$};
  \node[right=17pt] at (4,0) {$y$};
  \node[right] at (4,2) {$k$};
  \node[right] at (4,3) {$\ell$};
\end{tikzpicture}
\hspace{8mm}$=$\hspace{8mm}
\begin{tikzpicture}[
    scale=.75,
    baseline={([yshift=-.5ex]current bounding box.center)}
]
  \draw[paper wire]
      (4,1)--(3,1)--(0,2.5);
  \draw[paper wire,->]
      (0,2.5)--(1,3)--(3,2)--(4,2);
  \draw[paper wire]
      (4,0)--(1,0)--(0,.5);
  \draw[paper wire,->]
      (0,.5)--(3,3)--(4,3);
  \node[paper boundary] at (0,.5)  {};
  \node[paper boundary] at (0,2.5) {};
  \node[right=2pt] at (4,1) {$i$};
  \node[right=17pt] at (4,1) {$x$};
  \node[right=2pt] at (4,0) {$j$};
  \node[right=17pt] at (4,0) {$y$};
  \node[right] at (4,2) {$k$};
  \node[right] at (4,3) {$\ell$};
\end{tikzpicture}
\caption{The reflection equation.}
\label{fig:elementary-colored-reflection}
\end{figure}

Fix \(L\ge1\). Place \(L\) vertical lines with rapidities \(q^{L-1}x,q^{L-2}x,\ldots,x\), 
from left to right. Each line is reflected to the right at the diagonal boundary, 
where its rapidity is inverted. The \(r\)th line carries boundary weight \(K_{q^{L-r}x}\), 
and its crossing with a reflected line \(s<r\) carries bulk weight \(R_{q^{2L-r-s}x^2}\), 
following our column-to-row rapidity convention.

Let \(\mathcal K_x^L(\boldsymbol a;\boldsymbol b)\) denote the partition
function of Figure~\ref{fig:colored-boundary-fusion-array}, where
\(\boldsymbol a=(a_1,\ldots,a_L)\) is read from left to right along the
bottom and \(\boldsymbol b=(b_1,\ldots,b_L)\) is read from top to bottom
on the right.

\begin{definition}
\label{def:finite-colored-boundary-fusion}
For \(A,B\in\mathbb Z_{\ge0}^n\), with \(|A|,|B|\le L\), define
\begin{equation}
\label{eq:finite-colored-boundary-definition}
K_x^L(A;B):=
\sum_{(|\boldsymbol a|,|\boldsymbol b|)=(A,B)}
\pi_{L,A}(\boldsymbol a)
\mathcal K_x^L(\boldsymbol a;\boldsymbol b).
\end{equation}
\end{definition}

As in bulk fusion, conditioning on the outgoing arrow configuration
$B$ leaves a $q$-exchangeable word.

\begin{proposition}
\label{prop:finite-colored-boundary-q-exchangeability}
For arrow configurations \(A,B\) with \(|A|,|B|\leq L\) and every
outgoing word \(\boldsymbol b\) with \(|\boldsymbol b|=B\), we have
\begin{equation}
\label{eq:finite-colored-boundary-q-exchangeability}
\sum_{|\boldsymbol a|=A}
\pi_{L,A}(\boldsymbol a)
\mathcal K_x^L(\boldsymbol a;\boldsymbol b)
=
K_x^L(A;B)\pi_{L,B}(\boldsymbol b).
\end{equation}
The same $q$-exchangeability property allows us to fuse the $L$ incoming
lines and their reflected lines. The resulting boundary vertex is
described by the numbers of incoming and outgoing arrows of each color.
Its weight is $K_x^L(A;B)$, where $A$ and $B$ are the incoming and
outgoing arrow configurations, respectively.
\end{proposition}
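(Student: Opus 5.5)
The plan is to follow the proof of Proposition~\ref{prop:finite-colored-bulk-q-exchangeability} for bulk fusion, using the reflection equation (Proposition~\ref{prop:elementary-colored-reflection}) in place of the Yang--Baxter equation. The claim is that, conditional on the outgoing configuration $B$, the outgoing word $\boldsymbol b$ is $q$-exchangeable, i.e.\ distributed as $\pi_{L,B}$. Since $\pi_{L,B}$ is characterized by its ratios under adjacent transpositions, it suffices to check one local relation for each $r=1,\ldots,L-1$. Let $s_r$ swap $b_r$ and $b_{r+1}$, and suppose $b_r\succ b_{r+1}$. Write
\[
F(\boldsymbol b):=\sum_{|\boldsymbol a|=A}\pi_{L,A}(\boldsymbol a)\mathcal K_x^L(\boldsymbol a;\boldsymbol b).
\]
The local relation to establish is
\[
F(\boldsymbol b)=q\,F(s_r\boldsymbol b).
\]
Given this for all $r$ and all $\boldsymbol b$, $F$ is proportional to $q^{\operatorname{inv}_\succ(\boldsymbol b)}$ on words with $|\boldsymbol b|=B$. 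Summing over such $\boldsymbol b$ then identifies the constant as $K_x^L(A;B)$, which gives \eqref{eq:finite-colored-boundary-q-exchangeability}.

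\textbf{Step 1 (input symmetrization).} On the input side, I would use the standard fact that the $q$-exchangeable law is an eigenvector of the $R$-matrix at the special spectral parameter. Take two adjacent vertical lines $r,r+1$ with rapidities $q^{L-r}x$ and $q^{L-r-1}x$, so their ratio is $q$. At $z=q^{\pm1}$ the weights \eqref{eq:elementary-colored-R} degenerate: one of $R_z(i,j;i,j)$ or $R_z(j,i;j,i)$ vanishes. Consequently $\pi_{L,A}$ is invariant under attaching an extra cross $R_{q^{\pm1}}$ below the bottom boundary at positions $r,r+1$. Concretely, the pair $(a_r,a_{r+1})$ with $a_r\prec a_{r+1}$ is sent to the sorted pair with probability one, and the sorted pair is sent, with weights $q/(1+q)$ and $1/(1+q)$, in a way consistent with the ratio $q$ in $\pi_{L,A}$. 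This is the same lemma used in the bulk case, and I would take it as known from the bulk fusion argument.

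\textbf{Step 2 (moving the cross through the diagram).} I would insert this extra cross and push it through the boundary partition function of Figure~\ref{fig:colored-boundary-fusion-array}. Lines $r$ and $r+1$ first cross the reflected parts of lines $s<r$; there the cross commutes by the Yang--Baxter equation \eqref{eq:elementary-colored-ybe}, and its spectral parameter is unchanged because only ratios of rapidities enter. Next, lines $r$ and $r+1$ reach the boundary, cross each other once in reflected form (weight $R_{q^{2L-2r-1}x^2}$), and reflect. At that point the reflection equation \eqref{eq:elementary-colored-reflection}, with $x\to q^{L-r}x$ and $y\to q^{L-r-1}x$, moves the $R_{x/y}$ cross from the incoming side to the reflected side. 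Reflection inverts rapidities, so on the reflected lines the cross has parameter $q^{-1}$ or $q$, with the order of the two lines reversed. Finally, further Yang--Baxter moves carry it past the reflected crossings with lines $s>r+1$, until it emerges at the right boundary acting on $(b_r,b_{r+1})$. The result is an identity $F(\boldsymbol b)=\sum_{\boldsymbol b'}F(\boldsymbol b')R_{q^{\mp1}}(b'\to\boldsymbol b)$. Because the degenerate $R$-matrix is triangular in the way described in Step 1, this identity is precisely the relation $F(\boldsymbol b)=qF(s_r\boldsymbol b)$, with the trivial case $b_r=b_{r+1}$ handled separately.

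\textbf{Main obstacle.} I expect Step 2 to be the hardest part, specifically getting the geometry and spectral parameters exactly right. One must confirm that the triangular staircase with the stated rapidities is arranged so that the Yang--Baxter moves apply with matching parameters $q^{2L-r-s}x^2$. One must also confirm that the reflection equation is used in the correct orientation, so that the emerging cross on the right is the degenerate $R_{q^{-1}}$ rather than $R_q$; the wrong orientation would yield the factor $q^{-1}$. I would first verify the case $L=2$ by direct computation with $K_x$ from \eqref{eq:elementary-colored-K}, then carry out the general induction. The final sentence of the proposition, the fused boundary vertex with weight $K_x^L(A;B)$, then follows by summing over $\boldsymbol b$ and applying the same conditioning argument as in the bulk case.
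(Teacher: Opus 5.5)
The paper gives no proof of this proposition; it is recalled as known (cf.\ Remark~\ref{rem:finite-boundary-literature}). Your skeleton is the standard argument: insert a crossing of lines $r,r+1$ below the bottom boundary, move it through the staircase of Figure~\ref{fig:colored-boundary-fusion-array} by Yang--Baxter and the reflection equation, and read off exchangeability on the right. However, the local fact you build on is wrong, and the final deduction fails as written.

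In the stochastic normalization \eqref{eq:elementary-colored-R} there is no triangular degeneration at $z=q^{\pm1}$. At $z=q$ one has $R_q(i,j;i,j)=R_q(j,i;i,j)=q/(1+q)$ and $R_q(i,j;j,i)=R_q(j,i;j,i)=1/(1+q)$. So for inputs $\{i\succ j\}$ in either order the top output is $i$ with probability $q/(1+q)$, i.e.\ $R_q$ has identical rows. At $z=q^{-1}$ the weights have a pole ($1-qz=0$), and the only zeros are at $z=1$.

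Moreover, the identity produced by the move is not $F=F\,R$. After the crossing has passed through, the partition function in between has the rapidities of lines $r$ and $r+1$ interchanged, so their boundary weights and crossing parameters change. What you actually get is $F=\widetilde F\,R$, with $\widetilde F$ the output law of a different model. From that, neither an invariance argument nor triangularity yields $F(\boldsymbol b)=qF(s_r\boldsymbol b)$. Note also that the $q$-exchangeable pair law is stationary for $R_z$ at every $z$, so ``eigenvector'' is not the special feature at $z=q$.

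What makes the argument work is exactly that $R_q$ is the rank-one projector onto the $q$-exchangeable two-letter law.
\begin{itemize}
\item \emph{Input side.} The right-hand line plays the column role, so the inserted crossing has parameter $q^{L-r}x/q^{L-r-1}x=q$. It puts the stronger color on the left with probability $q/(1+q)$, which is the conditional law of $(a_r,a_{r+1})$ under $\pi_{L,A}$; inserting it therefore changes nothing.
\item \emph{Reflection.} Both sides of \eqref{eq:elementary-colored-reflection} carry $R_{x/y}$ with the same parameter, here $x/y=q$. So the crossing emerges as $R_q$, not $R_{q^{-1}}$.
\item \emph{Output side.} For two rightward reflected lines the lower one plays the column role. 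Hence the emerging projector puts the stronger color at the upper height with probability $q/(1+q)$, whatever $\widetilde F$ is.
\end{itemize}
Since $\boldsymbol b$ is read top to bottom (lines $r,r+1$ exit as $b_{L+1-r},b_{L-r}$), this is the conditional law under $\pi_{L,B}$ on that pair, which is your local relation. With this replacement the rest of your argument goes through, including the identification of the constant as $K_x^L(A;B)$.
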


To state the explicit weight, put
\[
    A_0:=L-|A|,
    \qquad
    B_0:=L-|B|,
    \qquad
    T:=L-B_1=B_0+\sum_{i=2}^nB_i,
\]
and
\begin{equation}
\label{eq:finite-boundary-Xi}
    \Xi(A,B)
    :=\sum_{2\le i<j\le n}(A_i-B_i)B_j
      +\sum_{i=2}^n(A_i-B_i)B_0.
\end{equation}

\begin{proposition}
\label{prop:finite-colored-boundary-fusion}
The weight vanishes unless \(B_r\le A_r\) for every
\(r\in\{0,2,\ldots,n\}\).  On this support,
\begin{equation}
\label{eq:finite-colored-boundary-fusion}
K_x^L(A;B)=
q^{A_1T+\binom T2+\Xi(A,B)}
\left(-\frac xa\right)^T
\frac{(ax;q)_T(q^{A_1+T}x^2;q)_{L-A_1-T}}
     {(q^{A_1}x/a;q)_{L-A_1}}
\prod_{r\in\{0,2,\ldots,n\}}
\begin{bmatrix}A_r\\B_r\end{bmatrix}_{\!q}.
\end{equation}
Moreover, the sum to one property is satisfied
\[
    \sum_BK_x^L(A;B)=1.
\]
\end{proposition}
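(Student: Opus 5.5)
Since $T$ counts only $B_0$ and $B_2,\ldots,B_n$, while the $L$ boundary lines can only convert non-$1$ colors into color $1$, I would prove the formula by induction on $L$, peeling off one line at a time from the fused boundary partition function.

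\textbf{Support.} Look at the single-line boundary weight $K_x$ of Definition~\ref{def:elementary-colored-K}. It never destroys color $1$ and never creates a color other than $1$. The bulk $R$-weights of Definition~\ref{def:elementary-colored-R} only permute colors. Hence the multiset of outgoing colors is obtained from the incoming one by turning some non-$1$ entries into $1$. This gives $B_r\le A_r$ for every $r\in\{0,2,\ldots,n\}$ and $B_1=A_1+\sum_{r\ne1}(A_r-B_r)$.

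\textbf{Base case.} For $L=1$, formula \eqref{eq:finite-colored-boundary-fusion} should reduce directly to $\kappa_0,\kappa_1$ and $1$.

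\textbf{Inductive step.} I would use Proposition~\ref{prop:finite-colored-boundary-q-exchangeability} to fix a convenient order of the incoming word, say with the $q$-exchangeable randomness moved into one distinguished line. I then decompose the size-$L$ triangle in Figure~\ref{fig:colored-boundary-fusion-array} as three pieces:
\begin{itemize}
\item the triangle of size $L-1$ on the lines with rapidities $q^{L-2}x,\ldots,x$, whose fused weight is $K^{L-1}_x$ by induction;
\item the leftmost line with rapidity $q^{L-1}x$, which reflects at $K_{q^{L-1}x}$;
\item the crossings of that reflected leftmost line with the other $L-1$ reflected lines.
\end{itemize}
After fusing the $L-1$ lines, the column of crossings is one horizontal line crossing a fused vertical edge of spin $q^{L-1}$ with spectral parameter $q^{L-1}x^2$. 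Its weight is $L^{\mathrm{hor}}$ from \eqref{eq:def-colored-L-hor}, with the weights listed in Figure~\ref{fig:colored-L-hor-families}.

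This produces a recursion of the schematic form
\[
K^L_x(A;B)=\sum_{c,e}\pi(\text{first letter}=c)\,K_{q^{L-1}x}(c;e)\sum_{B'}K^{L-1}_{x}(A-E_c;B')\,L^{\mathrm{hor}}(B',e;\cdots;B)\,\cdots .
\]
I would then check that the claimed product satisfies it. The check would be done by cases on the color $e$ of the reflected line:
\begin{itemize}
\item $e=1$: then $T$ is unchanged.
\item $e\neq1$: then $T$ increases by one.
\end{itemize}
Within each case, the sum over $B'$ has only finitely many terms. The pieces should collapse by the $q$-Pascal identity and by telescoping of the Pochhammer factors $(ax;q)_T$ and $(q^{A_1+T}x^2;q)_{L-A_1-T}$.

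\textbf{Alternative route.} Because the boundary only creates color $1$, every color in $\{0,2,\ldots,n\}$ behaves like a ``hole'' relative to $1$. So a natural simplification is to first merge all non-$1$ colors and verify the uncolored (two-state) case. That case should be the known fused creation-only boundary weight from \cite{MR4924418} with $c\to\infty$. I would then recover the colored refinement by $q$-exchangeability. Conditional on the set of positions of non-$1$ colors that survive, their relative order is $q$-exchangeable, and the surviving colors are selected among $A_0,A_2,\ldots,A_n$. This should produce $\prod_r\binom{A_r}{B_r}_q$ together with the inversion factor $q^{\Xi(A,B)}$, since $\Xi$ counts inversions between removed letters of color $i$ and surviving letters of weaker colors $j\prec i$.

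\textbf{Sum to one.} This follows from stochasticity of each elementary weight: $K$ and $R$ both sum to one, so the partition function sums to one over output words.

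\textbf{Main obstacle.} I expect the hard step to be the uncolored fused formula, i.e.\ pinning down the exact prefactor $q^{A_1T+\binom T2}(-x/a)^T$ and the ratio of Pochhammers. I would try to take it from \cite{MR4924418} in the limit $c\to\infty$; failing that, I would verify it by the one-line recursion above, which in the uncolored case is a two-term identity.
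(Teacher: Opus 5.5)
Your support, base-case and sum-to-one arguments are fine. For context, the paper itself gives no derivation: it identifies the formula with He's $t=0$ colored fused boundary weight \cite[Theorem~6.5]{MR5008991}. The genuine problem is the inductive step, which is where the formula would actually be proved: it uses a wrong decomposition of the triangle.

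In Figure~\ref{fig:colored-boundary-fusion-array} the leftmost line reflects lowest. Its reflected segment crosses the \emph{incoming} vertical segments of lines $2,\ldots,L$, before those lines reach their own boundary vertices; reflected lines are parallel and never cross. So the crossing row acts on the fused input $A-E_c$, and its output is what enters the smaller triangle:
\[
K^L_x(A;B)=\sum_{c,e,f}\pi_{L,A}(a_1=c)\,K_{q^{L-1}x}(c;e)\,L^{\mathrm{hor}}_{q^{L-1}x^2,\,q^{L-1}}(A-E_c,e;A'',f)\,K^{L-1}_x(A'';B-E_f).
\]
Here $A''=A-E_c+E_e-E_f$ and $\pi_{L,A}(a_1=c)=q^{\sum_{d\prec c}A_d}(1-q^{A_c})/(1-q^L)$. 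This uses two facts: $(a_2,\ldots,a_L)$ given $a_1=c$ is $q$-exchangeable with composition $A-E_c$, and Proposition~\ref{prop:finite-colored-bulk-q-exchangeability} with $M=1$ lets you feed the crossing output into $K^{L-1}_x$.

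Your recursion instead applies $K^{L-1}_x$ directly to $A-E_c$ and crosses the reflected line with its output $B'$. It is false already at $L=2$. For the input word $(0,0)$ and $T=0$ it gives $\kappa_1(qx)\kappa_1(x)$. The partition function and \eqref{eq:finite-colored-boundary-fusion} both give $\kappa_1(qx)\kappa_1(x)\frac{1-qx^2}{1-q^2x^2}$; the extra factor is the pass-through weight $R_{qx^2}(0,1;0,1)$. A recursion in which the crossing acts on the smaller triangle's output does exist if you peel off the \emph{rightmost} line instead: condition on $a_L$, apply $K^{L-1}_{qx}$ on lines $1,\ldots,L-1$, let the spin-one line $L$ cross the fused reflected edge at spectral parameter $qx^2$, then reflect with $K_x$. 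Even with a corrected recursion, matching the product formula over all $c,e,f$ and the six families of Figure~\ref{fig:colored-L-hor-families} is a genuine $q$-identity computation that your proposal only asserts.

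The alternative route does not bypass this. Merging all non-$1$ colors is legitimate and gives the uncolored law of $T$. But dividing \eqref{eq:finite-colored-boundary-fusion} by its merged version shows that the whole colored content is a single claim. Given $T$, the composition of the surviving non-$1$ letters must have law $q^{\Xi(A,B)}\prod_{r\ne1}\binom{A_r}{B_r}_q\big/\binom{L-A_1}{T}_q$, i.e.\ that of the last $T$ letters of a $q$-exchangeable word with composition $(A_r)_{r\ne1}$.

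This is not a consequence of $q$-exchangeability. Proposition~\ref{prop:finite-colored-boundary-q-exchangeability} only controls the arrangement of letters at fixed composition. Which letters get converted to $1$ depends on the line they occupy at reflection, after they have been reshuffled by bulk vertices whose weights depend on the color order. Further color projections only produce marginals (e.g.\ merging $\{1,\ldots,n\}$ gives the law of $B_0$), not the joint law of $(B_0,B_2,\ldots,B_n)$. So you need either the corrected recursion carried out in the colored setting, or, as the paper does, He's colored formula after translating conventions.
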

\begin{remark}
\label{rem:finite-boundary-literature}
After translating the color and parameter conventions, this is He's
$t=0$ fused boundary weight \cite[Theorem~6.5]{MR5008991}.
\end{remark}

\subsection{Vertex model realization of Martin's construction}
\label{subsec:martin-yang-baxter}

Aggarwal, Nicoletti, and Petrov obtained Martin's construction from
the colored fused vertex model; see \cite[Sections~2.4 and~7.1]{MR4968006}.
An important notion in their work is the following \emph{queue specialization}.

\begin{lemma}
\label{lem:queue-specialization}
Assume \(A_r=A'_r=0\) for \(r>m\).
Fix the coordinates of \(A,A'\) away from \(m\), and let
their \(m\)th coordinates be finite during the limiting procedure.  For
\(i,j\in\{0,\ldots,m\}\), impose color conservation in the form
\(
A'_m=A_m+\mathbf1_{\{i=m\}}-\mathbf1_{\{j=m\}}
\).
Then, as \(A_m\to\infty\), both vertical states contain infinitely many
arrows of color \(m\), and the limit
\begin{equation}
\mathbb W^{(m)}_{z,s}(A,i;A',j)
:=\lim_{A_m\to\infty}
L^{\mathrm{hor}}_{z,s}(A,i;A',j)
\label{eq:def-queue-L}
\end{equation}
exists.  All nonzero values belong to the following six families:
\begingroup
\small
\begin{equation}
\label{eq:queue-L-families}
\begin{alignedat}{2}
\mathbb W^{(m)}_{z,s}(A,0;A,0)
 &=\frac{-sz}{1-sz},
&\qquad
\mathbb W^{(m)}_{z,s}(A,k;A+E_k,0)
 &=\frac{-sz}{1-sz},
\\[2mm]
\mathbb W^{(m)}_{z,s}(A,k;A,k)
 &=\frac{q^{A_{[1,k-1]}}(1-zq^{A_k})}{1-sz},
&
\mathbb W^{(m)}_{z,s}(A,0;A-E_k,k)
 &=\frac{q^{A_{[1,k-1]}}(1-q^{A_k})}{1-sz},
\\[2mm]
\mathbb W^{(m)}_{z,s}
 (A,k;A+E_k-E_\ell,\ell)
 &=\frac{zq^{A_{[1,\ell-1]}}(1-q^{A_\ell})}{1-sz},
&
\mathbb W^{(m)}_{z,s}
 (A,\ell;A+E_\ell-E_k,k)
 &=\frac{q^{A_{[1,k-1]}}(1-q^{A_k})}{1-sz}.
\end{alignedat}
\end{equation}
\endgroup
Here \(1\leq k<\ell\leq m\).  For each incoming state the weights sum to one. 
\end{lemma}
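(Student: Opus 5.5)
The limit can be taken entry by entry in Figure~\ref{fig:colored-L-hor-families}, using $L^{\mathrm{hor}}_{z,s}=W_{z,s,q}$ from \eqref{eq:def-colored-L-hor}. Since the horizontal edge carries a single color, color conservation fixes $A'$ once $(A,i,j)$ is given. So for each incoming pair $(A,i)$ there are only finitely many candidate outputs, and the six families of Figure~\ref{fig:colored-L-hor-families} list all of them. With $0\le q<1$ fixed, we send $A_m\to\infty$ while keeping all other coordinates fixed and all coordinates above $m$ equal to $0$. The only dependence on $A_m$ then enters through $q^{A_m}$, $q^{|A|}$, and $q^{A_{[1,\ell-1]}}$ for $\ell>m$. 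Each of these is a monomial in $q$ whose exponent contains $A_m$, so it tends to $0$. For $\ell\le m$, the prefix sum $A_{[1,\ell-1]}$ does not involve $A_m$ and stays fixed.

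The plan is to go through the six families and the possible positions of $k$ and $\ell$ relative to $m$.
\begin{itemize}
\item Family $(A,0;A,0)$: the weight $\frac{q^{|A|}-sz}{1-sz}$ tends to $\frac{-sz}{1-sz}$.
\item Family $(A,k;A+E_k,0)$, where the horizontal arrow of color $k$ enters the column: the weight $\frac{z(q^{|A|}-s)}{1-sz}$ tends to $\frac{-sz}{1-sz}$. This needs care with the figure's convention, since the vertical state above is $A_k^+$ and the limiting formula has to be written in terms of the bottom state.
\item Family $(A,k;A,k)$: for $k<m$ nothing changes. For $k=m$, the weight $1-q^{A_m}z$ tends to $1$, which agrees with the formula $1-zq^{A_m}$ read in the limit.
\item Families containing the factor $(1-q^{A_k})$: these pass to the limit with $q^{A_m}\to0$ when $k=m$.
\end{itemize}
The hypothesis $i\in\{0,\ldots,m\}$ together with $A_r=0$ for $r>m$ forces every color appearing in the formulas to be at most $m$. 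Outputs of color $j>m$ therefore have weight $0$: they would require $A_j\ge1$ or $i=j$. Rewriting each surviving expression in the notation of \eqref{eq:queue-L-families}, with the limiting state $A$ carrying an infinite $m$-th coordinate, gives the six families. Conservation in the form $A'_m=A_m+\mathbf 1_{\{i=m\}}-\mathbf 1_{\{j=m\}}$ shows that both vertical states have $A_m\to\infty$ together.

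For the sum-to-one property, each finite $L^{\mathrm{hor}}_{z,s}$ sums to one over outgoing states by Proposition~\ref{prop:finite-colored-bulk-fusion}, extended by the standard analytic continuation procedure. For a fixed incoming $(A,i)$, the sum runs over a finite set of outputs (at most $m+1$ choices of $j$) that does not depend on $A_m$. The limit of this finite sum is therefore the sum of the limits, and it equals one.

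The main obstacle is bookkeeping rather than analysis. The conventions of Figure~\ref{fig:colored-L-hor-families}, where states are labeled by the top state $A_k^\pm$, have to be reconciled with those of \eqref{eq:queue-L-families}, where labels use the incoming state. In particular, one must check that the prefactors $q^{A_{[1,k-1]}}$ are evaluated on the correct vertical state, and that the family with $\ell=m$ correctly uses $1-q^{A_m}\to1$. I would carry this out by checking each family against the original finite weights in turn.
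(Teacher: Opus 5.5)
Your proposal is correct and matches the paper's proof. The paper simply sends $A_m\to\infty$ in the six families of Figure~\ref{fig:colored-L-hor-families} and uses $q^{A_m}\to0$. Your finite-sum argument for the sum-to-one property is a valid supplement. The convention issue you flag does not actually arise: every entry of that figure is already written in terms of the bottom (incoming) state $A$, exactly as in \eqref{eq:queue-L-families}, so no re-indexing is needed.
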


\begin{proof}
Take \(A_m\to\infty\) in the six weights of
Figure~\ref{fig:colored-L-hor-families}.  For \(0<q<1\), one has
\(q^{A_m}\to0\), so this gives \eqref{eq:queue-L-families}. 
\end{proof}

\begin{definition}
Fix \(N\geq1\) and parameters \(s_m<0\), \(0<z_m\leq1\),
\(1\leq m\leq n\).  The queue transfer matrix
\(\mathfrak Q^N_{\mathbf z,\mathbf s}\) is the transfer operator of the
vertex model with \(N\) rows and \(n\) columns in
Figure~\ref{fig:queue-transfer-matrix}: every vertex
in column \(m\) has weight \(\mathbb W^{(m)}_{z_m,s_m}\), and each matrix
element maps a bottom-and-left boundary configuration to a top-and-right
boundary configuration and is obtained by summing the products of local
weights over all internal edges.
We write simply \(\mathfrak Q^N\) when the parameters are understood.

For the queue construction, put color $0$ on every left boundary edge
and the infinitely many arrows of color m $E_m^\infty$ at the bottom of column $m$.
Let \(Q_m^N\) be the state at the top of column \(m\), and let
\(d^N(i)\) be the color on the right boundary in row \(i\), counted from the
top.  The operator together with these
boundary data induces measure on the top and right boundary
variables.
\end{definition}

\begin{figure}[htbp]
\centering
\begin{tikzpicture}[
  x=.82cm,
  y=.74cm,
  baseline={([yshift=-.5ex]current bounding box.center)},
  every node/.style={font=\scriptsize}
]
  
  \foreach \x in {1.35,2.90,4.45,7.60}
    \draw[paper wire,->] (\x,-.45)--(\x,5.65);

  \foreach \y in {.75,2.75,4.75}
    \draw[paper normal,->] (8.45,\y)--(0,\y);

  \node[paper label,anchor=south west] at (1.52,.87)
    {$\mathbb W^{(1)}_{z_1,s_1}$};

  \node[paper label,anchor=south west] at (3.07,2.87)
    {$\mathbb W^{(2)}_{z_2,s_2}$};

  \node[paper label,anchor=south east] at (7.43,4.87)
    {$\mathbb W^{(n)}_{z_n,s_n}$};

  \node[below=2pt] at (1.35,-.45) {$0$};
  \node[below=2pt] at (2.90,-.45) {$0$};
  \node[below=2pt] at (4.45,-.45) {$0$};
  \node[below=2pt] at (6.03,-.45) {$\cdots$};
  \node[below=2pt] at (7.60,-.45) {$0$};

  \node[above=2pt] at (1.35,5.65) {$d^N(1)$};
  \node[above=2pt] at (2.90,5.65) {$d^N(2)$};
  \node[above=2pt] at (4.45,5.65) {$d^N(3)$};
  \node[above=2pt] at (6.03,5.65) {$\cdots$};
  \node[above=2pt] at (7.60,5.65) {$d^N(N)$};

  \node[left=2pt] at (0,.75) {$Q_1^N$};
  \node[left=2pt] at (0,2.75) {$Q_2^N$};
  \node[left=2pt] at (0,3.75) {$\vdots$};
  \node[left=2pt] at (0,4.75) {$Q_n^N$};

  \node[right=2pt] at (8.45,.75)
    {$E_1^\infty\;(z_1,s_1)$};

  \node[right=2pt] at (8.45,2.75)
    {$E_2^\infty\;(z_2,s_2)$};

  \node[right=2pt] at (8.45,3.75)
    {$\vdots$};

  \node[right=2pt] at (8.45,4.75)
    {$E_n^\infty\;(z_n,s_n)$};
\end{tikzpicture}

\caption{The vertex model \(\mathfrak Q^N\). With the displayed empty bottom data and right-boundary \(E_m^\infty\) data, the model induces a probability measure on the top and left boundary once the stochastic condition is satisfied.}
\label{fig:queue-transfer-matrix}
\end{figure}

\begin{proposition}[\cite{MR4968006}, Proposition~7.4]
\label{prop:martin-yang-baxter-steady-state}
Fix \(0<\rho_1<\cdots<\rho_n<1\). For each finite \(N\), specialize
the vertex weights in Figure~\ref{fig:queue-transfer-matrix} by taking
\begin{equation}
\label{eq:martin-density-odds}
 z_m\downarrow0,
 \qquad
 s_mz_m=-\frac{1-\rho_m}{\rho_m},
 \qquad 1\leq m\leq n.
\end{equation}
The resulting vertex model realizes Martin's construction with service
densities \(\rho_1,\ldots,\rho_n\).

More precisely, for every fixed \(\ell\geq1\),
\begin{equation}
\label{eq:martin-anp-steady-state-convergence}
 \bigl(Q_1^N,\ldots,Q_n^N,d^N(1),\ldots,d^N(\ell)\bigr)
 \xrightarrow[N\to\infty]{\mathrm d}
 \bigl(Q_1,\ldots,Q_n,d(1),\ldots,d(\ell)\bigr),
\end{equation}
where the limit consists of the stationary queue states and their
departure process in Martin's construction. The limiting departure
process has the stationary full-space \(n\)-species ASEP law.

For \(n=2\), the limiting first-class queue length in the second
column satisfies
\begin{equation}
\label{eq:martin-M2-law}
 Q_2(1)\sim q\text{-}\mathrm{Poi}(r_{12}).
\end{equation}
\end{proposition}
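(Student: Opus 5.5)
The plan is to prove Proposition~\ref{prop:martin-yang-baxter-steady-state} in two independent parts. First, the specialized local weights coincide with Martin's queueing rules. Second, the finite-$N$ vertex model is a Markov chain started from an empty state, and its law near the output end converges to the stationary regime. The remaining claims, namely the full-space ASEP stationarity and the $q$-Poisson queue law, are then imported directly from Martin's theorem, see Subsection~\ref{subsec:ferrari-martin} and Theorem~\ref{thm:martin-stationary-queue}.

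\emph{Step 1: the specialized weights are Martin's rules.} Substitute \eqref{eq:martin-density-odds} into the six families \eqref{eq:queue-L-families} of Lemma~\ref{lem:queue-specialization}. Since $s_mz_m=-(1-\rho_m)/\rho_m$, we have $1/(1-s_mz_m)=\rho_m$ and $-s_mz_m/(1-s_mz_m)=1-\rho_m$. Every term carrying an extra factor $z_m$ vanishes as $z_m\downarrow0$, and $1-z_mq^{A_k}\to1$. The surviving nonzero weights are therefore:
\begin{itemize}
\item $(A,0)\to(A,0)$ and $(A,k)\to(A+E_k,0)$, each with weight $1-\rho_m$. These are "no service", with the arrival, if any, joining the queue.
\item $(A,k)\to(A,k)$ with weight $\rho_m q^{A_{[1,k-1]}}$. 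This is a service, rejected by all waiting customers of higher priority and taken by the arriving class-$k$ customer.
\item $(A,\ell)\to(A+E_\ell-E_k,k)$ with weight $\rho_m q^{A_{[1,k-1]}}(1-q^{A_k})$, for $k<\ell$. A waiting class-$k$ customer accepts, and the arrival joins the queue.
\item $(A,0)\to(A-E_k,k)$ with weight $\rho_m q^{A_{[1,k-1]}}(1-q^{A_k})$.
\end{itemize}
For $k=m$ one has $A_m=\infty$, so $1-q^{A_m}=1$. Hence an unused service produces a class-$m$ departure with probability $\rho_m q^{A_{[1,m-1]}}$. These are exactly the level-$m$ priority rules of Martin's multiline queues, with an independent Bernoulli$(\rho_m)$ service at each row. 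I will check that the carrier coordinates $A_1,\ldots,A_{m-1}$ are the queue contents, and that the physical input to column $m$ is the departure line of column $m-1$. It follows that, for each $N$, the vertex model is precisely Martin's construction run over $N$ sites from empty queues.

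\emph{Step 2: convergence as $N\to\infty$.} Process the lattice row by row in the direction the carriers travel. The state after a row is the vector of all queue contents $(A^{(2)},\ldots,A^{(n)})$ across the columns. Each row uses fresh i.i.d. randomness: the services at each level and the rejection coins. So the joint queue state is a time-homogeneous Markov chain on a countable space, started from the all-empty state. The outputs $d^N(1),\ldots,d^N(\ell)$ are the last $\ell$ rows, which are a function of the chain over its final $\ell$ steps. It therefore suffices to show that the chain is irreducible, aperiodic and positive recurrent. Total-variation convergence of the state at time $N-\ell$ then gives \eqref{eq:martin-anp-steady-state-convergence}, with the limit being the stationary queues and departures, which is Martin's stationary object by definition.

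Irreducibility and aperiodicity hold because the empty state is reachable and has a self-loop: a row with no services at any level leaves empty queues empty. Positive recurrence is the main obstacle. I would use a Foster--Lyapunov function $V=\sum_m c_m|A^{(m)}|$ with suitably chosen weights $c_m$. At level $m$ the queue gains at rate at most $\rho_{m-1}$ and, when nonempty, loses at rate bounded below in terms of $\rho_m(1-q^{|A^{(m)}|})$. Since $\rho_{m-1}<\rho_m$ and $q<1$, large queues have strictly negative drift. The difficulty is that departures of classes below $m$ feed the next level, so I would choose geometrically decreasing weights $c_m$ to absorb the cross-level coupling. Alternatively, I would argue inductively: the stationarity of the level-$(m-1)$ output, plus a loss-domination argument, gives level-$m$ tightness.

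Once convergence holds, the stationary ASEP property of the departure law is Martin's theorem. For $n=2$, \eqref{eq:martin-M2-law} is Theorem~\ref{thm:martin-stationary-queue}.
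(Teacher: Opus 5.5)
The paper gives no proof of this statement; it imports it from \cite{MR4968006}. Your self-contained route is therefore genuinely different. Step~1 is correct, and so is the framing of Step~2: the carrier vector is a Markov chain in the site variable, started empty, and $d^N(1),\dots,d^N(\ell)$ are read off the last $\ell$ steps. The gap is at the step you flag as the main obstacle: no Lyapunov function of the form $V=\sum_m c_m|A^{(m)}|$ satisfies Foster's criterion in general.

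Why a linear $V$ fails: write $h_m=|A^{(m)}|$. By your own list of specialized weights, $h_m$ increases by one with probability $\rho_{m-1}(1-\rho_m)$ and decreases by one with probability $(1-\rho_{m-1})\rho_m(1-q^{h_m})$. So the drift of $V$ is $\sum_m c_mD_m(h_m)$, with $D_m(0)=\rho_{m-1}(1-\rho_m)>0$ and $D_m(h)\to\rho_{m-1}-\rho_m<0$. Already for $n=3$, negative drift on the two infinite families $\{(h_2,0)\}$ and $\{(0,h_3)\}$ with the nonzero coordinate large forces
$c_2(\rho_2-\rho_1)>c_3\rho_2(1-\rho_3)$ and $c_3(\rho_3-\rho_2)>c_2\rho_1(1-\rho_2)$.
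These are incompatible whenever $(\rho_2-\rho_1)(\rho_3-\rho_2)\le\rho_1\rho_2(1-\rho_2)(1-\rho_3)$, for instance when the densities are closely spaced. The obstruction is the positive drift of an \emph{empty} queue on the boundary faces, not cross-level feeding, so no choice of weights, geometric or otherwise, repairs it. Your ``inductive loss-domination'' alternative is too vague to count as a proof.

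The missing observation is that the levels decouple at the level of total sizes. In the specialized weights, every vertex of column $m$ emits color $0$ with total weight exactly $1-\rho_m$, whatever its input and carrier. So the occupation output of column $m$ is just its own service indicator. Hence:
\begin{itemize}
\item The arrival indicators of column $m$ are i.i.d.\ $\operatorname{Ber}(\rho_{m-1})$, independent of the column-$m$ randomness.
\item Each $h_m$ is therefore by itself a birth--death chain with the rates above.
\item Detailed balance gives it the stationary law $q\text{-}\mathrm{Poi}(r_{m-1,m})$ with $r_{m-1,m}=\rho_{m-1}(1-\rho_m)/((1-\rho_{m-1})\rho_m)$. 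This also yields \eqref{eq:martin-M2-law} directly.
\end{itemize}
Since each $h_m$ started from $0$ converges in law, the joint carrier state is tight: bounded totals leave only finitely many color compositions. So the chain has an invariant probability on the closed class of the empty state. That class is all you need; you have neither shown nor need irreducibility on the whole state space. The self-loop at the empty state then gives total-variation convergence, hence \eqref{eq:martin-anp-steady-state-convergence}.

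Alternatively, the paper's own pathwise device would also work: Lemma~\ref{lem:scalar-clearing} with $F\equiv0$ and $\lambda=\rho_{m-1}$, together with the erasing-block coupling, gives almost-sure local stabilization of the truncations. This Loynes-type argument is stronger than the distributional convergence your ergodicity route delivers.
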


\section{Hole-indexed continuation of boundary and bulk weights}
\label{sec:colored-vertex-weights}

To construct stationary measures for the multispecies ASEP on the half-line,
we use a hole-indexed analytic continuation of the finite fused bulk and
boundary weights.  On the reflected lines, this continuation keeps the
occupations of colors \(0,2,\ldots,n\) fixed, with color \(1\) treated as the
implicit background.  Related constructions involving dual vertex weights,
fusion, and analytic continuation appear, e.g., in Borodin-Wheeler
\cite{BorodinWheeler2021} and Bufetov-Mucciconi-Petrov
\cite{BufetovMucciconiPetrov2021}.  In this section, we derive explicit
formulas for the continued weights, establish nonnegativity and the
sum-to-one property for the boundary and spin-one bulk weights, and show
that the continued weights satisfy the Yang-Baxter and reflection
equations.  These weights and identities will be used in
Section~\ref{sec:queue-specialization-stationarity} to obtain the queue
specialization and construct the stationary measures.

Set
\[
    \widehat{\mathcal C}_n:=\mathcal C_n\setminus\{1\}.
\]
A hole-indexed state is a vector
\(
H=(H_0,H_2,\ldots,H_n)
\in\mathbb Z_{\geq0}^{\widehat{\mathcal C}_n}
\); we define \(|H|:=H_0+\sum_{i=2}^n H_i\).  For
\(r\in\widehat{\mathcal C}_n\), let \(\widehat E_r\) be the
corresponding standard basis vector and set \(\widehat E_1:=0\).
At finite capacity \(M\), the state \(H\) represents the ordinary
particle vector
\begin{equation}
\label{eq:hole-to-normal-embedding}
    (M-|H|,H_2,\ldots,H_n).
\end{equation}
Thus the unrecorded coordinate is the number of color-\(1\) arrows.  When
holes are listed separately, we refer to colors \(2,\ldots,n\) as the weak
colors.

\paragraph{Stochastic convention.}
We take \(0<q<1\), unless stated otherwise.  For stochasticity statements
in this and subsequent sections, we use the boundary parameter range
\begin{equation}
\label{eq:continued-K-stochastic-range}
 0<x<1,\qquad a<0,\qquad s<0,\qquad 0<\frac{sx}{a}<1,
\end{equation}
and the spin-one bulk parameter range
\begin{equation}
\label{eq:spin-one-bulk-stochastic-range}
 s<0,\qquad 0<z\leq1.
\end{equation}

\subsection{Continuation of the fused boundary weight}
\label{subsec:continued-boundary-weight}
We continue the finite boundary weights recalled in
Subsection~\ref{subsec:finite-colored-boundary}, using the hole coordinates
defined above.

The boundary maps a normally indexed state to a hole-indexed state.  Write
\(G=(G_1,\ldots,G_n)\) and
\(H=(H_0,H_2,\ldots,H_n)\).  Define
\begin{equation}
\label{eq:continued-K-normalization}
    \mathcal N^K_{x,s}
    :=\frac{(x^2;q)_\infty(sx/a;q)_\infty}
    {(x/a;q)_\infty(sx^2;q)_\infty}.
\end{equation}

\begin{proposition}
\label{prop:continued-colored-K}
For fixed \(G=(G_1,\ldots,G_n)\) and
\(H=(H_0,H_2,\ldots,H_n)\), consider, for \(L\geq |G|+|H|\), the
finite-spin boundary weights
\[
 K_x^L\bigl(G;(L-|H|,H_2,\ldots,H_n)\bigr).
\]
Regarded as values at \(s=q^L\), they admit an analytic continuation in
\(s\) wherever the expression below is defined.  The continued weight
vanishes unless \(H_i\le G_i\) for \(2\le i\le n\); on this support, it is
given by
\begin{align}
K_{x,s}(G;H)
&=\mathcal N^K_{x,s}
\frac{(x/a;q)_{G_1}(ax;q)_{|H|}(q^{|G|}/s;q)_{H_0}}
     {(x^2;q)_{G_1+|H|}(q;q)_{H_0}}
\left(\frac{sx}{a}\right)^{H_0}
\left(-\frac xa\right)^{H_{[2,n]}}
\notag\\
&\quad\times q^{\sum_{j=2}^n\left(G_{[1,j-1]}H_j+\binom{H_j}{2}\right)}
\prod_{i=2}^n\begin{bmatrix}G_i\\H_i\end{bmatrix}_{\!q}.
\label{eq:continued-colored-K}
\end{align}
At \(s=q^L\), with \(L\geq |G|\), the weight vanishes if
\(H_0>L-|G|\).
\end{proposition}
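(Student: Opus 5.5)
Take the explicit finite-spin formula of Proposition~\ref{prop:finite-colored-boundary-fusion}, substitute the hole-indexed outgoing state $B=(L-|H|,H_2,\ldots,H_n)$, and rewrite everything so that $L$ appears only through $q^L=s$.

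With $A=G$ we have $A_0=L-|G|$, $A_1=G_1$, $B_0=H_0$, $B_1=L-|H|$, $B_r=H_r$ for $r\ge2$. Hence $T=L-B_1=|H|$, which is independent of $L$.

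The support condition $B_r\le A_r$ for $r\in\{0,2,\ldots,n\}$ splits into two parts. For $r\ge 2$ it reads $H_i\le G_i$, which gives the stated vanishing. For $r=0$ it reads $H_0\le L-|G|$, which is the last claim of the proposition. That claim will also come out automatically from the factor $(q^{|G|}/s;q)_{H_0}$: at $s=q^L$ this equals $(q^{|G|-L};q)_{H_0}$, which vanishes exactly when $H_0>L-|G|$.

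Next I rewrite each factor of \eqref{eq:finite-colored-boundary-fusion} term by term.

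\emph{Hole binomial.} The coefficient $\begin{bmatrix}L-|G|\\H_0\end{bmatrix}_q$ equals
\[
\frac{(q^{|G|-L};q)_{H_0}\,(-1)^{H_0}q^{(L-|G|)H_0-\binom{H_0}{2}}}{(q;q)_{H_0}}.
\]

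\emph{The ratio $(q^{G_1+T}x^2;q)_{L-G_1-T}/(q^{G_1}x/a;q)_{L-G_1}$.} Write it as
\[
\frac{(x^2;q)_\infty}{(x^2;q)_{G_1+|H|}\,(q^Lx^2;q)_\infty}\cdot\frac{(x/a;q)_{G_1}\,(q^Lx/a;q)_\infty}{(x/a;q)_\infty}.
\]
Setting $q^L=s$ produces exactly $\mathcal N^K_{x,s}$ together with $(x/a;q)_{G_1}/(x^2;q)_{G_1+|H|}$. The factor $(ax;q)_T=(ax;q)_{|H|}$ passes through unchanged.

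\emph{Monomial prefactor.} Collect $q^{G_1T+\binom T2+\Xi}(-x/a)^T$ together with the power of $q$ from the hole binomial. Then check that the total $q$-exponent, after splitting $T=H_0+H_{[2,n]}$, reduces to
\[
q^{LH_0}\cdot q^{\sum_{j\ge2}(G_{[1,j-1]}H_j+\binom{H_j}{2})}\cdot q^{\text{(terms cancelling against }|G|H_0\text{)}}.
\]
The resulting $q^{LH_0}(-x/a)^{H_0}(-1)^{H_0}$ becomes $(sx/a)^{H_0}$. This uses
\[
\binom{T}{2}=\binom{H_0}{2}+H_0H_{[2,n]}+\sum_{j}\binom{H_j}{2}+\sum_{2\le i<j}H_iH_j .
\]
The $q^{G_iH_j}$ and $H_iH_j$ cross terms from $\Xi(G,B)=\sum_{2\le i<j}(G_i-H_i)H_j+\sum_i(G_i-H_i)H_0$ combine to give the $G_{[1,j-1]}$ sums. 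The $\sum_i G_iH_0$ part, plus $G_1H_0$ from $G_1T$, cancels the $-|G|H_0$ from the hole binomial. I expect this exponent bookkeeping to be the only delicate step, and I would verify it first on the case $n=2$.

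Once the finite weight equals the right-hand side of \eqref{eq:continued-colored-K} at $s=q^L$ for all large $L$, that right-hand side is a meromorphic (indeed rational times $q$-Pochhammer) function of $s$. It therefore serves as the analytic continuation, by the standard analytic continuation procedure: the values at infinitely many points $q^L$ accumulating at $0$ determine it uniquely.
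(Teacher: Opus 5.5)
Your proposal is correct and follows the paper's proof essentially step by step: you substitute $B=(L-|H|,H_2,\ldots,H_n)$ so that $T=|H|$ and $B_0=H_0$, convert the hole $q$-binomial coefficient into $(q^{|G|-L};q)_{H_0}/(q;q)_{H_0}$ times a monomial, check the exponent identity $G_1|H|+\binom{|H|}{2}+\Xi(G,B)=|G|H_0+\binom{H_0}{2}+\sum_{j=2}^n\bigl(G_{[1,j-1]}H_j+\binom{H_j}{2}\bigr)$, and rewrite the Pochhammer ratio using $\mathcal N^K_{x,q^L}=(x^2;q)_L/(x/a;q)_L$. The paper also states explicitly a point you leave implicit: $(q^{|G|}/s;q)_{H_0}(sx/a)^{H_0}=(x/a)^{H_0}\prod_{r=0}^{H_0-1}(s-q^{|G|+r})$, so the continued weight is analytic at $s=0$, which is the accumulation point of the nodes $q^L$ on which your uniqueness step relies.
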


\begin{proof}
Fix \(L\ge |G|\) and set \(B=(L-|H|,H_2,\ldots,H_n)\), so that
\(B_0=H_0\).  On the support \(H_0\le L-|G|\) and \(H_i\le G_i\)
for \(i\ge2\), we have \(G_1+|H|\le L\).  Substituting \(A=G\) and
this \(B\) into Proposition~\ref{prop:finite-colored-boundary-fusion} gives
\begin{equation}
\label{eq:continued-K-finite-substitution}
K_x^L(G;B)=
q^{G_1|H|+\binom{|H|}{2}+\Xi(G,B)}
\left(-\frac xa\right)^{|H|}
\frac{(ax;q)_{|H|}(q^{G_1+|H|}x^2;q)_{L-G_1-|H|}}
     {(q^{G_1}x/a;q)_{L-G_1}}
\begin{bmatrix}L-|G|\\H_0\end{bmatrix}_{\!q}
\prod_{i=2}^n
\begin{bmatrix}G_i\\H_i\end{bmatrix}_{\!q}.
\end{equation}

First, the definition of \(\Xi\) and the identity \(B_0=H_0\) imply
\[
\Xi(G,B)
=
\sum_{2\le i<j\le n}(G_i-H_i)H_j
+H_0\sum_{i=2}^n(G_i-H_i).
\]
Consequently,
\begin{equation}
\label{eq:continued-K-exponent-splitting}
G_1|H|+\binom{|H|}{2}+\Xi(G,B)
=|G|H_0+\binom{H_0}{2}
+\sum_{j=2}^n\left(G_{[1,j-1]}H_j+\binom{H_j}{2}\right).
\end{equation}
Second,
\begin{equation}
\label{eq:continued-K-binomial-conversion}
\begin{bmatrix}L-|G|\\H_0\end{bmatrix}_{\!q}
q^{|G|H_0+\binom{H_0}{2}}
\left(-\frac xa\right)^{H_0}
=
\frac{(q^{|G|-L};q)_{H_0}}{(q;q)_{H_0}}
\left(\frac{q^Lx}{a}\right)^{H_0}.
\end{equation}
Finally,
\[
    \mathcal N^K_{x,q^L}
    =\frac{(x^2;q)_L}{(x/a;q)_L},
\]
and hence
\begin{equation}
\label{eq:continued-K-product-conversion}
\frac{(q^{G_1+|H|}x^2;q)_{L-G_1-|H|}}
     {(q^{G_1}x/a;q)_{L-G_1}}
=
\mathcal N^K_{x,q^L}
\frac{(x/a;q)_{G_1}}{(x^2;q)_{G_1+|H|}}.
\end{equation}
Combining
\eqref{eq:continued-K-finite-substitution}--%
\eqref{eq:continued-K-product-conversion}
gives \eqref{eq:continued-colored-K} at \(s=q^L\).
The support and specialization claims follow immediately from
\eqref{eq:continued-colored-K}.

The apparent singularity at \(s=0\) is removable, since
\begin{equation}
\label{eq:continued-K-removable-product}
(q^{|G|}/s;q)_{H_0}
\left(\frac{sx}{a}\right)^{H_0}
=
\left(\frac xa\right)^{H_0}
\prod_{r=0}^{H_0-1}(s-q^{|G|+r}).
\end{equation}
The right-hand side of \eqref{eq:continued-colored-K} is therefore analytic
at the neighborhood of \(s=0\), and the standard analytic continuation procedure completes the proof.
\end{proof}

The following proposition establishes stochasticity of the continued
boundary weights.

\begin{proposition}
\label{prop:continued-K-stochasticity}
Assume the boundary stochastic range
\eqref{eq:continued-K-stochastic-range}.
Then, for every normally indexed state \(G\),
\begin{equation}
\label{eq:continued-K-stochasticity}
 K_{x,s}(G;H)\geq0,
 \qquad
 \sum_{H}K_{x,s}(G;H)=1.
\end{equation}
The sum is absolutely convergent and locally uniformly convergent when
\(s\) ranges over compact subsets of the pole-free domain
\(|sx/a|<1\).
\end{proposition}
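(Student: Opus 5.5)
Nonnegativity and summation are treated separately, both read off the explicit formula \eqref{eq:continued-colored-K}.

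\emph{Nonnegativity.} Under \eqref{eq:continued-K-stochastic-range} we have $a<0$, $0<x<1$, $s<0$, so $x/a<0$, $ax<0$, $sx/a>0$ and $-x/a>0$. Hence $(x/a;q)_{G_1}$ and $(ax;q)_{|H|}$ are products of factors $1-(\text{negative})>0$, while $(x^2;q)_{G_1+|H|}>0$. Since $s<0$, the factor $(q^{|G|}/s;q)_{H_0}$ has factors $1-q^{|G|+r}/s>1$. The powers $(sx/a)^{H_0}$ and $(-x/a)^{H_{[2,n]}}$, the $q$-powers and the $q$-binomials are all positive. For the normalization $\mathcal N^K_{x,s}$: $(x^2;q)_\infty>0$ and $(x/a;q)_\infty>0$ because $x/a<0$; $(sx/a;q)_\infty>0$ because $0<sx/a<1$; and $(sx^2;q)_\infty>0$ because $sx^2<0$. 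Thus $K_{x,s}(G;H)\ge0$.

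\emph{Sum to one, via analytic continuation.} For $s=q^L$ with $L\ge|G|$, Proposition~\ref{prop:continued-colored-K} identifies $K_{x,q^L}(G;H)$ with the finite weight $K^L_x(G;B)$, where $B=(L-|H|,H_2,\dots,H_n)$, and it vanishes when $H_0>L-|G|$. As $H$ ranges over its support, $B$ ranges over all $B$ with $B_r\le G_r$ for $r\in\{0,2,\dots,n\}$. Proposition~\ref{prop:finite-colored-boundary-fusion} then gives $\sum_H K_{x,q^L}(G;H)=1$ for all large $L$, where the sum is finite. To pass to general $s$, I would set $F(s)=\sum_H K_{x,s}(G;H)$, show it is analytic on $\{|sx/a|<1\}$ minus the poles of $\mathcal N^K_{x,s}$, and show $F(q^L)=1$. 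Since $q^L\to0$ accumulates at $0$, which lies inside the domain where $F$ is analytic (the singularity at $s=0$ is removable by \eqref{eq:continued-K-removable-product}), the identity theorem gives $F\equiv1$ on the connected domain containing $0$, and hence on the stochastic range.

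\emph{Main obstacle: locally uniform absolute convergence.} The sum over $H_2,\dots,H_n$ is finite because $H_i\le G_i$. Only $H_0$ is unbounded, so I would bound the $H_0$-tail for $s$ in a compact set $S\subset\{|sx/a|<1\}$. Using \eqref{eq:continued-K-removable-product}, the $H_0$-dependent factors are
\[
(x/a)^{H_0}\prod_{r<H_0}(s-q^{|G|+r})\cdot\frac{(ax;q)_{|H|}}{(x^2;q)_{G_1+|H|}(q;q)_{H_0}}.
\]
The last fraction is bounded uniformly in $H_0$, since the $q$-Pochhammer ratios converge to finite nonzero limits as $H_0\to\infty$. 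The product satisfies $|s-q^{|G|+r}|\le|s|+q^{r}$, so it is bounded by $C|s|^{H_0}$ up to a convergent product $\prod_r(1+q^r/|s|)$. That bound degenerates near $s=0$, so I would instead use the cruder estimate $|s|+q^r\le\max(|s|,q^{r/2})(1+q^{r/2})$. This gives a term bound $C\,(\max(|s|,\epsilon)\,|x/a|)^{H_0}$ plus a superexponentially decaying piece, and hence geometric decay whenever $|sx/a|<1$ uniformly on $S$. The pole set of $\mathcal N^K_{x,s}$, namely $sx^2=q^{-k}$, lies outside the region $|sx/a|<1$ only under suitable conditions, so I would check separately that the stochastic range stays pole-free. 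If it does not, I would restrict to the component of the domain containing both $0$ and the stochastic range. With uniform convergence established, Weierstrass' theorem makes $F$ analytic, which completes the continuation argument.
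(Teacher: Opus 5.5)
Your proposal is correct and follows the paper's proof. You get nonnegativity from the signs of the factors in \eqref{eq:continued-colored-K}, and locally uniform absolute convergence from geometric decay in $H_0$, the weak coordinates ranging over a finite set. You then apply the identity theorem at the points $s=q^L\to0$, where the finite-spin boundary sum equals one.

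The pole check you left open is immediate. The only poles come from $(sx^2;q)_\infty$ at the isolated positive points $s=q^{-k}x^{-2}$, so the disk $|sx/a|<1$ with these points removed is still connected. It contains both $0$ and the stochastic range $s<0$.
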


\begin{proof}
Nonnegativity follows directly from the signs of the factors in
\eqref{eq:continued-colored-K}.  For fixed \(G\), the weak output
coordinates range over a finite set, while the weights decay geometrically
in \(H_0\), locally uniformly on pole-free compact subsets of
\(|sx/a|<1\).  Together with the removable singularity at \(s=0\)
established above, this shows that the row sum is holomorphic.  At
\(s=q^L\), for all sufficiently large \(L\), it reduces to the finite-spin
boundary sum, which equals one.  The identity theorem therefore gives the
sum-to-one property on the pole-free component containing zero, which
contains the stated stochastic range.
\end{proof}

For the use in the boundary specializations below, we record the one- and
two-color forms explicitly.

\begin{example}
\label{ex:one-color-continued-boundary}
For \(n=1\), write \(g:=G_1\) and \(h:=H_0\).  Then
Proposition~\ref{prop:continued-colored-K} gives
\begin{equation}
\label{eq:one-color-continued-K}
K_{x,s}(g;h)
=
\mathcal N^K_{x,s}
\frac{(x/a;q)_g(ax;q)_h}{(x^2;q)_{g+h}}
\left(\frac{sx}{a}\right)^h
\frac{(q^g/s;q)_h}{(q;q)_h},
\qquad g,h\in\mathbb Z_{\geq0}.
\end{equation}
\end{example}

\begin{example}
\label{ex:two-color-continued-boundary}
For \(n=2\), Proposition~\ref{prop:continued-colored-K} reads
\begin{align}
K_{x,s}\bigl((G_1,G_2);(H_0,H_2)\bigr)
&=\mathcal N^K_{x,s}
\frac{(x/a;q)_{G_1}(ax;q)_{H_0+H_2}}
     {(x^2;q)_{G_1+H_0+H_2}}
\left(\frac{sx}{a}\right)^{H_0}
\left(-\frac xa\right)^{H_2}
\notag\\
&\quad\times
q^{G_1H_2+\binom{H_2}{2}}
\begin{bmatrix}G_2\\H_2\end{bmatrix}_{\!q}
\frac{(q^{G_1+G_2}/s;q)_{H_0}}{(q;q)_{H_0}}.
\label{eq:two-color-continued-K}
\end{align}
\end{example}

\begin{remark}
Identifying color \(2\) with the hole \(0\) amounts to fixing
\(h=H_0+H_2\) and summing \eqref{eq:two-color-continued-K} over all
compatible pairs \((H_0,H_2)\).  With \(g=G_1\), the resulting weight is
\eqref{eq:one-color-continued-K} and does not depend on \(G_2\).
\end{remark}

\subsection{The horizontal hole-index continuation}
We next apply the hole-indexed continuation to the fused bulk weights.

Let
\[
G=(G_1,\ldots,G_n),
\quad
G'=(G'_1,\ldots,G'_n),
\quad
H=(H_0,H_2,\ldots,H_n),
\quad
H'=(H'_0,H'_2,\ldots,H'_n).
\]
Given \(P=(P_1,\ldots,P_n)\), write \(p:=|P|\).  Define
\begin{equation}
\label{eq:hat-W-normalization}
    \mathcal N_{z,s_1,s_2}
    :=\frac{(z;q)_\infty(s_1s_2z;q)_\infty}
    {(s_1z;q)_\infty(s_2z;q)_\infty}.
\end{equation}

\begin{proposition}
\label{prop:hole-continued-W}
For fixed exterior states, the finite fused weights obtained from the
finite-capacity representation \eqref{eq:hole-to-normal-embedding} admit an
analytic continuation in the spin variables, with \(s_1=q^L\) and
\(s_2=q^M\) at finite spin.  Denote this weight by
\(
\widehat W_{z,s_1,s_2}
(G,H;G',H')
\).  It vanishes unless
\begin{equation}
\label{eq:hat-W-conservation}
    G_1+|H'|=G'_1+|H|,
    \qquad
    G_i+H_i=G'_i+H'_i,
    \quad 2\le i\le n.
\end{equation}
When \eqref{eq:hat-W-conservation} holds, it is given by
\begin{align}
&\widehat W_{z,s_1,s_2}(G,H;G',H')
\notag\\
&\quad=\mathcal N_{z,s_1,s_2}
\left(s_1z\right)^{H'_0}
\frac{(q;q)_{H_0}}{(q;q)_{H'_0}}
\times\sum_{P}(-1)^{p+H'_0}
q^{H_0p+\binom{H'_0}{2}+\binom{p+1}{2}}
\frac{(z/q;q)_p}{(z;q)_{H_0+p}}
\notag\\
&\qquad\times
\left(\frac{q^{1+|G|-H_0-p}}{s_1z};q\right)_{H'_0}
q^{\sum_{1\le i<j\le n}\left[(G'_i-P_i)H'_j+P_i(H_j-P_j)\right]}
\notag\\
&\qquad\times
\frac{(s_2q^{1-|H'|};q)_{G'_1-P_1}
      (s_2q^{1-|H|-P_1};q)_{P_1}}
     {(q;q)_{G'_1-P_1}(q;q)_{P_1}}
\prod_{i=2}^n
\begin{bmatrix}G'_i+H'_i-P_i\\G'_i-P_i\end{bmatrix}_{\!q}
\begin{bmatrix}H_i\\P_i\end{bmatrix}_{\!q}.
\label{eq:hole-continued-W}
\end{align}
Here the finite sum is over
\begin{equation}
\label{eq:hat-W-summation-range}
    0\le P_1\le G'_1,
    \qquad
    0\le P_i\le\min(H_i,G'_i),
    \quad 2\le i\le n.
\end{equation}
\end{proposition}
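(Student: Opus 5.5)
The plan mirrors the proof of Proposition~\ref{prop:continued-colored-K}: compute the finite fused weight at $s_1=q^L$, $s_2=q^M$ in hole coordinates, rewrite it so that $L$ and $M$ enter only through $q^L,q^M$, and then invoke the standard analytic continuation procedure.

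\emph{Step 1: substitute the embedding.} Take Proposition~\ref{prop:finite-colored-bulk-fusion}. The vertical (spin $L$) edge carries the normally indexed states $G$ (incoming) and $G'$ (outgoing). The horizontal (spin $M$) edge carries $H$ and $H'$ through the embedding \eqref{eq:hole-to-normal-embedding}. Concretely, set
\[
B=(M-|H|,H_2,\ldots,H_n),\qquad D=(M-|H'|,H'_2,\ldots,H'_n).
\]
The conservation law $A+B=C+D$ becomes exactly \eqref{eq:hat-W-conservation}: the first coordinate gives $G_1-|H|=G'_1-|H'|$, and the others give $G_i+H_i=G'_i+H'_i$.

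\emph{Step 2: fix the summation variable.} In \eqref{eq:finite-colored-bulk-fusion} the sum runs over $0\le P\le B\wedge C=B\wedge G'$. Since $B_1=M-|H|$ is large, the constraint on $P_1$ reduces to $P_1\le G'_1$ once $M$ is large. The constraints $P_i\le\min(H_i,G'_i)$ for $i\ge2$ are unchanged. This is the range \eqref{eq:hat-W-summation-range}, and it is finite and independent of $L,M$.

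\emph{Step 3: rewrite each factor.} Expand both $\Phi_q$ factors via \eqref{eq:reversed-color-Phi}.
\begin{itemize}
\item Large $q$-binomials: $\bigl[\begin{smallmatrix}M-|H|\\P_1\end{smallmatrix}\bigr]_q$ and the first-coordinate binomial with top entry $M-|H'|$ are converted to $(s_2q^{1-|H'|};q)_{G'_1-P_1}/(q;q)_{G'_1-P_1}$ and $(s_2q^{1-|H|-P_1};q)_{P_1}/(q;q)_{P_1}$. Each conversion costs a power $q^{\binom{\cdot}{2}}$ and a sign, using the identity $\bigl[\begin{smallmatrix}N\\k\end{smallmatrix}\bigr]_q=(q^{-N};q)_k(-1)^kq^{Nk-\binom k2}/(q;q)_k$.
\item Pochhammers of length $\approx M$ or $L$: in $\Phi_q(C-P,C+D-P;\tfrac{q^{1-L}}{z},\tfrac{q^{1-L-M}}{z})$, factors such as $(y;q)_{|V|}$ with $|V|\approx M$ are split into a ratio of infinite products times a finite Pochhammer. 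The ratio of infinite products should assemble into $\mathcal N_{z,s_1,s_2}$ of \eqref{eq:hat-W-normalization}. This is the analogue of $\mathcal N^K_{x,q^L}=(x^2;q)_L/(x/a;q)_L$ in the proof of Proposition~\ref{prop:continued-colored-K}.
\item Holes: the quantities $D_0=H'_0$, $B_0=H_0$ and the hole exponents appear in the prefactor $q^{M|A|-L|D|}$. Together with the inversion factors $q^{\sum_{i<j}U_i(V_j-U_j)}$, they should produce $(s_1z)^{H'_0}$, the factor $(q^{1+|G|-H_0-p}/(s_1z);q)_{H'_0}$, and the stated $q$-power.
\end{itemize}

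\emph{Step 4: the exponent bookkeeping.} This is where I expect the main difficulty. All powers involving $L$ and $M$ must cancel or combine into $s_1,s_2$, leaving exactly
\[
q^{H_0p+\binom{H'_0}{2}+\binom{p+1}{2}+\sum_{i<j}[(G'_i-P_i)H'_j+P_i(H_j-P_j)]}.
\]
I would organize this as in \eqref{eq:continued-K-exponent-splitting}: separate color $1$ (whose large occupation is implicit) from the weak colors and the holes. I would check the result first for $n=1$, then for $n=2$ against Example~\ref{ex:two-color-continued-boundary}-style specializations. As a sanity check, the result should agree with $L^{\mathrm{hor}}$-type weights at $M=1$.

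\emph{Step 5: continuation.} Once the identity holds at all $(s_1,s_2)=(q^L,q^M)$ with $L,M$ large, the right side of \eqref{eq:hole-continued-W} is a finite sum of functions meromorphic in $(s_1,s_2)$. The standard analytic continuation procedure then defines $\widehat W_{z,s_1,s_2}$ as this right side. Any apparent singularity at $s_1=0$ is removable, by the same factorization trick as \eqref{eq:continued-K-removable-product} applied to $(s_1z)^{H'_0}(q^{\cdots}/(s_1z);q)_{H'_0}$.
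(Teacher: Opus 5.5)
Your plan matches the paper's proof. Substitute $B=(M-|H|,H_2,\ldots,H_n)$ and $D=(M-|H'|,H'_2,\ldots,H'_n)$ into \eqref{eq:finite-colored-bulk-fusion}, rewrite the long Pochhammer symbols and Gaussian binomials via \eqref{eq:negative-q-Pochhammer-identities} so that only $q^L,q^M$ appear (producing $\mathcal N_{z,s_1,s_2}$), remove the apparent $s_1=0$ singularity, and invoke the standard continuation. The one difference is the $P_1$ range: instead of taking $M$ large, the paper notes that every added term with $P_1>M-|H|$ carries $(s_2q^{1-|H|-P_1};q)_{P_1}$, which vanishes at $s_2=q^M$. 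This gives agreement with the finite weight at every admissible spin, which is what your proposed $M=1$ sanity check would actually require.
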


\begin{proof}
Substitute
\[
B=(M-|H|,H_2,\ldots,H_n),
\qquad
D=(M-|H'|,H'_2,\ldots,H'_n)
\]
into \eqref{eq:finite-colored-bulk-fusion}.  Its summation vector remains
finite.  The original first-coordinate bound is
\[
    P_1\le\min(M-|H|,G'_1).
\]
After the scalar factors have been rearranged, this range may be extended
to \(P_1\le G'_1\): at \(s_2=q^M\), every added term has
\[
    (s_2q^{1-|H|-P_1};q)_{P_1}
    =(q^{M+1-|H|-P_1};q)_{P_1}=0.
\]
For \(i\ge2\), the bound is
\(P_i\le\min(H_i,G'_i)\).  The two color-order exponents
and the associated Gaussian binomial coefficients give
\[
\begin{aligned}
&q^{\sum_{1\le i<j\le n}\left[(G'_i-P_i)H'_j+P_i(H_j-P_j)\right]}
\\
&\quad\times
\frac{(q^{M+1-|H'|};q)_{G'_1-P_1}
      (q^{M+1-|H|-P_1};q)_{P_1}}
     {(q;q)_{G'_1-P_1}(q;q)_{P_1}}
\prod_{i=2}^n
\begin{bmatrix}G'_i+H'_i-P_i\\G'_i-P_i\end{bmatrix}_{\!q}
\begin{bmatrix}H_i\\P_i\end{bmatrix}_{\!q}.
\end{aligned}
\]

Using the identities
\begin{equation}
\label{eq:negative-q-Pochhammer-identities}
(q^{-N};q)_r
=(-1)^rq^{-Nr+\binom r2}
  \frac{(q;q)_N}{(q;q)_{N-r}},
\quad
(u;q)_{r+t}=(u;q)_r(uq^r;q)_t,
\quad
\frac{(u;q)_\infty}{(uq^r;q)_\infty}=(u;q)_r,
\end{equation}
we obtain \eqref{eq:hole-continued-W} at finite spins.  The apparent \(s_1=0\)
singularity is removable term by term, because
\[
 (s_1z)^{H'_0}
 \left(\frac{q^{1+|G|-H_0-p}}{s_1z};q\right)_{H'_0}
 =\prod_{r=0}^{H'_0-1}
   \left(s_1z-q^{1+|G|-H_0-p+r}\right).
\]
Since the sum in \eqref{eq:hole-continued-W} is finite, the standard
analytic continuation procedure proves the claim.
\end{proof}

The following specializations are used later in the one-sided and
mixed-crossing computations.

\begin{example}
We first specialize the bottom--top edge to spin one while retaining a
hole-indexed fused left--right edge:
\begin{equation}
\label{eq:def-colored-L-ver}
    \widehat L^{\mathrm{ver}}_{z,s}:=\widehat W_{z,q,s}.
\end{equation}
For a hole-indexed state \(H=(H_0,H_2,\ldots,H_n)\), write
\[
    H_k^\pm:=H\pm\widehat E_k,
    \qquad
    H_{k\ell}^{+-}:=H+\widehat E_k-\widehat E_\ell.
\]
Figure~\ref{fig:colored-L-ver-families} lists all six nonzero transition
families and their weights. 
\end{example}

\begin{figure}[htbp]
\centering
\begingroup
\newcommand{\VerConfig}[4]{%
\begin{tikzpicture}[baseline=(current bounding box.center)]
  \draw[gray!70,line width=1.5pt,line cap=round]
    (0,-.62)--(0,.62);
  \draw[holecolor,line width=3.8pt,line cap=round]
    (-.82,0)--(.82,0);
  \node[above=3pt] at (0,.62) {$#1$};
  \node[left=4pt,text=holecolor] at (-.82,0) {$#2$};
  \node[right=4pt,text=holecolor] at (.82,0) {$#3$};
  \node[below=3pt] at (0,-.62) {$#4$};
\end{tikzpicture}}
\setlength{\tabcolsep}{0pt}
\setlength{\arrayrulewidth}{.35pt}
\renewcommand{\arraystretch}{1.15}
\begin{tabular}{|
  *{3}{>{\centering\arraybackslash}m{.31\textwidth}|}}
\hline
\rule[-1.12cm]{0pt}{2.30cm}\VerConfig{1}{H}{H}{1}
&
\VerConfig{k}{H}{H}{k}
&
\VerConfig{k}{H}{H_k^-}{1}
\\ \hline
\rule[-.45cm]{0pt}{1.10cm}
{\small\(\displaystyle\frac{q^{|H|}-sz}{1-sz}\)}
&
{\small\(\displaystyle
  \frac{q^{\sum_{r\prec k}H_r}(1-q^{H_k}z)}{1-sz}\)}
&
{\small\(\displaystyle
  \frac{q^{\sum_{r\prec k}H_r}(1-q^{H_k})}{1-sz}\)}
\\ \hline
\rule[-1.12cm]{0pt}{2.30cm}\VerConfig{1}{H}{H_k^+}{k}
&
\VerConfig{\ell}{H}{H_{k\ell}^{+-}}{k}
&
\VerConfig{k}{H}{H_{\ell k}^{+-}}{\ell}
\\ \hline
\rule[-.45cm]{0pt}{1.10cm}
{\small\(\displaystyle\frac{z(q^{|H|}-s)}{1-sz}\)}
&
{\small\(\displaystyle
  \frac{zq^{\sum_{r\prec\ell}H_r}(1-q^{H_\ell})}
       {1-sz}\)}
&
{\small\(\displaystyle
  \frac{q^{\sum_{r\prec k}H_r}(1-q^{H_k})}{1-sz}\)}
\\ \hline
\end{tabular}
\endgroup
\caption{The six nonzero transition families of
\(\widehat L^{\mathrm{ver}}_{z,s}\).  Here
\(k,\ell\in\widehat{\mathcal C}_n\) and \(k\prec\ell\).}
\label{fig:colored-L-ver-families}
\end{figure}

The following proposition establishes stochasticity of the spin-one
weights displayed above.

\begin{proposition}
\label{prop:hat-L-ver-stochasticity}
Assume the spin-one bulk stochastic range
\eqref{eq:spin-one-bulk-stochastic-range}.
Then the weights in Figure~\ref{fig:colored-L-ver-families} are
nonnegative and, for every input color \(i\) and hole-indexed state
\(H\),
\begin{equation}
\label{eq:hat-L-ver-stochasticity}
 \sum_{j,H'}
 \widehat L^{\mathrm{ver}}_{z,s}
 (i,H;j,H')=1.
\end{equation}
The sum is finite.  The same statement holds at \(q=0\) by continuity.
\end{proposition}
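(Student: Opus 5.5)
The plan is to verify the claim directly from the six families in Figure~\ref{fig:colored-L-ver-families}. Throughout we are in the range $s<0$, $0<z\le1$, $0<q<1$. Every weight has the common denominator $1-sz$, which is positive since $sz<0$.

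\textbf{Nonnegativity.} The numerators split into three groups.
\begin{itemize}
\item The factors $q^{\sum_{r\prec k}H_r}$, $1-q^{H_k}$ and $1-q^{H_k}z$ are all nonnegative, because $z\le 1$.
\item The numerator $q^{|H|}-sz$ is positive, since $-sz>0$.
\item The numerator $z(q^{|H|}-s)$ is positive, since $-s>0$.
\end{itemize}
So every weight is nonnegative.

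\textbf{Finiteness of the sum.} For a fixed input, the output $j$ ranges over $\widehat{\mathcal C}_n\cup\{1\}$. The output $H'$ is then determined by $H'=H+\widehat E_i-\widehat E_j$. Hence the sum has finitely many terms.

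\textbf{Sum-to-one, input $i=1$.} The possible outputs are $j=1$, and $j=k$ for each $k\in\widehat{\mathcal C}_n$. After multiplying by $1-sz$, the sum becomes
\[
q^{|H|}-sz+z\bigl(q^{|H|}-s\bigr)\cdot(\ast)+\ldots
\]
On inspection, the families with input $1$ are the first one ($1\to1$) and the fourth one ($1\to k$). For the fourth family, Figure~\ref{fig:colored-L-ver-families} lists $\widehat L^{\mathrm{ver}}$ from bottom $1$ to right... Here I must be careful about the orientation: the picture is a spin-one vertical edge with a fused horizontal hole edge.

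So I re-index by the vertical input, meaning the bottom label. The bottom input $1$ can go to
\begin{itemize}
\item top $1$, with $H$ unchanged (weight $(q^{|H|}-sz)/(1-sz)$);
\item top $k$, with $H\to H_k^-$ (weight $q^{\sum_{r\prec k}H_r}(1-q^{H_k})/(1-sz)$).
\end{itemize}
The second family is the telescoping one. Order $\widehat{\mathcal C}_n$ by $\prec$, so $0$ is weakest and comes last. Then
\[
\sum_k q^{\sum_{r\prec k}H_r}\bigl(1-q^{H_k}\bigr)=1-q^{|H|}.
\]
Adding the first family gives $(q^{|H|}-sz+1-q^{|H|})/(1-sz)=1$.

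\textbf{Sum-to-one, input $k\neq1$.} The outputs are:
\begin{itemize}
\item top $k$ (stay), with weight $q^{S_{\prec k}}(1-q^{H_k}z)$;
\item top $1$, with $H\to H_k^+$, and weight $z(q^{|H|}-s)$;
\item top $\ell$ with $\ell\prec k$ and $H\to H+\widehat E_k-\widehat E_\ell$, weight $q^{S_{\prec \ell}}(1-q^{H_\ell})$;
\item top $\ell$ with $k\prec\ell$ and $H\to H_{k\ell}^{+-}$, weight $zq^{S_{\prec\ell}}(1-q^{H_\ell})$.
\end{itemize}
Here $S_{\prec k}=\sum_{r\prec k}H_r$, and all weights are divided by $1-sz$.

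Now telescope each group:
\begin{itemize}
\item The colors stronger than $k$ contribute $1-q^{S_{\prec k}}$.
\item The colors weaker than $k$ contribute $z\bigl(q^{S_{\preceq k}}-q^{|H|}\bigr)$.
\item The stay term is $q^{S_{\prec k}}-zq^{S_{\preceq k}}$.
\end{itemize}
Summing these with the top-$1$ term $zq^{|H|}-zs$ gives $1-zs$. Dividing by $1-sz$ yields $1$.

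\textbf{The case $q=0$.} All weights are polynomial in $q$ once we use the convention $0^0=1$. Therefore both nonnegativity and the identity pass to the limit $q\to0$ by continuity.

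\textbf{Main obstacle.} The only delicate point is reading the orientation of the families correctly. The telescoping itself is routine. As a cross-check, one can compare with the analytic continuation of the finite-spin sum-to-one from Proposition~\ref{prop:finite-colored-bulk-fusion} via \eqref{eq:def-colored-L-ver}. That identity is polynomial in $s$ after multiplying by $1-sz$, and it holds at $s=q^M$ for all $M$; hence it holds identically.
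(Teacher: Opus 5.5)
Your proof is correct, but for the sum-to-one identity it takes a different route from the paper. The paper handles nonnegativity and finiteness exactly as you do. For the identity $\sum_{j,H'}\widehat L^{\mathrm{ver}}_{z,s}(i,H;j,H')=1$, however, it argues structurally, which is the argument you relegate to a cross-check:
\begin{itemize}
\item the row sum has at most $n+1$ terms and is rational in $s$;
\item at $s=q^M$ it equals $1$ by the finite-spin stochasticity in Proposition~\ref{prop:finite-colored-bulk-fusion};
\item the standard analytic continuation procedure then extends it to all $s$.
\end{itemize}

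What each route buys:
\begin{itemize}
\item Your direct telescoping is self-contained and verifies precisely the statement as phrased, namely the weights displayed in Figure~\ref{fig:colored-L-ver-families}. It does not depend on the fusion machinery or on identifying that table with the specialization $\widehat W_{z,q,s}$. It also works verbatim at $q=0$ with $0^0=1$, so no continuity argument is needed.
\item The paper's argument explains why stochasticity holds, namely that it is inherited from finite fusion. It also applies uniformly to fused weights where an explicit telescoping would be impractical.
\end{itemize}

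There is one slip to fix. In the paper's convention $0$ is $\prec$-minimal, and $\ell\prec k$ means $\ell$ is weaker than $k$. So the $z$-free family (top $\ell\prec k$) comes from the colors weaker than $k$ and contributes $1-q^{S_{\prec k}}$. The colors stronger than $k$ contribute $z\bigl(q^{S_{\preceq k}}-q^{|H|}\bigr)$. You have these two labels swapped, and in the telescoping $0$ should come first, not last. This is harmless, since both telescoping identities hold for any linear order on $\widehat{\mathcal C}_n$ used consistently with the weights. You should also delete the false start at the beginning of the case $i=1$, where you misread which families have bottom input $1$.
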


\begin{proof}
All six families are nonnegative in the stated range.  Conservation
determines \(H'\) from the output color \(j\), so the row sum contains at
most \(n+1\) terms and is rational in \(s\).  The sum-to-one property
follows from that of the finite bulk weights in
Proposition~\ref{prop:finite-colored-bulk-fusion} and the standard analytic
continuation procedure.  The statement at \(q=0\) follows by continuity.
\end{proof}

\begin{example}
\label{ex:two-color-hole-spin-one}
Let \(1\succ2\succ0\), and write
\(H=(H_0,H_2)\).  With the input and output
colors ordered as \(0,2,1\), the weights
\(\widehat L^{\mathrm{ver}}_{z,s}\) are
\begingroup
\renewcommand{\arraystretch}{1.65}
\begin{equation}
\label{eq:two-color-L-ver-table}
\begin{array}{c|ccc}
i\backslash j&0&2&1\\ \hline
0&\dfrac{1-q^{H_0}z}{1-sz}
 &\dfrac{q^{H_0}z(1-q^{H_2})}{1-sz}
 &\dfrac{z(q^{|H|}-s)}{1-sz}\\[7pt]
2&\dfrac{1-q^{H_0}}{1-sz}
 &\dfrac{q^{H_0}(1-q^{H_2}z)}{1-sz}
 &\dfrac{z(q^{|H|}-s)}{1-sz}\\[7pt]
1&\dfrac{1-q^{H_0}}{1-sz}
 &\dfrac{q^{H_0}(1-q^{H_2})}{1-sz}
 &\dfrac{q^{|H|}-sz}{1-sz}.
\end{array}
\end{equation}
\endgroup
Here
\(H'=H_{ij}^{+-}\), where
\(\widehat E_0=(1,0)\),
\(\widehat E_2=(0,1)\), and
\(\widehat E_1=(0,0)\).
\end{example}

\begin{example}
\label{ex:two-color-doubly-fused-hole}
For \(n=2\), write
\[
G=(G_1,G_2),\qquad
H=(H_0,H_2),\qquad
G'=(G'_1,G'_2),\qquad
H'=(H'_0,H'_2).
\]
The weight vanishes unless
\begin{equation}
\label{eq:two-color-doubly-fused-conservation}
G_1+|H'|=G'_1+|H|,
\qquad
G_2+H_2=G'_2+H'_2.
\end{equation}
On this support,
\begin{align}
&\widehat W_{z,s_1,s_2}
 \bigl(G,H;G',H'\bigr)
\notag\\
&\quad =
\mathcal N_{z,s_1,s_2}(s_1z)^{H'_0}
\frac{(q;q)_{H_0}}{(q;q)_{H'_0}}
\sum_{P_1=0}^{G'_1}
\sum_{P_2=0}^{\min(H_2,G'_2)}
(-1)^{p+H'_0}
q^{H_0p+\binom{H'_0}{2}+\binom{p+1}{2}}
\frac{(z/q;q)_p}{(z;q)_{H_0+p}}
\notag\\
&\quad\times
\left(\frac{q^{1+|G|-H_0-p}}{s_1z};q\right)_{H'_0}
q^{(G'_1-P_1)H'_2+P_1(H_2-P_2)}
\notag\\
&\quad\times
\begin{bmatrix}G'_2+H'_2-P_2\\G'_2-P_2\end{bmatrix}_{\!q}
\begin{bmatrix}H_2\\P_2\end{bmatrix}_{\!q}
\times
\frac{(s_2q^{1-|H'|};q)_{G'_1-P_1}}
     {(q;q)_{G'_1-P_1}}
\frac{(s_2q^{1-|H|-P_1};q)_{P_1}}
     {(q;q)_{P_1}}.
\label{eq:22-fully-fused-weight}
\end{align}
\end{example}

\subsection{Continued Yang-Baxter and reflection equations}
\label{subsec:continued-colored-identities}

Recall that, in the normalization of
Figure~\ref{fig:colored-bulk-fusion-array}, the local parameter in row
\(r\) and column \(c\) is \(q^{L+r-c-1}z\).  Consequently, the three
pairwise spectral parameters in the fully fused Yang--Baxter equation
below are \(qy/(s_2x)\), \(qy/s_2\), and \(qx/s_1\).

\begin{proposition}
\label{prop:continued-colored-ybe}
First, let
\(
A,B,I_1,I_2,C_1,C_2\in\mathbb Z_{\ge0}^n
\)
be normally indexed states.  Then
\begin{align}
&\sum_{X,X_1,X_2}
W_{qy/(s_2x),s_1,s_2}(I_1,I_2;X_1,X_2)
W_{qy/s_2,s_3,s_2}(A,X_2;X,C_2)
W_{qx/s_1,s_3,s_1}(X,X_1;B,C_1)
\notag\\
&\qquad=
\sum_{X,X_1,X_2}
W_{qx/s_1,s_3,s_1}(A,I_1;X,X_1)
W_{qy/s_2,s_3,s_2}(X,I_2;B,X_2)
W_{qy/(s_2x),s_1,s_2}(X_1,X_2;C_1,C_2).
\label{eq:continued-normal-colored-ybe}
\end{align}
Here \(X,X_1,X_2\in\mathbb Z_{\ge0}^n\).

Second, let \(A,B,I_1,C_1\) be normally indexed states and let
\(I_2,C_2\) be hole-indexed states.  Then
\begin{align}
&\sum_{X,X_1,X_2}
\widehat W_{qy/(s_2x),s_1,s_2}(I_1,I_2;X_1,X_2)
\widehat W_{qy/s_2,s_3,s_2}(A,X_2;X,C_2)
W_{qx/s_1,s_3,s_1}(X,X_1;B,C_1)
\notag\\
&\qquad=
\sum_{X,X_1,X_2}
W_{qx/s_1,s_3,s_1}(A,I_1;X,X_1)
\widehat W_{qy/s_2,s_3,s_2}(X,I_2;B,X_2)
\widehat W_{qy/(s_2x),s_1,s_2}(X_1,X_2;C_1,C_2).
\label{eq:continued-colored-ybe-components}
\end{align}
In the second identity, \(X,X_1\) are normally indexed and \(X_2\) is
hole-indexed.  All sums are finite by the conservation laws.
\end{proposition}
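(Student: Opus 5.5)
The plan is to derive both identities from the unfused colored Yang--Baxter equation (Proposition~\ref{prop:elementary-colored-ybe}) by fusion at finite spins, and then to remove the finite-spin restriction with the standard analytic continuation procedure.

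\textbf{Finite-spin version of the first identity.} Take $s_1=q^{L_1}$, $s_2=q^{L_2}$, $s_3=q^{L_3}$. Consider a three-bundle configuration: a vertical bundle of $L_3$ lines and two crossing bundles of $L_1$ and $L_2$ lines. The rapidities inside each bundle form geometric progressions as in Figure~\ref{fig:colored-bulk-fusion-array}.

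Applying Proposition~\ref{prop:elementary-colored-ybe} once at every triple crossing moves the vertical bundle through the crossing of the other two. This gives an identity between partition functions of the two global pictures, for fixed boundary words.

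Next, average the incoming words against the $q$-exchangeable laws $\pi_{L,A}$. Proposition~\ref{prop:finite-colored-bulk-q-exchangeability} states that $q$-exchangeability propagates through each rectangle. Applying it successively to the three rectangles in each picture collapses the internal words into arrow configurations, so each side becomes a sum of products of fused weights $W^{L,M}_z$.

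A short bookkeeping check is needed here. The local spectral parameters of the three rectangles must match $qy/(s_2x)$, $qy/s_2$ and $qx/s_1$, using the rule from Figure~\ref{fig:colored-bulk-fusion-array} that the spectral parameter of a rectangle is determined by its rightmost column and top row. This gives \eqref{eq:continued-normal-colored-ybe} at $(s_1,s_2,s_3)=(q^{L_1},q^{L_2},q^{L_3})$ for all large $L_i$.

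\textbf{From finite spins to general parameters.} For fixed exterior states, the conservation laws make the sums over $X,X_1,X_2$ finite. Each weight is rational in $s_1,s_2,s_3$ by Definition~\ref{def:normal-continued-W}. Clearing denominators and applying the standard analytic continuation procedure in each variable separately then proves the identity in general.

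\textbf{The mixed identity.} Here one runs the same finite-spin argument, but the $L_2$-bundle states are encoded through the hole embedding \eqref{eq:hole-to-normal-embedding}: $I_2\leftrightarrow(L_2-|I_2|,\dots)$, and similarly for $C_2$ and $X_2$. At finite spin, $\widehat W$ is by construction (Proposition~\ref{prop:hole-continued-W}) exactly $W$ evaluated on these embedded states. Hence \eqref{eq:continued-colored-ybe-components} at $s_2=q^{L_2}$ is just \eqref{eq:continued-normal-colored-ybe} rewritten in hole coordinates.

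Two points require care.

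\begin{itemize}
\item The sum over hole-indexed $X_2$ must correspond bijectively to the sum over normally indexed $X_2$ with $|X_2|\le L_2$. Hole states with $|X_2|>L_2$ have to contribute zero at $s_2=q^{L_2}$. I would check this from the vanishing factors $(s_2q^{1-|H|-P_1};q)_{P_1}$ and $(s_2q^{1-|H'|};q)_{G'_1-P_1}$ in \eqref{eq:hole-continued-W}, as in the proof of Proposition~\ref{prop:hole-continued-W}.
\item In the mixed identity, $s_2$ is simultaneously the spin of the hole-indexed edge and a factor in the spectral parameters. Continuation in $s_2$ is therefore not simply continuation in a capacity variable. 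The cleanest route is to treat $y$ as free: for fixed $y$, both sides are rational in $s_2$ and agree at $s_2=q^{L_2}$ for infinitely many $L_2$, so they agree identically.
\end{itemize}

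The sums over $X$, $X_1$ and $X_2$ stay finite because the conservation laws \eqref{eq:hat-W-conservation} bound all coordinates in terms of the exterior states. With that, the identity extends to all $s_1,s_2,s_3$.

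\textbf{Expected main obstacle.} The hardest step should be the bookkeeping in the first stage. One has to verify that the $q$-exchangeable averaging passes consistently through all three fused rectangles, in the directions and reading orders that appear in both pictures. One must also confirm that the rapidity shifts produce exactly the stated spectral parameters. The continuation steps themselves are routine.
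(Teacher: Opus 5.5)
Your proposal is correct and follows the paper's own argument. The paper proves this proposition, together with the reflection equation, in one line by citing the elementary Yang--Baxter equation, the \(q\)-exchangeability of fused weights (Proposition~\ref{prop:finite-colored-bulk-q-exchangeability}) and the standard analytic continuation procedure; you spell out the same steps, including the spectral-parameter bookkeeping.

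One small precision for the mixed identity: \(\widehat W\) is not literally rational in \(s_2\), because of the infinite-product factor \(\mathcal N_{z,s_1,s_2}\). However, the same state-independent product \(\mathcal N_{qy/(s_2x),s_1,s_2}\,\mathcal N_{qy/s_2,s_3,s_2}\) multiplies every term on both sides, so after dividing it out your continuation argument goes through. Your capacity concern is also moot once \(L_2\) is large, since conservation bounds \(|X_2|\) independently of \(L_2\).
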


\begin{figure}[htbp]
\centering
\begingroup
\begin{tikzpicture}[
  x=.74cm,y=.74cm,
  baseline={([yshift=-.5ex]current bounding box.center)},
  identity state/.style={font=\scriptsize,inner sep=.7pt}
]
  \begin{scope}
    \draw[paper normal,->]
      (0,.2)--(1,1.1)--(2,2)--(4.4,2);
    \draw[paper hole,->]
      (0,2)--(1,1.1)--(2,.2)--(4.4,.2);
    \draw[paper normal,->]
      (3.4,-.3)--(3.4,2.5);
    \node[identity state,left=2pt] at (0,.2) {$I_1$};
    \node[identity state,left=22pt] at (0,.2) {$(x,s_1)$};
    \node[identity state,left=2pt] at (0,2) {$I_2$};
    \node[identity state,left=22pt] at (0,2) {$(y,s_2)$};
    \node[identity state,below=2pt] at (3.4,-.3) {$A$};
    \node[identity state,below=16pt] at (3.4,-.3) {$(1,s_3)$};
    \node[identity state,right=2pt] at (4.4,2) {$C_1$};
    \node[identity state,right=2pt] at (4.4,.2) {$C_2$};
    \node[identity state,above=2pt] at (3.4,2.5) {$B$};
  \end{scope}

  \node[font=\normalsize] at (6.35,1.1) {$=$};
    
  \begin{scope}[shift={(9.0,0)}]
    \draw[paper normal,->]
      (1,-.3)--(1,2.5);
    \draw[paper normal,->]
      (0,.2)--(2.4,.2)--(3.3,1.1)--(4.4,2);
    \draw[paper hole,->]
      (0,2)--(2.4,2)--(3.3,1.1)--(4.4,.2);
    \node[identity state,left=2pt] at (0,.2) {$I_1$};
    \node[identity state,left=22pt] at (0,.2) {$(x,s_1)$};
    \node[identity state,left=2pt] at (0,2) {$I_2$};
    \node[identity state,left=22pt] at (0,2) {$(y,s_2)$};
    \node[identity state,below=2pt] at (1,-.3) {$A$};
    \node[identity state,below=16pt] at (1,-.3) {$(1,s_3)$};
    \node[identity state,right=2pt] at (4.4,2) {$C_1$};
    \node[identity state,right=2pt] at (4.4,.2) {$C_2$};
    \node[identity state,above=2pt] at (1,2.5) {$B$};
  \end{scope}
\end{tikzpicture}
\endgroup
\caption{The Yang-Baxter equation for continued vertices \eqref{eq:continued-colored-ybe-components}.}
\label{fig:continued-colored-ybe}
\end{figure}

The continued bulk and boundary weights also satisfy the following
reflection equation.

\begin{proposition}
\label{prop:continued-colored-reflection}
Let \(G\) be an incoming normally indexed state, \(i\) an incoming color,
\(H\) an outgoing hole-indexed state, and \(j\) an outgoing
color.  Then
\begin{align}
&\sum_{\gamma_1,\gamma_2,G^{(1)},H^{(1)}}
L^{\mathrm{hor}}_{x/y,s}(G,i;G^{(1)},\gamma_1)\,
K_{x,s}(G^{(1)};H^{(1)})\,
\widehat L^{\mathrm{ver}}_{xy,s}
(\gamma_1,H^{(1)};\gamma_2,H)\,
K_y(\gamma_2;j)
\notag\\
&\qquad=
\sum_{\gamma_1,\gamma_2,G^{(1)},H^{(1)}}
K_y(i;\gamma_1)\,
L^{\mathrm{hor}}_{xy,s}(G,\gamma_1;G^{(1)},\gamma_2)\,
K_{x,s}(G^{(1)};H^{(1)})\,
\widehat L^{\mathrm{ver}}_{x/y,s}
(\gamma_2,H^{(1)};j,H).
\label{eq:continued-colored-reflection-components}
\end{align}
The sums are finite by conservation.
\end{proposition}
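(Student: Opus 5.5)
The plan is to derive the continued identity from its finite-spin counterpart and then invoke the standard analytic continuation procedure. At $s=q^L$, the weights $L^{\mathrm{hor}}_{z,s}$, $K_{x,s}$ and $\widehat L^{\mathrm{ver}}_{z,s}$ are fused partition functions. $L^{\mathrm{hor}}_{z,q^L}=W^{L,1}_z$ is a column of $L$ lines crossing a single horizontal line. $K_{x,q^L}$ is the fused boundary triangle $K^L_x$ of Figure~\ref{fig:colored-boundary-fusion-array}, with the output rewritten in hole coordinates through \eqref{eq:hole-to-normal-embedding}. $\widehat L^{\mathrm{ver}}_{z,q^L}$ is the bulk weight with $M=L$ reflected lines crossing one spin-one line, again rewritten in hole coordinates. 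Since $H$ is hole-indexed with capacity $L$, the finite-spin identity is exactly the reflection equation for a bundle of $L$ lines of rapidities $q^{L-1}x,\dots,x$ together with a single line of rapidity $y$.

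First, I would fix words $\boldsymbol a$ for $G$ and, at the end, average over the $q$-exchangeable law $\pi_{L,G}$. On each side the unfused picture consists of the $y$-line together with the $L$ $x$-lines, each crossing the $y$-line twice (before and after reflection), a boundary weight for every line, and the internal triangle of crossings among the $x$-lines. I would then prove the unfused identity by moving the $y$-line's boundary reflection through the $x$-bundle. The procedure is an induction on $L$: at each step, apply Proposition~\ref{prop:elementary-colored-reflection} to the pair formed by the $y$-line and the current $x$-line, and use the Yang--Baxter equation (Proposition~\ref{prop:elementary-colored-ybe}) to slide the $y$-line through the crossings with the remaining $x$-lines. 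The spectral parameters $q^{L-r}x/y$ and $q^{L-r}xy$ match the conventions, so this is the usual "train argument".

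Second, I would fuse. Propositions~\ref{prop:finite-colored-bulk-q-exchangeability} and~\ref{prop:finite-colored-boundary-q-exchangeability} show that $q$-exchangeability of the bundle's words is preserved at every fused vertex. Hence, after averaging over $\pi_{L,G}$, every intermediate bundle state $G^{(1)},H^{(1)}$ may be replaced by its arrow configuration, and the unfused identity becomes \eqref{eq:continued-colored-reflection-components} at $s=q^L$ with $H$ translated to hole coordinates.

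Third, I would continue analytically in $s$. For fixed $G,i,H,j$, conservation leaves only finitely many intermediate states: $G^{(1)}$ differs from $G$ by at most one arrow, and $H^{(1)}$ differs from $H$ by at most one unit. Each side is therefore a finite sum. By Proposition~\ref{prop:continued-colored-K} and the explicit formulas for the other weights, each side is a rational function of $s$. The two sides agree at $s=q^L$ for all $L\ge |G|+|H|+2$, so the standard analytic continuation procedure gives the identity for all $s$.

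I expect the main obstacle to be the finiteness claim in the boundary step, together with checking that the hole-coordinate embedding is compatible on both sides. The boundary weight $K_{x,s}(G^{(1)};H^{(1)})$ has unbounded $H^{(1)}_0$ in principle. However, $H^{(1)}$ is tied to the fixed output $H$ through one spin-one vertex $\widehat L^{\mathrm{ver}}$, which changes $|H|$ by at most one, so the sum stays finite. At finite $L$, I must also verify that out-of-range terms vanish, for example $H_0>L-|G|$ or negative first coordinates. This follows from the vanishing statements in Proposition~\ref{prop:continued-colored-K} and from the Pochhammer factors in $\widehat L^{\mathrm{ver}}$, which vanish at $s=q^L$ when $|H|=L$.
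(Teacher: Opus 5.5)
Your proposal is correct and is essentially the paper's argument. The paper also derives the identity from the unfused Yang--Baxter and reflection equations, carries $q$-exchangeability through the fused bulk and boundary vertices via Propositions~\ref{prop:finite-colored-bulk-q-exchangeability} and~\ref{prop:finite-colored-boundary-q-exchangeability}, and then continues from $s=q^L$ to general $s$.

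One point needs correcting: each side is not literally rational in $s$, because $K_{x,s}$ contains the infinite-product factor $\mathcal N^K_{x,s}$. This factor is common to every term, and both sides are holomorphic near the accumulation point $s=0$ of the points $q^L$, so the continuation step goes through unchanged.
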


\begin{proof}[Proof of Propositions~\ref{prop:continued-colored-ybe}
and~\ref{prop:continued-colored-reflection}]
The identities follow from the elementary Yang--Baxter and reflection
equations, the \(q\)-exchangeability statements in
Propositions~\ref{prop:finite-colored-bulk-q-exchangeability}
and~\ref{prop:finite-colored-boundary-q-exchangeability}, and the standard
analytic continuation procedure.
\end{proof}

\begin{figure}[htbp]
\centering
\begingroup
\begin{tikzpicture}[scale=.94,baseline={([yshift=-.5ex]current bounding box.center)}]
  \draw[paper normal]
    (4,1)--(3,1)--(1,0)--(0,.5);
  \draw[paper hole,->]
    (0,.5)--(3,2)--(4,2);
  \draw[paper wire,->]
    (4,0)--(3,0)--(0,2.5)--(1,3)--(4,3);
  \node[paper boundary] at (0,.5)  {};
  \node[paper boundary] at (0,2.5) {};
  \node[right=2pt] at (4,0) {\scriptsize \(i\)};
  \node[right=18pt] at (4,0) {\scriptsize \(y\)};
  \node[right=2pt] at (4,1) {\scriptsize \(G\)};
  \node[right=24pt] at (4,1) {\scriptsize \((x,s)\)};
  \node[right=2pt] at (4,2) {\scriptsize \(H\)};
  \node[right=2pt] at (4,3) {\scriptsize \(j\)};
\end{tikzpicture}
\hspace{8mm}\(=\)\hspace{8mm}
\begin{tikzpicture}[scale=.94,baseline={([yshift=-.5ex]current bounding box.center)}]
  \draw[paper normal]
    (4,1)--(3,1)--(0,2.5);
  \draw[paper hole,->]
    (0,2.5)--(1,3)--(3,2)--(4,2);
  \draw[paper wire,->]
    (4,0)--(1,0)--(0,.5)--(3,3)--(4,3);
  \node[paper boundary] at (0,.5)  {};
  \node[paper boundary] at (0,2.5) {};
  \node[right=2pt] at (4,0) {\scriptsize \(i\)};
  \node[right=18pt] at (4,0) {\scriptsize \(y\)};
  \node[right=2pt] at (4,1) {\scriptsize \(G\)};
  \node[right=24pt] at (4,1) {\scriptsize \((x,s)\)};
  \node[right=2pt] at (4,2) {\scriptsize \(H\)};
  \node[right=2pt] at (4,3) {\scriptsize \(j\)};
\end{tikzpicture}
\endgroup
\caption{The reflection equation for continued vertices
\eqref{eq:continued-colored-reflection-components}.}
\label{fig:continued-colored-reflection}
\end{figure}

\section{Queue specialization and stationarity}
\label{sec:queue-specialization-stationarity}

In this section we prove Theorem~\ref{thm:main-stationary-construction}.
The general idea behind the proof is essentially the same as in \cite{MR4968006}; it is short and follows a well-known pattern. Namely, the Yang-Baxter equation provides an ''engine'' --- a vertex model of a particular form --- that commutes with the ASEP generator. Constructing the engine in a right way, we can force it to produce a nontrivial output from the trivial stationary measure of ASEP (such as the completely filled line by first-class particles). Due to commutation, this ensures that the nontrivial output is also a stationary measure of ASEP. Thus, the engine provides non-trivial stationary measures from trivial ones. 

While our proof fully follows this idea, we were not able to find its elegant and short form. The main technical difficulty is that the commutation relations seem to work better for finite volumes, while we are constructing a genuinely infinite object. This explains why Theorem \ref{thm:intro-vertex-form} has an asymptotic form, and a significant effort has to be devoted to prove the stabilization of the limiting procedure. It would be interesting to realize an idea above in a more transparent and shorter way.  

The proof in this section has the following structure. In Subsection \ref{subsec:half-space-ASEP-double-row} we recall the vertex model (fundamental double row), which has an ASEP limit. In Subsection \ref{subsec:boundary-queue-specialization} we introduce a queue specialization of vertices from Section \ref{sec:colored-vertex-weights}, recovering weights from Figures \ref{fig:intro-normal-weights}, \ref{fig:intro-return-weights}, and \ref{fig:intro-boundary-weights}. The next Subsections record their properties, which survive the queue specialization. The key Subsection \ref{subsec:stable-engine-carriers} establishes the stabilization of our vertex model / queue construction, as the size of the model goes to infinity. Finally, in the last subsection we prove stationarity, following the general idea above. 

\subsection{The fundamental double row and the half-space ASEP limit}
\label{subsec:half-space-ASEP-double-row}

Set \(\Omega_n=\mathcal C_n^{\mathbb N}\).  For \(\eta\in\Omega_n\), let \(\eta^{j,j+1}\) exchange
sites \(j,j+1\), and let \(\eta^{1\to1}\) replace the first color by
\(1\).  Write
\begin{equation}
\label{eq:colored-ASEP-rate}
 \mathfrak c_q(i,j)=\mathbf1_{\{i\succ j\}}+
                       q\mathbf1_{\{j\succ i\}}.
\end{equation}
The process defined in Subsection~\ref{subsec:intro-stationary-construction}
has the following generator on cylinder functions:
\begin{equation}
 (\mathcal L_\alpha f)(\eta)
 =\sum_{j\geq1}\mathfrak c_q(\eta_j,\eta_{j+1})
       \bigl(f(\eta^{j,j+1})-f(\eta)\bigr)
  +(1-q)\alpha\mathbf1_{\{\eta_1\ne1\}}
       \bigl(f(\eta^{1\to1})-f(\eta)\bigr).
 \label{eq:multispecies-half-space-ASEP-generator}
\end{equation}
Its Harris construction defines a Feller semigroup
\begin{equation}
\label{eq:ASEP-semigroup}
 (\mathfrak P_t f)(\eta)=\mathbb E_\eta[f(\eta(t))].
\end{equation}
Probability laws act on the right: \((\mu\mathfrak P_t)(f)=\mu(\mathfrak P_t f)\).

For $\alpha>0$, $\alpha\ne1$, set
\begin{equation}
\label{eq:a-alpha-map}
 a=\frac{\alpha}{\alpha-1}.
\end{equation}
Thus $a<0$ for $\alpha<1$ and $a>1$ for $\alpha>1$.
Writing the elementary boundary weights in terms of $\alpha$ gives
\begin{equation}
\label{eq:fundamental-boundary-alpha-form}
 \kappa_1(u)=\frac{\alpha(1-u^2)}{\alpha+(1-\alpha)u},\qquad
 \kappa_0(u)=\frac{u(1-\alpha+\alpha u)}{\alpha+(1-\alpha)u}.
\end{equation}

Let us define a double row operator, see Figure \ref{fig:finite-fundamental-double-row} for a graphical representation. The incoming
lower auxiliary color is set to be \(a_N=c\), and its outgoing upper color is
\(b_N=d\).  In column \(j\), the lower and upper parameters are,
respectively,
\begin{equation}
\label{eq:fundamental-double-row-rapidities}
 u/y_j,\qquad uy_j.
\end{equation}
\begin{definition}
\label{def:finite-fundamental-double-row}
For words \(\boldsymbol\eta,\boldsymbol\xi\in\mathcal C_n^N\), define
\begin{equation}
\label{eq:unclosed-fundamental-double-row}
 \mathfrak T^{N;\mathrm{open}}_{u;\mathbf y}(c,\boldsymbol\eta;d,\boldsymbol\xi)
 =\sum_{\substack{\boldsymbol v,a_0,\ldots,a_{N-1}\\b_0,\ldots,b_{N-1}}}
 K_u(a_0;b_0)
 \prod_{j=1}^N
 R_{u/y_j}(a_j,\eta_j;a_{j-1},v_j)
 R_{uy_j}(v_j,b_{j-1};\xi_j,b_j).
\end{equation}
Set
\[
 \mathfrak T^{N,c}_{u;\mathbf y}(\boldsymbol\eta,\boldsymbol\xi)
 =\sum_{d\in\mathcal C_n}\mathfrak T^{N;\mathrm{open}}_{u;\mathbf y}
                  (c,\boldsymbol\eta;d,\boldsymbol\xi),
 \qquad \mathfrak T^N_{u;\mathbf y}=\mathfrak T^{N,0}_{u;\mathbf y}.
\]
\end{definition}
The lower row is sampled from right to left, then the boundary vertex is sampled,
and finally the upper row is sampled from left to right. The procedure is stochastic.
In the sequel, we use the homogeneous parameters
\begin{equation}
\label{eq:ASEP-column-rapidities}
 y_1=\cdots=y_N=1,
\end{equation}
and suppress \(\mathbf y\) in notation. Note that we sum over the output $d$, cf.
\cite[Definition~3.2 and Section~6]{MR4924418}.

\begin{figure}[htbp]
\centering
\begin{tikzpicture}[
baseline={([yshift=-.5ex]current bounding box.center)},
scale=.85
]

\foreach \xx/\bottomstate/\topstate in {
  2/{\eta_1}/{\xi_1},
  3/{\eta_2}/{\xi_2},
  4/{\eta_3}/{\xi_3},
  7/{\eta_N}/{\xi_N}}
{
  \draw[paper wire,->] (\xx,-.75)--(\xx,1.75);
  \node[below,inner sep=1pt] at (\xx,-.75)
    {$\scriptstyle\bottomstate$};
  \node[above,inner sep=1pt] at (\xx,1.75)
    {$\scriptstyle\topstate$};
}
\node at (5.35,-.75) {$\cdots$};
\node at (5.35,1.75) {$\cdots$};

\draw[paper normal]
  (8,0)--(1,0)--(0,.5);
\draw[paper normal,->]
  (0,.5)--(1,1)--(8,1);
\node[paper boundary] at (0,.5) {};

\node[paper label,left=3pt] at (0,.5)
  {$\scriptstyle K_u$};
\node[right=3pt,inner sep=1pt] at (8,0)
  {$\scriptstyle c$};
\node[right=18pt,inner sep=1pt] at (8,0)
  {$\scriptstyle u$};
\node[right=3pt,inner sep=1pt] at (8,1)
  {$\scriptstyle d \ \ \ u^{-1}$};

\end{tikzpicture}
\caption{The finite fundamental double row
\(\mathfrak T^{N,c}_{u;\mathbf 1}\).}
\label{fig:finite-fundamental-double-row}
\end{figure}

For a finite word write \(\boldsymbol\eta^{N\to c}\) for the replacement of
its rightmost color by \(c\).  Let the finite space ASEP generator \(\mathcal L^{\rm int}_{\alpha,N}\) consist
of the bonds \(1,\ldots,N-1\) and the left boundary term of
\eqref{eq:multispecies-half-space-ASEP-generator}, and set
\begin{equation}
\label{eq:finite-out-generator}
 \mathcal L^{\rm out}_{\alpha,N}f(\boldsymbol\eta)
 =\mathcal L^{\rm int}_{\alpha,N}f(\boldsymbol\eta)
 +\mathbf1_{\{\eta_N\ne0\}}
       \bigl(f(\boldsymbol\eta^{N\to0})-f(\boldsymbol\eta)\bigr).
\end{equation}

\begin{proposition}
\label{prop:double-row-to-half-space-ASEP}
Set
\begin{equation}
\label{eq:ASEP-row-scaling}
 u_\varepsilon=1-(1-q)\varepsilon.
\end{equation}
On the finite state space of pairs \((c,\boldsymbol\eta)\),
\(\mathfrak T^{N;\mathrm{open}}_{u_\varepsilon}=\mathrm{Id}+2\varepsilon\mathcal G_N+
O_N(\varepsilon^2)\), where
\begin{equation}
 \mathcal G_N F(c,\boldsymbol\eta)
 =\mathcal L^{\rm int}_{\alpha,N}F(c,\cdot)(\boldsymbol\eta)
 +\mathfrak c_q(\eta_N,c)
 \bigl(F(\eta_N,\boldsymbol\eta^{N\to c})-F(c,\boldsymbol\eta)\bigr).
 \label{eq:unclosed-double-row-generator}
\end{equation}
Consequently,
\begin{equation}
\label{eq:ASEP-row-first-order}
 \mathfrak T^{N,0}_{u_\varepsilon}
 =\mathrm{Id}+2\varepsilon\mathcal L^{\rm out}_{\alpha,N}
       +O_N(\varepsilon^2).
\end{equation}
The remainders are in the sup-norm operator norm at each fixed $N$.
\end{proposition}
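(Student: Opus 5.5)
The plan is a first-order perturbation of the double row around the point \(u=1\), where every vertex becomes deterministic.

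\emph{Step 1 (the point \(u=1\)).} At \(z=1\), Definition~\ref{def:elementary-colored-R} gives \(R_1(i,j;j,i)=1\) for all \(i,j\), and all other weights vanish. Likewise \eqref{eq:fundamental-boundary-alpha-form} gives \(\kappa_0(1)=1\) and \(\kappa_1(1)=0\), so \(K_1\) is the identity. I will trace the lower row of \eqref{eq:unclosed-fundamental-double-row} from right to left:
\begin{itemize}
\item \(a_{j-1}=\eta_j\) and \(v_j=a_j\), so \(v_j=\eta_{j+1}\) for \(j<N\), \(v_N=c\), and \(a_0=\eta_1\).
\item The boundary gives \(b_0=\eta_1\).
\item The upper row gives \(\xi_j=b_{j-1}\) and \(b_j=v_j\), so \(\xi_j=\eta_j\) and \(d=b_N=c\).
\end{itemize}
Hence \(\mathfrak T^{N;\mathrm{open}}_{1}=\mathrm{Id}\) on pairs \((c,\boldsymbol\eta)\).

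\emph{Step 2 (first-order expansion).} The state space is finite and every weight is rational in \(u\) and regular at \(u=1\). Expanding the product of the \(2N+1\) local weights therefore gives
\[
\mathfrak T^{N;\mathrm{open}}_{u}=\mathrm{Id}+\sum(\text{one vertex perturbed})+O_N((1-u)^2).
\]
Each perturbed term keeps all other vertices on their deterministic \(z=1\) path.

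\emph{Step 3 (local derivatives).} Write \(z=u=1-\delta\) with \(\delta=(1-q)\varepsilon\). For \(i\succ j\), Definition~\ref{def:elementary-colored-R} gives
\[
R_z(i,j;i,j)=\frac{q\delta}{1-q}+O(\delta^2)=q\varepsilon+O(\varepsilon^2),\qquad R_z(j,i;j,i)=\varepsilon+O(\varepsilon^2).
\]
The complementary weights lose the same amounts. At the boundary, \(\kappa_1(1-\delta)=2\alpha\delta+O(\delta^2)=2(1-q)\alpha\varepsilon+O(\varepsilon^2)\).

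\emph{Step 4 (identifying the moves).} I will check what each single perturbation produces in the output.
\begin{itemize}
\item A perturbed lower vertex at column \(j+1\) meets \(\eta_{j+1}\) from the left and \(\eta_{j+2}\) (or \(c\)) from below. Keeping a ``non-crossing'' configuration there, and propagating the deterministic rule, exchanges \(\eta_j,\eta_{j+1}\) in the output.
\item A perturbed upper vertex at column \(j\) meets \(v_j=\eta_{j+1}\) and \(b_{j-1}=\eta_j\), and produces the same exchange.
\item Each of the two contributes \(\mathfrak c_q(\eta_j,\eta_{j+1})\varepsilon\), with the rate \(1\) or \(q\) determined by the priority order, for a total of \(2\varepsilon\mathfrak c_q\).
\item At the rightmost column, the relevant lower and upper vertices involve the pair \((\eta_N,c)\). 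The resulting move is \((c,\boldsymbol\eta)\mapsto(\eta_N,\boldsymbol\eta^{N\to c})\) with total rate \(2\varepsilon\,\mathfrak c_q(\eta_N,c)\).
\item The boundary perturbation turns \(\eta_1\ne1\) into \(1\) with rate \(2\varepsilon(1-q)\alpha\).
\end{itemize}
Stochasticity of every vertex fixes the diagonal terms, so the first-order coefficient is exactly \(2\mathcal G_N\) as in \eqref{eq:unclosed-double-row-generator}.

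\emph{Main obstacle.} The delicate step is this bookkeeping: verifying that a single perturbed vertex in the lower and in the upper row yields the \emph{same} adjacent transposition with the same orientation. It also requires checking that no single-vertex perturbation creates a nonlocal rearrangement. I would settle this by explicitly tracing the deterministic path after the perturbed vertex in both rows, using \(v_j=\eta_{j+1}\) and \(b_{j-1}=\eta_j\).

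\emph{Step 5 (closing).} To get \eqref{eq:ASEP-row-first-order}, apply the expansion with \(c=0\) to functions \(F(d,\boldsymbol\xi)=f(\boldsymbol\xi)\) that do not depend on \(d\); this is exactly summing over \(d\). The boundary move becomes \(\boldsymbol\eta\mapsto\boldsymbol\eta^{N\to0}\) with rate \(\mathfrak c_q(\eta_N,0)=\mathbf 1_{\{\eta_N\ne0\}}\), which is the extra term in \(\mathcal L^{\rm out}_{\alpha,N}\). The sup-norm remainder bound holds at each fixed \(N\) because the state space is finite.
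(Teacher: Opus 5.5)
This is the paper's argument. At $u=1$ the vertices $R_1$ and $K_1$ make the double row the identity, and a first-order term has exactly one exceptional vertex, with weight $\varepsilon$, $q\varepsilon$ or $2(1-q)\alpha\varepsilon$. Each bulk bond is hit once in each pass, and the column-$N$ exceptions give the $(\eta_N,c)$ exchange.

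There is one index slip in the first bullet of Step~4. The lower vertex at column $j+1$ has inputs $\eta_{j+1}$ and $a_{j+1}=\eta_{j+2}$, so it exchanges $\eta_{j+1},\eta_{j+2}$, not $\eta_j,\eta_{j+1}$. The lower partner of the upper vertex at column $j$ is therefore the lower vertex at column $j$, with inputs $\eta_j$ and $\eta_{j+1}$. With your indexing, the lower vertex at column $N$ would produce the $(N-1,N)$ swap, contradicting your own (correct) bullet about the rightmost column.
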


\begin{proof}
At \(u=1\), \(R_1\) swaps its two inputs and \(K_1\) is the identity.
In \eqref{eq:unclosed-fundamental-double-row}, the lower and upper passes
therefore undo each other, including the auxiliary color.  A first-order
transition contains exactly one exception to this deterministic pattern.
We have
\[
 \frac{1-u_\varepsilon}{1-qu_\varepsilon}
     =\varepsilon+O(\varepsilon^2),\qquad
 \frac{q(1-u_\varepsilon)}{1-qu_\varepsilon}
     =q\varepsilon+O(\varepsilon^2),
\]
and
$\kappa_1(u_\varepsilon)=2(1-q)\alpha\varepsilon+O(\varepsilon^2)$
by \eqref{eq:fundamental-boundary-alpha-form}.
A bulk bond is visited in both passes, giving the coefficient \(2\).
At the right endpoint an exception in either pass exchanges \(\eta_N\)
and \(c\), with coefficient \(\mathfrak c_q(\eta_N,c)\); this gives
\eqref{eq:unclosed-double-row-generator}.  For \(c=0\) the exchange is
the exit term in \eqref{eq:finite-out-generator}.  This is the colored,
endpoint-retaining form of the expansion in
\cite[Lemma~6.2]{MR4924418}.
\end{proof}

At fixed $N$, we have
\begin{equation}
\label{eq:finite-row-Euler-limit}
 \lim_{M\to\infty}
 \left(\mathfrak T^{N,0}_{1-(1-q)t/(2M)}\right)^M
 =\exp\bigl(t\mathcal L^{\mathrm{out}}_{\alpha,N}\bigr).
\end{equation}
For a bounded cylinder $f$ supported on $[1,k]$, let $f_N$ be its restriction
to $N$ sites.  Coupling the finite and half-line processes with the same
Harris clocks yields
\begin{equation}
\label{eq:ASEP-row-continuous-time-limit}
 \lim_{N\to\infty}\lim_{M\to\infty}
 \left\|\left(\mathfrak T^{N,0}_{1-(1-q)t/(2M)}\right)^M f_N
             -\mathfrak P_t f\right\|_\infty=0.
\end{equation}

\subsection{Queue boundary specialization}
\label{subsec:boundary-queue-specialization}

There are two boundary regimes.  For $0<\alpha<1$, we first take the
queue limit (infinitely many arrows of one color) and then specialize the
density parameters.  For $\alpha>1$, the boundary kernel instead becomes
deterministic at $x=a^{-1}$, where $a$ is given by
\eqref{eq:a-alpha-map}.
We treat the deterministic case first.

\paragraph{Deterministic boundary.}
The deterministic parameter range is
\begin{equation}
\label{eq:queue-empty-cap-range}
 0<x<1,\qquad a=x^{-1}>1,\qquad s<0.
\end{equation}
At this specialization, the boundary output is independent of the
incoming normal state.

\begin{lemma}
\label{prop:empty-cap-boundary}
Under \eqref{eq:queue-empty-cap-range}, for every usually indexed state
$G$ and hole-indexed state $H$,
\begin{equation}
\label{eq:empty-cap-boundary}
 K_{x,s}(G;H)=\mathbf1_{\{H=0\}}.
\end{equation}
The same formula holds after making any one usually indexed coordinate
infinite.  The local identities of
Propositions~\ref{prop:continued-colored-ybe}
and~\ref{prop:continued-colored-reflection} are valid at this
specialization whenever their other weights are regular.
\end{lemma}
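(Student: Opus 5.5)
We substitute $a=x^{-1}$ directly into the explicit continued weight \eqref{eq:continued-colored-K} of Proposition~\ref{prop:continued-colored-K}. The factor $(ax;q)_{|H|}$ becomes $(1;q)_{|H|}$, which vanishes as soon as $|H|\ge1$, because its first factor is $1-ax=0$. It therefore suffices to check two things: the remaining factors are finite at this specialization, and the weight at $H=0$ equals one.

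For finiteness, consider the denominator $(x^2;q)_{G_1+|H|}$. It is nonzero since $0<x<1$. The normalization becomes
\[
\mathcal N^K_{x,s}=\frac{(x^2;q)_\infty(sx^2;q)_\infty}{(x^2;q)_\infty(sx^2;q)_\infty}=1,
\]
because $x/a=x^2$ and $sx/a=sx^2$. Here $(sx^2;q)_\infty\ne0$ since $s<0$, and $(x^2;q)_\infty\neq 0$. The remaining factors are all finite: $(x/a;q)_{G_1}=(x^2;q)_{G_1}$, $(q^{|G|}/s;q)_{H_0}$, the powers, and the Gaussian binomials. The only subtlety is that the removable singularity at $s=0$ plays no role, since $s<0$ is fixed.

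At $H=0$, the weight reduces to $\mathcal N^K_{x,s}(x^2;q)_{G_1}/(x^2;q)_{G_1}=1$. For $H\ne 0$ the weight is zero by the vanishing factor. This gives \eqref{eq:empty-cap-boundary}. As a consistency check, this also agrees with the sum-to-one property, although $a>1$ lies outside the range of Proposition~\ref{prop:continued-K-stochasticity}, so that proposition cannot be invoked here.

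For an infinite usual coordinate $G_k\to\infty$, we note that the formula at $H=0$ is identically $1$, independent of $G$. For $H\ne0$ it is identically $0$ for every finite $G$, provided the binomial support condition $H_i\le G_i$ is respected. The limit therefore exists trivially and gives the same indicator.

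The local identities come from Propositions~\ref{prop:continued-colored-ybe} and~\ref{prop:continued-colored-reflection}. Both sides of these identities are finite sums of rational functions of $(a,x,s,\dots)$. The identities hold on an open set of parameters, hence as rational identities, and so they persist at $a=x^{-1}$ wherever every weight involved is regular. The main obstacle is this regularity: we must make sure that no weight on either side acquires a pole at $a=x^{-1}$. For the boundary weight $K_{x,s}$ this was checked above. For the bulk weights $L^{\mathrm{hor}},\widehat L^{\mathrm{ver}}$ and $K_y$, this is exactly the hypothesis ``whenever their other weights are regular''. The only remaining point is that the specialization commutes with the finite sums, which is immediate since the sums are finite by conservation.
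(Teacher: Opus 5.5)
Your argument is correct and is essentially the paper's proof: at $a=x^{-1}$ the factor $(ax;q)_{|H|}=(1;q)_{|H|}$ kills every $H\neq0$, while at $H=0$ both $\mathcal N^K_{x,s}$ and the ratio $(x/a;q)_{G_1}/(x^2;q)_{G_1}$ equal one, and since the result does not depend on $G$ it survives the queue limit. Your continuity argument for the local identities fills in a point the paper's proof leaves implicit. One small correction: because of the infinite products in $\mathcal N^K_{x,s}$, the weights are meromorphic rather than rational in $a$ and $s$, but this does not affect the reasoning (and the Yang--Baxter identities do not involve $a$ at all).
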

\begin{proof}
Consider \eqref{eq:continued-colored-K}. If $|H|>0$, the factor
$(ax;q)_{|H|}=(1;q)_{|H|}$ vanishes.  If $H=0$, the normalization
\eqref{eq:continued-K-normalization} and the ratio
$(x/a;q)_{G_1}/(x^2;q)_{G_1}$ both equal one.
Since this formula does not depend on the incoming
multiplicities, it also holds in the queue limit.
\end{proof}

\paragraph{Stochastic boundary.}
We now work in the stochastic parameter range
\eqref{eq:continued-K-stochastic-range}.  The queue specialization sends
the number of normally indexed arrows of color~$m$ to infinity, while
the hole-indexed boundary output $H$ remains finite.  We use the laws
\(p_{u,v}^{(q)}\), \(q\text{-}\mathrm{NB}(u,v)\), \(p_v^{(q)}\), and
\(q\text{-}\mathrm{Poi}(v)\) of
\eqref{eq:q-negative-binomial-law}--\eqref{eq:q-Poisson-law}.

\begin{lemma}
\label{lem:queue-boundary-specialization}
Assume \eqref{eq:continued-K-stochastic-range}.  Let
\[
 A=(A_1,\ldots,A_{m-1},\infty,A_{m+1},\ldots,A_n)
\]
be a state with infinitely many arrows of color~\(m\).  The limit
\begin{equation}
\label{eq:def-queue-K}
 \mathbb K^{(m)}_{x,s}(A;H)
 :=\lim_{M\to\infty}
 K_{x,s}\bigl(
 (A_1,\ldots,A_{m-1},M,A_{m+1},\ldots,A_n);H\bigr)
\end{equation}
exists for every fixed hole-indexed output~$H$.  For $m=1$, all
weak-color outputs vanish, and
\begin{equation}
\label{eq:queue-K-m-one}
 \mathbb K^{(1)}_{x,s}(A;H)
 =\mathbf1_{\{H_2=\cdots=H_n=0\}}
   p_{ax,sx/a}^{(q)}(H_0).
\end{equation}
For $m\geq2$, the output is supported on
\begin{equation}
\label{eq:queue-K-support}
 0\leq H_i\leq A_i\quad(2\leq i<m),
 \qquad H_i=0\quad(i>m),
\end{equation}
with $H_0,H_m\geq0$ unrestricted.  Thus only the $m$th weak-color
multiplicity can be arbitrarily large.  Write $T:=H_{[2,m]}$; on this
support,
\begin{equation}
\label{eq:queue-K-m-general}
\begin{aligned}
\mathbb K^{(m)}_{x,s}(A;H)
&=\mathcal N^K_{x,s}
  \frac{(x/a;q)_{A_1}(ax;q)_{H_0+T}(sx/a)^{H_0}}
       {(x^2;q)_{A_1+H_0+T}(q;q)_{H_0}(q;q)_{H_m}}
  \left(-\frac xa\right)^T\\
&\quad\times
  q^{A_1T+\binom T2+
     \sum_{2\leq i<j\leq m}(A_i-H_i)H_j}
  \prod_{i=2}^{m-1}\begin{bmatrix}A_i\\H_i\end{bmatrix}_{\!q}.
\end{aligned}
\end{equation}
The weights are nonnegative and
\begin{equation}
\label{eq:queue-K-normalization}
       \sum_{H}\mathbb K^{(m)}_{x,s}(A;H)=1.
\end{equation}
\end{lemma}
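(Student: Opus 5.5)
The plan is to start from the explicit formula \eqref{eq:continued-colored-K} of Proposition~\ref{prop:continued-colored-K}, substitute $G=(A_1,\ldots,A_{m-1},M,A_{m+1},\ldots,A_n)$, and let $M\to\infty$ factor by factor, using $0<q<1$ so that $q^M\to0$.

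\emph{Support.} The continued weight carries $\prod_{i\ge2}\begin{bmatrix}G_i\\H_i\end{bmatrix}_q$ and vanishes unless $H_i\le G_i$. With $G_i=A_i$ for $i\ne m$, this gives $H_i\le A_i$ for $2\le i<m$. For $i>m$ the factor $q^{G_{[1,i-1]}H_i}$ contains $q^{MH_i}\to0$, so the limit vanishes unless $H_i=0$. (If one insists on $A_i=0$ for $i>m$, as in the queue specialization, this is automatic.) The coordinates $H_0$ and $H_m$ stay unrestricted.

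\emph{Pointwise limits on the support.}
\begin{itemize}
\item $(x/a;q)_{G_1}$ and $(x^2;q)_{G_1+|H|}$ do not involve $M$ when $m\ge2$.
\item $(q^{|G|}/s;q)_{H_0}\,(sx/a)^{H_0}$ equals, by the removable-singularity rewriting \eqref{eq:continued-K-removable-product}, $(x/a)^{H_0}\prod_{r<H_0}(s-q^{|G|+r})$, which tends to $(sx/a)^{H_0}$.
\item $\begin{bmatrix}M\\H_m\end{bmatrix}_q\to 1/(q;q)_{H_m}$.
\item In the exponent, $\sum_j G_{[1,j-1]}H_j$ contributes no $M$ for $j\le m$. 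We then regroup $\sum_{j=2}^m\bigl(G_{[1,j-1]}H_j+\binom{H_j}2\bigr)$ as $A_1T+\binom T2+\sum_{2\le i<j\le m}(A_i-H_i)H_j$, where $T=H_{[2,m]}$. This is the same bookkeeping identity as \eqref{eq:continued-K-exponent-splitting}, using $\binom T2=\sum_j\binom{H_j}2+\sum_{i<j}H_iH_j$.
\end{itemize}
Together these give \eqref{eq:queue-K-m-general}.

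\emph{The case $m=1$.} Here $G_1=M\to\infty$, so $H_i\le G_i=A_i$ for $i\ge2$. The factor $q^{G_1H_j}$ with $G_1=M$ kills every weak output, since $j\ge2>m$. For $H=(h,0,\ldots,0)$ we have $(x/a;q)_M/(x^2;q)_{M+h}\to(x/a;q)_\infty/(x^2;q)_\infty$, which cancels the corresponding part of $\mathcal N^K_{x,s}$ and leaves $(sx/a;q)_\infty/(sx^2;q)_\infty$. The product with $(ax;q)_h(sx/a)^h/(q;q)_h$ is exactly $p^{(q)}_{ax,sx/a}(h)$ by \eqref{eq:q-negative-binomial-law}, since $ax\cdot sx/a=sx^2$.

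\emph{Nonnegativity and normalization.} Nonnegativity passes to the limit from Proposition~\ref{prop:continued-K-stochasticity}. For $m=1$ the normalization is just the $q$-binomial theorem. For $m\ge2$ the main obstacle is interchanging limit and sum, since both $H_0$ and $H_m$ are unbounded. I would prove a summable dominating bound uniform in $M$:
\begin{itemize}
\item in $H_0$, the geometric factor $(sx/a)^{H_0}$ together with the bounded ratios $|(q^{|G|}/s;q)_{H_0}(s)^{H_0}|\le\prod(|s|+1)$, refined to $\le C\,|s|^{H_0}$ for large $H_0$, gives geometric decay;
\item in $H_m$, the factor $q^{\binom T2}$ gives superexponential decay, uniformly in $M$, because $\begin{bmatrix}M\\H_m\end{bmatrix}_q\le1/(q;q)_\infty$;
\item the Pochhammer ratios $(ax;q)/(x^2;q)$ are bounded uniformly.
\end{itemize}
Dominated convergence applied to $\sum_HK_{x,s}(G;H)=1$ then yields \eqref{eq:queue-K-normalization}.

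If that bound proves delicate, the fallback is to sum \eqref{eq:queue-K-m-general} directly: first over $H_m$ via a $q$-binomial identity in $q^{\binom{H_m}2}(-x/a)^{H_m}$ combined with the $(ax;q)$ factors, then over $H_0$.
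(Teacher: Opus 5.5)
Your proposal is correct and follows the paper's own proof. You substitute $G=(A_1,\ldots,A_{m-1},M,A_{m+1},\ldots,A_n)$ into \eqref{eq:continued-colored-K}, remove outputs with $H_j>0$ for $j>m$ (and all weak outputs when $m=1$) through the factor $q^{MH_j}$, and pass to the limit in $(q^{|G|}/s;q)_{H_0}$ and $\begin{bmatrix}M\\H_m\end{bmatrix}_q$. You then identify the $m=1$ case as $p^{(q)}_{ax,sx/a}$, which is normalized by the $q$-binomial theorem.

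The one addition is your dominated-convergence argument for the normalization when $m\ge2$, which the paper leaves implicit. It is valid: the bounds $\begin{bmatrix}M\\H_m\end{bmatrix}_q\le 1/(q;q)_{H_m}$ and $|(q^{|G|}/s;q)_{H_0}s^{H_0}|\le |s|^{H_0}\prod_{r\ge0}(1+q^r/|s|)$, together with the factor $q^{\binom T2}$, do give summable domination uniform in $M$. This transfers the finite-$M$ sum-to-one of Proposition~\ref{prop:continued-K-stochasticity} to the limit.
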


\begin{proof}
Suppose first that $m\geq2$.  In
\eqref{eq:continued-colored-K}, put
\[
 G=(A_1,\ldots,A_{m-1},M,A_{m+1},\ldots,A_n).
\]
For fixed \(H\),
\[
 (q^{M+\sum_{a\ne m}A_a}/s;q)_{H_0}\longrightarrow1,
 \qquad
 \begin{bmatrix}M\\H_m\end{bmatrix}_{\!q}
 \longrightarrow\frac1{(q;q)_{H_m}}.
\]
If some \(H_j>0\) with \(j>m\), the exponent contains a factor
\(q^{MH_j}\), so the limit is zero.  Collecting the remaining powers gives
\eqref{eq:queue-K-m-general}.  For \(m=1\), every term with
\(H_{[2,n]}>0\) contains a vanishing power of \(q^M\).  The remaining factors
reduce to
\eqref{eq:queue-K-m-one}.  This is a \(q\)-negative binomial law, whose
sum-to-one property follows from the \(q\)-binomial theorem.
\end{proof}

We next send $x$ to zero while keeping $sx/a$ fixed.  The resulting
boundary law has no weak-color output: only the hole count $H_0$
remains random.

\begin{corollary}
\label{cor:queue-K-density-specialization}
Let \(0<\alpha<1\), \(0<\rho<1\), and \(\alpha+\rho>1\), and set
\begin{equation}
\label{eq:queue-density-boundary-parameters}
 a=\frac{\alpha}{\alpha-1},
 \qquad
 s_x=-\frac{1-\rho}{\rho x},
 \qquad
 \beta:=\frac{s_xx}{a}
 =\frac{(1-\alpha)(1-\rho)}{\alpha\rho}\in(0,1).
\end{equation}
The condition $\alpha+\rho>1$ is equivalent to $\beta<1$.
For the queue specialization in any color~$m$ and every fixed finite
choice of the remaining coordinates of~$A$,
\begin{equation}
\label{eq:queue-K-density-limit}
 \mathbb K^{(m)}_{x,s_x}(A;H)
 \xrightarrow[x\downarrow0]{}
 \mathbf1_{\{H_2=\cdots=H_n=0\}}p_\beta^{(q)}(H_0).
\end{equation}
\end{corollary}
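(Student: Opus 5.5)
We start from the explicit formulas of Lemma~\ref{lem:queue-boundary-specialization} and take $x\downarrow0$ with $s=s_x$. The combination $s_xx/a=\beta$ stays fixed, while $x/a\to0$, $ax\to0$ and $x^2\to0$.

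\textbf{Normalization.} We have
\[
\mathcal N^K_{x,s_x}=\frac{(x^2;q)_\infty(\beta;q)_\infty}{(x/a;q)_\infty(\beta x;q)_\infty}.
\]
Since $s_xx^2=\beta a x\to0$, this converges to $(\beta;q)_\infty$.

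\textbf{The case $m=1$.} By \eqref{eq:queue-K-m-one} the weight is $p^{(q)}_{ax,\beta}(H_0)$. Its prefactor $(\beta;q)_\infty/(ax\beta;q)_\infty$ tends to $(\beta;q)_\infty$, and $(ax;q)_{H_0}\to1$. So the weight tends to $p^{(q)}_{\beta}(H_0)$, which is \eqref{eq:queue-K-density-limit}.

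\textbf{The case $m\ge2$.} On the support \eqref{eq:queue-K-support}, write $T=H_{[2,m]}$ and use \eqref{eq:queue-K-m-general}. The factors $(x/a;q)_{A_1}$, $(ax;q)_{H_0+T}$ and $(x^2;q)_{A_1+H_0+T}$ all tend to $1$. The factor $(sx/a)^{H_0}=\beta^{H_0}$ is constant. The factor $(-x/a)^T$ tends to $0$ whenever $T>0$. All remaining factors (powers of $q$, $q$-binomials, $1/(q;q)_{H_0}(q;q)_{H_m}$) do not depend on $x$. Hence the pointwise limit is $\mathbf 1_{\{T=0\}}(\beta;q)_\infty\beta^{H_0}/(q;q)_{H_0}$. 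Since $H_i=0$ for $i>m$ on the support, this equals $\mathbf1_{\{H_2=\cdots=H_n=0\}}p^{(q)}_\beta(H_0)$.

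\textbf{Parameters.} We check that the specialization stays in the stochastic range \eqref{eq:continued-K-stochastic-range} for small $x$. From $0<\alpha<1$ we get $a<0$, and $s_x<0$ holds. We have $\beta=(1-\alpha)(1-\rho)/(\alpha\rho)$. The inequality $\beta<1$ is equivalent to $(1-\alpha)(1-\rho)<\alpha\rho$, i.e.\ to $1-\alpha-\rho<0$, i.e.\ to $\alpha+\rho>1$. Clearly $\beta>0$.

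\textbf{Main obstacle.} The only point needing care is that the convergence is pointwise in $H$. The limit law is a probability distribution, so no mass is lost. If convergence in total variation is wanted, Scheffé's lemma gives it, because each prelimit weight is itself stochastic by \eqref{eq:queue-K-normalization}.
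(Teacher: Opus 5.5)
Your proof is correct and follows the paper's own argument: substitute the specialization into the explicit queue-limit formulas of Lemma~\ref{lem:queue-boundary-specialization}, let the factor $(-x/a)^{H_{[2,m]}}$ kill every weak-color output, send the finite Pochhammer factors to one, and use $\mathcal N^K_{x,s_x}\to(\beta;q)_\infty$. One small typo: the denominator of the normalization should be $(\beta a x;q)_\infty=(s_xx^2;q)_\infty$, as your next sentence indicates, not $(\beta x;q)_\infty$; this does not affect the limit.
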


\begin{proof}
For fixed incoming and outgoing states, substitute
\eqref{eq:queue-density-boundary-parameters} in
\eqref{eq:queue-K-m-one} or \eqref{eq:queue-K-m-general}.
For $m\geq2$, any nonzero term with $H_{[2,n]}>0$ has
$H_{[2,m]}>0$, and the factor $(-x/a)^{H_{[2,m]}}$ forces it to zero;
for $m=1$, such terms already vanish.  When $H_{[2,n]}=0$, the finite Pochhammer factors tend to
one, and
\[
 \mathcal N^K_{x,s_x}
 =\frac{(x^2;q)_\infty(\beta;q)_\infty}
 {(x/a;q)_\infty(\beta ax;q)_\infty}
 \longrightarrow(\beta;q)_\infty.
\]
This proves the claim. 
\end{proof}

\paragraph{Local Yang--Baxter and reflection identities.}
The finite-color identities from Section~\ref{sec:colored-vertex-weights},
continued in Subsection~\ref{subsec:continued-colored-identities}, also
survive the queue limit.  At fixed exterior states, each component
identity involves only finitely many internal configurations; thus
passing to an infinite color multiplicity can be done directly. This establishes the following proposition. 

\begin{proposition}
\label{thm:queue-ybe-reflection}
\label{prop:queue-ybe-reflection}
Assume either \eqref{eq:continued-K-stochastic-range} or
\eqref{eq:queue-empty-cap-range}; in the latter case put
$\mathbb K^{(m)}_{x,s}(A;H)=\mathbf1_{\{H=0\}}$.
The normally indexed Yang--Baxter equation of
Proposition~\ref{prop:continued-colored-ybe}, with one fused line and
two fundamental lines, remains valid after the color-$m$ queue limit:
its fused vertices become $\mathbb W^{(m)}$.  The corresponding mixed
Yang-Baxter equation on the reflected segment uses a hole-indexed
fused line with vertices $\widehat L^{\rm ver}$.

The queue reflection equation is as follows.  Let $A$ have infinitely
many normally indexed arrows of color~$m$, let $H$ be an exterior
hole-indexed state, and take $i,j\in\mathcal C_n$.  Then
\begin{align}
&\sum_{\gamma_1,\gamma_2,A^{(1)},H^{(1)}}
\mathbb W^{(m)}_{x/y,s}(A,i;A^{(1)},\gamma_1)
\mathbb K^{(m)}_{x,s}(A^{(1)};H^{(1)})
\widehat L^{\mathrm{ver}}_{xy,s}
 (\gamma_1,H^{(1)};\gamma_2,H)K_y(\gamma_2;j)
\notag\\
&\qquad=
\sum_{\gamma_1,\gamma_2,A^{(1)},H^{(1)}}
K_y(i;\gamma_1)
\mathbb W^{(m)}_{xy,s}(A,\gamma_1;A^{(1)},\gamma_2)
\mathbb K^{(m)}_{x,s}(A^{(1)};H^{(1)})
\widehat L^{\mathrm{ver}}_{x/y,s}
 (\gamma_2,H^{(1)};j,H).
\label{eq:queue-reflection-components}
\end{align}
In both sums, $A^{(1)}$ and $H^{(1)}$ are internal normally and
hole-indexed states, respectively, and $\gamma_1,\gamma_2$ are internal
colors.
\end{proposition}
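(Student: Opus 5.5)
The plan is to obtain every identity in the proposition as a limit of the corresponding identity before the queue limit, which holds by Propositions~\ref{prop:continued-colored-ybe} and~\ref{prop:continued-colored-reflection}. We replace the infinite color-$m$ coordinate by a finite $M$, write the finite-$M$ identity, and let $M\to\infty$ term by term. The argument has three ingredients.

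\emph{Step 1 (finiteness of sums).} Fix all exterior states, so that $A$ becomes $A^{M}:=(A_1,\ldots,A_{m-1},M,A_{m+1},\ldots)$. In the normally indexed Yang--Baxter equation with one fused line and two fundamental lines, the internal fused state is determined by the exterior data and the internal single colors, through conservation at each vertex. So each side is a sum over at most $(n+1)^3$ terms, and this index set does not depend on $M$. The same holds for the mixed Yang--Baxter equation on the reflected segment. There the hole-indexed fused line $\widehat L^{\rm ver}$ has spin one in the physical direction and a finite exterior $H$, so the internal $H$ is again determined by the colors.

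\emph{Step 2 (the reflection equation).} The only genuine infinite sum is over $H^{(1)}$, the output of the boundary vertex in \eqref{eq:continued-colored-reflection-components}. After $\widehat L^{\rm ver}$, conservation forces $H^{(1)}=H+\widehat E_{\gamma_2}-\widehat E_{\gamma_1}$ on the left side, and $H^{(1)}=H+\widehat E_j-\widehat E_{\gamma_2}$ on the right side, up to the $\widehat E_1=0$ convention. So $H^{(1)}$ ranges over finitely many vectors near the fixed exterior $H$. The internal $A^{(1)}$ is determined by $A$, $i$ and $\gamma_1$ (or $\gamma_2$), and has color-$m$ coordinate $M\pm O(1)$. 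Each side is therefore a finite sum of products of four weights, with the number of terms uniform in $M$.

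\emph{Step 3 (pointwise convergence).} Each factor converges as $M\to\infty$:
\begin{itemize}
\item $L^{\rm hor}$ converges to $\mathbb W^{(m)}$ by Lemma~\ref{lem:queue-specialization};
\item $K_{x,s}$ converges to $\mathbb K^{(m)}$ by Lemma~\ref{lem:queue-boundary-specialization}, applied with a color-$m$ coordinate $M+c$ for a fixed shift $c$;
\item the factors $\widehat L^{\rm ver}$ and $K_y$ do not involve $M$.
\end{itemize}
Passing to the limit in finitely many terms then gives \eqref{eq:queue-reflection-components}. In the deterministic regime \eqref{eq:queue-empty-cap-range}, Lemma~\ref{prop:empty-cap-boundary} shows $K_{x,s}=\mathbf1_{\{H=0\}}$ for every finite $M$, so the limit is immediate. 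The continued identities remain valid at $a=x^{-1}$ by continuity in $a$, provided the other weights are regular, since both sides are finite rational expressions.

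\emph{Main obstacle.} The delicate point is making sure that no pole appears in the limit and that each identity is applied at parameters where the finite-$M$ weights are defined. For $\mathbb W^{(m)}_{z,s}$ this requires $1-sz\neq0$, and for $K_{x,s}$ the normalization $\mathcal N^K_{x,s}$ requires $|sx/a|<1$. Both are guaranteed by the stated ranges. I would also check that Lemma~\ref{lem:queue-boundary-specialization}'s limit is genuinely independent of the fixed $O(1)$ shift in the color-$m$ coordinate. This follows from its proof: the only $M$-dependence is through factors $q^{M+c}\to0$ and the binomial $\left[\begin{smallmatrix}M+c\\H_m\end{smallmatrix}\right]_q\to 1/(q;q)_{H_m}$.
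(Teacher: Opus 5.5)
Your proposal is correct and follows the same route as the paper. The paper likewise notes that, at fixed exterior states, each component identity from Propositions~\ref{prop:continued-colored-ybe} and~\ref{prop:continued-colored-reflection} has only finitely many internal configurations, since conservation pins down $A^{(1)}$ and $H^{(1)}$, so the color-$m$ multiplicity can be sent to infinity term by term, with the deterministic case covered by Lemma~\ref{prop:empty-cap-boundary}. You additionally spell out details the paper leaves implicit: the index set is uniform in $M$, the boundary limit does not depend on the $O(1)$ shift, and no poles occur in the stated parameter ranges.
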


\subsection{Combination of engines}
\label{subsec:half-line-engine}

We use the conventions of Subsection~\ref{subsec:intro-stationary-construction},
padding carrier states with zeros to $n=r$ colors.  Write
$\mathcal C_m=\{0,\ldots,m\}$ and let $\mathbf0^{(m)}$ be the state
with infinitely many arrows of color~$m$ and no others.  To formulate
the commutation identities, we extend the $m$th engine to physical inputs
in $\mathcal C_m^N$, rather than just $\mathcal C_{m-1}^N$.

\begin{definition}
\label{def:half-line-engine}
Assume either \eqref{eq:continued-K-stochastic-range} or
\eqref{eq:queue-empty-cap-range}. Let $G$ be a normally indexed state
with $G_m=\infty$ and $G_k=0$ for $k>m$, let $H$ be a hole-indexed
state with $H_k=0$ for $k>m$, and take
$\boldsymbol\eta,\boldsymbol\xi\in\mathcal C_m^N$.  Define the
\emph{open engine} by
\begin{align}
 &\mathfrak E^{N,m;\mathrm{open}}_{x,s}(G,\boldsymbol\eta;
                         H,\boldsymbol\xi)\notag\\*
 &\quad=\sum_{\substack{\boldsymbol v,A^{(0)},\ldots,A^{(N-1)}\\*
                        H^{(0)},\ldots,H^{(N-1)}}}
 \mathbb K^{(m)}_{x,s}(A^{(0)};H^{(0)})
 \prod_{j=1}^N
 \mathbb W^{(m)}_{x,s}(A^{(j)},\eta_j;A^{(j-1)},v_j)
 \widehat L^{\rm ver}_{x,s}(v_j,H^{(j-1)};\xi_j,H^{(j)}),
 \label{eq:unclosed-engine-kernel}
\end{align}
where $A^{(N)}=G$ and $H^{(N)}=H$.  The normal carrier runs from
site $N$ to site $1$, and the return carrier runs from $1$ to $N$.
Summing over the return-carrier endpoint gives the closed kernels
\[
 \mathfrak E^{N,m;G}_{x,s}(\boldsymbol\eta,\boldsymbol\xi)
 =\sum_H\mathfrak E^{N,m;\mathrm{open}}_{x,s}
                 (G,\boldsymbol\eta;H,\boldsymbol\xi),
 \qquad
 \mathfrak E^{N,m}_{x,s}
 =\mathfrak E^{N,m;\mathbf0^{(m)}}_{x,s}.
\]
The open endpoints will be needed for the construction in
Subsection~\ref{subsec:engine-commutation}; the corresponding vertex model
is depicted in Figure~\ref{fig:finite-half-line-engine}.
\end{definition}

\begin{figure}[htbp]
\centering
\begin{tikzpicture}[
baseline={([yshift=-.5ex]current bounding box.center)},
 x=.74cm,y=.74cm,
 engstate/.style={font=\scriptsize,inner sep=.8pt}
]

\foreach \yy/\outstate/\rap in {
  .9/{\xi_N}/{y_N},
  2.2/{\cdots}/{\cdots},
  3.5/{\xi_2}/{y_2},
  4.8/{\xi_1}/{y_1}}
{
  \draw[paper wire,->] (.35,\yy)--(12.15,\yy);
  \node[engstate,left=2pt] at (.35,\yy) {$0$};
  \node[engstate,left=15pt] at (.35,\yy) {$\rap$};
  \node[engstate,right=2pt] at (12.15,\yy) {$\outstate$};
}

\draw[paper normal] (.85,.10)--(.85,5.15)--(1.65,5.80);
\draw[paper hole,->] (1.65,5.80)--(2.45,5.15)--(2.45,.10);
\node[paper boundary] at (1.65,5.80) {};
\node[engstate,below left=2pt] at (.85,.10) {$\mathbf0^{(1)}$};
\node[engstate,below left=2pt,text=holecolor] at (2.45,.10)
  {$\sum H^{(1)}$};
\node[engstate,below left=15pt] at (.85,.10) {$(x_1,s_1)$};
\node[engstate,above=3pt] at (1.65,5.80) {$\mathbb K^{(1)}$};

\draw[paper normal] (3.95,.10)--(3.95,5.15)--(4.75,5.80);
\draw[paper hole,->] (4.75,5.80)--(5.55,5.15)--(5.55,.10);
\node[paper boundary] at (4.75,5.80) {};
\node[engstate,below right=2pt] at (3.95,.10) {$\mathbf0^{(2)}$};
\node[engstate,below right=2pt,text=holecolor] at (5.55,.10)
  {$\sum H^{(2)}$};
\node[engstate,below right=15pt] at (3.95,.10) {$(x_2,s_2)$};
\node[engstate,above=3pt] at (4.75,5.80) {$\mathbb K^{(2)}$};

\draw[paper normal] (9.05,.10)--(9.05,5.15)--(9.85,5.80);
\draw[paper hole,->] (9.85,5.80)--(10.65,5.15)--(10.65,.10);
\node[paper boundary] at (9.85,5.80) {};
\node[engstate,below left=2pt] at (9.05,.10) {$\mathbf0^{(r)}$};
\node[engstate,below right=2pt,text=holecolor] at (10.65,.10)
  {$\sum H^{(r)}$};
\node[engstate,below=15pt] at (9.85,.10)
  {$(x_r,s_r)$};
\node[engstate,above=3pt] at (9.85,5.80) {$\mathbb K^{(r)}$};
\node[engstate] at (7.20,2.85) {$\cdots$};
\end{tikzpicture}
\caption{The combination of engines \eqref{eq:finite-engine-array}, after a clockwise rotation. The engines of colors \(3,\ldots,r-1\) are omitted between the
second and last engines.}
\label{fig:finite-half-line-engine}
\end{figure}

For $0<\alpha<1$, we use the density limit
\begin{equation}
\label{eq:engine-density-specialization}
 x_m\downarrow0,\qquad s_m=-\frac{\bar\rho_m}{\rho_m x_m},\qquad
 a=\frac{\alpha}{\alpha-1}.
\end{equation}
For $\alpha>1$, use instead the exact specialization
\begin{equation}
\label{eq:engine-large-rate-specialization}
 a=\frac{\alpha}{\alpha-1},\qquad
 x_m=a^{-1}=1-\alpha^{-1},\qquad
 s_m=-\frac{1-\rho_m}{\rho_m x_m}.
\end{equation}
We also use the density-specialized vertices
$\mathbb W^{(m)}_{\rho_m}$, $\mathbb K^{(m)}_{\alpha,\rho_m}$, and
$\widehat L^{\mathrm{ver}}_{\rho_m}$ of
Subsection~\ref{subsec:intro-stationary-construction}, and denote
the corresponding closed engines by $\mathfrak E^{N,m}$.  Their combination produces
\begin{equation}
\label{eq:finite-engine-array}
 \mu^N_{\mathrm{eng}}
 =\delta_{\boldsymbol0_N}\mathfrak E^{N,1}\cdots\mathfrak E^{N,r}.
\end{equation}
At $\alpha=1$ the two regimes have the common limit $x=0$ with a
zero boundary state.  For both regimes, set
\begin{equation}
\label{eq:all-rate-boundary-parameter}
 \beta_m:=\frac{(1-\alpha)_+(1-\rho_m)}{\alpha\rho_m}.
\end{equation}
The boundary hole count has law $q\text{-}\mathrm{Poi}(\beta_m)$, with
$\beta_m=0$ (hence a deterministic zero boundary) when $\alpha\ge1$.

\begin{proposition}
\label{prop:density-array-convergence}
For every fixed size and permitted incoming state,
\eqref{eq:engine-density-specialization} converges under the queue specialization in total variation
to the vertex model of Definition~\ref{def:intro-queue-array} for $0<\alpha<1$.
For $\alpha>1$, \eqref{eq:engine-large-rate-specialization} gives those
queues exactly.  The boundary output is independent of the incoming
normally indexed state, and the resulting density kernels are
continuous in total variation at $\alpha=1$ and $q=0$.
\end{proposition}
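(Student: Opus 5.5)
The plan is to compute the limit of each of the three vertex families separately under the specializations \eqref{eq:engine-density-specialization} and \eqref{eq:engine-large-rate-specialization}. I would then assemble the engine kernel \eqref{eq:unclosed-engine-kernel} and check that the only infinite summation, over the boundary output $H^{(0)}$, behaves well in total variation.

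\emph{Usual vertices.} Take the weights $\mathbb W^{(m)}_{z,s}$ of Lemma~\ref{lem:queue-specialization} with $z=x_m$ and $s=s_m$. Since $s_mx_m=-\bar\rho_m/\rho_m$, we get $1-sz=\rho_m^{-1}$ and $-sz/(1-sz)=1-\rho_m$. Every weight then becomes $\rho_m$ times its numerator. Examples are $\rho_m q^{A_{[1,k-1]}}(1-x_mq^{A_k})$ and $\rho_m x_m q^{A_{[1,\ell-1]}}(1-q^{A_\ell})$.

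For $\alpha>1$ we have $x_m=1-\alpha^{-1}$, and this is exactly Figure~\ref{fig:intro-normal-weights}. For $0<\alpha<1$, letting $x_m\downarrow0$ gives Figure~\ref{fig:intro-normal-weights} with $x=0=\max\{0,1-\alpha^{-1}\}$. The convergence is uniform in the carrier state, because $x_m$ enters only through factors $x_mq^{(\cdot)}$ bounded by $x_m$.

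\emph{Return vertices.} The same substitution in Figure~\ref{fig:colored-L-ver-families} gives
\[
(q^{|H|}-sz)/(1-sz)=1-\rho_m+\rho_mq^{|H|},\qquad z(q^{|H|}-s)/(1-sz)=1-\rho_m+\rho_m x_mq^{|H|}.
\]
The remaining four families follow in the same way, so the result is Figure~\ref{fig:intro-return-weights}. Again the convergence is uniform in $H$.

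\emph{Boundary vertex.} For $0<\alpha<1$, first check that the specialization lies in the stochastic range \eqref{eq:continued-K-stochastic-range}. Here $a<0$ and $s_m<0$, and \eqref{eq:intro-vertex-parameters} gives $s_mx_m/a=\beta_m\in(0,1)$. Then Lemma~\ref{lem:queue-boundary-specialization} and Corollary~\ref{cor:queue-K-density-specialization} give pointwise convergence of $\mathbb K^{(m)}_{x_m,s_{x_m}}(A;\cdot)$ to $\mathbf1_{\{H_{[2,n]}=0\}}p^{(q)}_{\beta_m}(H_0)$. Both sides are probability laws, by \eqref{eq:queue-K-normalization}. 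Scheffé's lemma therefore upgrades this to total variation convergence, for every fixed finite $A$ off the $m$th coordinate. For $\alpha>1$, Lemma~\ref{prop:empty-cap-boundary} with $a=x_m^{-1}$ gives exactly $\mathbf1_{\{H=0\}}$, including after the queue limit. In both cases the limit does not depend on $A$, which proves the independence claim.

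\emph{Assembling the engine.} For fixed $N$, the input $\boldsymbol\eta$ and the start $G=\mathbf0^{(m)}$, the usual carrier passes through only finitely many states, since its non-$m$ coordinates are bounded by $N$. So the normal pass is a finite sum of products of uniformly convergent weights. The return pass, given $H^{(0)}$, is also finite, because conservation determines $H^{(j)}$ from the colors. Write the engine output law as $\sum_{H^{(0)}}\mathbb K(\cdot;H^{(0)})\,F_x(H^{(0)},\cdot)$, where $F_x$ is a stochastic kernel converging uniformly in $H^{(0)}$. The total variation distance to the limit is then at most
\[
\|\mathbb K_x-\mathbb K\|_{TV}+\sup_{H}\|F_x(H,\cdot)-F(H,\cdot)\|_{TV},
\]
and both terms tend to $0$. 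Composing the $r$ engines preserves this, since a composition of total-variation-convergent kernels on finite input sets converges in total variation.

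\emph{Continuity at $\alpha=1$ and $q=0$.} As $\alpha\uparrow1$ we have $\beta_m\to0$, so $q\text{-}\mathrm{Poi}(\beta_m)\to\delta_0$ in total variation. As $\alpha\downarrow1$ we have $x\to0$, and the bulk weights are continuous in $x$, uniformly in states. At $q=0$, the weights are continuous in $q$ uniformly in states, with the convention $0^0=1$. Also $p^{(q)}_\beta\to$ the geometric law pointwise, hence in total variation by Scheffé. The same two-term bound then gives continuity of the kernels.

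The step I expect to be the main obstacle is the boundary: justifying the interchange of the $x\downarrow0$ limit with the infinite sum over $H^{(0)}_0$. Scheffé handles this once normalization is preserved at every $x$, but that requires verifying that each specialized parameter stays in the stochastic range \eqref{eq:continued-K-stochastic-range}. The remaining steps are direct substitutions.
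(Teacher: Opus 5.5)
Your proposal is correct and follows the paper's proof. That proof consists of your substitution $1-s_mx_m=\rho_m^{-1}$ in the row weights of Lemma~\ref{lem:queue-specialization} and Figure~\ref{fig:colored-L-ver-families}, together with Corollary~\ref{cor:queue-K-density-specialization} and Lemma~\ref{prop:empty-cap-boundary} for the boundary. Your Scheff\'e argument for the infinite sum over $H^{(0)}_0$, the total-variation bound for assembling the finite engine, and the continuity checks at $\alpha=1$ and $q=0$ are details the paper leaves implicit, and they are sound.
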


\begin {proof}
The usual row weights of Subsection \ref{subsec:intro-stationary-construction} match 
\eqref{eq:queue-L-families} and Figure~\ref{fig:colored-L-ver-families},
with $\rho=(1-sx)^{-1}$.
The boundary specializations are Corollary~\ref{cor:queue-K-density-specialization}
and Lemma~\ref{prop:empty-cap-boundary}. 
\end{proof}

\begin{lemma}
\label{lem:saturated-color-invisibility}
For every normally indexed state $A$ in the color-$m$ queue limit,
\begin{equation}
\label{eq:saturated-color-invisibility}
 \mathbb W^{(m)}_{z,s}(A,0;A',j)
 =\mathbb W^{(m)}_{z,s}(A,m;A',j).
\end{equation}
Consequently, the first engine sends any word in $\{0,1\}^N$ to
the same output law as the vacuum.  Both assertions hold also after
density specialization.
\end{lemma}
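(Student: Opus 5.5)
The plan is to check the identity \eqref{eq:saturated-color-invisibility} directly against the six families of \eqref{eq:queue-L-families}, and then to propagate it through the engine. In the color-$m$ queue limit the coordinate $A_m$ is infinite, so $A\pm E_m=A$ and $q^{A_m}=0$.

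For input $m$, the relevant families are the following.
\begin{itemize}
\item $(A,m;A+E_m,0)=(A,m;A,0)$ has weight $-sz/(1-sz)$, which equals $\mathbb W^{(m)}_{z,s}(A,0;A,0)$.
\item $(A,m;A,m)$ has weight $q^{A_{[1,m-1]}}(1-zq^{A_m})/(1-sz)=q^{A_{[1,m-1]}}/(1-sz)$. This coincides with $\mathbb W^{(m)}_{z,s}(A,0;A-E_m,m)=q^{A_{[1,m-1]}}(1-q^{A_m})/(1-sz)$, because $A-E_m=A$.
\item For $k<m$ (so $m$ plays the role of $\ell$), the transition $(A,m;A+E_m-E_k,k)=(A,m;A-E_k,k)$ has weight $q^{A_{[1,k-1]}}(1-q^{A_k})/(1-sz)$. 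This is exactly $\mathbb W^{(m)}_{z,s}(A,0;A-E_k,k)$.
\end{itemize}
In the other direction, with input $m$ playing the role of $k$, the family $(A,k;A+E_k-E_\ell,\ell)$ requires some $\ell>m$. Such colors are not in $\mathcal C_m$, so no further transitions exist. The outputs $j\in\{0,k,m\}$ with the corresponding carrier states $A'$ therefore match one-to-one, and the first assertion follows. Since the density specialization only substitutes $z,s$ as functions of $\rho_m$, with $z\downarrow0$ and $sz$ fixed, the identity persists in the limit. Concretely, the intro table of $\mathbb W^{(m)}$ satisfies the same matching, with $x$ replacing $z$ and $\rho_m=(1-sz)^{-1}$ as in Proposition~\ref{prop:density-array-convergence}.

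For the engine statement, take $m=1$. In $\mathfrak E^{N,1}$, the physical input $\eta_j$ enters only through the factor $\mathbb W^{(1)}(A^{(j)},\eta_j;A^{(j-1)},v_j)$. By the identity just proved, this factor is unchanged when any $\eta_j=1$ is replaced by $0$. Hence $\mathfrak E^{N,1}(\boldsymbol\eta,\cdot)=\mathfrak E^{N,1}(\boldsymbol0_N,\cdot)$ for every $\boldsymbol\eta\in\{0,1\}^N$. For the extended (open) engine of Definition~\ref{def:half-line-engine} this holds term by term in the sum \eqref{eq:unclosed-engine-kernel}, so it holds for both closed and open kernels.

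The only delicate point is bookkeeping. One must confirm that the carrier identity $A+E_m=A-E_m=A$ is the convention adopted in Lemma~\ref{lem:queue-specialization}, where $A'_m=A_m+\mathbf1_{\{i=m\}}-\mathbf1_{\{j=m\}}$ before the limit is taken. One must also check that no family with $\ell>m$ survives, which is true because the carrier has $A_r=0$ for $r>m$.
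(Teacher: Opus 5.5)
Your proposal is correct and follows the same argument as the paper. The paper simply observes from \eqref{eq:queue-L-families} that the only weights distinguishing inputs $0$ and $m$ carry the vanishing factor $q^{A_m}$, and your family-by-family matching (using $A\pm E_m=A$) makes this explicit; the engine statement then follows term by term in the kernel, and the identity survives any specialization of $(z,s)$ because it holds pointwise in those parameters.
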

\begin{proof}
This is immediate from \eqref{eq:queue-L-families}: weights distinguishing
inputs $0$ and $m$ contain the vanishing factor $q^{A_m}$.
\end{proof}

\subsection{Finite commutation relation}
\label{subsec:engine-commutation}

We have already established that the queue vertices satisfy the Yang-Baxter and reflection equations. Repeating the argument of 
\cite[Propositions~3.7 and~3.9]{MR4924418} with the use of queue vertices, we arrive to the following proposition. 
See Figures~\ref{fig:finite-boundary-train}-\ref{fig:finite-boundary-train-return} for a graphical form of the proof.

\begin{proposition}
\label{prop:finite-boundary-train}
Assume either \eqref{eq:continued-K-stochastic-range} or
\eqref{eq:queue-empty-cap-range}, and let $x<u<1$.
For arrow configurations $G,H$,
$i,j\in\mathcal C_m$, and
$\boldsymbol\eta,\boldsymbol\xi\in\mathcal C_m^N$, one has
\begin{align}
 &\sum_{G^{(1)},H^{(1)},\gamma_1,\gamma_2,\boldsymbol\nu}
 \mathbb W^{(m)}_{x/u,s}(G,i;G^{(1)},\gamma_1)
 \mathfrak E^{N,m;\mathrm{open}}_{x,s}(G^{(1)},\boldsymbol\eta;
                                 H^{(1)},\boldsymbol\nu)\notag\\*
 &\hspace{15mm}\times
 \widehat L^{\rm ver}_{xu,s}(\gamma_1,H^{(1)};\gamma_2,H)
 \mathfrak T^{N;\mathrm{open}}_u(\gamma_2,\boldsymbol\nu;j,\boldsymbol\xi)
 \notag\\*
 &=\sum_{G^{(1)},H^{(1)},\gamma_1,\gamma_2,\boldsymbol\nu}
 \mathfrak T^{N;\mathrm{open}}_u(i,\boldsymbol\eta;\gamma_1,\boldsymbol\nu)
 \mathbb W^{(m)}_{xu,s}(G,\gamma_1;G^{(1)},\gamma_2)\notag\\*
 &\hspace{15mm}\times
 \mathfrak E^{N,m;\mathrm{open}}_{x,s}(G^{(1)},\boldsymbol\nu;
                                 H^{(1)},\boldsymbol\xi)
 \widehat L^{\rm ver}_{x/u,s}(\gamma_2,H^{(1)};j,H).
 \label{eq:finite-augmented-train}
\end{align}
\end{proposition}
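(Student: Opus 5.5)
The identity \eqref{eq:finite-augmented-train} is a train argument. We drag the extra fundamental double row $\mathfrak T^{N;\mathrm{open}}_u$ through the open engine one column at a time. The engine's carrier lines are augmented by the fundamental line with rapidity $u$ (lower pass) and $u^{-1}$ (upper pass). At each crossing we apply one of the local identities of Proposition~\ref{prop:queue-ybe-reflection}, exactly as in \cite[Propositions~3.7 and~3.9]{MR4924418}.

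Concretely, I would expand both sides into vertex-model partition functions on the same set of external edges. These are the incoming $G,i,\boldsymbol\eta$ and the outgoing $H,j,\boldsymbol\xi$, with every internal state summed. On the left-hand side, the crossing vertex $\mathbb W^{(m)}_{x/u,s}(G,i;\cdot,\cdot)$ sits at the bottom of the usual carrier, to the right of site $N$. It is the crossing of the usual fused carrier (rapidity $x$) with the lower fundamental line (rapidity $u$).

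\textbf{Step 1 (usual carrier).} We push this crossing up through sites $N,N-1,\ldots,1$. At site $k$, the normally indexed Yang--Baxter equation (the queue version of \eqref{eq:continued-normal-colored-ybe}, with one fused line and two fundamental lines) exchanges the order. The physical line $\eta_k$ now meets the fundamental line $R_{u}$ before, rather than after, the carrier vertex $\mathbb W^{(m)}_{x}$. Each move swaps a triangle of weights $\mathbb W^{(m)}_{x/u}\,\mathbb W^{(m)}_{x}\,R_{u}$ for the opposite triangle.

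\textbf{Step 2 (boundary).} After $N$ moves, the crossing reaches the boundary vertex $\mathbb K^{(m)}_{x,s}$. There the queue reflection equation \eqref{eq:queue-reflection-components} converts the configuration $\mathbb W^{(m)}_{x/u}\,\mathbb K^{(m)}\,\widehat L^{\rm ver}_{xu}\,K_u$ into $K_u\,\mathbb W^{(m)}_{xu}\,\mathbb K^{(m)}\,\widehat L^{\rm ver}_{x/u}$. This moves the fundamental line from outside the engine to inside its boundary turn.

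\textbf{Step 3 (return carrier).} We then push the resulting crossing back down along the return carrier through sites $1,\ldots,N$. This uses the mixed Yang--Baxter equation \eqref{eq:continued-colored-ybe-components}, which has a hole-indexed fused line with vertices $\widehat L^{\rm ver}$ and fundamental $R_{u^{-1}}$-type vertices. It ends with $\widehat L^{\rm ver}_{x/u,s}(\gamma_2,H^{(1)};j,H)$ at the bottom, as on the right-hand side. The order of the two double rows is now exchanged, and regrouping gives the right-hand side. Every identity is applied with fixed external states, so each involves finitely many internal configurations, and the sums may be exchanged freely.

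\textbf{Main obstacle: parameter bookkeeping.} The spectral parameters in each local move must match the prescribed ones, namely $qy/(s_2x)$, $qy/s_2$, $qx/s_1$ in Proposition~\ref{prop:continued-colored-ybe}, specialized to spin-one fundamental lines with $s=q$. Three checks are needed:
\begin{itemize}
\item the usual carrier pairs with the lower fundamental line at ratio $x/u$;
\item the reflected carrier (rapidity $x^{-1}$) pairs with the upper line (rapidity $u^{-1}$) at ratio $xu$ and $x/u$ as displayed;
\item the physical columns have rapidity $y_j=1$, consistent with $\mathbb W^{(m)}_{x,s}$ and $\widehat L^{\rm ver}_{x,s}$ inside the engine.
\end{itemize}
I would also need all intermediate weights to be regular, which is where the hypothesis $x<u<1$ enters. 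It keeps $x/u<1$, so that $1-s\,x/u\neq0$ and the $R$-weights $R_{u}$, $R_{u^{-1}}$ have no poles. In the deterministic regime \eqref{eq:queue-empty-cap-range}, Lemma~\ref{prop:empty-cap-boundary} guarantees that the local identities still hold there.
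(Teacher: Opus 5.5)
Your overall strategy is the paper's. The paper proves the identity by invoking the train argument of \cite[Propositions~3.7 and~3.9]{MR4924418} with queue vertices, as drawn in Figures~\ref{fig:finite-boundary-train}--\ref{fig:finite-boundary-train-return}. However, the local moves in your Steps~1 and~3 are mis-assigned, and Step~1 cannot be carried out as written.

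On column $k$ of the left-hand side, the physical line meets three vertices in order: $\mathbb W^{(m)}_{x,s}$ (usual carrier), then $\widehat L^{\rm ver}_{x,s}$ (return carrier), and only then the lower $R_u$ of $\mathfrak T^{N;\mathrm{open}}_u$. So $\mathbb W^{(m)}_{x,s}$ and $R_u$ are not adjacent, and the triangle $\mathbb W^{(m)}_{x/u}\,\mathbb W^{(m)}_{x}\,R_u$ is not available at that point. Relatedly, the reflection equation you use in Step~2 needs $\widehat L^{\rm ver}_{xu,s}(\gamma_1,H^{(1)};\gamma_2,H)$ to sit at the boundary between $\mathbb K^{(m)}$ and $K_u$. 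Your Step~1 never moves it there. Symmetrically, your Step~3 transports only $\widehat L^{\rm ver}_{x/u,s}$. But the right-hand side also has $\mathbb W^{(m)}_{xu,s}(G,\gamma_1;G^{(1)},\gamma_2)$ at the right end, outside the engine, so that crossing must be transported too.

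The repair is that each sweep transports the \emph{pair} of crossings and uses both kinds of Yang--Baxter equation at every column.
\begin{itemize}
\item In the sweep $k=N,\ldots,1$, first apply the mixed equation \eqref{eq:continued-colored-ybe-components} to the triangle $(\widehat L^{\rm ver}_{xu},\widehat L^{\rm ver}_{x},R_u)$. This brings the lower fundamental vertex next to $\mathbb W^{(m)}_x$. Then apply the normally indexed equation to $(\mathbb W^{(m)}_{x/u},\mathbb W^{(m)}_{x},R_u)$.
\item After \eqref{eq:queue-reflection-components}, the sweep $k=1,\ldots,N$ works the same way with the upper $R_u$. First move $\widehat L^{\rm ver}_{x/u}$ past the column by a mixed move, then move $\mathbb W^{(m)}_{xu}$ by a normal move.
\end{itemize}
With this correction your argument goes through; this is exactly what the figures depict.

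A smaller point concerns the hypothesis $x<u<1$. It is not needed for regularity, since $1-sz>0$ automatically for $s<0$ and $z>0$, and the identity itself is rational. Its actual role is nonnegativity, which matters when the identity is later used probabilistically. It gives $x/u\le 1$ for the crossing weights. In the regime \eqref{eq:queue-empty-cap-range}, one has $\kappa_0(u)\ge0$ exactly when $u\ge 1-\alpha^{-1}=x$.
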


\begin{figure}[H]
	\centering
	\begin{tikzpicture}[
		x=.70cm,y=.63cm,
		train state/.style={font=\scriptsize,inner sep=.6pt}
		]
		\foreach \xx in {1.70,2.70,3.70,4.70}{
			\draw[paper wire,->] (\xx,-.45)--(\xx,3.45);
		}
		\node[train state,above=1pt] at (1.70,3.45) {$\xi_1$};
		\node[train state,above=1pt] at (2.70,3.45) {$\xi_2$};
		\node[train state,above=1pt] at (3.70,3.45) {$\cdots$};
		\node[train state,above=1pt] at (4.70,3.45) {$\xi_N$};
		\node[train state,below=1pt] at (1.70,-.45) {$\eta_1$};
		\node[train state,below=1pt] at (2.70,-.45) {$\eta_2$};
		\node[train state,below=1pt] at (3.70,-.45) {$\cdots$};
		\node[train state,below=1pt] at (4.70,-.45) {$\eta_N$};
		
		\draw[paper normal]
		(7.2,1)--(6.55,1)--(5.55,0)--(.50,0)--(0,.50);
		\draw[paper hole,->]
		(0,.50)--(.50,1)--(4.95,1)--(5.95,2)--(7.2,2);
		\node[paper boundary] at (0,.50) {};
		
		\draw[paper wire,->]
		(7.2,0)--(6.55,0)--(4.95,2)--(.50,2)--(0,2.50)--(.50,3)--(7.2,3);
		\node[paper boundary] at (0,2.50) {};
		
		\node[train state,anchor=west] at (7.24,0) {$0\,;\,u$};
		\node[train state,anchor=west] at (7.24,1) {$\mathbf0^{(m)}\,;\,(x,s)$};
		\node[train state,anchor=west,text=holecolor] at (7.24,2)
		{$\sum H'$};
		\node[train state,anchor=west] at (7.24,3)
		{$\sum c'$};
	\end{tikzpicture}
	\caption{Initial diagram for the commutation identity, with crossings of parameters
		$x/u$ and $xu$.  The auxiliary inputs are $0$ and $\mathbf0^{(m)}$,
		the exterior outputs are summed.}
	\label{fig:finite-boundary-train}
\end{figure}

\begin{figure}[H]
	\centering
	\begin{tikzpicture}[
		x=.52cm,y=.57cm,
		train state/.style={font=\tiny,inner sep=.6pt}
		]
		\begin{scope}
			\foreach \xx in {1.70,2.70,3.70,4.70}{
				\draw[paper wire,->] (\xx,-.45)--(\xx,3.45);
			}
			\node[train state,above=1pt] at (1.70,3.45) {$\xi_1$};
			\node[train state,above=1pt] at (2.70,3.45) {$\xi_2$};
			\node[train state,above=1pt] at (3.70,3.45) {$\cdots$};
			\node[train state,above=1pt] at (4.70,3.45) {$\xi_N$};
			\node[train state,below=1pt] at (1.70,-.45) {$\eta_1$};
			\node[train state,below=1pt] at (2.70,-.45) {$\eta_2$};
			\node[train state,below=1pt] at (3.70,-.45) {$\cdots$};
			\node[train state,below=1pt] at (4.70,-.45) {$\eta_N$};
			
			\draw[paper normal]
			(7.2,1)--(1.40,1)--(-.20,0)--(-1.00,.50);
			\draw[paper hole,->]
			(-1.00,.50)--(-.20,1)--(1.40,2)--(7.2,2);
			\draw[paper wire,->]
			(7.2,0)--(1.40,0)--(-1.00,2.50)
			--(-.20,3)--(7.2,3);
			\node[paper boundary] at (-1.00,.50) {};
			\node[paper boundary] at (-1.00,2.50) {};
			\node[train state,anchor=west] at (7.24,0) {$0\,;\,u$};
			\node[train state,anchor=west] at (7.24,1) {$\mathbf0^{(m)}\,;\,(x,s)$};
			\node[train state,anchor=west,text=holecolor] at (7.24,2)
			{$\sum H'$};
			\node[train state,anchor=west] at (7.24,3)
			{$\sum c'$};
		\end{scope}
		
		\begin{scope}[shift={(13.5,0)}]
			\foreach \xx in {1.70,2.70,3.70,4.70}{
				\draw[paper wire,->] (\xx,-.45)--(\xx,3.45);
			}
			\node[train state,above=1pt] at (1.70,3.45) {$\xi_1$};
			\node[train state,above=1pt] at (2.70,3.45) {$\xi_2$};
			\node[train state,above=1pt] at (3.70,3.45) {$\cdots$};
			\node[train state,above=1pt] at (4.70,3.45) {$\xi_N$};
			\node[train state,below=1pt] at (1.70,-.45) {$\eta_1$};
			\node[train state,below=1pt] at (2.70,-.45) {$\eta_2$};
			\node[train state,below=1pt] at (3.70,-.45) {$\cdots$};
			\node[train state,below=1pt] at (4.70,-.45) {$\eta_N$};
			\draw[paper normal]
			(7.2,1)--(1.40,1)--(-.20,2)--(-1.00,2.50);
			\draw[paper hole,->]
			(-1.00,2.50)--(-.20,3)--(1.40,2)--(7.2,2);
			\draw[paper wire,->]
			(7.2,0)--(-.20,0)--(-1.00,.50)--(1.40,3)--(7.2,3);
			\node[paper boundary] at (-1.00,.50) {};
			\node[paper boundary] at (-1.00,2.50) {};
			\node[train state,anchor=west] at (7.24,0) {$0\,;\,u$};
			\node[train state,anchor=west] at (7.24,1) {$\mathbf0^{(m)}\,;\,(x,s)$};
			\node[train state,anchor=west,text=holecolor] at (7.24,2)
			{$\sum H'$};
			\node[train state,anchor=west] at (7.24,3)
			{$\sum c'$};
		\end{scope}
		
	\end{tikzpicture}
	\caption{Yang-Baxter moves through columns $N,N-1,\ldots,1$ bring the crossings
		to the boundary (left), followed by the reflection equation (right).
	}
	\label{fig:finite-boundary-train-boundary}
\end{figure}

\begin{figure}[H]
	\centering
	\begin{tikzpicture}[
		x=.52cm,y=.57cm,
		train state/.style={font=\tiny,inner sep=.6pt}
		]
		\begin{scope}
			\foreach \xx in {1.70,2.70,3.70,4.70}{
				\draw[paper wire,->] (\xx,-.45)--(\xx,3.45);
			}
			\node[train state,above=1pt] at (1.70,3.45) {$\xi_1$};
			\node[train state,above=1pt] at (2.70,3.45) {$\xi_2$};
			\node[train state,above=1pt] at (3.70,3.45) {$\cdots$};
			\node[train state,above=1pt] at (4.70,3.45) {$\xi_N$};
			\node[train state,below=1pt] at (1.70,-.45) {$\eta_1$};
			\node[train state,below=1pt] at (2.70,-.45) {$\eta_2$};
			\node[train state,below=1pt] at (3.70,-.45) {$\cdots$};
			\node[train state,below=1pt] at (4.70,-.45) {$\eta_N$};
			
			\draw[paper wire,->]
			(7.2,0)--(.50,0)--(0,.50)--(.50,1)--(5.15,1)--(7.2,3);
			\node[paper boundary] at (0,.50) {};
			\draw[paper normal]
			(7.2,1)--(6.55,1)--(5.55,2)--(.50,2)--(0,2.50);
			\draw[paper hole,->] (0,2.50)--(.50,3)--(6.2,3)--(7.2,2);
			\node[paper boundary] at (0,2.50) {};
			\node[train state,anchor=west] at (7.24,0) {$0\,;\,u$};
			\node[train state,anchor=west] at (7.24,1) {$\mathbf0^{(m)}\,;\,(x,s)$};
			\node[train state,anchor=west,text=holecolor] at (7.24,2)
			{$\sum H'$};
			\node[train state,anchor=west] at (7.24,3)
			{$\sum c'$};
		\end{scope}
		
		\begin{scope}[shift={(13.5,0)}]
			\foreach \xx in {1.70,2.70,3.70,4.70}{
				\draw[paper wire,->] (\xx,-.45)--(\xx,3.45);
			}
			\node[train state,above=1pt] at (1.70,3.45) {$\xi_1$};
			\node[train state,above=1pt] at (2.70,3.45) {$\xi_2$};
			\node[train state,above=1pt] at (3.70,3.45) {$\cdots$};
			\node[train state,above=1pt] at (4.70,3.45) {$\xi_N$};
			\node[train state,below=1pt] at (1.70,-.45) {$\eta_1$};
			\node[train state,below=1pt] at (2.70,-.45) {$\eta_2$};
			\node[train state,below=1pt] at (3.70,-.45) {$\cdots$};
			\node[train state,below=1pt] at (4.70,-.45) {$\eta_N$};
            
			\draw[paper wire,->]
			(7.2,0)--(.50,0)--(0,.50)--(.50,1)--(5.15,1)--(6.15,2)--(7.2,2);
			\node[paper boundary] at (0,.50) {};
			\draw[paper normal]
			(7.2,1)--(6.55,1)--(5.55,2)--(.50,2)--(0,2.50);
			\draw[paper hole,->] (0,2.50)--(.50,3)--(7.2,3);
			\node[paper boundary] at (0,2.50) {};
			\node[train state,anchor=west] at (7.24,0) {$0\,;\,u$};
			\node[train state,anchor=west] at (7.24,1)
			{$\mathbf0^{(m)}\,;\,(x,s)$};
			\node[train state,anchor=west] at (7.24,2)
			{$\sum\gamma$};
			\node[train state,above=2pt] at (4.95,2) {$\mathbf d$};
			\node[train state,fill=white,below=2pt] at (5.84,1.42) {$xu$};
			\node[train state,anchor=west,text=holecolor] at (7.24,3)
			{$\sum H^N$};
		\end{scope}
		
	\end{tikzpicture}
	\caption{Yang-Baxter moves through columns $1,2,\ldots,N$ (left), followed by
		summing the two outgoing states of the hole-indexed vertex (right).
		The normally indexed vertex of parameter $xu$ remains, and its output
		$\mathbf d$ enters the engine.}
	\label{fig:finite-boundary-train-return}
\end{figure}

\subsection{Infinite engine: stabilization}
\label{subsec:stable-engine-carriers}
\label{subsec:engine-locality}

In the double row operator, the usual row is read from right to left and the return row from
left to right. To construct an infinite engine, we must show that
changing the usual carrier sufficiently far to the right does not
change the colors in a fixed window on the left.  We do this by sampling different
carrier states with the \emph{same random choices at every vertex}.

\begin{proposition}
\label{def:engine-graphical-realization}
Recall $x=(1-\alpha^{-1})_+$.  Independently at each vertex, define random variables
\[
 S\sim\operatorname{Ber}(\rho_m),\qquad
 F\sim\operatorname{Ber}(x),\qquad
 \mathbb P(R=j)=(1-q)q^{j-1},\quad j\ge1.
\]
Then the following rules produce precisely the weights
in Figures~\ref{fig:intro-normal-weights} and~\ref{fig:intro-return-weights}.

\smallskip
\noindent\emph{Usual vertex.}
For $m\ge2$, list the arrows of the incoming carrier $G$ in the order
\[
 \underbrace{1,\ldots,1}_{G_1},\
 \underbrace{2,\ldots,2}_{G_2},\ \ldots,\
 \underbrace{m-1,\ldots,m-1}_{G_{m-1}},\
 m,m,\ldots .
\]
Let $c_R$
denote the color of the arrow in position $R$.  Given the incoming
physical color $i$, choose the outgoing physical color $j$ as follows:
\begin{enumerate}
 \item If $S=0$, set $j=0$.
 \item If $S=1$ and $i$ is not a recorded carrier color
       (in particular, if $i=0$), set $j=c_R$.
 \item If $S=1$ and $i=k\in\{1,\ldots,m-1\}$, set
       $b=G_1+\cdots+G_{k-1}$.  Set $j=c_R$ when $R\le b$.
       When $R>b$, set $j=k$ if $F=0$, and $j=c_R$ if $F=1$.
\end{enumerate}
The remaining colors determine the outgoing carrier $G'$.

\smallskip
\noindent\emph{Return vertex.}
We use exactly the same procedure, but list the incoming
carrier $H$ in the different order:
\[
 \underbrace{0,\ldots,0}_{H_0},\
 \underbrace{m,\ldots,m}_{H_m},\
 \underbrace{m-1,\ldots,m-1}_{H_{m-1}},\ \ldots,\
 \underbrace{2,\ldots,2}_{H_2},\
 1,1,\ldots .
\]
Let $c_R$ be
again the color in position $R$ in such an ordered list.  Given the incoming physical
color $i$, the outgoing physical color $j$ is determined via:
\begin{enumerate}
 \item If $S=0$, set $j=1$.
 \item If $S=1$ and $i=1$, set $j=c_R$.
 \item If $S=1$ and $i=k$ is recorded ($k=0$ or $2\le k\le m$), let
       $b=\sum_{a\prec k}H_a$, the number of recorded carrier arrows
       preceding color $k$ in the displayed order.  Set $j=c_R$ when
       $R\le b$.  When $R>b$, set $j=k$ if $F=0$, and $j=c_R$ if $F=1$.
\end{enumerate}
\end{proposition}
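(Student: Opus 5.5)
The plan is a direct case-by-case verification. For each incoming pair, I would compute the probability of every outgoing pair under the randomized rule and compare it with the corresponding entry of Figure~\ref{fig:intro-normal-weights} or Figure~\ref{fig:intro-return-weights}. The basic tools are the geometric law of $R$ and the elementary facts
\[
\mathbb P(R\le b)=1-q^{b},\qquad \mathbb P(b<R\le b+g)=q^{b}(1-q^{g}),\qquad \mathbb P(R>b)=q^{b}.
\]
Together with the independence of $S$, $F$ and $R$, these reduce every check to a finite computation. The event $\{S=0\}$ produces the weight $1-\rho_m$ in the first column (outputs $0$ and $\{G\text{ unchanged or } G'=G+E_k\}$). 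Everything else carries an overall factor $\rho_m$.

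For the usual vertex, I would first take the input $i=0$, or $i=m$, which is not recorded. Then $j=c_R$. In the listed order, the arrows of color $k<m$ occupy the positions $G_{[1,k-1]}<R\le G_{[1,k]}$, so $\mathbb P(j=k)=q^{G_{[1,k-1]}}(1-q^{G_k})$. Color $m$ occupies all positions $R>G_{[1,m-1]}$, so $\mathbb P(j=m)=q^{G_{[1,m-1]}}$. This matches the entry $\rho_m q^{G_{[1,k-1]}}(1-q^{G_k})$, because $G_m=\infty$ makes $q^{G_m}=0$.

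Next, for $i=k$ recorded, I would set $b=G_{[1,k-1]}$. The outcomes $R\le b$ give an $\ell\prec k$, with the weight of the bottom-right family. The outcomes $R>b$ split according to $F$:
\begin{itemize}
\item With $F=0$ (probability $1-x$), the output is $j=k$.
\item With $F=1$, the output is $c_R$, which can be $k$ itself (probability $q^{b}(1-q^{G_k})$) or a weaker $\ell$ (probability $q^{G_{[1,\ell-1]}}(1-q^{G_\ell})$).
\end{itemize}
Summing the two contributions to $j=k$ gives $q^b\bigl[(1-x)+x(1-q^{G_k})\bigr]=q^b(1-xq^{G_k})$. The $F=1$, $j=\ell\succ$-weaker term gives $xq^{G_{[1,\ell-1]}}(1-q^{G_\ell})$. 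Both match the figure.

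The carrier updates also have to agree. Removing the emitted arrow $c_R$ and inserting the incoming arrow gives exactly $G'=G+E_i-E_j$. Here $E_m$ is absorbed into the infinite coordinate, and $i=m$ is then indistinguishable from $0$, consistent with Lemma~\ref{lem:saturated-color-invisibility}. One small point needs care in the figure's bookkeeping: the top-left family corresponds to $i=k$ and $j=k$ with carrier unchanged, and not to an arrow exchange.

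The return vertex is handled by the same computation with the reversed list order $0,m,m-1,\dots,2,1,1,\dots$, which is exactly the order $\prec$ on $\widehat{\mathcal C}_m$ followed by the infinite background of $1$'s. Then $\sum_{a\prec k}H_a$ plays the role of $b$. Color $1$ occupies $R>|H|$, with probability $q^{|H|}$, and the $S=0$ rule outputs $1$.

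Two combinations give the first-column entries. For $i=1$ and $j=1$, the events $\{S=0\}$ and $\{S=1,R>|H|\}$ together give $1-\rho_m+\rho_m q^{|H|}$. For $i=k$ recorded and $j=1$, the event $\{S=0\}$ gives $1-\rho_m$, and the event $\{S=1,F=1,R>|H|\}$ gives $\rho_m x q^{|H|}$, for a total of $1-\rho_m+\rho_m x q^{|H|}$.

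The main obstacle is purely bookkeeping: matching each family's orientation conventions, namely $H'=H+\widehat E_i-\widehat E_j$ and $k\prec\ell$ versus $k<\ell$, to the correct event. I would handle this by tabulating all events against the six families. As a final consistency check, I would confirm that the event probabilities sum to one, which agrees with the stochasticity in Proposition~\ref{prop:hat-L-ver-stochasticity}.
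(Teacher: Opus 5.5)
Your proposal is correct and is the same argument as the paper, whose proof is just ``a direct check'': you compute each event's probability from $\mathbb P(b<R\le b+g)=q^b(1-q^g)$ and the independence of $S,F,R$, and match it to one of the six families. There are two harmless slips in the usual-vertex bookkeeping. The positions $R\le b$ carry colors \emph{stronger} than $k$, so you should write $\ell\succ k$, not $\ell\prec k$. And the family with $i=j=k$ and unchanged carrier is the top-middle entry of the figure, not the top-left one.
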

\begin{proof}
A direct check.
\end{proof}

For either row, the number of \emph{recorded carrier arrows} is the
sum of its finite color multiplicities; the infinitely repeated
color is excluded. Our goal is to show that the engine converges, in the sense of local convergence, as $N\to\infty$, and the limit is independent from the input at the bottom.
The next key technical lemma shows
that sufficiently distant vertices erase dependence on this input.

\begin{lemma}
\label{lem:scalar-clearing}
Let $A_i \in \{0,1 \}\}$ be the indicator of the event that the physical input at site $i$ is a non-zero
color, and let $h$ be the number non-zero arrows entering a vertex.
The number of arrows leaving it is
\begin{equation}
\label{eq:monotone-workload-recursion}
 \Phi_i(h,A_i)=h+A_i-S_i\bigl(\mathbf1_{\{R_i\le h\}}
       +A_i(1-F_i)\mathbf1_{\{R_i>h\}}\bigr).
\end{equation}
For fixed randomness at each vertex, $\Phi_i$ is nondecreasing in $h$.

Suppose that $(A_i)$ is stationary ergodic with mean
$\lambda<\rho_m$. For $N>d$,
let $C_N^{(N)}=0$ and
$C_{i-1}^{(N)}=\Phi_i(C_i^{(N)},A_i)$ for $i=N,\ldots,1$.
Then, at every fixed cut $d$,
\[
 C_d^{(N)}\uparrow C_d<\infty\quad\text{almost surely}.
\]
The process $(C_d)$ is stationary and ergodic, with
$\mathbb P(C_d=0)>0$.  For every integer $b\ge1$, almost surely
there are arbitrarily large cuts $d$ and a finite block immediately
to their left such that the carrier is empty at $d$ and the block
empties \emph{any} carrier entering with at most $b$ recorded arrows,
regardless of their colors.
\end{lemma}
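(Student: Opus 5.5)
First I will derive \eqref{eq:monotone-workload-recursion} from Proposition~\ref{def:engine-graphical-realization}. In the usual vertex, the count of recorded arrows increases by one exactly when a recorded physical color enters and is not emitted. It decreases by one exactly when a recorded arrow leaves. An arrow at position $R$ leaves when $S=1$ and either $R\le h$, or $R>h$ (so $c_R=m$), the input is recorded and $F=0$ forces emission of the input $k$. I would like to check the rule $R\le b$ against $R\le h$ here. An arrow of color $\succeq k$ that is drawn at a position in $(b,h]$ with $F=0$ still lets the incoming $k$ out while the drawn arrow also leaves, so the net change is the same. This is the reason for writing the decrement only through $\mathbf 1_{\{R\le h\}}$. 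The same bookkeeping applies to the return vertex, with the recorded colors listed first. For monotonicity in $h$ I compare $h$ and $h+1$ with the same $(S,R,F,A)$. The decrement $S(\mathbf 1_{R\le h}+A(1-F)\mathbf 1_{R>h})$ takes values in $\{0,1\}$ and can grow by at most one when $h\mapsto h+1$. Hence $\Phi(h+1)-\Phi(h)\in\{0,1\}$.

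Next comes convergence. Monotonicity, together with $C_N^{(N)}=0\le C_N^{(N+1)}$, gives $C_d^{(N)}\le C_d^{(N+1)}$ by induction downward. So $C_d^{(N)}\uparrow C_d\in[0,\infty]$. This is Loynes' construction for a stationary random recursion. For finiteness I compare with a Lindley-type walk. When $h\ge1$, every step has decrement at least $S\mathbf 1_{R\le 1}$, and $\mathbb P(R\le1)=1-q$. A better bound comes from taking $h$ large: $\mathbf 1_{R\le h}\to1$ a.s. So for large $h$ the increment is $A_i-S_i$ with probability close to one, and its mean is $\lambda-\rho_m<0$. I would fix $h_0$ such that $\lambda-\rho_m\mathbb P(R\le h_0)<0$. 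Then above level $h_0$ the chain is dominated by $h_0+W$, where $W$ is the reflected random walk with increments $A_i-S_i\mathbf 1_{R_i\le h_0}$. That walk is stationary ergodic with negative drift, so its Loynes supremum $\sup_{n}\sum_{i=d+1}^{d+n}(\cdot)$ is a.s.\ finite by the ergodic theorem. This gives $C_d<\infty$ a.s. Stationarity and ergodicity of $(C_d)$ follow because $C_d$ is a fixed measurable function of the shifted i.i.d.\ vertex randomness and the ergodic $(A_i)$, since the joint process is ergodic as a product of an ergodic process with an i.i.d.\ one.

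For $\mathbb P(C_d=0)>0$, negative drift implies the dominating walk returns to zero. To make the walk actually hit $0$ below $h_0$, I use that from any level $h\le h_0$ there is a positive-probability finite pattern ($A=0$, $S=1$, $R=1$ repeated $h_0$ times) that empties the carrier. I also need $\mathbb P(A_i=0)>0$, which holds because $\lambda<\rho_m<1$. Strictly, the pattern requires a run of zeros of $A$, which is stationary-ergodic information. Consecutive zeros might fail for a general ergodic $A$, so I would instead use the pattern $S=1,R\le h$ at each site. That pattern decreases $h$ whenever $A=0$, and otherwise keeps $h$ fixed or reduces it, using $R\le h$. This works as long as $h\ge1$, which suffices. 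Poincaré recurrence / ergodicity then gives $\mathbb P(C_d=0)>0$ and infinitely many such $d$.

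The last assertion is where the main obstacle lies. Fix $b$. I need a block of sites $d'+1,\dots,d'+\ell$ to the left of a cut with the following property: any entering recorded count $\le b$ exits as $0$, regardless of colors. The key observation is that, by \eqref{eq:monotone-workload-recursion}, the count evolution does not depend on colors. By monotonicity, it therefore suffices to empty the worst input $h=b$. The block pattern will be $b$ consecutive sites with $S_i=1$ and $R_i=1$; each such site decreases the count by one whenever $A_i=0$, and keeps it otherwise. I still need the event $\{C_{d}=0\}$ together with such a good block lying immediately left of $d$ to have positive probability jointly. Here I would condition on the $A$'s and on the randomness to the right of $d$, and use the independence of the local vertex variables $(S,R,F)$ in the block from $C_d$. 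That independence holds because $C_d$ depends only on sites $>d$. So the joint probability is at least $\mathbb P(C_d=0)\,\mathbb E[\prod((1-q)\rho_m)^{b}\cdots]>0$, after additionally requiring enough sites in the block with $A_i=0$, which happens with positive probability by ergodicity and $\lambda<1$. Ergodicity of the joint stationary process then yields arbitrarily large such $d$ almost surely.
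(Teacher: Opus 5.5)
\textbf{Verdict: same route as the paper, but the final clearing block is wrong as stated.}

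Your structure matches the paper's proof: the monotone Loynes limit in $N$; domination by a Lindley walk whose i.i.d.\ services $S_i\mathbf 1_{\{R_i\le h_0\}}$ have mean exceeding $\lambda$; stationarity and ergodicity as a factor of (ergodic input)$\times$(i.i.d.\ vertex randomness); and a positive-probability clearing pattern on a block independent of $C_d$.

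\textbf{The clearing pattern.} You impose only $S_i=1$, $R_i=1$. Then \eqref{eq:monotone-workload-recursion} gives $\Phi_i(h,1)=h+F_i\mathbf 1_{\{h=0\}}$. So your claim that a site with $A_i=1$ ``keeps'' the count is false once the carrier has emptied. At such a site with $F_i=1$ and $R_i=1>b=0$, the vertex emits $c_1=m$ and the incoming color joins the empty carrier. Hence the block need not end empty, and the composed block map need not send $b$ to $0$. This happens with positive probability whenever $x>0$, i.e.\ $\alpha>1$. The repair is the paper's pattern $S_i=1$, $F_i=0$, $R_i=1$, which has probability $\rho_m(1-q)(1-x)>0$ per site. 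Under it, sites with $A_i=1$ act as the identity and sites with $A_i=0$ act as $h\mapsto(h-1)_+$.

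\textbf{Block length.} The block must have a deterministic length $L$, chosen so that it contains at least $b$ sites with $A_i=0$ with positive probability jointly with $\{C_d=0\}$. A block of $b$ consecutive sites does not work. As you noticed earlier, a stationary ergodic input (e.g.\ $0101\ldots$ with a uniform random phase) need not contain $b$ consecutive zeros. Your later phrase ``additionally requiring enough sites with $A_i=0$'' only makes sense for such a longer block.

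\textbf{Two smaller points.}
\begin{itemize}
\item In your derivation of the recursion, when $R\in(b,h]$ and $F=0$ the drawn arrow does \emph{not} leave: the output is $k$ and the carrier is unchanged. The net change is zero because with $F=1$ the drawn arrow leaves and $k$ joins instead. Your conclusion is right but the stated mechanism is not.
\item Your argument for $\mathbb P(C_d=0)>0$ uses the pattern without $F_i=0$ and stops when the count first hits zero inside the block. That only gives $C_{d'}=0$ at a random cut $d'\in[d-L,d]$. To conclude you need a union bound over these $L+1$ cuts together with stationarity. With $F_i=0$ imposed the count stays at $0$, which is why the paper's minimal-$h$ argument closes directly.
\end{itemize}
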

\begin{proof}
Equation~\eqref{eq:monotone-workload-recursion} follows from arrow
conservation and Proposition~\ref{def:engine-graphical-realization}.
When $S_i=1$, its two maps are
$h\mapsto h-\mathbf1_{\{R_i\le h\}}$ for $A_i=0$ and
$h\mapsto h+F_i\mathbf1_{\{R_i>h\}}$ for $A_i=1$.
These formulas imply monotonicity.
Increasing $N$ replaces a zero carrier at the old endpoint by a
nonnegative one; hence $C_d^{(N)}$ increases in $N$.

Choose $B\ge1$ so large that $p:=\rho_m(1-q^B)>\lambda$, and set
$T_i=S_i\mathbf1_{\{R_i\le B\}}$.  At a vertex with at least $B$
recorded arrows, $T_i=1$ forces one to be emitted.  If there are fewer
than $B$, the outgoing carrier contains at most $B$ arrows.  Thus
\[
 \Phi_i(h,A_i)\le\max\{B,h+A_i-T_i\},\qquad
 C_d^{(N)}\le B+\max_{d\le j\le N}\sum_{i=d+1}^j(A_i-T_i).
\]
The $T_i$ are i.i.d. Bernoulli($p$), independent of $(A_i)$.
By the ergodic theorem, the sums on the right have negative linear
drift $\lambda-p<0$, and their supremum is finite almost surely.
This proves $C_d<\infty$.  On a two-sided extension, $(C_d)$ is a
shift-invariant function of the stationary ergodic inputs and
independent vertex randomness, hence is itself stationary and ergodic.

We claim that $\mathbb P(C_d=0)>0$.  Otherwise let $h\ge1$ be the
smallest value with $\mathbb P(C_d=h)>0$.  Since $\lambda<1$, almost
every input has infinitely many sites with $A_i=0$ to the left of $d$.
For some $L$, therefore, the event that $C_d=h$ and at least one
of $A_{d-L+1},\ldots,A_d$ vanishes has positive probability.
On these $L$ vertices impose $S_i=1$, $F_i=0$, $R_i=1$.
These choices have positive probability independently of $C_d$ and
the physical inputs.  By~\eqref{eq:monotone-workload-recursion},
they leave the carrier size unchanged when $A_i=1$ and
reduce it by one, if positive, when $A_i=0$.  Hence
$C_{d-L}<h$ with positive probability, contrary to the minimality
of $h$.

Finally, fix $b$.  On the event $C_d=0$, there are almost surely
infinitely many $A_i=0$ to the left.  We may therefore choose a
deterministic $L$ such that the event
\begin{equation}
\label{eq:scalar-clearing-block}
 C_d=0,\qquad \sum_{i=d-L+1}^d(1-A_i)\ge b,
 \qquad S_i=1,\ F_i=0,\ R_i=1\quad(d-L+1\le i\le d)
\end{equation}
has positive probability.  Indeed, the prescribed vertex choices
are independent of $C_d$ and of the physical inputs on this block.
By ergodicity, such blocks occur arbitrarily far to the right.
Each vertex with $A_i=0$ removes one recorded arrow, if present,
while the other vertices leave the population unchanged; thus the
block empties every entrance with at most $b$ recorded arrows.
\end{proof}

The lemma also holds when the input indicators agree, outside an
almost surely finite prefix, with a stationary ergodic sequence of
mean less than $\rho_m$. Indeed,
far enough to the right the scalar constructions coincide.  For a
fixed window and a bound $b$ on the right-entrance population, take
an erasing block~\eqref{eq:scalar-clearing-block} to the right of
that window and of the exceptional prefix.  At its right end, every
zero-entrance truncation has an empty carrier. The colored carriers and outputs therefore agree to
its left, \emph{uniformly} over all choices of the $b$ incoming
arrows.

\begin{proposition}
\label{prop:qualitative-localization}
\label{prop:engine-binary-projection}
Combination of engines
\eqref{eq:finite-engine-array}, sampled with common vertex randomness with all possible inputs from the bottom,
are eventually identical on every fixed output window, almost
surely, as $N \to \infty$. 
\end{proposition}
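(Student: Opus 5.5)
The plan is to prove the statement by induction on the engine index $m=1,\ldots,r$, using Lemma~\ref{lem:scalar-clearing} and the remark after it as the only probabilistic input. All engines $\mathfrak E^{N,1},\ldots,\mathfrak E^{N,r}$ are realized on one probability space through the graphical rules of Proposition~\ref{def:engine-graphical-realization}. Each vertex at site $i$ of engine $m$ gets its own triple $(S,F,R)$, and these triples do not depend on $N$ or on the input word. For each $N$ the configuration is then a deterministic function of the input and of the randomness at sites $1,\ldots,N$. The inductive claim is the following. For every fixed window $[1,k]$, almost surely there is $N_0$ such that for all $N\ge N_0$ and all inputs to engine $1$, the outputs of engine $m$ on $[1,k]$ coincide. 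The same holds for the outputs on any larger fixed window, because the window is arbitrary.

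\textbf{The first engine.} By Lemma~\ref{lem:saturated-color-invisibility}, engine $1$ treats every input word as the vacuum, so its output is already independent of the bottom input. What remains is stabilization in $N$. For the usual carrier of engine $1$, the physical inputs are all equivalent to $0$, so $A_i\equiv0$ and $\lambda=0<\rho_1$. Lemma~\ref{lem:scalar-clearing} then gives erasing blocks arbitrarily far to the right. To the right of the block, the carrier entering it has some finite number of arrows, at most $b$ with positive probability for $b$ large. I will choose the block so that it lies to the right of the window and the carrier at its right end is empty for every zero-entrance truncation of length at least that end. The usual-carrier states at sites $1,\ldots,d'$ then agree for all large $N$. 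The boundary sample $H^{(0)}$ is independent of $G^{(0)}$ (Proposition~\ref{prop:density-array-convergence}), and the return carrier is read from left to right. Hence the outputs $\xi_1,\ldots,\xi_k$ depend only on the intermediate colors $v_1,\ldots,v_k$, on $H^{(0)}$, and on the local randomness. These are all eventually constant in $N$.

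\textbf{Inductive step.} The input to engine $m$ is the output of engine $m-1$. By induction it agrees, on any fixed finite window, with an $N$-independent infinite word $\eta^{(m-1)}$ once $N$ is large. Here the main obstacle appears. I must check that the indicator sequence $A_i=\mathbf 1\{\eta^{(m-1)}_i\neq0\}$ is, outside an almost surely finite prefix, a stationary ergodic sequence of mean $\rho_{m-1}<\rho_m$. Only then can the remark following Lemma~\ref{lem:scalar-clearing} be applied.

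To handle this, I first project colors $1,\ldots,m-1$ to a single class. Far from the boundary the return carrier of each earlier engine carries essentially color $1$. I intend to argue that far to the right it does not change which sites are occupied, because by the erasing property $H$ becomes empty at infinitely many sites. The occupation indicator of engine $m-1$'s output then eventually coincides with that of its usual row. That usual row is a Martin-type queue output, which is stationary and ergodic with density $\rho_{m-1}$, by the same mechanism as Proposition~\ref{prop:martin-yang-baxter-steady-state}. If this identification fails, the fallback is a domination argument. I would use monotonicity of $\Phi_i$ in $h$ and the fact that the engine-$(m-1)$ occupations are bounded above by an ergodic sequence of density $\rho_{m-1}$ built from its own $S$ variables. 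Substituting that sequence into the drift bound of Lemma~\ref{lem:scalar-clearing} would still give finiteness and erasing blocks.

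\textbf{Conclusion.} With the hypothesis of the lemma secured, I take an erasing block for engine $m$ to the right of both the window and the point beyond which the engine-$(m-1)$ input has stabilized. This makes the usual carrier of engine $m$ on the window identical for all large $N$ and all inputs. The argument runs uniformly over the at most $b$ incoming arrows, with $b$ chosen using $\mathbb P(C_d\le b)$ close to one and then letting $b\to\infty$ along a sequence. The return carrier is read left to right from an input-independent $H^{(0)}$, so it introduces no further dependence. This completes the induction, and the case $m=r$ is the statement.
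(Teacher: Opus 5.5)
Your treatment of engine~$1$ and the overall induction via erasing blocks are fine. The inductive step, however, has a genuine gap exactly where you flagged the main obstacle: verifying the hypothesis of Lemma~\ref{lem:scalar-clearing} for engine $m\ge2$.

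\textbf{The primary argument rests on a false claim.} You assert that far to the right the return row of engine $m-1$ stops changing which sites are occupied, so that its output occupations eventually coincide with those of its usual row. This is false. Even with an empty return carrier, a return vertex with $S=0$ outputs color~$1$ whatever its input. Indeed, the $k\to1$ weight $1-\rho_{m-1}+\rho_{m-1}xq^{|H|}$ in Figure~\ref{fig:intro-return-weights} is positive at $H=0$. So a hole coming from the usual row becomes a particle, and the hole is stored in the carrier. Later, a particle input with $S=1$, $R\le H_0$ releases that hole. This happens at a positive density of sites forever, however often $H$ empties.

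\textbf{The fallback does not repair it.} The output occupations are not dominated by the usual-row indicators $S_i^{m-1,-}$, since a hole input can exit as color~$1$. These indicators are the only density-$\rho_{m-1}$ sequence built from that engine's own $S$ variables. You exhibit no other dominating stationary ergodic sequence of density below $\rho_m$.

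A telltale symptom is that $\rho_{m-1}>1/2$ is never used, yet it is essential. For $\rho<1/2$ the return-row hole count grows linearly, and the output occupation density is $1-\rho$ rather than $\rho$. That can exceed $\rho_m$ and destroy the stabilization of the next engine.

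\textbf{The missing idea is that occupation indicators are autonomous.} The usual row of engine $m$ emits a hole exactly when $S_i^{m,-}=0$, whatever its input. Holes precede all other recorded colors in the return carrier. Hence the return hole count obeys
\[
H_i=\Phi_i^{m,+}\bigl(H_{i-1},\,1-S_i^{m,-}\bigr),\qquad
\mathbf 1_{\{\xi_i=0\}}=H_{i-1}+1-S_i^{m,-}-H_i .
\]
This is a birth--death chain with birth probability $(1-\rho_m)(1-\rho_m+\rho_m xq^h)$ and death probability $\rho_m^2(1-q^h)$. It is positive recurrent precisely because $\rho_m>1/2$. Coupling it with a stationary copy shows that the occupation output agrees, outside a finite prefix, with a stationary ergodic sequence of mean $\rho_m$.

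\textbf{This also supplies the $N$-independence your induction lacks.} The recursion runs left to right from the common boundary sample, so the occupation indicator at site $i$ is the same for every $N\ge i$ and every bottom input. Lemma~\ref{lem:scalar-clearing} needs exactly this: it is stated for a single input sequence $(A_i)$ shared by all truncations, which is what makes $C_d^{(N)}$ monotone in $N$. Your induction hypothesis (agreement on fixed windows for large $N$) does not provide it. The inputs of engine $m$ near the right end of each truncation still depend on $N$. So the ``point beyond which the engine-$(m-1)$ input has stabilized'' does not exist.
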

\begin{proof}
First observe that the occupation indicators in the output of engine
$m$ depend only on its own random choices, not on its physical input.
Indeed, the usual row emits a hole exactly when $S_i^{m,-}=0$, and
holes precede all other recorded colors in the return carrier.
Writing $H_i$ for the number of holes after its $i$th return vertex,
we obtain the one-color rule
\begin{equation}
\label{eq:autonomous-hole-carrier}
 H_i=\Phi_i^{m,+}(H_{i-1},1-S_i^{m,-}),\qquad
 \mathbf1_{\{\xi_i=0\}}=H_{i-1}+1-S_i^{m,-}-H_i.
\end{equation}
Here the physical hole inputs $1-S_i^{m,-}$ are i.i.d.
Bernoulli($1-\rho_m$) and are independent of the return-vertex
choices and the almost surely finite boundary carrier $H_0$.

For this one-color return row the scalar update is a birth--death
chain.  Its probabilities of gaining and losing a hole from state $h$
are, respectively,
\[
 (1-\rho_m)(1-\rho_m+\rho_m xq^h),\qquad
 \rho_m^2(1-q^h).
\]
Since $\rho_m>1/2$, the ratio of these probabilities tends to
$((1-\rho_m)/\rho_m)^2<1$.  The chain is positive recurrent and
has an invariant distribution with a finite mean.  Start a second
copy in that distribution, using the same physical inputs and
vertex choices.  The larger of the two initial carrier populations
dominates both copies and hits zero almost surely; from then on
they coincide.  The stationary copy has a stationary ergodic output
occupation sequence.  By arrow conservation and equality of the
mean incoming and outgoing hole counts in stationarity, its mean
occupation is $\rho_m$.  Thus the actual occupation output agrees
outside a finite random prefix with a stationary ergodic binary
sequence of mean $\rho_m$.

Engine $1$ is insensitive to its input by
Lemma~\ref{lem:saturated-color-invisibility}.  For $m\ge2$, the
recorded-input indicators of engine $m$ are precisely the occupation
indicators of engine $m-1$.  They are fixed across truncations by
the input-independent recursion~\eqref{eq:autonomous-hole-carrier},
and have a stationary ergodic tail of mean
$\rho_{m-1}<\rho_m$, independent of the new engine's vertex choices.
Lemma~\ref{lem:scalar-clearing} and the erasing-block coupling above
therefore construct the infinite usual row and give exact local
agreement of its truncations; the return row is then read from left
to right using the common boundary sample and vertex choices.
Induction over the finitely many engines proves the asserted local
limit. 
\end{proof}

\subsection{Stationarity}
\label{subsec:engine-stationarity}

In this section we prove the first main result of the paper, Theorem \ref{thm:intro-vertex-form}. 

In the next lemma, we prove an analog of a stabilization result from the previous section for a fundamental double row, which will become an ASEP generator in the limit. 

\begin{lemma}
\label{lem:infinite-fundamental-double-row}
For $x=(1-\alpha^{-1})_+<u<1$ set
$d(u)=(1-u)/(1-qu)<1$.  The finite kernels
$\mathfrak T_u^{N,c}$ converge, independently of the incoming
auxiliary color $c$, to a Markov kernel $\mathfrak T_u$
on $\Omega_n$.  For $f$ supported on $[1,k]$ and $N>k$,
\begin{equation}
\label{eq:fundamental-geometric-forgetting}
 \sup_{\eta,c}\left|
  \mathfrak T_u^{N,c}f(\eta|_{[1,N]})-\mathfrak T_uf(\eta)
 \right|\le 2\|f\|_\infty d(u)^{N-k}.
\end{equation}
Moreover,
\begin{equation}
\label{eq:full-configuration-fundamental-invariant}
 \delta_{\boldsymbol1}\mathfrak T_u=\delta_{\boldsymbol1}.
\end{equation}
\end{lemma}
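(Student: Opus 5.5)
The plan is to view the lower row of the double row $\mathfrak T^{N,c}_u$ as a single-color carrier (spin one) that travels from site $N$ to site $1$, and to couple all starting colors $c$ through common vertex randomness. At $u=1$ every $R$-vertex swaps its two inputs deterministically. For $u<1$ I would realize $R_{u}$ (spectral parameter $z=u$) as follows: with probability $d(u)=(1-u)/(1-qu)$-type weights the vertex deviates from the swap, and with the complementary probability it swaps. The reading of Definition~\ref{def:elementary-colored-R} I have in mind is this. Fix the lower-row vertex at column $j$ with inputs $a_j$ (auxiliary, from the right) and $\eta_j$ (from below). If the auxiliary color is weaker than $\eta_j$, the swap $(a_j,\eta_j)\mapsto$ (auxiliary out $=\eta_j$, up $=a_j$) has weight $(1-q)/(1-qz)$ or $z(1-q)/(1-qz)$, and the non-swap has weight $(1-z)/(1-qz)$ or $q(1-z)/(1-qz)$. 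In every case the non-swap probability is at most $(1-z)/(1-qz)=d(u)$, since $q(1-z)\le 1-z$.

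The coupling I would use is this. At each lower vertex draw one uniform $U_j$, and declare a \emph{refresh} when $U_j\le(1-q)u/(1-qu)=1-d(u)$ (the smaller swap probability, using $z\le1$). On a refresh the vertex swaps regardless of which two colors it sees. After a swap the outgoing auxiliary color equals $\eta_j$, so it no longer depends on the incoming auxiliary color. Monotonicity is unnecessary here because the carrier holds a single color. Coalescence of two copies started from different $c$ therefore happens at the first refresh encountered moving leftward. Off a refresh the remaining probability must be split consistently, swap or not depending on the colors, which is a standard maximal-coupling realization. The probability that no refresh occurs among columns $k+1,\dots,N$ is at most $d(u)^{N-k}$. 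On the refresh event the lower-row outputs $v_1,\dots,v_k$ and the auxiliary color arriving at column $k$ coincide for all $c$ and all $N'\ge N$. The boundary vertex $K_u$ and the upper row, read left to right using the same randomness, then give identical $\xi_1,\dots,\xi_k$, since the upper row only needs data to the left. This yields the Cauchy bound $|\mathfrak T^{N,c}_uf-\mathfrak T^{N',c'}_uf|\le2\|f\|_\infty d(u)^{N-k}$. Hence the kernel $\mathfrak T_u$ is defined as the limit, is a Markov kernel by Kolmogorov consistency, and satisfies \eqref{eq:fundamental-geometric-forgetting}. The condition $x<u$ is only needed so that $K_u$ is stochastic (nonnegative $\kappa_0,\kappa_1$), which holds for $u>(1-\alpha^{-1})_+$ by \eqref{eq:fundamental-boundary-alpha-form}.

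For \eqref{eq:full-configuration-fundamental-invariant}, I would use the finite kernel with $c=1$. With all inputs equal to $1$ and auxiliary color $1$, every $R$-vertex has equal inputs $1,1$ and outputs $1,1$ with weight one. The boundary gives $K_u(1;1)=1$, and the upper row again sees only $1$'s. Thus $\mathfrak T^{N,1}_u(\boldsymbol 1,\cdot)=\delta_{\boldsymbol1}$ exactly, and the $c$-independence of the limit gives $\delta_{\boldsymbol1}\mathfrak T_u=\delta_{\boldsymbol1}$.

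The main obstacle I expect is verifying rigorously that a uniform refresh probability $1-d(u)$ exists for all four color orderings. That requires checking that both swap weights dominate $1-d(u)$, which needs $z=u\le 1$ for $z(1-q)/(1-qz)$, and handling equal colors, where the swap is trivially the identity. If this fails for some ordering, I would fall back on a coupling showing that two auxiliary colors merge with probability at least $1-d(u)$ at each step.
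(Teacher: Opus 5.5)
Your proposal is correct and is essentially the paper's argument: a common refresh event of probability $1-d(u)=u(1-q)/(1-qu)$ at each lower vertex, on which the exchange happens for all inputs. This makes the outgoing auxiliary color equal to $\eta_j$, so coupled truncations agree on $[1,k]$ unless every lower vertex at sites $k+1,\ldots,N$ fails, which gives the $d(u)^{N-k}$ bound. Your explicit derivation of $\delta_{\boldsymbol1}\mathfrak T_u=\delta_{\boldsymbol1}$ from the $c=1$ kernel (every vertex sees $(1,1)$ and $K_u(1;1)=1$), and your remark that $x<u$ is exactly what makes $K_u$ stochastic, fill in points the paper leaves implicit.
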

\begin{proof}
At each fundamental vertex, the incoming colors can be exchanged
with probability at least $1-d(u)=u(1-q)/(1-qu)$.  Sample an
independent event of that probability on which the exchange is
performed for \emph{all} inputs.  In the lower right-to-left pass,
such an exchange at site $j$ makes the auxiliary color continuing
to the left independent of its distant right entrance.  Two
coupled truncations therefore agree on $[1,k]$ unless every lower
vertex at sites $k+1,\ldots,N$ fails this event; the probability of
failure is $d(u)^{N-k}$.  This proves the bound and the existence
of the limit. 
\end{proof}

\begin{proposition}
\label{prop:engine-preserves-double-row}
Fix $x<u<1$.  Suppose $\nu\mathfrak T_u=\nu$ and, for $m\ge2$,
the occupation indicators under $\nu$ agree outside an almost surely
finite prefix with a stationary ergodic binary sequence of mean
$\lambda<\rho_m$, independently of the $m$th engine's vertex
choices.  Then
\begin{equation}
\label{eq:engine-preserves-fundamental-invariance}
 (\nu\mathfrak E_m)\mathfrak T_u=\nu\mathfrak E_m.
\end{equation}
For $m=1$ the same holds for any invariant input law on
$\mathcal C_1^{\mathbb N}$.
\end{proposition}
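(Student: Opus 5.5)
Our plan is to derive \eqref{eq:engine-preserves-fundamental-invariance} from the finite commutation relation of Proposition~\ref{prop:finite-boundary-train} and then pass to the limit $N\to\infty$, using the stabilization results of Subsection~\ref{subsec:stable-engine-carriers} for the engine and Lemma~\ref{lem:infinite-fundamental-double-row} for the double row. Fix a cylinder function $f$ supported on $[1,k]$. In \eqref{eq:finite-augmented-train} we take the exterior input $G=\mathbf0^{(m)}$ and the auxiliary color $i=0$, and we sum over the exterior outputs $H$ and $j$. The left-hand side then becomes ``engine, then an extra crossing pair, then $\mathfrak T^{N;\mathrm{open}}_u$''. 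The right-hand side becomes ``$\mathfrak T^{N,0}_u$, then a normally indexed vertex $\mathbb W^{(m)}_{xu,s}$ whose output color $\gamma_2$ is fed into the engine together with the perturbed carrier $G^{(1)}$, then a return vertex''. Summing over $H$ and $j$ removes the final $\widehat L^{\rm ver}$ and the final $\mathfrak T$ auxiliary output by stochasticity (Proposition~\ref{prop:hat-L-ver-stochasticity}), as in Figure~\ref{fig:finite-boundary-train-return}.

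Both sides are integrated against $\nu|_{[1,N]}$ and tested with $f(\boldsymbol\xi|_{[1,k]})$. On the left, the output window of the engine is then transformed by a kernel that differs from $\mathfrak T_u$ only through the auxiliary entrance color $\gamma_2$ at site $N$ and its extra $xu$ vertex. Lemma~\ref{lem:infinite-fundamental-double-row} makes this discrepancy $O(d(u)^{N-k})$, uniformly in the entrance color. On the right we first use $\nu\mathfrak T^{N,0}_u\approx\nu\mathfrak T_u=\nu$ on windows, again by \eqref{eq:fundamental-geometric-forgetting}. The engine is then applied to $\nu$ but with a perturbed right entrance: the carrier $G^{(1)}$ differs from $\mathbf0^{(m)}$ by at most one recorded arrow, and one extra color sits at the far end.

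The main obstacle is to show that this perturbation of the carrier entrance does not affect the window $[1,k]$ in the limit. For this we invoke Lemma~\ref{lem:scalar-clearing} together with the erasing-block remark after it, with $b=1$. By hypothesis the recorded-input indicators under $\nu$ agree, outside a finite prefix, with a stationary ergodic sequence of mean $\lambda<\rho_m$, independent of the engine randomness. Hence, almost surely, an erasing block occurs between the window and site $N$ for $N$ large, uniformly over entrances with at most one recorded arrow. The usual carriers, and hence the return row read from left to right with the common boundary sample, then agree on $[1,k]$. By dominated convergence the two sides converge to $\nu\mathfrak E_m\mathfrak T_u f$ and $\nu\mathfrak E_m f$, respectively.

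There is one further point to handle. Under $\nu\mathfrak E_m$, the infinite $\mathfrak T_u$ is the limit of $\mathfrak T^{N,c}_u$ for any $c$ (Lemma~\ref{lem:infinite-fundamental-double-row}), so the random entrance color is harmless. Moreover, the finite engine at size $N$ converges locally to $\mathfrak E_m$ by Proposition~\ref{prop:qualitative-localization}. For $m=1$, Lemma~\ref{lem:saturated-color-invisibility} makes engine $1$ blind to inputs in $\mathcal C_1$. The clearing argument is then needed only for the carrier perturbation, and since all recorded inputs are invisible, a single vertex with $S=1$ and $R=1$ on a zero-input site absorbs the extra arrow. Hence any invariant input law on $\mathcal C_1^{\mathbb N}$ works.
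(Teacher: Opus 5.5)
Your overall plan follows the paper. You apply \eqref{eq:finite-augmented-train} with $G=\mathbf0^{(m)}$ and $i=0$, sum out the exterior outputs, control the left-hand side by the uniform forgetting bound \eqref{eq:fundamental-geometric-forgetting}, and use the erasing-block coupling for the right entrance. The gap is on the right-hand side, at the step ``first use $\nu\mathfrak T^{N,0}_u\approx\nu\mathfrak T_u=\nu$ on windows; the engine is then applied to $\nu$.'' The engine is not a local kernel. Its usual carrier sweeps the whole input word from site $N$ down to site $1$, so its output on $[1,k]$ depends on the input near the right end, and window-level closeness of $\nu\mathfrak T^{N,0}_u$ to $\nu$ says nothing about the engine's output. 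Near site $N$ the two laws genuinely differ. The finite double row starts its lower auxiliary line with color $0$ at site $N$, whereas the infinite one receives a random color from the right. This difference propagates leftward until the first full-exchange event, so the finite output $\zeta^N$ and $\zeta=\mathfrak T_u\eta$ disagree on a block of random, geometrically distributed length at the right end, not on a single site. Each discrepant site can feed a recorded arrow into the usual carrier, so the erasing block with $b=1$ does not cover this.

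The missing step is what the paper does. Couple $\zeta^N$ with $\zeta=\mathfrak T_u\eta$, $\eta\sim\nu$, so that $\mathbb P\bigl(\zeta^N|_{[1,N-L]}\neq\zeta|_{[1,N-L]}\bigr)\le d(u)^L$ for each fixed $L$. On the agreement event, the last $L$ sites together with the crossing contribute at most $L+1$ recorded arrows to the carrier entering site $N-L$. To the left of that site, the input is $\zeta\sim\nu$, whose occupation indicators satisfy the hypothesis and are independent of the engine's vertex choices. The erasing-block coupling following Lemma~\ref{lem:scalar-clearing}, now with bound $b=L+1$, makes the window output coincide with that of the engine applied to $\zeta$. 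Hence the limits of the two sides differ on $f$ by at most $2\|f\|_\infty d(u)^L$, and you let $L\to\infty$ after $N\to\infty$. For $m=1$ your conclusion is fine as it stands: engine $1$'s output law does not depend on its input at all, so neither the carrier perturbation nor the right-end discrepancy matters. Note also that for $m=1$ the carrier has no recorded coordinates, so your ``absorbing the extra arrow'' remark is unnecessary.
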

\begin{proof}
Apply the identity~\eqref{eq:finite-augmented-train}
with empty usual carrier and incoming fundamental color $0$, sum
over the outgoing auxiliary states, and make the density
specialization of Proposition~\ref{prop:density-array-convergence}.
The left-hand side consists of an engine followed by a fundamental
double row with possibly random auxiliary entrance; the second
consists of the fundamental double row with entrance $0$, followed
by an engine with at most one additional recorded arrow in its
right-entrance carrier.  The final return crossing disappears on
summing by stochasticity.

By~\eqref{eq:fundamental-geometric-forgetting}, the first side
converges to $(\nu\mathfrak E_m)\mathfrak T_u$, regardless of the
random auxiliary entrance.  For the second, couple the finite
fundamental output $\zeta^N$ with the infinite output
$\zeta=\mathfrak T_u\eta$, where $\eta\sim\nu$.  For fixed $L$,
\[
 \mathbb P\bigl(\zeta^N|_{[1,N-L]}\ne
                  \zeta|_{[1,N-L]}\bigr)\le d(u)^L.
\]
On the agreement event, the last $L$ sites introduce at most $L+1$
recorded carrier arrows.  To their left the physical inputs agree.
As $\zeta\sim\nu$ and the next engine uses independent vertex
choices, the erasing-block coupling following
Lemma~\ref{lem:scalar-clearing} removes every such bounded entrance
before any fixed output window, uniformly in its colors.
The limits of the two sides therefore differ, on a cylinder
function $f$, by at most $2\|f\|_\infty d(u)^L$.
Letting $L\to\infty$ proves the assertion. 
\end{proof}

\begin{proof}[Proof of Theorem~\ref{thm:main-stationary-construction}]
Let $\nu_0=\delta_{\boldsymbol1}$ and
$\nu_m=\nu_{m-1}\mathfrak E_m$.  The local construction above
identifies $\nu_r=\mu_{\mathrm{eng}}$ with the limit of the output of the combination of engines, as $N \to \infty$. The occupation indicators under
$\nu_m$ agree outside a finite prefix with a stationary ergodic
binary sequence of mean $\rho_m$.  Since
$\nu_0\mathfrak T_u=\nu_0$ by
\eqref{eq:full-configuration-fundamental-invariant}, the preceding
proposition, applied successively with $\rho_{m-1}<\rho_m$, gives
\begin{equation}
\label{eq:engine-infinite-double-row-invariance}
 \mu_{\mathrm{eng}}\mathfrak T_u=\mu_{\mathrm{eng}},\qquad x<u<1.
\end{equation}
For a cylinder function $f$ supported on $[1,k]$, take
$u_\varepsilon=1-(1-q)\varepsilon$.  Since
$d(u_\varepsilon)=\varepsilon/(1+q\varepsilon)$,
\eqref{eq:fundamental-geometric-forgetting} allows us to truncate
at $k+2$, with error $O_f(\varepsilon^2)$.  By
Proposition~\ref{prop:double-row-to-half-space-ASEP},
\[
 \mathfrak T_{u_\varepsilon}f
 =\mathfrak T_{u_\varepsilon}^{k+2,0}f+O_f(\varepsilon^2)
 =f+2\varepsilon\mathcal L_\alpha f+O_f(\varepsilon^2).
\]
The finite right-end term does not act on $f$.  Using
\eqref{eq:engine-infinite-double-row-invariance}, dividing by
$2\varepsilon$, and sending $\varepsilon\downarrow0$ gives
$\mu_{\mathrm{eng}}(\mathcal L_\alpha f)=0$, which is the desired stationarity.
\end{proof}

\section{Boundary correlations}
This section proves Theorem~\ref{thm:two-site-marginal} by computing
the joint color distribution at the first two sites of the stationary
two-species ASEP on the half-line, using the engine construction of
Section~\ref{sec:queue-specialization-stationarity}.
Subsection~\ref{subsec:asep-local-specializations} collects the local
transition probabilities and moment identities needed for these
computations.  Subsection~\ref{subsec:low-high-asep} treats the low--high
density regime.  For the high--high regime,
Subsection~\ref{subsec:two-color-mixed-engine} introduces the mixed-crossing
representation and determines the crossing's output distribution;
Subsection~\ref{subsec:high-high} then uses these results to compute the
two-site probabilities.  Finally,
Subsection~\ref{subsec:two-species-large-alpha} treats \(\alpha\geq1\)
using the exact specialization constructed in
Section~\ref{sec:queue-specialization-stationarity}.

In each case, color projection reduces the computation of the full
two-site marginal to two probabilities.  We give this common reduction
here and use it throughout the section.

\paragraph{Reduction to two probabilities.}
We use the notation \(\bar\rho\), \(J(\rho)\), and \(b_{u,v}\) from
Subsection~\ref{subsec:half-space-mpa}.
The color projections merging \(\{1,2\}\) and \(\{0,2\}\) have stationary
laws \(\mu^{\alpha,\rho_2}\) and \(\mu^{\alpha,\rho_1}\), respectively,
whose two-site marginals are given by
Proposition~\ref{prop:one-species-left-edge}.
After normalization, the nine probabilities \(p_{ij}\) have eight degrees
of freedom, and the two projections give six independent constraints.
For convenience, we compute \(p_{22}\) and \(p_{20}\); all other entries
then follow from the two projected marginals.  It therefore suffices
to establish
\begin{equation}
\label{eq:two-colored-targets}
 p_{22}=\frac{J_1-J_2}{\alpha^2}
       \bigl(J_1\theta^2-qJ_2\bigr),
 \qquad
 p_{20}=\frac{J_1-J_2}{\alpha^2}
       \bigl(J_1\theta(1-\theta)+qJ_2\bigr).
\end{equation}

\subsection{ASEP specializations of the boundary and row vertex weights}
\label{subsec:asep-local-specializations}

We collect the identities for the laws defined in
\eqref{eq:q-negative-binomial-law} and
\eqref{eq:q-Poisson-law} that will be used repeatedly below.

\begin{lemma}
\label{lem:22-q-negative-binomial-moments}
Let \(0\leq q<1\), \(s<1\), \(0\leq b<1\), and
\(\upsilon\sim q\text{-}\mathrm{NB}(s,b)\).  For \(|bz|<1\),
\begin{equation}
\label{eq:22-q-negative-binomial-pgf}
    \mathbb E[z^\upsilon]
    =\frac{(b;q)_\infty(sbz;q)_\infty}
           {(sb;q)_\infty(bz;q)_\infty}.
\end{equation}
Consequently, for \(m\in\mathbb Z_{\geq0}\),
\begin{equation}
\label{eq:22-q-negative-binomial-moments}
    \mathbb E[q^{m\upsilon}]
    =\frac{(b;q)_m}{(sb;q)_m}.
\end{equation}
More generally, for \(\ell\in\mathbb Z_{\geq1}\),
\begin{equation}
\label{eq:22-q-negative-binomial-factorial-moments}
\mathbb E\!\left[
 \mathbf 1_{\{\upsilon\geq\ell\}}
 \prod_{r=0}^{\ell-1}(1-q^{\upsilon-r})
\right]
=\frac{b^\ell(s;q)_\ell}{(sb;q)_\ell}.
\end{equation}

In particular, if
\(\upsilon\sim q\text{-}\mathrm{Poi}(b)\), then
\begin{equation}
\label{eq:22-q-Poisson-useful-moments}
\begin{gathered}
    \mathbb E[q^{m\upsilon}]=(b;q)_m,
    \qquad
    \mathbb E[1-q^\upsilon]=b,\\
    \mathbb E[q^\upsilon(1-q^\upsilon)]=qb(1-b),
    \qquad
    \mathbb E\!\left[
      \mathbf 1_{\{\upsilon\geq2\}}(1-q^\upsilon)(1-q^{\upsilon-1})
    \right]=b^2.
\end{gathered}
\end{equation}
\end{lemma}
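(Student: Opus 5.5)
The plan is to derive everything from the $q$-binomial theorem, $\sum_{k\ge0}\frac{(u;q)_k}{(q;q)_k}w^k=\frac{(uw;q)_\infty}{(w;q)_\infty}$ for $|w|<1$, which also underlies the normalization in \eqref{eq:q-negative-binomial-law}.

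\emph{Generating function.} Substituting $w=bz$ and $u=s$ gives
\[
\mathbb E[z^\upsilon]=\frac{(b;q)_\infty}{(sb;q)_\infty}\sum_k\frac{(s;q)_k}{(q;q)_k}(bz)^k=\frac{(b;q)_\infty(sbz;q)_\infty}{(sb;q)_\infty(bz;q)_\infty},
\]
which is \eqref{eq:22-q-negative-binomial-pgf}. Setting $z=q^m$ (admissible since $|bq^m|<1$) and using $(u;q)_\infty/(uq^m;q)_\infty=(u;q)_m$ for $u=b$ and $u=sb$ yields \eqref{eq:22-q-negative-binomial-moments}.

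\emph{Factorial moments.} For $k\ge\ell$ I will use
\[
\prod_{r=0}^{\ell-1}(1-q^{k-r})=\frac{(q;q)_k}{(q;q)_{k-\ell}},
\]
and the factor vanishes automatically for $k<\ell$. Hence, writing $k=j+\ell$, the expectation becomes
\[
\frac{(b;q)_\infty}{(sb;q)_\infty}\,b^\ell\sum_{j\ge0}\frac{(s;q)_{j+\ell}}{(q;q)_j}b^j .
\]
Split $(s;q)_{j+\ell}=(s;q)_\ell(sq^\ell;q)_j$ and apply the $q$-binomial theorem with $u=sq^\ell$, $w=b$:
\[
\sum_{j\ge0}\frac{(sq^\ell;q)_j}{(q;q)_j}b^j=\frac{(sbq^\ell;q)_\infty}{(b;q)_\infty}.
\]
Collecting factors gives $b^\ell(s;q)_\ell\,(sbq^\ell;q)_\infty/(sb;q)_\infty=b^\ell(s;q)_\ell/(sb;q)_\ell$, which is \eqref{eq:22-q-negative-binomial-factorial-moments}.

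\emph{The $q$-Poisson case.} Put $s=0$.
\begin{itemize}
\item Formula \eqref{eq:22-q-negative-binomial-moments} gives $\mathbb E[q^{m\upsilon}]=(b;q)_m$; in particular $\mathbb E[1-q^\upsilon]=1-(1-b)=b$.
\item Using $m=1$ and $m=2$, $\mathbb E[q^\upsilon-q^{2\upsilon}]=(1-b)-(1-b)(1-qb)=qb(1-b)$.
\item Formula \eqref{eq:22-q-negative-binomial-factorial-moments} with $\ell=2$ gives $b^2$.
\end{itemize}

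The only delicate point is the case $q=0$. There I interpret $q^0=1$ and $(u;0)_k=1-u$ for $k\ge1$, so the laws become the stated geometric-type laws. Either I redo the sums directly with these conventions, or I argue by continuity in $q$: each side is a uniformly convergent series in $k$, with dominating term $|b|^k$ up to bounded factors.

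Convergence throughout requires $|b|<1$ and $|bz|<1$, both assumed, with $s<1$ ensuring positivity. I expect no genuine obstacle; the main care is the index shift and the Pochhammer bookkeeping.
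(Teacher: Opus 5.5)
Your proof is correct and follows the same route as the paper: the $q$-binomial theorem gives the generating function, $z=q^m$ gives the moments, the shift $k=j+\ell$ with $(s;q)_{j+\ell}=(s;q)_\ell(sq^\ell;q)_j$ and a second application of the $q$-binomial theorem give the factorial moments, and $s=0$ gives the $q$-Poisson identities. Your $q=0$ remark also holds, and your first option is enough: with $0^0=1$ the $q$-binomial theorem is valid verbatim at $q=0$, so no continuity argument is needed.
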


\begin{proof}
Equation~\eqref{eq:22-q-negative-binomial-pgf} is the \(q\)-binomial
theorem.  Setting \(z=q^m\) proves
\eqref{eq:22-q-negative-binomial-moments}.  Multiplying
\(p_{s,b}^{(q)}(k)\) by
\(\mathbf 1_{\{k\geq\ell\}}
 \prod_{r=0}^{\ell-1}(1-q^{k-r})\), shifting \(k\) by \(\ell\), and
applying the same theorem proves
\eqref{eq:22-q-negative-binomial-factorial-moments}.  The final identities
are the specialization \(s=0\).
\end{proof}

We use the boundary kernels \(\mathbb K^{(m)}_{\alpha,\rho}\) obtained by
the ASEP density specialization \eqref{eq:engine-density-specialization}.
The following statement records their output laws for \(m=1,2\).

\begin{lemma}
\label{lem:22-two-boundary-specialization}
Let \(0<\alpha,\rho<1\) with \(\alpha+\rho>1\).  Under the ASEP density
specialization, the boundary output \(H=(h_0,h_2)\) satisfies
\[
 h_0\sim q\text{-}\mathrm{Poi}(b_{\alpha,\rho}),
 \qquad h_2=0\quad\text{almost surely},
\]
for input \((\infty,0)\) when \(m=1\), and for input \((g,\infty)\) when
\(m=2\), for every \(g\in\mathbb Z_{\geq0}\).  In the latter case, the
output law does not depend on \(g\); consequently, for a random input
\(g\), the boundary output is independent of \(g\).
\end{lemma}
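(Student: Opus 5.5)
The plan is to read both cases directly off the explicit queue-limit formulas of Lemma~\ref{lem:queue-boundary-specialization} and Corollary~\ref{cor:queue-K-density-specialization}, specialized to two colors ($n=2$). Throughout, the specialization is \eqref{eq:queue-density-boundary-parameters}: $a=\alpha/(\alpha-1)<0$ and $s_x=-\bar\rho/(\rho x)$, with $x\downarrow0$. Set $\beta:=s_xx/a=b_{\alpha,\rho}\in(0,1)$. It is independent of $x$, and $\beta<1$ is exactly the hypothesis $\alpha+\rho>1$.

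\emph{Case $m=1$.} The input is $A=(\infty,0)$. By \eqref{eq:queue-K-m-one}, $h_2=0$ identically and $h_0\sim p^{(q)}_{ax,\beta}$. As $x\downarrow0$ we have $ax\to0$. The finite factor $(ax;q)_{h_0}$ tends to $1$, and the normalization $(\beta;q)_\infty/(\beta ax;q)_\infty$ tends to $(\beta;q)_\infty$. Hence the limit law is $q\text{-}\mathrm{Poi}(\beta)$.

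\emph{Case $m=2$.} The input is $A=(g,\infty)$. On the support \eqref{eq:queue-K-support} both $H_0$ and $H_2$ are unrestricted, and the weight \eqref{eq:queue-K-m-general} carries the factor $(-x/a)^{T}$ with $T=H_2$. So for fixed $(H_0,H_2)$ with $H_2\ge1$ the weight is $O(x)$ and vanishes in the limit. For $H_2=0$ the weight is
\[
\mathcal N^K_{x,s_x}\frac{(x/a;q)_g(ax;q)_{H_0}}{(x^2;q)_{g+H_0}(q;q)_{H_0}}\beta^{H_0}.
\]
Every finite Pochhammer factor tends to $1$, and $\mathcal N^K_{x,s_x}\to(\beta;q)_\infty$, as computed in the proof of Corollary~\ref{cor:queue-K-density-specialization}. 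So the pointwise limit is $p^{(q)}_\beta(H_0)\mathbf1_{\{H_2=0\}}$, which does not involve $g$.

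\emph{Passing from pointwise to law convergence.} This is the only point needing care. The pointwise limits already sum to $1$ (by the $q$-binomial theorem), and the prelimit weights are probability distributions by \eqref{eq:queue-K-normalization}. Scheffé's lemma then gives total-variation convergence. In particular no mass escapes to large $H_2$, even though $H_2$ is unbounded for $m=2$, so no separate tail estimate is required.

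\emph{Random input.} Since the limiting kernel $\mathbb K^{(2)}((g,\infty);\cdot)$ is the same distribution for every $g$, mixing over any random $g$ produces a joint law equal to the product of the law of $g$ and $q\text{-}\mathrm{Poi}(\beta)\otimes\delta_0$. This is the claimed independence. The same conclusion also holds in the form of Proposition~\ref{prop:density-array-convergence}, which states that the density-specialized boundary output does not depend on the incoming normally indexed state.
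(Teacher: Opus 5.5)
Your proposal is correct and is essentially the paper's proof, which is a single line: it invokes Corollary~\ref{cor:queue-K-density-specialization} with $n=2$ and $m=1,2$. You make that corollary explicit in the two-color case, using \eqref{eq:queue-K-m-one} and \eqref{eq:queue-K-m-general} with $T=H_2$, and you add a Scheff\'e step to upgrade pointwise convergence of the weights to total-variation convergence; that step is a harmless refinement.
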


\begin{proof}
This follows from Corollary~\ref{cor:queue-K-density-specialization}
with \(n=2\) and \(m=1,2\), respectively.
\end{proof}

In both density regimes, the reflected row uses the kernel
\(\widehat L^{\mathrm{ver}}_\rho\) from
Section~\ref{sec:queue-specialization-stationarity}.
Its two-color transition probabilities are as follows.

\begin{lemma}
\label{lem:22-local-transition}
Fix \(0\leq q<1\) and \(\rho\in(0,1)\).  Given \(i\in\mathcal C_2\) and a
horizontal queue state \((h_0,h_2)\in\mathbb Z_{\geq0}^2\), let
\(\mathsf p_{q,\rho}(i,j\mid h_0,h_2)\) be the probability of output color
\(j\in\mathcal C_2\).  Rows and columns in the following table are displayed
in the order \(0,2,1\):
\begin{equation}
\label{eq:22-local-transition-table}
\begingroup
\renewcommand{\arraystretch}{1.55}
\begin{array}{c|ccc}
\mathsf p_{q,\rho}(i,j\mid h_0,h_2)
    &j=0&j=2&j=1\\
\hline
i=0
    &\rho
    &0
    &\bar\rho
\\
i=2
    &\rho(1-q^{h_0})
    &\rho q^{h_0}
    &\bar\rho
\\
i=1
    &\rho(1-q^{h_0})
    &\rho q^{h_0}(1-q^{h_2})
    &\bar\rho+\rho q^{h_0+h_2}
\end{array}
\endgroup
\end{equation}
With the hole-state vectors \(\widehat E_i\)
from Example~\ref{ex:two-color-hole-spin-one}, the arrow configuration is
updated deterministically according to
\[
 (h_0',h_2')=(h_0,h_2)+\widehat E_i-\widehat E_j.
\]
\end{lemma}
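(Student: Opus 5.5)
The table is a direct specialization of the return-vertex weights $\widehat L^{\mathrm{ver}}$ of Figure~\ref{fig:intro-return-weights} (equivalently Example~\ref{ex:two-color-hole-spin-one}, after the density specialization), so I would simply compute it entry by entry. The entries involve no factor of $x$, which means I need to fix which specialization is meant. Under \eqref{eq:engine-density-specialization} (the case $0<\alpha<1$) we have $x\downarrow0$ and $s=-\bar\rho/(\rho x)$, hence $sz\to-\bar\rho/\rho$ when $z=x$. Then $1/(1-sz)\to\rho$, the factors $z$ appearing in the $k\to 1$ and $k\to\ell$ entries vanish, and the terms $1-q^{H_k}z$ tend to $1$. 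This matches Proposition~\ref{prop:density-array-convergence} with $\rho=(1-sx)^{-1}$. For $\alpha\ge1$ the weights of Figure~\ref{fig:intro-return-weights} contain $x=1-\alpha^{-1}$, so the lemma as written corresponds to the $x=0$ weights; I will read it in that sense (the stated range $\rho\in(0,1)$ with no $\alpha$ supports this).

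In the $x\to0$ limit of \eqref{eq:two-color-L-ver-table}, row $i=0$ gives $(1-q^{h_0}z)/(1-sz)\to\rho$ for the output $0$. The output $2$ has weight $z q^{h_0}(1-q^{h_2})/(1-sz)\to0$. The output $1$ has weight $z(q^{|H|}-s)/(1-sz)$; since $zs\to-\bar\rho/\rho$, the term $-zs/(1-sz)\to\bar\rho$. Row $i=2$ follows the same way and gives $\rho(1-q^{h_0})$, $\rho q^{h_0}(1-0)$ and $\bar\rho$. Row $i=1$ gives $\rho(1-q^{h_0})$, $\rho q^{h_0}(1-q^{h_2})$ and $(q^{|H|}-sz)/(1-sz)\to\rho q^{h_0+h_2}+\bar\rho$. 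Each of these is a limit of a rational function with nonvanishing denominator $1-sz\to1/\rho$, so no subtlety arises. As a sanity check, each row sums to one, consistent with Proposition~\ref{prop:hat-L-ver-stochasticity}. The case $q=0$ follows by continuity, with $q^{0}=1$.

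The update rule $(h_0',h_2')=(h_0,h_2)+\widehat E_i-\widehat E_j$ is just the conservation law $H'=H+\widehat E_i-\widehat E_j$ built into the return vertex (Figure~\ref{fig:intro-return-weights}, Example~\ref{ex:two-color-hole-spin-one}). This law is preserved under the limit because it holds for every value of the parameters.

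The main obstacle is not the computation but the bookkeeping. I must make sure the ordering convention ($0\succ$-ordering $\sum_{a\prec k}H_a$ with holes listed first, so that the exponent for color $2$ is $q^{h_0}$) agrees with the table. I would cross-check this against the graphical rule of Proposition~\ref{def:engine-graphical-realization} with $F=0$. In that rule, $S=0$ gives output $1$ with probability $\bar\rho$. When $S=1$ and the input is $1$, the output is $c_R$, the color at position $R$ in the list $0^{h_0}2^{h_2}1\cdots$, which reproduces row $i=1$. When the input is $k$ and $R>b$, the output keeps $k$, which reproduces rows $0$ and $2$.
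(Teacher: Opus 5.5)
Your proposal is correct and takes essentially the paper's approach. The paper gives no separate proof: the table is read off from the density-specialized return vertex $\widehat L^{\mathrm{ver}}$ of Figure~\ref{fig:intro-return-weights} at $x=0$, which is the same as the $x\downarrow0$, $sx=-\bar\rho/\rho$ limit of Example~\ref{ex:two-color-hole-spin-one} that you compute, and your reading of the lemma as the $0<\alpha<1$ specialization matches the paper, which treats $\alpha\ge1$ separately using the $x=a^{-1}$ weights.
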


\subsection{Two-species low--high-density stationary measure}
\label{subsec:low-high-asep}
Throughout this subsection we assume regime (L) of
Theorem~\ref{thm:two-site-marginal}; in particular,
\[
 \rho_1=\alpha,\qquad 0<\alpha\leq\frac12,\qquad 1-\alpha<\rho_2<1.
\]
We use the construction in Figure~\ref{fig:low-high-engine}, in which the
engine for color~\(2\) receives a Bernoulli input of density \(\alpha\).
The following lemma obtains this representation from the two-engine
construction by taking the first engine to its critical density.

\begin{lemma}
\label{lem:critical-bernoulli-reduction}
For \(1-\alpha<\rho<\rho_2\), let \(\mu_{N,\rho}\) be the output law of the
finite zero-input array with \(N\) sites and engine densities
\((\rho,\rho_2)\).  For each fixed \(N\), the output law of the first engine
converges to \(\operatorname{Ber}(\alpha)^{\otimes N}\) as
\(\rho\downarrow1-\alpha\).  Moreover, the two iterated limits
\[
 \lim_{\rho\downarrow1-\alpha}\lim_{N\to\infty}\mu_{N,\rho}
 =\lim_{N\to\infty}\lim_{\rho\downarrow1-\alpha}\mu_{N,\rho}
 =\mu^{\alpha,\alpha,\rho_2}
\]
exist and agree locally.  Their common limit is stationary for the
two-species half-space ASEP and is given by the single-engine construction
in Figure~\ref{fig:low-high-engine}, with input law \(\pi^\alpha\) independent
of the randomness of the remaining engine.
\end{lemma}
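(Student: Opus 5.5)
The plan is to handle the first engine by itself and then pass the resulting product input through the second engine, using the monotone-workload coupling of Lemma~\ref{lem:scalar-clearing} to justify the interchange of limits.

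\emph{Step 1: the first engine becomes Bernoulli.} By Lemma~\ref{lem:saturated-color-invisibility}, the output of engine $1$ does not depend on its input. It is determined by the return row of Proposition~\ref{def:engine-graphical-realization}, fed with i.i.d.\ hole inputs $1-S_i\sim\operatorname{Ber}(1-\rho)$ and started from the boundary sample $H_0\sim q\text{-}\mathrm{Poi}(\beta)$ with $\beta=b_{\alpha,\rho}\uparrow 1$ as $\rho\downarrow 1-\alpha$.

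\begin{itemize}
\item Since $p_\beta^{(q)}(h)\to 0$ for each fixed $h$, the initial hole count tends to infinity in probability.
\item I will prove directly that, for fixed $N$, if $H_0\to\infty$ then the $N$ return vertices see a carrier with $q^{|H|}\to 0$ and $q^{H_0}\to 0$.
\item In that case the return weights reduce, by \eqref{eq:22-local-transition-table} with one color, to: output $1$ with probability $\bar\rho$ and output $0$ with probability $\rho$, independently of the input.
\item The output is therefore $\operatorname{Ber}(\bar\rho)^{\otimes N}$, which tends to $\operatorname{Ber}(\alpha)^{\otimes N}$.
\end{itemize}

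Each vertex changes $|H|$ by at most one, so finitely many sites cannot bring a huge carrier down. The convergence is therefore in total variation at fixed $N$.

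\emph{Step 2: the limit $\lim_N\lim_\rho$.} The second engine's kernel does not depend on $\rho$, and its input law converges in total variation. Hence $\lim_{\rho}\mu_{N,\rho}$ equals $\operatorname{Ber}(\alpha)^{\otimes N}\mathfrak E^{N,2}$. We have $\alpha<1-\alpha<\rho_2$, so the input is i.i.d.\ with mean $\alpha<\rho_2$. Lemma~\ref{lem:scalar-clearing}, together with the localization argument of Proposition~\ref{prop:qualitative-localization}, then gives local convergence as $N\to\infty$ to the single-engine law.

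Stationarity of this law follows from Proposition~\ref{prop:engine-preserves-double-row} with $m=2$ and $\nu=\pi^\alpha$ (the product Bernoulli law), provided $\pi^\alpha\mathfrak T_u=\pi^\alpha$. I will check this invariance from the one-species theory. Bernoulli($\alpha$) is the low-density half-line stationary measure for injection rate $\alpha$: its matrix-product realization in Proposition~\ref{prop:half-space-mpa} is one-dimensional, with $E=\bar\alpha$ and $D=\alpha$. This gives invariance under $\mathcal L_\alpha$. Invariance under $\mathfrak T_u$ itself I would derive by applying the first-engine identity of Proposition~\ref{prop:engine-preserves-double-row} at densities $\rho>1-\alpha$ and letting $\rho\downarrow 1-\alpha$. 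This is possible because $\nu_1^{\rho}\mathfrak T_u=\nu_1^\rho$ holds for each $\rho$, and the local convergence $\nu_1^\rho\to\pi^\alpha$ passes through $\mathfrak T_u$ by \eqref{eq:fundamental-geometric-forgetting}.

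\emph{Step 3: the other order, which I expect to be the main obstacle.} Here I need $\lim_{\rho}\mu^{\alpha,\rho,\rho_2}=\mu^{\alpha,\alpha,\rho_2}$ locally, which requires uniformity in $\rho$ near $1-\alpha$. On the infinite line, the first engine's output is the stationary output of the return birth–death chain of \eqref{eq:autonomous-hole-carrier}, started from $H_0$. As $\beta\to1$, $H_0$ blows up, and the return chain has drift strictly toward $0$ only because $\rho>1/2$. The first-engine output on a fixed window $[1,k]$ is therefore still $\operatorname{Ber}(\bar\rho)$ with probability tending to one.

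The second engine is the delicate part: its usual carrier comes from the right, where the first engine's output has density close to $\rho$, not $\alpha$. I will use the monotone coupling of Lemma~\ref{lem:scalar-clearing}. The workload $C_d$ is a nondecreasing function of the input indicators, and the erasing-block argument needs only density less than $\rho_2$. Since $\rho<\rho_2$ uniformly, I will bound the second carrier entering the window by a quantity whose tail is uniform in $\rho\in(1-\alpha,\rho')$; Bernoulli($\rho'$) inputs dominate. Tightness of the carrier at the right edge of the window then follows. Conditional on that carrier and on the window input, the output converges. The remaining check is that the carrier law at the window edge converges as $\rho\downarrow1-\alpha$. I would try to show this by noting that the colored carrier depends only on the input to the right of the window through an erasing block at a distance that is tight uniformly in $\rho$; on that region the inputs converge locally, being driven by the same return chain. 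This identifies both iterated limits with the single-engine law.
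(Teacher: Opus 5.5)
Your Steps~1 and~2 coincide with the paper's argument. The first engine's output converges in total variation at fixed $N$, because $h_0\to\infty$ and can drop by at most $N$; the localization of Lemma~\ref{lem:scalar-clearing} then applies to an i.i.d.\ input of mean $\alpha<\rho_2$.

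Your stationarity argument takes a different but valid route. You pass $\nu_1^\rho\mathfrak T_u=\nu_1^\rho$ to the limit, which is legitimate: the first engine's output on $[1,k]$ depends only on $h_0$ and the first $k$ vertices, and \eqref{eq:fundamental-geometric-forgetting} makes $\mathfrak T_uf$ a uniform limit of cylinder functions. You then apply Proposition~\ref{prop:engine-preserves-double-row} with $m=2$ and $\nu=\pi^\alpha$. The paper instead passes $\mu^{\alpha,\rho,\rho_2}(\mathcal L_\alpha f)=0$ to the limit $\rho\downarrow1-\alpha$, using that $\mathcal L_\alpha$ maps cylinder functions to cylinder functions. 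That is shorter, but it relies on identifying $\lim_\rho\lim_N\mu_{N,\rho}$, whereas yours needs only the other order.

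The gap is in Step~3, which is exactly the claim that the two iterated limits agree. You assert that Bernoulli$(\rho')$ inputs dominate the first engine's occupation output uniformly in $\rho$, but nothing you invoke yields this. The asymptotic density $\rho$ and the drift of the return chain give neither a pathwise domination nor uniform tightness of the second engine's workload. Such a domination is not a generic feature of the return row: started from an empty carrier, a site is occupied with probability $1-\rho\bar\rho>\rho$. It must therefore come from the size of the boundary carrier.

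The missing idea, which is the core of the paper's proof, is a comparison with the stationary one-color queue.
\begin{itemize}
\item With $x=0$, the one-color return row is Martin's queue with hole arrivals of density $\bar\rho$ and services of density $\rho$. Its stationary carrier is $q\text{-}\mathrm{Poi}(b_{\rho,\rho})$ and its occupation output is i.i.d.\ $\operatorname{Ber}(\rho)$ (Theorem~\ref{thm:martin-stationary-queue} and Proposition~\ref{prop:martin-yang-baxter-steady-state}).
\item Since $\rho>\alpha$, one has $b_{\alpha,\rho}\ge b_{\rho,\rho}$, and $q$-Poisson laws increase stochastically in their parameter.
\item In the graphical realization of Proposition~\ref{def:engine-graphical-realization}, a larger return carrier stays larger and emits more holes.
\end{itemize}
Hence the actual first-engine occupation output lies pathwise below an i.i.d.\ $\operatorname{Ber}(\rho)$ sequence. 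It is therefore below $\operatorname{Ber}(\bar u)$ for all $1-\alpha<\rho\le\bar u<\rho_2$.

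Because $\Phi_i$ in \eqref{eq:monotone-workload-recursion} is nondecreasing in $A_i$, an empty-carrier cut produced by the dominating input and the second engine's vertex choices serves every such $\rho$. This gives a truncation of the second engine that is uniform in $\rho$. With that in hand, you do not need the convergence of the carrier law at the window edge, which you say you ``would try'' to show. At each fixed truncation the total-variation convergence of Step~1 applies, and removing the truncation identifies both iterated limits.
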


\begin{figure}[htbp]
\centering
\begin{tikzpicture}[
baseline={([yshift=-.5ex]current bounding box.center)},
scale=.85
]
\foreach \x/\instate/\midstate/\outstate in {
  2/{\eta_1}/{\pi_1}/{\mu_1},
  3/{\eta_2}/{\pi_2}/{\mu_2},
  4/{\eta_3}/{\pi_3}/{\mu_3},
  5/{\cdots}/{\cdots}/{\cdots},
  6/{\cdots}/{\cdots}/{\cdots},
  7/{\cdots}/{\cdots}/{\cdots}}
{
  \draw[paper wire,->] (\x,-.75)--(\x,1.75);
  \node[below] at (\x,-.75) {$\instate$};
  \node[anchor=east,xshift=-2pt,inner sep=0pt]
      at (\x,.5) {$\midstate$};
  \node[above] at (\x,1.75) {$\outstate$};
}

\draw[paper normal] (8,0) -- (1,0) -- (0,.5);
\draw[paper hole,->] (0,.5) -- (1,1) -- (8,1);
\node[paper boundary] at (0,.5) {};

\node[right=3pt] at (8,0) {$(0,\infty)$};
\node[right=36pt] at (8,0) {$\scriptstyle\rho_2$};
\node[right=3pt,text=holecolor] at (8,1) {$\sum H$};
\end{tikzpicture}
\caption{The limiting construction in
Lemma~\ref{lem:critical-bernoulli-reduction}.  The first engine
is replaced by the Bernoulli input \(\eta\sim\pi^\alpha\), and the remaining
engine has density \(\rho_2\).}
\label{fig:low-high-engine}
\end{figure}

\begin{proof}
The first engine starts its reflected row with
\[
 h_0\sim q\text{-}\mathrm{Poi}(b_{\alpha,\rho}).
\]
As \(\rho\downarrow1-\alpha\), we have \(b_{\alpha,\rho}\uparrow1\), so
\(h_0\) tends to infinity in probability.  At fixed \(N\), the hole count
can decrease by at most \(N\), and hence tends to infinity throughout the
reflected row.  In the one-color restriction of
\eqref{eq:22-local-transition-table}, the probability of output color~\(1\)
consequently tends to \(1-\rho\to\alpha\), regardless of the incoming
color.  The independent randomness at different vertices therefore gives
the product law \(\operatorname{Ber}(\alpha)^{\otimes N}\).  Applying the
second engine yields the finite version of Figure~\ref{fig:low-high-engine}.

To justify the exchange of limits, choose
\(1-\alpha<\bar u<\rho_2\) and restrict to
\(1-\alpha<\rho\leq\bar u\).  Since \(\rho>\alpha\), we have
\(b_{\alpha,\rho}\geq b_{\rho,\rho}\).  The stationary one-color Martin
coupling with densities \(1-\rho\) and \(\rho\) has carrier law
\(q\text{-}\mathrm{Poi}(b_{\rho,\rho})\) and Bernoulli product occupation
output of density \(\rho\), by
Theorem~\ref{thm:martin-stationary-queue} and
Proposition~\ref{prop:martin-yang-baxter-steady-state}.  Since the
\(q\)-Poisson laws are stochastically increasing in their parameter, the
common graphical realization couples the first-engine boundary carrier
above this stationary carrier and its occupation output below the
stationary Bernoulli sequence.  It is therefore also dominated by a
Bernoulli product sequence of density \(\bar u\).  Since
\(\bar u<\rho_2\), the erasing-block coupling following
Lemma~\ref{lem:scalar-clearing} gives a finite-engine approximation for
the second engine on every fixed output window, uniformly in \(\rho\).
For each fixed truncation, the Bernoulli convergence established above
applies.  Removing the truncation proves that the two iterated limits
agree.

For every \(1-\alpha<\rho<\rho_2\), the limit as \(N\to\infty\) is
stationary by Theorem~\ref{thm:engine-array-stationarity}.  Since the
ASEP generator maps cylinder functions to cylinder functions, the
stationarity identity passes to the local limit
\(\rho\downarrow1-\alpha\).  Consequently, the limiting measure is
stationary.
\end{proof}

Let \(\eta\), \(\pi\), and \(\mu\) denote the configurations below, between,
and above the two rows in Figure~\ref{fig:low-high-engine}, respectively.
The input \(\eta\) has Bernoulli product law \(\pi^\alpha\).  Reading from
right to left, the unreflected row realizes Martin's queue with arrival
density \(\alpha\) and service density \(\rho_2\).
Proposition~\ref{prop:martin-yang-baxter-steady-state} therefore gives
\[
 \pi\sim\pi^{\alpha,\rho_2},
\]
where the full-space law is restricted to the positive sites.  Its
two-site marginal is given by Theorem~\ref{thm:full-space-two-site-law}.

At the left boundary, the unreflected row supplies the input
\((g,\infty)\).  Given the entire unreflected row, the boundary kernel
depends only on this input and, by
Lemma~\ref{lem:22-two-boundary-specialization}, returns \((h_0,0)\), where
\[
 h_0\sim q\text{-}\mathrm{Poi}(b_{\alpha,\rho_2}).
\]
This conditional output law is the same for every \(g\), so \(h_0\) is
independent of \(\pi\).  Starting from \((h_0,0)\), the reflected row is
sampled from left to right using fresh, independent vertex randomness.
At site \(i\), the vertical input is \(\pi(i)\) and the output color
\(\mu(i)\) is sampled according to \eqref{eq:22-local-transition-table},
with \(\rho=\rho_2\) and the current horizontal arrow configuration.
The latter is updated by the conservation rule in
Lemma~\ref{lem:22-local-transition}.  By
Lemma~\ref{lem:critical-bernoulli-reduction}, the resulting configuration
has law \(\mu\sim\mu^{\alpha,\alpha,\rho_2}\).

For the two direct calculations, we use the elementary identities
\begin{equation}
\label{eq:22-cross-parameter-identities}
    1-b_{u,v}=\frac{u+v-1}{uv},
    \qquad
    1-qb_{u,v}=\frac{uv-q(1-u)(1-v)}{uv}
\end{equation}
and the low--high identity
\begin{equation}
J_1-J_2
 =\alpha(1-\alpha)-\rho_2\bar\rho_2
  =(\rho_2-\alpha)(\alpha+\rho_2-1).
\label{eq:low-high-current-difference}
\end{equation}

We first compute \(p_{22}\).

\begin{proposition}
\label{prop:low-high-p22}
The first identity in \eqref{eq:two-colored-targets} holds under
regime (L) of Theorem~\ref{thm:two-site-marginal}.
\end{proposition}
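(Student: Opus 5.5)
The plan is to compute $p_{22}=\mathbb P(\mu(1)=2,\mu(2)=2)$ directly from the single-engine representation of Figure~\ref{fig:low-high-engine}, described just above. The relevant facts are: $\pi\sim\pi^{\alpha,\rho_2}$ restricted to positive sites; the reflected row starts from $(h_0,0)$ with $h_0\sim q\text{-}\mathrm{Poi}(b)$, $b:=b_{\alpha,\rho_2}$, independent of $\pi$; and the reflected row is then sampled via Lemma~\ref{lem:22-local-transition} with $\rho=\rho_2$.

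\textbf{Step 1: which configurations contribute.} Read off from \eqref{eq:22-local-transition-table} which first-step configurations can produce output $2$ at site $1$ when the state is $(h_0,0)$.
\begin{itemize}
\item An input $\pi(1)=0$ never outputs $2$.
\item An input $\pi(1)=1$ outputs $2$ with probability $\rho_2q^{h_0}(1-q^{h_2})$, which vanishes because $h_2=0$.
\item An input $\pi(1)=2$ outputs $2$ with probability $\rho_2q^{h_0}$. Since $\widehat E_2-\widehat E_2=0$, the state stays $(h_0,0)$.
\end{itemize}
The same analysis at site $2$ shows that $\mu(2)=2$ forces $\pi(2)=2$, again with conditional probability $\rho_2q^{h_0}$. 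Since the vertex randomness is fresh and $h_0$ is independent of $\pi$, this gives
\[
p_{22}=\rho_2^2\,\mathbb P(\pi(1)=2,\pi(2)=2)\,\mathbb E[q^{2h_0}].
\]

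\textbf{Step 2: insert known quantities.} By Theorem~\ref{thm:full-space-two-site-law} with $\rho_1=\alpha$,
\[
\mathbb P(\pi(1)=2,\pi(2)=2)=(\rho_2-\alpha)(\rho_2-\alpha\theta).
\]
By Lemma~\ref{lem:22-q-negative-binomial-moments},
\[
\mathbb E[q^{2h_0}]=(b;q)_2=(1-b)(1-qb).
\]
Applying \eqref{eq:22-cross-parameter-identities} with $(u,v)=(\alpha,\rho_2)$ then gives
\[
p_{22}=\frac{(\rho_2-\alpha)(\alpha+\rho_2-1)}{\alpha^2}\,(\rho_2-\alpha\theta)\bigl(\alpha\rho_2-q(1-\alpha)(1-\rho_2)\bigr).
\]

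\textbf{Step 3: match the target.} By \eqref{eq:low-high-current-difference}, the prefactor $(\rho_2-\alpha)(\alpha+\rho_2-1)$ equals $J_1-J_2$. It then remains to verify the polynomial identity
\[
(\rho_2-\alpha\theta)\bigl(\alpha\rho_2-q\bar\alpha\bar\rho_2\bigr)=J_1\theta^2-qJ_2,
\]
where $J_1=\alpha\bar\alpha$, $J_2=\rho_2\bar\rho_2$ and $\theta=q+(1-q)\rho_2$.

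I expect this last algebraic verification to be the only real obstacle. I would do it by substituting $\rho_2=(\theta-q)/(1-q)$ and $\bar\rho_2=(1-\theta)/(1-q)$, expanding both sides as polynomials in $\alpha$, and comparing the coefficients of $\alpha^0$, $\alpha^1$ and $\alpha^2$. If a discrepancy appears, the first thing to recheck is the orientation of the $(2,2)$ entry of Theorem~\ref{thm:full-space-two-site-law} relative to the site ordering of $\pi$, since the Martin queue is read from right to left.
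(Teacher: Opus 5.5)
Your proposal is correct and is essentially the paper's proof. It uses the same reduction to $p_{22}=\rho_2^2\,\mathbb P(\pi(1)=2,\pi(2)=2)\,\mathbb E[q^{2h_0}]$ and the same full-space entry $(\rho_2-\alpha)(\rho_2-\alpha\theta)=(\rho_2-\alpha)\bigl((1-\alpha)\rho_2-q\alpha\bar\rho_2\bigr)$. The closing identity is immediate: expanding the product gives $J_1(\rho_2^2+q^2\bar\rho_2^2)-qJ_2(\alpha^2+\bar\alpha^2)$, and substituting $\alpha^2+\bar\alpha^2=1-2J_1$ turns this into $J_1\theta^2-qJ_2$. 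Your orientation worry is moot, since $(2,2)$ is a diagonal entry.
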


\begin{proof}
Since the boundary output has no color--\(2\) arrows,
\eqref{eq:22-local-transition-table} forces \(\pi(1)=\pi(2)=2\) whenever
\(\mu(1)=\mu(2)=2\).  Each \(2\to2\) transition has weight
\(\rho_2q^{h_0}\) and leaves \((h_0,0)\) unchanged, so independence of
\(\pi\) and \(h_0\) gives
\begin{equation}
\label{eq:low-high-p22-unsimplified}
p_{22}=\rho_2^2
  \mathbb P\bigl(\pi(1)=2,\pi(2)=2\bigr)
  \mathbb E[q^{2h_0}].
\end{equation}
Theorem~\ref{thm:full-space-two-site-law} gives
\[
\mathbb P\bigl(\pi(1)=2,\pi(2)=2\bigr)
=(\rho_2-\alpha)
 \bigl((1-\alpha)\rho_2-q\alpha\bar\rho_2\bigr).
\]
Substituting this and the moment formula
\eqref{eq:22-q-Poisson-useful-moments} into
\eqref{eq:low-high-p22-unsimplified}, and using
\eqref{eq:low-high-current-difference} and
\eqref{eq:22-cross-parameter-identities}, proves the claim since
\(\theta=\rho_2+q\bar\rho_2\) and
\[
\bigl(\alpha\rho_2-q(1-\alpha)\bar\rho_2\bigr)
 \bigl((1-\alpha)\rho_2-q\alpha\bar\rho_2\bigr)
=\alpha(1-\alpha)(\rho_2+q\bar\rho_2)^2
 -q\rho_2\bar\rho_2=J_1\theta^2-qJ_2.
\]
\end{proof}

We next compute \(p_{20}\).

\begin{proposition}
\label{prop:low-high-p20}
The second identity in \eqref{eq:two-colored-targets} holds under
regime (L) of Theorem~\ref{thm:two-site-marginal}.
\end{proposition}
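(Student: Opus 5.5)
In regime (L) I would compute $p_{20}=\mathbb P(\mu(1)=2,\mu(2)=0)$ directly from the single-engine representation of Figure~\ref{fig:low-high-engine}, following the proof of Proposition~\ref{prop:low-high-p22}. The inputs are: $\pi\sim\pi^{\alpha,\rho_2}$; the boundary output $(h_0,0)$ with $h_0\sim q\text{-}\mathrm{Poi}(b)$, $b:=b_{\alpha,\rho_2}$, independent of $\pi$; and fresh vertex randomness in the reflected row, sampled via \eqref{eq:22-local-transition-table} with $\rho=\rho_2$.

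The first step is to identify the admissible paths at site~$1$. The incoming carrier is $(h_0,0)$, so $h_2=0$. By \eqref{eq:22-local-transition-table}, output $2$ at site~$1$ has probability $0$ for inputs $0$ and $1$, since the factor $1-q^{h_2}$ vanishes. Hence $\mu(1)=2$ forces $\pi(1)=2$, with weight $\rho_2q^{h_0}$. This transition leaves the carrier equal to $(h_0,0)$, because $\widehat E_2-\widehat E_2=0$.

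The second step is to condition on $\pi(2)$. At site~$2$, output $0$ has probability $\rho_2$ if $\pi(2)=0$, and $\rho_2(1-q^{h_0})$ if $\pi(2)\in\{1,2\}$. Using the independence of $h_0$ and $\pi$, this gives
\[
p_{20}=\rho_2^2\,\mathbb E[q^{h_0}]\,\mathbb P(\pi(1)=2,\pi(2)=0)
+\rho_2^2\,\mathbb E\bigl[q^{h_0}(1-q^{h_0})\bigr]\,
\mathbb P(\pi(1)=2,\pi(2)\in\{1,2\}).
\]
I would then insert the moments $\mathbb E[q^{h_0}]=1-b$ and $\mathbb E[q^{h_0}(1-q^{h_0})]=qb(1-b)$ from \eqref{eq:22-q-Poisson-useful-moments}. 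I would also insert the full-space two-site probabilities from Theorem~\ref{thm:full-space-two-site-law} with $\rho_1=\alpha$: $\mathbb P(2,0)=(\rho_2-\alpha)\bar\rho_2\phi$ with $\phi=1-(1-q)\alpha$, and $\mathbb P(2,2)+\mathbb P(2,1)=(\rho_2-\alpha)\bigl(\rho_2+\alpha(1-\theta)\bigr)$.

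The final step, which I expect to be the main obstacle although it is purely algebraic, is the simplification. By \eqref{eq:22-cross-parameter-identities}, $1-b=(\alpha+\rho_2-1)/(\alpha\rho_2)$. Together with \eqref{eq:low-high-current-difference}, this gives the common prefactor $\rho_2^2(1-b)(\rho_2-\alpha)=\rho_2(J_1-J_2)/\alpha$. It then remains to check the polynomial identity
\[
\rho_2\Bigl[\bar\rho_2\phi+qb\bigl(\rho_2+\alpha(1-\theta)\bigr)\Bigr]
=\frac{1}{\alpha}\bigl(J_1\theta(1-\theta)+qJ_2\bigr).
\]
To verify it, I would substitute $b=\bar\alpha\bar\rho_2/(\alpha\rho_2)$, $1-\theta=(1-q)\bar\rho_2$, $J_1=\alpha\bar\alpha$ and $J_2=\rho_2\bar\rho_2$. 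Both sides then carry an overall factor $\bar\rho_2$, and after dividing by it they should match as polynomials in $(\alpha,\rho_2,q)$. I would check this first at $q=0$ and then compare the coefficients of $q$ and $q^2$.
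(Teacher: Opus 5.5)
Your proposal is correct and is essentially the paper's own proof: the same forcing of $\pi(1)=2$ at site~$1$, the same split on $\pi(2)\in\{0\}$ versus $\{1,2\}$, the same $q$-Poisson moments and full-space two-site probabilities, and the same final algebra. The paper writes your bracket as $1-(1-qb)\bigl(\rho_2+\alpha(1-\theta)\bigr)$, which equals $\bar\rho_2\phi+qb\bigl(\rho_2+\alpha(1-\theta)\bigr)$; one small precision is that after cancelling $\bar\rho_2$ both sides still contain $q\rho_2/\alpha$, so they agree as rational functions (polynomials only after multiplying by $\alpha$), and indeed both reduce to $q\rho_2/\alpha+(1-q)(1-\alpha)\theta$.
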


\begin{proof}
As in the preceding proof, the first output \(2\) requires \(\pi(1)=2\)
and leaves \((h_0,0)\) unchanged.  Separating the cases \(\pi(2)=0\) and
\(\pi(2)\in\{1,2\}\), we obtain from \eqref{eq:22-local-transition-table}
and independence of \(\pi\) and \(h_0\)
\begin{align}
p_{20}
&=\rho_2^2\Bigl\{
  \mathbb P\bigl(\pi(1)=2,\pi(2)=0\bigr)\mathbb E[q^{h_0}]
  +\mathbb P\bigl(\pi(1)=2,\pi(2)\in\{1,2\}\bigr)
   \mathbb E\bigl[q^{h_0}(1-q^{h_0})\bigr]\Bigr\}.
\label{eq:low-high-p20-unsimplified}
\end{align}
Theorem~\ref{thm:full-space-two-site-law} gives
\begin{align*}
\mathbb P\bigl(\pi(1)=2,\pi(2)\in\{1,2\}\bigr)
 &=(\rho_2-\alpha)\bigl(\rho_2+\alpha(1-\theta)\bigr),\\
\mathbb P\bigl(\pi(1)=2,\pi(2)=0\bigr)
 &=(\rho_2-\alpha)\bigl[1-\rho_2-\alpha(1-\theta)\bigr].
\end{align*}
Substituting these probabilities and the moments
\eqref{eq:22-q-Poisson-useful-moments} into
\eqref{eq:low-high-p20-unsimplified} yields
\begin{equation}
p_{20}=\rho_2^2(1-b_{\alpha,\rho_2})(\rho_2-\alpha)
  \Bigl[1-(1-qb_{\alpha,\rho_2})
    \bigl(\rho_2+\alpha(1-\theta)\bigr)\Bigr].
\end{equation}
Applying \eqref{eq:low-high-current-difference},
\eqref{eq:22-cross-parameter-identities}, and
\(1-\theta=(1-q)\bar\rho_2\) gives the second identity in
\eqref{eq:two-colored-targets}.
\end{proof}

Propositions~\ref{prop:low-high-p22} and~\ref{prop:low-high-p20}
establish \eqref{eq:two-colored-targets} in regime (L).  The reduction
at the beginning of this section therefore proves regime (L) of
Theorem~\ref{thm:two-site-marginal}.

\subsection{The mixed crossing and the two-engine construction}
\label{subsec:two-color-mixed-engine}

Direct computation from the original two-engine construction is difficult,
since the upper engine receives the lower engine's half-space output.
Introducing a mixed crossing allows us to bring the two unreflected rows
together, where they form Martin's two-level queue.  We can then use its
known limiting output law and conditional queue distributions,
substantially simplifying the calculation.

Throughout this subsection, assume
\[
 0<\alpha<1,\qquad
 \frac12<\rho_1<\rho_2<1,\qquad
 \alpha+\rho_1>1.
\]
The lower \(m=1\) and upper \(m=2\) engines have densities \(\rho_1\) and
\(\rho_2\), respectively.  For a row with spectral parameter \(x\) and spin
\(s\), the service density is \(\rho=(1-sx)^{-1}\).  The ASEP specialization
\eqref{eq:engine-density-specialization} keeps this density fixed as
\(x\downarrow0\), which determines the spin parameters below.  Write
\((x_1,\tau)\) and \((x_2,\sigma)\) for the parameters of the two engines,
and set \(a=\alpha/(\alpha-1)\).  The resulting specialization is
\begin{equation}
\label{eq:22-queue-specialization}
 z=x_1x_2,
 \qquad
 \tau:=-\frac{\bar\rho_1}{\rho_1x_1},
 \qquad
 \sigma:=-\frac{\bar\rho_2}{\rho_2x_2},
 \qquad x_1,x_2\downarrow0.
\end{equation}
The following proposition shows that this rearrangement preserves the
local output law in the infinite-volume limit.

\begin{proposition}
\label{prop:22-mixed-crossing-representation}
Under the assumptions above and the specialization
\eqref{eq:22-queue-specialization}, the two constructions in
Figure~\ref{fig:mixed-engine-limit} have the same local limiting output law.
\end{proposition}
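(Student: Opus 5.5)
We will derive the rearrangement from the queue Yang--Baxter and reflection identities of Proposition~\ref{prop:queue-ybe-reflection}, first at finite $N$ and fixed small $x_1,x_2>0$, and then pass to the limits $x_1,x_2\downarrow0$ and $N\to\infty$. The original construction applies engine~$1$ (usual row with $\mathbb W^{(1)}_{x_1,\tau}$, boundary $\mathbb K^{(1)}$, return row with $\widehat L^{\mathrm{ver}}_{x_1,\tau}$) and then engine~$2$. The mixed construction has the two usual rows adjacent, forming Martin's two-level queue. The reflected row of engine~$1$ and the usual row of engine~$2$ must cross once. That crossing is a mixed vertex $\widehat W$ between a hole-indexed line and a normally indexed queue line, with spectral parameter built from $z=x_1x_2$; the queue limit is taken in color~$2$ on the normal line.

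\textbf{Step 1: the finite identity.} We start from the finite two-engine diagram. We insert at the far right (beyond site $N$) a mixed crossing between the outgoing return carrier of engine~$1$ and the incoming usual carrier $\mathbf0^{(2)}$ of engine~$2$. Summing over the outgoing return carrier and using the stochasticity from Proposition~\ref{prop:hat-L-ver-stochasticity} and its fused analogue, this insertion does not change the law of the physical output. We then push the crossing leftward through columns $N,\dots,1$ using the mixed Yang--Baxter equation~\eqref{eq:continued-colored-ybe-components}, exactly as in the train argument of Proposition~\ref{prop:finite-boundary-train}. Each move exchanges the order in which a physical line meets the return row of engine~$1$ and the usual row of engine~$2$. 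After $N$ moves, the two usual rows are adjacent and the crossing sits just after the boundary of engine~$1$. It now joins the carrier $H^{(0)}$ produced by $\mathbb K^{(1)}$ with the carrier $G^{(0)}$ of engine~$2$ before the latter enters $\mathbb K^{(2)}$. This is the second picture of Figure~\ref{fig:mixed-engine-limit}, which therefore has the same finite-$N$ output law as the original picture.

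\textbf{Step 2: spectral parameters and stochasticity.} We check that the spectral parameters produced by the Yang--Baxter moves are consistent with~\eqref{eq:22-queue-specialization}. The crossing must carry parameter $x_1x_2$ or $x_1/x_2$ in the convention of Proposition~\ref{prop:continued-colored-ybe}, and the choice should be dictated by the reflection geometry. We also check that all weights stay nonnegative and stochastic as $x_1,x_2\downarrow0$ with $\tau x_1,\sigma x_2$ fixed. Since every component identity involves finitely many terms for fixed exterior states, the limit $x_i\downarrow0$ can be taken termwise. This gives an equality of the density-specialized finite laws, as in Proposition~\ref{prop:density-array-convergence}.

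\textbf{Step 3: the limit $N\to\infty$ (the main obstacle).} The first picture converges locally by Proposition~\ref{prop:qualitative-localization}. For the second picture, the two adjacent usual rows are Martin's queue and converge by Proposition~\ref{prop:martin-yang-baxter-steady-state}. The mixed crossing then receives the random carrier of engine~$1$'s boundary together with a stationary queue state, and the reflected rows are read left to right.

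The delicate point is that in the finite mixed picture, the right end of engine~$2$'s usual row receives whatever the crossing emitted, with a possibly nonzero number of recorded arrows. To control this, I would bound that number by a tight random variable, using the geometric decay of the $q$-Poisson laws. I would then apply the erasing-block coupling that follows Lemma~\ref{lem:scalar-clearing}. That coupling removes any bounded right entrance before a fixed window, uniformly in colors, because engine~$1$'s occupation output has a stationary tail of density $\rho_1<\rho_2$. Combining this with tightness (letting the bound $b\to\infty$) shows that both finite pictures converge locally to their infinite versions. Since the two pictures agree at every finite $N$, their local limits coincide.
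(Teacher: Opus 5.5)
Your Step~1 rests on a false claim: inserting the crossing at the far right does \emph{not} leave the finite-$N$ output law unchanged. Stochasticity lets you sum out only an output that leaves the diagram, and here only the hole-indexed output $H'$ does. The normally indexed output $G'=(g_1',\infty)$ becomes the right entrance of engine~2's usual row. By Proposition~\ref{prop:22-infinite-stack-specialization} with inputs $G=(0,\infty)$ and $H=(h_0,0)$, one gets $\psi=0$, $h_2'=h_0$ and $g_1'=h_0'\sim q\text{-}\mathrm{Poi}(b_{\rho_1,\rho_2})$. This law is nondegenerate, since $b_{\rho_1,\rho_2}>0$. So at each finite $N$ the Yang--Baxter train identifies the mixed picture with a \emph{modified} two-engine construction, in which engine~2's usual row starts from the random carrier $(g_1',\infty)$. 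It does not identify it with the original construction.

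Your Step~3 then puts the random entrance in the wrong diagram. Yang--Baxter moves keep the exterior states fixed. So in the mixed (left-crossing) picture, engine~2's usual row enters with exactly $(0,\infty)$, and the crossing's normally indexed output goes into $\mathbb K^{(2)}$, which discards it; there is nothing to erase there.

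The erasing argument is needed for the right-crossing picture, and that is the real content of the proof:
\begin{itemize}
\item The law of $g_1'$ does not depend on $N$, hence is tight. Choose $B$ with $\mathbb P(g_1'>B)<\varepsilon$.
\item On $\{g_1'\le B\}$, the erasing-block coupling following Lemma~\ref{lem:scalar-clearing} removes the extra entrance before any fixed window, uniformly in its colors. This applies because engine~2's recorded inputs are engine~1's occupations, which have a stationary ergodic tail of mean $\rho_1<\rho_2$.
\item Letting $N\to\infty$ and then $\varepsilon\downarrow0$ shows that the right-crossing picture has the same local limit as the original construction.
\item The finite-$N$ identity then transfers this limit to the mixed picture.
\end{itemize}
Your ingredients (tightness from $q$-Poisson decay, erasing blocks) are the right ones. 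As written, however, your chain ``original $=$ right-crossing $=$ mixed'' at finite $N$ fails at the first equality.

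Two minor points. No reflection equation is needed: the crossing stops between the two boundary rows, so only the mixed Yang--Baxter equation is used. The crossing parameter is $z=x_1x_2$, and stochasticity of its specialized limit comes from Proposition~\ref{prop:22-infinite-stack-specialization}, not from a general fused analogue of Proposition~\ref{prop:hat-L-ver-stochasticity}.
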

\begin{figure}[htbp]
\centering
\begin{tikzpicture}[
baseline={([yshift=-.5ex]current bounding box.center)},
scale=.58,
thickgray/.style={paper normal},
thicksalmon/.style={paper hole},
thingray/.style={paper wire},
every node/.style={inner sep=1.2pt}
]

\foreach \x in {2,...,7}
  \draw[thingray,->] (\x,-.7) -- (\x,3.7);

\draw[thickgray] (8,2) -- (1,2) -- (0,2.5);
\draw[thicksalmon,->] (0,2.5) -- (1,3) -- (8,3);
\fill[black] (0,2.5) circle (1.7pt);

\draw[thickgray] (8,0) -- (1,0) -- (0,.5);
\draw[thicksalmon,->] (0,.5) -- (1,1) -- (8,1);
\fill[black] (0,.5) circle (1.7pt);

\foreach \x in {2,3,4,6,7}
  \node[below] at (\x,-.7) {$0$};
\node[below] at (5,-.7) {$\cdots$};
\foreach \x/\lab in {2/\mu_1,3/\mu_2,4/\mu_3,5/\cdots,6/\cdots,7/\cdots}
  \node[above] at (\x,3.7) {$\lab$};

\node[align=center] at (10,2.45)
  {\scriptsize same local\\[-1pt]\scriptsize output law};
\node at (10,1.35) {$\Longleftrightarrow$};

\foreach \x in {14,...,19}
  \draw[thingray,->] (\x,-.7) -- (\x,3.7);

\draw[thickgray] (20,1) -- (13,1) -- (12,2.25);
\draw[thicksalmon,->] (12,2.25) -- (13,3) -- (20,3);
\fill[black] (12,2.25) circle (1.7pt);

\draw[thickgray] (20,0) -- (13,0) -- (12,.75);
\draw[thicksalmon,->] (12,.75) -- (13,2) -- (20,2);
\fill[black] (12,.75) circle (1.7pt);
\fill[black] (12.6,1.5) circle (1.9pt);

\foreach \x in {14,15,16,18,19}
  \node[below] at (\x,-.7) {$0$};
\node[below] at (17,-.7) {$\cdots$};
\foreach \x/\lab in {14/\mu_1,15/\mu_2,16/\mu_3,17/\cdots,18/\cdots,19/\cdots}
  \node[above] at (\x,3.7) {$\lab$};
\end{tikzpicture}
\caption{Local limits of the two diagrams in Figure~\ref{fig:mixed-engine}:
the original two-engine construction (left) and the representation with a
mixed vertex between the boundary rows (right).  Both have the same
stationary output law.}
\label{fig:mixed-engine-limit}
\end{figure}

\begin{proof}
Fix \(N\), and let \(\widetilde{\mathfrak H}^{(N)}_{\mathrm R}\) and
\(\widetilde{\mathfrak H}^{(N)}_{\mathrm L}\) denote the partition functions
in Figure~\ref{fig:mixed-engine}, with the crossing at the right and left
endpoints, respectively.  For fixed exposed arrow configurations,
successive applications of the continued Yang--Baxter identity of
Proposition~\ref{prop:continued-colored-ybe} give
\begin{equation}
\label{eq:finite-two-engine-crossing-train}
 \widetilde{\mathfrak H}^{(N)}_{\mathrm R}
 =\widetilde{\mathfrak H}^{(N)}_{\mathrm L}.
\end{equation}
At fixed \(N\), we now take the specialization
\eqref{eq:22-queue-specialization}.  The limiting kernels are stochastic by
Propositions~\ref{prop:density-array-convergence}
and~\ref{prop:22-infinite-stack-specialization}, so
\eqref{eq:finite-two-engine-crossing-train} gives equality of the
finite-volume output laws.
\begin{figure}[htbp]
\centering
\begin{tikzpicture}[
baseline={([yshift=-.5ex]current bounding box.center)},
scale=0.58,
every node/.style={inner sep=1.2pt}
]

\foreach \x in {2,...,7}
    \draw[paper wire,->] (\x,-.7) -- (\x,3.7);

\draw[paper normal]
    (8.5,1) -- (8,1.5) -- (7.5,2)
    -- (1,2) -- (0,2.5);

\draw[paper hole,->]
    (0,2.5) -- (1,3) -- (8,3);

\draw[paper normal]
    (8,0) -- (1,0) -- (0,.5);

\draw[paper hole,->]
    (0,.5) -- (1,1) -- (7.5,1)
    -- (8,1.5) -- (8.5,2);

\node[paper boundary] at (0,2.5) {};
\node[paper boundary] at (0,.5)  {};

\node[right=2pt] at (8,0) {$(\infty,0)$};
\node[right=36pt] at (8,0) {$(x_1,\tau)$};
\node[right=2pt] at (8.5,1) {$(0,\infty)$};
\node[right=36pt] at (8.5,1) {$(x_2,\sigma)$};
\node[right=2pt,text=holecolor] at (8.5,2)
    {$\sum H^{(1)}$};
\node[right=2pt,text=holecolor] at (8,3)
    {$\sum H^{(2)}$};

\node[below] at (2,-.7) {$0$};
\node[below] at (3,-.7) {$0$};
\node[below] at (4,-.7) {$0$};
\node[below] at (5,-.7) {$\cdots$};
\node[below] at (6,-.7) {$\cdots$};
\node[below] at (7,-.7) {$0$};

\node[above] at (2,3.7) {$\mu_1$};
\node[above] at (3,3.7) {$\mu_2$};
\node[above] at (4,3.7) {$\mu_3$};
\node[above] at (5,3.7) {$\cdots$};
\node[above] at (6,3.7) {$\cdots$};
\node[above] at (7,3.7) {$\mu_N$};

\node at (11.8,1.5) {$=$};

\begin{scope}[xshift=1.25cm]

\foreach \x in {14,...,19}
    \draw[paper wire,->] (\x,-.7) -- (\x,3.7);

\draw[paper normal]
    (20,1) -- (13,1) -- (12,2.25);

\draw[paper hole,->]
    (12,2.25) -- (13,3) -- (20,3);

\draw[paper normal]
    (20,0) -- (13,0) -- (12,.75);

\draw[paper hole,->]
    (12,.75) -- (13,2) -- (20,2);
\node[paper boundary] at (12,2.25) {};
\node[paper boundary] at (12,.75)  {};

\node[right=2pt] at (20,0) {$(\infty,0)$};
\node[right=36pt] at (20,0) {$(x_1,\tau)$};
\node[right=2pt] at (20,1) {$(0,\infty)$};
\node[right=36pt] at (20,1) {$(x_2,\sigma)$};
\node[right=2pt,text=holecolor] at (20,2)
    {$\sum H^{(1)}$};
\node[right=2pt,text=holecolor] at (20,3)
    {$\sum H^{(2)}$};

\node[below] at (14,-.7) {$0$};
\node[below] at (15,-.7) {$0$};
\node[below] at (16,-.7) {$0$};
\node[below] at (17,-.7) {$\cdots$};
\node[below] at (18,-.7) {$\cdots$};
\node[below] at (19,-.7) {$0$};

\node[above] at (14,3.7) {$\mu_1$};
\node[above] at (15,3.7) {$\mu_2$};
\node[above] at (16,3.7) {$\mu_3$};
\node[above] at (17,3.7) {$\cdots$};
\node[above] at (18,3.7) {$\cdots$};
\node[above] at (19,3.7) {$\mu_N$};

\end{scope}
\end{tikzpicture}
\caption{The finite-volume Yang--Baxter identity at cutoff \(N\).
The mixed vertex is moved from the right endpoint (left) to the position
between the two boundary rows (right).}
\label{fig:mixed-engine}
\end{figure}

In the diagram with the crossing at the right endpoint, its inputs are
\(G=(0,\infty)\) and \(H=(h_0,0)\).  The case \(g_1=0\) of
Proposition~\ref{prop:22-infinite-stack-specialization} gives
\[
 g_1'=h_0'\sim q\text{-}\mathrm{Poi}(b_{\rho_1,\rho_2}),
\]
independently of \(h_0\).  Since \(b_{\rho_1,\rho_2}<1\) and this law does
not depend on \(N\), the laws of \(g_1'\) form a tight family.

The crossing changes the upper engine's entrance from \((0,\infty)\) to
\((g_1',\infty)\).  Since the laws of \(g_1'\) are tight,
given \(\varepsilon>0\), choose \(B\) so that
\(\mathbb P(g_1'>B)<\varepsilon\).  On \(\{g_1'\leq B\}\), the
erasing-block coupling following Lemma~\ref{lem:scalar-clearing} removes
the additional entrance before any fixed output window, uniformly over
its colors.  Letting \(N\to\infty\) and then \(\varepsilon\downarrow0\)
shows that this change does not affect the limiting output law.  We may
therefore let \(N\to\infty\) in
\eqref{eq:finite-two-engine-crossing-train} and conclude that the two
constructions in Figure~\ref{fig:mixed-engine-limit} have the same local
limiting output law.
\end{proof}

In the right panel of Figure~\ref{fig:mixed-engine-limit}, the mixed
vertex receives \(G=(g_1,\infty)\) from Martin's queue and \(H=(h_0,0)\)
from the lower boundary.  We first compute its transition law for fixed
\(g_1,h_0\), by applying the specialization
\eqref{eq:22-queue-specialization} to the weight
\(\widehat W_{z,\sigma,\tau}\) from
Example~\ref{ex:two-color-doubly-fused-hole}.

\begin{proposition}
\label{prop:22-infinite-stack-specialization}
Let \(\rho_1,\rho_2\) and the parameters be as in
\eqref{eq:22-queue-specialization}, and fix inputs
\(G=(g,\infty)\) and \(H=(h,0)\), with \(g,h\in\mathbb Z_{\geq0}\).
Under this specialization, the output is obtained by sampling
\[
 h_0'\sim q\text{-}\mathrm{Poi}(b_{\rho_1,\rho_2}),
 \qquad
 \mathbb P(\psi=t)=\mathsf M_q(t\mid g,h),
\]
independently, where
\begin{equation}
\label{eq:22-matching-kernel}
\mathsf M_q(t\mid g,h)
:=
\mathbf1_{\{0\leq t\leq\min(g,h)\}}
q^{(g-t)(h-t)}
\frac{(q;q)_g(q;q)_h}
{(q;q)_t(q;q)_{g-t}(q;q)_{h-t}}.
\end{equation}
The output vectors are \(G'=(g_1',\infty)\) and \(H'=(h_0',h_2')\), where
\begin{equation}
\label{eq:22-infinite-stack-output-map}
 h_2'=h-\psi,
 \qquad
 g_1'=h_0'+g-\psi.
\end{equation}
\end{proposition}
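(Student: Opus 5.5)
We take the explicit two-color formula \eqref{eq:22-fully-fused-weight} for \(\widehat W_{z,s_1,s_2}(G,H;G',H')\). We substitute the inputs \(G=(g,M)\) with \(M\to\infty\) and \(H=(h,0)\), and we put \(s_1=\sigma\) for the vertical (upper, color-\(2\) queue) line and \(s_2=\tau\) for the horizontal (lower, color-\(1\) return) line, as in the right panel of Figure~\ref{fig:mixed-engine-limit}. Then we pass to the limit in the order: queue limit \(M\to\infty\) first, then \(x_1,x_2\downarrow0\) with \(\sigma x_2,\tau x_1\) fixed and \(z=x_1x_2\). Since \(H_2=0\), the sum over \(P_2\) collapses to \(P_2=0\). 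Conservation \eqref{eq:two-color-doubly-fused-conservation} gives \(h_2'=G_2-G'_2\) (both infinite, so this is read as a finite difference) and \(g_1'=g+h-h_0'-h_2'\). Writing \(\psi:=h-h_2'\), this is exactly \eqref{eq:22-infinite-stack-output-map}. So it remains to show that the joint law of \((h_0',h_2')\) factorizes as \(q\text{-}\mathrm{Poi}(b_{\rho_1,\rho_2})\otimes\) the law of \(h-\psi\), with \(\psi\sim\mathsf M_q(\cdot\mid g,h)\).

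The key steps are as follows.

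(i) Take the queue limit \(G_2\to\infty\) termwise in the finite sum over \(P_1\le G'_1\). Factors like \(q^{(G'_1-P_1)H'_2}\) survive, while the \(q\)-binomial \(\left[\begin{smallmatrix}G'_2+H'_2\\G'_2\end{smallmatrix}\right]_q\) tends to \(1/(q;q)_{H'_2}\). Also \((q^{1+|G|-H_0-p}/(s_1z);q)_{H'_0}\to1\) after it is combined with \((s_1z)^{H'_0}\) via the removable-singularity identity. Care is needed because \(|G|\) contains \(G_2\): we expand \((s_1z)^{H'_0}(\cdots;q)_{H'_0}=\prod_r(s_1z-q^{1+|G|-\cdots+r})\to(s_1z)^{H'_0}\).

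(ii) Substitute \(s_1z=\sigma x_2\cdot x_1\to0\)? Here one must check which combination stays finite. With \(z=x_1x_2\), we have \(s_1z=\sigma x_1x_2\to0\) and \(s_2z\to0\), but \(s_1s_2z=\sigma\tau x_1x_2=\bar\rho_1\bar\rho_2/(\rho_1\rho_2)=b_{\rho_1,\rho_2}\) is finite. Hence the surviving \(H'_0\)-dependence must come from the factors \((s_2q^{1-|H'|};q)_{G'_1-P_1}\) together with the normalization \(\mathcal N_{z,s_1,s_2}\to(b;q)_\infty\), where \(b=b_{\rho_1,\rho_2}\). We track powers of \(s_2\), which diverge, against the powers of \(z\), which vanish. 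The terms in which they balance are exactly those that produce \(b^{h_0'}/(q;q)_{h_0'}\) and kill all other \(P_1\). We expect a single value of \(P_1\) (in terms of \(g,\psi\)) to survive, and the \((z/q;q)_p/(z;q)_{H_0+p}\to1\) factor to disappear.

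(iii) Collect the remaining \(q\)-factors in \(g,h,h_2'\) and identify them with \(\mathsf M_q(h-h_2'\mid g,h)\). This requires matching \(q^{P_1(H_2-\cdots)}\), the \((q;q)_{H_0}/(q;q)_{H'_0}\) ratio and the binomials \((s_2q^{1-|H|-P_1};q)_{P_1}/(q;q)_{P_1}\) with \(q^{(g-t)(h-t)}\) and the \(q\)-multinomial in \eqref{eq:22-matching-kernel}. Normalization is then automatic: \(\sum_t\mathsf M_q(t\mid g,h)=1\) is the \(q\)-Vandermonde (or \(q\)-Chu--Vandermonde) identity, which also cross-checks the exponent bookkeeping. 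Stochasticity of \(\widehat W\) survives the limits, so factorization plus matching of the \(h_0'\)-marginal suffices.

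The main obstacle is step (ii). This is the degree bookkeeping in \(x_1,x_2\) under divergent \(s_1,s_2\) and vanishing \(z\), where individual terms blow up and cancellations among signs \((-1)^{p+H'_0}\) may be needed. If the direct leading-order extraction proves messy, the fallback is to avoid the full formula. Instead we compute the joint generating function of \((h_0',h_2')\) by iterating the spin-one vertex \(\widehat L^{\mathrm{ver}}\) through a long fused column, using the graphical queue rule of Proposition~\ref{def:engine-graphical-realization}. The \(q\)-Poisson and matching laws then appear as outputs of a \(q\)-Hahn/queue coupling.
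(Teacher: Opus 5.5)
Your setup matches the paper's: substitute into \eqref{eq:22-fully-fused-weight}, keep only $P_2=0$, send the color-$2$ multiplicity to infinity, then specialize. Step (i) is fine. There is one slip: conservation $G_1+|H'|=G'_1+|H|$ gives $g_1'=g+h_0'+h_2'-h=h_0'+g-\psi$, not $g+h-h_0'-h_2'$. Only the former matches \eqref{eq:22-infinite-stack-output-map}.

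The genuine gap is step (ii), and the heuristic behind it is false. Fix outputs $H'=(r,h-t)$, so $G'_1=r+g-t$. Inside the $P_1$-sum the only $\tau$-dependence is in $(\tau q^{1-r-h+t};q)_{r+g-t-p}$ and $(\tau q^{1-h-p};q)_p$. Their degrees in $\tau$ add up to $r+g-t$ for \emph{every} $p$. So all terms have the same order, and power counting cannot single out one value of $P_1$. After multiplication by $(\sigma z)^r=b_{\rho_1,\rho_2}^{\,r}\tau^{-r}$, each individual term is of order $\tau^{g-t}$; when $g=t$, all terms contribute at the same order. Already for $g=1$, $h=r=t=0$, at $z=0$, the two terms are $(1-q\tau)/(1-q)$ and $-q(1-\tau)/(1-q)$. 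Both are of order $\tau$, and only their sum, $1$, is finite.

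The missing ingredient is a summation identity. The paper uses the terminating $q$-Chu--Vandermonde identity to show two things at $z=0$: all powers of $\tau$ above $r$ cancel in the full sum, and the coefficient of $\tau^r$ is $q^{(g-t)(h-t)}\begin{bmatrix}g\\t\end{bmatrix}_{\!q}$. Together with the prefactor $(q;q)_h/\bigl((q;q)_r(q;q)_{h-t}\bigr)$, this is exactly $\mathsf M_q(t\mid g,h)/(q;q)_r$. Your step (iii) matches the factors of a single surviving term against $\mathsf M_q$, so it has nothing to act on.

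For the same reason, dropping $(z/q;q)_p/(z;q)_{h+p}\to1$ termwise is not justified. An $O(z)$ correction to the coefficient of $\tau^{r+j}$ contributes $O(z\tau^{j})=O(x_2x_1^{1-j})$ after normalization, and this need not vanish for $j\ge2$. You need the structural fact that each excess power of $\tau$ comes with a factor $\tau z=O(x_2)$. That fact comes from evaluating the sum, not from termwise limits.

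Your fallback does not avoid this. Proposition~\ref{def:engine-graphical-realization} covers only the density-specialized vertices with a spin-one physical line. The mixed crossing is doubly fused, with continued spins $\sigma,\tau\to-\infty$ that are not of the form $q^M$. Building it from $\widehat L^{\mathrm{ver}}$ would require fusing at $s_1=q^L$ and continuing in $s_1$, which brings you back to the same sum.
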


\begin{proof}
Fix integers \(r\geq0\) and \(0\leq t\leq\min(g,h)\).  At a finite
cutoff \(R\geq h-t\), the corresponding arrow configurations are
\[
\begin{aligned}
 G&=(g,R),& H&=(h,0),\\
 G'&=(r+g-t,R-h+t),& H'&=(r,h-t).
\end{aligned}
\]
We first send \(R\to\infty\) and write
\begin{align}
&\mathbb{W}^{[2]}_{z,\sigma,\tau}
\bigl((g,\infty),(h,0);(r+g-t,\infty),(r,h-t)\bigr)
\notag\\
&\quad:=\lim_{R\to\infty}
\widehat W_{z,\sigma,\tau}
\bigl((g,R),(h,0);(r+g-t,R-h+t),(r,h-t)\bigr).
\label{eq:def-22-infinite-stack-crossing}
\end{align}
We will show that the subsequent density specialization gives
\begin{equation}
\label{eq:22-infinite-stack-kernel}
\lim_{x_1,x_2\downarrow0}
\mathbb{W}^{[2]}_{z,\sigma,\tau}
\bigl((g,\infty),(h,0);(r+g-t,\infty),(r,h-t)\bigr)
=p_{b_{\rho_1,\rho_2}}^{(q)}(r)
\mathsf M_q(t\mid g,h).
\end{equation}
For fixed finite values of all output coordinates other than the infinite
coordinate, the limiting weight vanishes unless the output has this form.

Since \(h_2=0\), only \(P_2=0\) contributes to
\eqref{eq:22-fully-fused-weight}.  In the summation notation of that
formula, \(p=P_1+P_2=P_1\).  Sending \(R\to\infty\) gives
\[
\mathbb{W}^{[2]}_{z,\sigma,\tau}
\bigl((g,\infty),(h,0);(r+g-t,\infty),(r,h-t)\bigr)
=
\mathcal N_{z,\sigma,\tau}(\sigma z)^r
\frac{(q;q)_h}{(q;q)_r(q;q)_{h-t}}
S_{g,h,t,r}(z,\tau),
\]
where
\begin{align*}
S_{g,h,t,r}(z,\tau)
&:=\sum_{p=0}^{r+g-t}
(-1)^{p+r}
q^{\binom r2+(h-t)(r+g-t)+tp+\binom{p+1}2}
\frac{(z/q;q)_p}{(z;q)_{h+p}}
\notag\\
&\quad\times
\frac{(\tau q^{1-r-h+t};q)_{r+g-t-p}}
     {(q;q)_{r+g-t-p}}
\frac{(\tau q^{1-h-p};q)_p}{(q;q)_p}.
\end{align*}
The terminating \(q\)-Chu--Vandermonde identity gives the coefficient
extraction
\[
 \lim_{\tau\to-\infty}\tau^{-r}S_{g,h,t,r}(0,\tau)
 =q^{(g-t)(h-t)}\begin{bmatrix}g\\t\end{bmatrix}_{\!q}.
\]
Equivalently, all powers of \(\tau\) above \(r\) cancel at \(z=0\), and
the coefficient of \(\tau^r\) is the right-hand side.  At nonzero \(z\),
every excess power of \(\tau\) comes with a power of
\(\tau z=O(x_2)\); hence the same limit holds along the joint
specialization \eqref{eq:22-queue-specialization}.  Since
\[
 (\sigma z)^r=b_{\rho_1,\rho_2}^{\,r}\tau^{-r},
 \qquad
 \mathcal N_{z,\sigma,\tau}
 \longrightarrow(b_{\rho_1,\rho_2};q)_\infty,
\]
we obtain \eqref{eq:22-infinite-stack-kernel}.  Finally,
\begin{equation}
 \sum_{t=0}^{\min(g,h)}
 q^{(g-t)(h-t)}
 \frac{(q;q)_g(q;q)_h}
 {(q;q)_t(q;q)_{g-t}(q;q)_{h-t}}=1
 \label{eq:22-matching-kernel-normalization}
\end{equation}
is another terminating \(q\)-Chu--Vandermonde identity.  Thus
\(\mathsf M_q\) is a probability kernel, and the factorization in
\eqref{eq:22-infinite-stack-kernel} gives the stated independent sampling
rule and output map.  The \(q=0\) statement is the continuous extension
of the limiting kernel; one does not substitute \(q=0\)
into the prelimit expression containing \(z/q\).
\end{proof}

For the random inputs considered next, write
\[
 \omega:=g_1'-h_0'=g_1-\psi.
\]
We average the crossing law over a random finite bottom input and an
independent \(q\)-Poisson horizontal input.

\begin{theorem}
\label{thm:22-general-bottom-input}
Let \(g_1\) have distribution \(\mathbb P(g_1=g)=\gamma_g\) on
\(\mathbb Z_{\geq0}\), and let
\(h_0\sim q\text{-}\mathrm{Poi}(\beta)\), \(0\leq\beta<1\), be independent
of \(g_1\).  Apply the crossing of
Proposition~\ref{prop:22-infinite-stack-specialization} to
\(G=(g_1,\infty)\) and \(H=(h_0,0)\).
For \(v\in\mathbb Z_{\geq0}\), the output satisfies
\[
 \mathbb P(\omega=v)
 =(\beta;q)_v\sum_{g\geq v}\gamma_g\beta^{g-v}
   \begin{bmatrix}g\\v\end{bmatrix}_{\!q},
\]
and, whenever \(\mathbb P(\omega=v)>0\),
\[
 h_2'\mid\{\omega=v\}\sim q\text{-}\mathrm{Poi}(\beta q^v).
\]
Moreover, \(h_0'\sim q\text{-}\mathrm{Poi}(b_{\rho_1,\rho_2})\),
independently of \((h_2',\omega)\), and \(g_1'=h_0'+\omega\).
\end{theorem}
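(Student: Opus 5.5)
The theorem follows from Proposition~\ref{prop:22-infinite-stack-specialization} by averaging over the independent inputs $g_1$ and $h_0$. Conditional on $(g_1,h_0)=(g,h)$, the output $h_0'\sim q\text{-}\mathrm{Poi}(b_{\rho_1,\rho_2})$ is sampled independently of $\psi\sim\mathsf M_q(\cdot\mid g,h)$. The pair $(h_2',\omega)=(h-\psi,g-\psi)$ is a function of $(g,h,\psi)$ alone. Averaging over $(g_1,h_0)$ therefore leaves $h_0'$ independent of $(h_2',\omega)$ with the stated law, and $g_1'=h_0'+\omega$ is just the output map \eqref{eq:22-infinite-stack-output-map}. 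The substantive claim is the joint law of $(\omega,h_2')$.

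For this, I would write $u=g-t$ and $w=h-t$, so that $\omega=u$, $h_2'=w$, $g=u+t$ and $h=w+t$. With $\gamma_g$ the law of $g_1$ and $p_\beta^{(q)}$ the law of $h_0$,
\[
 \mathbb P(\omega=v,h_2'=w)
 =\sum_{t\ge0}\gamma_{v+t}\,p_\beta^{(q)}(w+t)\,
 q^{vw}\frac{(q;q)_{v+t}(q;q)_{w+t}}{(q;q)_t(q;q)_v(q;q)_w}.
\]
Substituting $p_\beta^{(q)}(w+t)=(\beta;q)_\infty\beta^{w+t}/(q;q)_{w+t}$ cancels $(q;q)_{w+t}$, so the summand factorizes in $w$ and $t$:
\[
 \mathbb P(\omega=v,h_2'=w)
 =(\beta;q)_\infty\frac{(q^v\beta)^w}{(q;q)_w}
 \sum_{t\ge0}\gamma_{v+t}\beta^t\begin{bmatrix}v+t\\v\end{bmatrix}_{\!q}.
\]
The $w$-dependence is $(\beta q^v)^w/(q;q)_w$, which is proportional to $p_{\beta q^v}^{(q)}(w)$. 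So conditionally on $\omega=v$ we get $h_2'\sim q\text{-}\mathrm{Poi}(\beta q^v)$.

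Summing over $w$ with the $q$-binomial theorem \eqref{eq:22-q-negative-binomial-pgf} (case $s=0$) gives $\sum_w(\beta q^v)^w/(q;q)_w=1/(\beta q^v;q)_\infty$. Since $(\beta;q)_\infty/(\beta q^v;q)_\infty=(\beta;q)_v$, this yields
\[
 \mathbb P(\omega=v)=(\beta;q)_v\sum_{g\ge v}\gamma_g\beta^{g-v}\begin{bmatrix}g\\v\end{bmatrix}_{\!q},
\]
after reindexing $g=v+t$.

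The only points needing care are interchanging the sums, which is justified by nonnegativity and Tonelli, and the degenerate cases. When $\beta=0$ the convention $0^0=1$ should be used. The case $q=0$ follows from the continuous extension of the kernel, where the Gaussian binomials and Pochhammers reduce to $1$ and the same factorization holds. I expect no real obstacle; the key step is spotting the cancellation of $(q;q)_{w+t}$ that separates the $w$ and $t$ variables.
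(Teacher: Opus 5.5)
Your proposal is correct and is essentially the paper's proof: the paper likewise averages the factorized kernel of Proposition~\ref{prop:22-infinite-stack-specialization} against $\gamma_g\,p_\beta^{(q)}(h)$ with $t=g-v$, $h=t+k$, obtains the same factorized joint law (it keeps the $h_0'$-coordinate $r$ in the formula and reads off independence from the separate factor $p^{(q)}_{b_{\rho_1,\rho_2}}(r)$, whereas you argue independence structurally beforehand), and then sums with the $q$-binomial theorem. One small inaccuracy in your side remark: at $q=0$ the factor $(\beta;q)_v$ becomes $1-\beta$ for $v\geq1$, not $1$; this is harmless, because your computation holds verbatim at $q=0$ with the convention $0^0=1$.
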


\begin{proof}
For \(v\in\mathbb Z_{\geq0}\), abbreviate the sum by
\begin{equation}
\label{eq:22-general-bottom-transform}
 \mathcal T_v^{g_1}(\beta)
 :=\sum_{g\geq v}\gamma_g\beta^{g-v}
   \begin{bmatrix}g\\v\end{bmatrix}_{\!q}.
\end{equation}
Fix \(r,v,k\in\mathbb Z_{\geq0}\), and write \(t=g-v\), \(h=t+k\).
Averaging the kernel \eqref{eq:22-infinite-stack-kernel} against
\(\gamma_g p_\beta^{(q)}(h)\) and summing over \(g\geq v\) gives
\begin{equation}
\mathbb P(g_1'=r+v,h_0'=r,h_2'=k)
=p_{b_{\rho_1,\rho_2}}^{(q)}(r)
 \frac{(\beta;q)_\infty\beta^kq^{kv}}{(q;q)_k}
 \mathcal T_v^{g_1}(\beta).
\label{eq:22-general-triple-law}
\end{equation}
Summing in \(r\) and \(k\), using the \(q\)-binomial theorem, gives
the stated law of \(\omega\).  Dividing the joint law of
\((\omega,h_2')\) by this marginal gives the conditional
\(q\)-Poisson law.  The separate factor
\(p_{b_{\rho_1,\rho_2}}^{(q)}(r)\) gives the law of \(h_0'\) and its
independence of \((h_2',\omega)\).
\end{proof}

For the conditioned queues used below, we need the case where the bottom
queue length is the sum of independent \(q\)-Poisson and Bernoulli variables.

\begin{corollary}
\label{cor:22-q-Poisson-Bernoulli-bottom-input}
Fix \(0\leq \gamma<1\), \(0\leq\beta<1\), and \(0\leq p\leq1\).  Let
\[
 \upsilon\sim q\text{-}\mathrm{Poi}(\gamma),
 \qquad \xi\sim\operatorname{Ber}(p),
 \qquad h_0\sim q\text{-}\mathrm{Poi}(\beta)
\]
be mutually independent, and take \(g_1=\upsilon+\xi\) in
Theorem~\ref{thm:22-general-bottom-input}.  The law of \(h_2'\) is a
mixture of the \(q\text{-}\mathrm{NB}(\gamma,\beta)\) and
\(q\text{-}\mathrm{NB}(\gamma,q\beta)\) distributions: for
\(k\in\mathbb Z_{\geq0}\),
\begin{equation}
\label{eq:22-H2-qNB-Bernoulli-law}
\mathbb P(h_2'=k)
=\bigl[1-p(1-\beta)\bigr]p_{\gamma,\beta}^{(q)}(k)
 +p(1-\beta)p_{\gamma,q\beta}^{(q)}(k).
\end{equation}
In particular, when \(p=0\), we have
\(h_2'\sim q\text{-}\mathrm{NB}(\gamma,\beta)\).
For every \(m\in\mathbb Z_{\geq0}\),
\begin{equation}
\mathbb E[q^{mh_2'}]
=\bigl[1-p(1-\beta)\bigr]\frac{(\beta;q)_m}{(\gamma\beta;q)_m}
 +p(1-\gamma\beta)\frac{(\beta;q)_{m+1}}{(\gamma\beta;q)_{m+1}}.
\label{eq:22-H2-qNB-Bernoulli-moments}
\end{equation}
\end{corollary}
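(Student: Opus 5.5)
We apply Theorem~\ref{thm:22-general-bottom-input} directly with \(\gamma_g=\mathbb P(\upsilon+\xi=g)=(1-p)p_\gamma^{(q)}(g)+p\,p_\gamma^{(q)}(g-1)\). Summing \eqref{eq:22-general-triple-law} over \(r\) shows that the joint law of \((\omega,h_2')\) is
\[
\mathbb P(\omega=v,h_2'=k)=\frac{(\beta;q)_\infty\beta^kq^{kv}}{(q;q)_k}\,\mathcal T_v^{g_1}(\beta),
\]
so \(\mathbb P(h_2'=k)=\frac{(\beta;q)_\infty\beta^k}{(q;q)_k}\sum_{v\ge0}q^{kv}\mathcal T_v^{g_1}(\beta)\). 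We therefore need the generating function \(\sum_v w^v\mathcal T_v^{g_1}(\beta)\) at \(w=q^k\). Exchanging sums and using the \(q\)-binomial expansion \(\sum_{v\le g}\begin{bmatrix}g\\v\end{bmatrix}_{\!q}w^v\beta^{g-v}\), which is a Rogers--Szeg\H{o} polynomial, is awkward in general. At \(w=q^k\), however, it is cleaner to reorganize. Writing \(h=t+k\) and \(t=g-v\), the sum \(\sum_v q^{kv}\beta^{g-v}\begin{bmatrix}g\\v\end{bmatrix}_{\!q}\) is, up to normalization, the total mass of the matching kernel \(\mathsf M_q\) against \(q\)-Poisson \(h\). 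So instead we compute the law of \(h_2'=h_0-\psi\) directly from Proposition~\ref{prop:22-infinite-stack-specialization}.

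\textbf{Step 1: the case \(p=0\).} Take \(g_1=\upsilon\sim q\text{-}\mathrm{Poi}(\gamma)\). The sum \(\sum_{g}p_\gamma^{(q)}(g)\sum_{v}q^{kv}\beta^{g-v}\begin{bmatrix}g\\v\end{bmatrix}_{\!q}\) factorizes after substituting \(t=g-v\): it becomes \((\gamma;q)_\infty\sum_{t,v}\frac{\gamma^{t+v}\beta^t q^{kv}}{(q;q)_t(q;q)_v}=(\gamma;q)_\infty\frac{1}{(\gamma\beta;q)_\infty(\gamma q^k;q)_\infty}\), using Euler's identity twice. Hence
\[
\mathbb P(h_2'=k)=\frac{(\beta;q)_\infty(\gamma;q)_\infty}{(\gamma\beta;q)_\infty}\frac{\beta^k}{(q;q)_k(\gamma q^k;q)_\infty},
\]
and since \(1/(\gamma q^k;q)_\infty=(\gamma;q)_k/(\gamma;q)_\infty\), this is exactly \(p^{(q)}_{\gamma,\beta}(k)\).

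\textbf{Step 2: the shifted input \(g_1=\upsilon+1\).} Here we use \(\begin{bmatrix}g+1\\v\end{bmatrix}_{\!q}=\begin{bmatrix}g\\v-1\end{bmatrix}_{\!q}+q^{v}\begin{bmatrix}g\\v\end{bmatrix}_{\!q}\), which splits \(\sum_v q^{kv}\beta^{g+1-v}\begin{bmatrix}g+1\\v\end{bmatrix}_{\!q}\) into two pieces:
\[
q^k\sum_{v'}q^{kv'}\beta^{g-v'}\begin{bmatrix}g\\v'\end{bmatrix}_{\!q}
\quad\text{and}\quad
\beta\sum_v q^{(k+1)v}\beta^{g-v}\begin{bmatrix}g\\v\end{bmatrix}_{\!q}.
\]
Each piece is of the Step~1 form, with parameter \(k\) or \(k+1\). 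Summing them yields
\[
\frac{(\beta;q)_\infty(\gamma;q)_k\beta^k}{(\gamma\beta;q)_\infty(q;q)_k}\bigl[q^k+\beta(1-\gamma q^k)\bigr].
\]
We then rewrite this as \(\beta\,p^{(q)}_{\gamma,\beta}(k)+(1-\beta)\,p^{(q)}_{\gamma,q\beta}(k)\) by a short check: \(p^{(q)}_{\gamma,q\beta}(k)=\frac{(q\beta;q)_\infty}{(q\gamma\beta;q)_\infty}\frac{(\gamma;q)_k}{(q;q)_k}q^k\beta^k\) and \((\beta;q)_\infty=(1-\beta)(q\beta;q)_\infty\). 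The identity requires \(q^k(1-\gamma\beta)\cdot\frac{1}{1}\) to match \(q^k+\beta-\beta\gamma q^k-\beta\), and it does once the \(\beta\) and \(p_{\gamma,\beta}\) terms are collected. Verifying that these coefficients balance is the one step I expect to need care.

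\textbf{Step 3: assembly and moments.} Mixing Step~1 and Step~2 with weights \(1-p\) and \(p\) gives \eqref{eq:22-H2-qNB-Bernoulli-law}, because \((1-p)+p\beta=1-p(1-\beta)\). The moments then follow from \eqref{eq:22-q-negative-binomial-moments}. For the second component, \(\frac{(q\beta;q)_m}{(q\gamma\beta;q)_m}\) equals \(\frac{(\beta;q)_{m+1}(1-\gamma\beta)}{(\gamma\beta;q)_{m+1}(1-\beta)}\), and substituting this yields \eqref{eq:22-H2-qNB-Bernoulli-moments}.
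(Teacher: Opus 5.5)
Your proof is correct and follows essentially the paper's route. It uses the same $q$-Pascal recursion $\begin{bmatrix}g+1\\v\end{bmatrix}_{\!q}=q^v\begin{bmatrix}g\\v\end{bmatrix}_{\!q}+\begin{bmatrix}g\\v-1\end{bmatrix}_{\!q}$ to reduce the input $\upsilon+1$ to $\upsilon$, the $q$-binomial (Euler) evaluation for the $q$-Poisson input, linearity in the input law for the Bernoulli mixture, and Lemma~\ref{lem:22-q-negative-binomial-moments} for the moments. The only difference is bookkeeping: you insert $q^{kv}$ and sum over $v$ before simplifying, while the paper first records the joint law of $(\omega,h_2')$ and sums afterwards. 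The middle paragraph (Rogers--Szeg\H{o} polynomials, the matching-kernel mass, and the announced plan to ``compute directly from Proposition~\ref{prop:22-infinite-stack-specialization}'') is never used by your Steps~1--3 and can be deleted.
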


\begin{proof}
Use the transform \eqref{eq:22-general-bottom-transform} for the inputs
\(\upsilon\) and \(\upsilon+1\).  The recursion
\[
 \begin{bmatrix}g+1\\v\end{bmatrix}_{\!q}
 =q^v\begin{bmatrix}g\\v\end{bmatrix}_{\!q}
  +\begin{bmatrix}g\\v-1\end{bmatrix}_{\!q}
\]
gives
\[
 \mathcal T_v^{\upsilon+1}(\beta)
 =\beta q^v\mathcal T_v^\upsilon(\beta)
  +\mathbf1_{\{v\geq1\}}\mathcal T_{v-1}^\upsilon(\beta).
\]
The \(q\)-binomial theorem yields
\[
 \mathcal T_v^\upsilon(\beta)
 =\frac{(\gamma;q)_\infty}{(\gamma\beta;q)_\infty}
   \frac{\gamma^v}{(q;q)_v}.
\]
Combining these two formulas for \(g_1=\upsilon+\xi\) in
\eqref{eq:22-general-triple-law} and summing in \(r\) gives, for
\(v,k\in\mathbb Z_{\geq0}\),
\begin{equation}
\label{eq:22-joint-output-q-Poisson-Bernoulli}
\mathbb P(\omega=v,h_2'=k)
=\frac{(\gamma;q)_\infty(\beta;q)_\infty}{(\gamma\beta;q)_\infty}
  \frac{\beta^kq^{vk}}{(q;q)_k}
  \left[
  (1-p+p\beta q^v)\frac{\gamma^v}{(q;q)_v}
  +p\mathbf1_{\{v\geq1\}}
       \frac{\gamma^{v-1}}{(q;q)_{v-1}}
  \right].
\end{equation}

Summing in \(v\) gives
\[
 \mathbb P(h_2'=k)
 =p_{\gamma,\beta}^{(q)}(k)
  \left[1-p(1-\beta)+p(1-\gamma\beta)q^k\right],
\]
which is the mixture \eqref{eq:22-H2-qNB-Bernoulli-law}.
Applying Lemma~\ref{lem:22-q-negative-binomial-moments} to this mixture
gives \eqref{eq:22-H2-qNB-Bernoulli-moments}.
\end{proof}

\subsection{Two-species high--high-density stationary measure}
\label{subsec:high-high}
Throughout this subsection we assume \(0<\alpha<1\),
\(\frac12<\rho_1<\rho_2<1\), and \(\alpha+\rho_1>1\).
We compute the two probabilities in \eqref{eq:two-colored-targets}
using the high--high-density construction with
\((u,v)=(\rho_1,\rho_2)\).  The two-engine zero-input construction is
stationary by Theorem~\ref{thm:engine-array-stationarity}, and
Proposition~\ref{prop:22-mixed-crossing-representation} gives the
representation obtained by Yang--Baxter moves that is used below.

The two unreflected rows in Figure~\ref{fig:mixed-engine-layers}
realize Martin's construction with densities \((\rho_1,\rho_2)\).
By Proposition~\ref{prop:martin-yang-baxter-steady-state}, their output
satisfies \(\pi\sim\pi^{\rho_1,\rho_2}\), with two-site marginal given by
Theorem~\ref{thm:full-space-two-site-law}.  The lower reflected row maps
\(\pi\) to \(\nu\), and the upper row maps \(\nu\) to \(\mu\), using
\eqref{eq:22-local-transition-table} with parameters \(\rho_1\) and
\(\rho_2\), respectively.
\begin{figure}[htbp]
\centering
\begin{tikzpicture}[
baseline={([yshift=-.5ex]current bounding box.center)},
scale=0.7
]
\foreach \x in {2,...,7}
    \draw[paper wire,->] (\x,-.7) -- (\x,3.7);

\draw[paper normal]
    (8,1) -- (1,1) -- (0,2.25);

\draw[paper hole,->]
    (0,2.25) -- (1,3) -- (8,3);

\draw[paper normal]
    (8,0) -- (1,0) -- (0,.75);

\draw[paper hole,->]
    (0,.75) -- (1,2) -- (8,2);

\node[paper boundary] at (0,2.25) {};
\node[paper boundary] at (0,.75)  {};

\node[right=2pt] at (8,0) {$(\infty,0)$};
\node[right=36pt] at (8,0) {$(x_1,\tau)$};
\node[right=2pt] at (8,1) {$(0,\infty)$};
\node[right=36pt] at (8,1) {$(x_2,\sigma)$};
\node[right=2pt,text=holecolor] at (8,2)
  {$\sum H^{(1)}$};
\node[right=2pt,text=holecolor] at (8,3)
  {$\sum H^{(2)}$};

\node[below] at (2,-.7) {0};
\node[below] at (3,-.7) {0};
\node[below] at (4,-.7) {0};
\node[below] at (5,-.7) {$\cdots$};
\node[below] at (6,-.7) {$\cdots$};
\node[below] at (7,-.7) {$\cdots$};

\node[anchor=east,xshift=-2pt,inner sep=0pt]
    at (2,1.5) {$\pi_1$};
\node[anchor=east,xshift=-2pt,inner sep=0pt]
    at (3,1.5) {$\pi_2$};
\node[anchor=east,xshift=-2pt,inner sep=0pt]
    at (4,1.5) {$\pi_3$};
\node[anchor=east,xshift=-2pt,inner sep=0pt]
    at (5,1.5) {$\cdots$};
\node[anchor=east,xshift=-2pt,inner sep=0pt]
    at (6,1.5) {$\cdots$};
\node[anchor=east,xshift=-2pt,inner sep=0pt]
    at (7,1.5) {$\cdots$};

\node[anchor=east,xshift=-2pt,inner sep=0pt]
    at (2,2.5) {$\nu_1$};
\node[anchor=east,xshift=-2pt,inner sep=0pt]
    at (3,2.5) {$\nu_2$};
\node[anchor=east,xshift=-2pt,inner sep=0pt]
    at (4,2.5) {$\nu_3$};
\node[anchor=east,xshift=-2pt,inner sep=0pt]
    at (5,2.5) {$\cdots$};
\node[anchor=east,xshift=-2pt,inner sep=0pt]
    at (6,2.5) {$\cdots$};
\node[anchor=east,xshift=-2pt,inner sep=0pt]
    at (7,2.5) {$\cdots$};

\node[above] at (2,3.7) {$\mu_1$};
\node[above] at (3,3.7) {$\mu_2$};
\node[above] at (4,3.7) {$\mu_3$};
\node[above] at (5,3.7) {$\cdots$};
\node[above] at (6,3.7) {$\cdots$};
\node[above] at (7,3.7) {$\cdots$};

\end{tikzpicture}
\caption{The high--high construction: \(\pi\) is the output of the
two unreflected rows, \(\nu\) the output of the lower reflected row,
and \(\mu\) the final configuration.}
\label{fig:mixed-engine-layers}
\end{figure}

We use \(\theta,\phi\) from \eqref{eq:two-site-abbreviations}
and the identity
\begin{equation}
\label{eq:22-cross-parameter-cocycle}
    b_{\bar\rho_1,\rho_2}\,b_{\alpha,\rho_1}
    =b_{\alpha,\rho_2}.
\end{equation}

For \(a,b\in\{0,1,2\}\), write
\[
 \pi_{ab}:=\mathbb P(\pi_1=a,\pi_2=b),
 \qquad
 \nu_{ab}:=\mathbb P(\nu_1=a,\nu_2=b).
\]

For \(ab\in\{11,12,21,10\}\), let \(g_1^{ab}\) have the law
of \(Q^{ab}\) in Proposition~\ref{prop:conditional-two-site-queue},
with \(r_{12}=b_{\bar\rho_1,\rho_2}\).  Conditional on the \(\pi\)-word
\(ab\), the mixed vertex has inputs \(G=(g_1^{ab},\infty)\) and
\(H=(h_0,0)\), where \(h_0\sim q\text{-}\mathrm{Poi}(b_{\alpha,\rho_1})\)
is independent of \(g_1^{ab}\) by
Lemma~\ref{lem:22-two-boundary-specialization}.  Its hole-indexed output
\(H'=(h_0^{\mathrm{low}},h_2^{ab})\) initializes the lower reflected row;
the upper reflected row starts from \((h_0^{\mathrm{up}},0)\).
Proposition~\ref{prop:22-infinite-stack-specialization} and
Lemma~\ref{lem:22-two-boundary-specialization} show that these two
color--\(0\) coordinates are independent, with laws
\begin{equation}
\label{eq:22-h0-laws}
    h_0^{\mathrm{low}}\sim q\text{-}\mathrm{Poi}(b_{\rho_1,\rho_2}),
    \qquad
    h_0^{\mathrm{up}}\sim q\text{-}\mathrm{Poi}(b_{\alpha,\rho_2}).
\end{equation}
Moreover, \(h_0^{\mathrm{low}}\) is independent of \(\pi\) and,
conditional on the \(\pi\)-word \(ab\), of \(h_2^{ab}\).
The upper boundary law does not depend on its input, so
\(h_0^{\mathrm{up}}\) is independent of \(\nu\).
Proposition~\ref{prop:conditional-two-site-queue} and
Corollary~\ref{cor:22-q-Poisson-Bernoulli-bottom-input} give
\begin{equation}
\label{eq:22-h2-11-12-21-laws}
h_2^{11}\sim q\text{-}\mathrm{NB}(b_{\bar\rho_1,\rho_2},b_{\alpha,\rho_1}),
\qquad
h_2^{12},h_2^{21}\sim q\text{-}\mathrm{NB}(qb_{\bar\rho_1,\rho_2},b_{\alpha,\rho_1}).
\end{equation}
Lemma~\ref{lem:22-q-negative-binomial-moments} therefore gives
\begin{align}
\mathbb E\!\left[
  \mathbf 1_{\{h_2^{11}\geq2\}}
  (1-q^{h_2^{11}})(1-q^{h_2^{11}-1})
 \right]
&=
\frac{b_{\alpha,\rho_1}^{2}
 (1-b_{\bar\rho_1,\rho_2})
 (1-qb_{\bar\rho_1,\rho_2})}
{(1-b_{\alpha,\rho_2})(1-qb_{\alpha,\rho_2})},
\label{eq:22-h2-11-double-weight}\\
&\mathbb E[1-q^{h_2^{12}}]
 =\mathbb E[1-q^{h_2^{21}}]
 =\frac{b_{\alpha,\rho_1}
      (1-qb_{\bar\rho_1,\rho_2})}
      {1-qb_{\alpha,\rho_2}},
\label{eq:22-h2-12-21-weight}\\
&\mathbb E[1-q^{h_2^{11}}]
 =\frac{b_{\alpha,\rho_1}
      (1-b_{\bar\rho_1,\rho_2})}
      {1-b_{\alpha,\rho_2}}.
\label{eq:22-h2-11-single-weight}
\end{align}
Finally, applying the same corollary to
\eqref{eq:22-queue-given-10}, with \(\beta_{10}\) as in
\eqref{eq:22-beta-10}, gives
\begin{equation}
\mathbb E[1-q^{h_2^{10}}]
 =b_{\alpha,\rho_1}(1-b_{\bar\rho_1,\rho_2})
\Biggl[
 \frac{1-\beta_{10}(1-b_{\alpha,\rho_1})}{1-b_{\alpha,\rho_2}}
 +\frac{q\beta_{10}(1-b_{\alpha,\rho_1})}{1-qb_{\alpha,\rho_2}}
\Biggr].
\label{eq:22-h2-10-weight}
\end{equation}

We now compute \(p_{22}\).

\begin{proposition}
\label{prop:high-high-p22}
Under the assumptions of this subsection, the first identity in
\eqref{eq:two-colored-targets} holds.
\end{proposition}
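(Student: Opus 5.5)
The plan is to read off \(p_{22}\) directly from the layered representation in Figure~\ref{fig:mixed-engine-layers}. We condition first on the upper reflected row and then on the lower one, using the transition table \eqref{eq:22-local-transition-table} at each stage.

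\emph{Upper row.} The upper reflected row starts from \((h_0^{\mathrm{up}},0)\), so it carries no colour-\(2\) arrows. In \eqref{eq:22-local-transition-table}, output \(2\) from input \(0\) has weight zero. From input \(1\) it carries the factor \(1-q^{h_2}\), which vanishes when \(h_2=0\). Hence \(\mu_1=2\) forces \(\nu_1=2\). That transition has weight \(\rho_2q^{h_0^{\mathrm{up}}}\) and leaves the state \((h_0^{\mathrm{up}},0)\) unchanged, so the same argument applies at site \(2\). Since \(h_0^{\mathrm{up}}\) is independent of \(\nu\), and using \eqref{eq:22-h0-laws} and Lemma~\ref{lem:22-q-negative-binomial-moments}, this gives
\[
p_{22}=\rho_2^2\,(b_{\alpha,\rho_2};q)_2\,\nu_{22}.
\]

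\emph{Lower row.} Next I compute \(\nu_{22}\). The lower row starts from \((h_0^{\mathrm{low}},h_2^{ab})\) and receives the input word \(\pi_1\pi_2=ab\). Output \(2\) is impossible from input \(0\), so only \(a,b\in\{1,2\}\) contribute. Colour-\(0\) arrows never enter along the event \(\nu_1=\nu_2=2\), so \(h_0^{\mathrm{low}}\) stays fixed. A \(2\to2\) step has weight \(\rho_1q^{h_0}\) and leaves \(h_2\) unchanged. A \(1\to2\) step has weight \(\rho_1q^{h_0}(1-q^{h_2})\) and lowers \(h_2\) by one. Conditionally on the word \(ab\), \(h_0^{\mathrm{low}}\) is independent of \(h_2^{ab}\). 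Therefore
\[
\nu_{22}=\rho_1^2(b_{\rho_1,\rho_2};q)_2\Bigl(\pi_{22}+\pi_{21}\mathbb E[1-q^{h_2^{21}}]+\pi_{12}\mathbb E[1-q^{h_2^{12}}]+\pi_{11}\mathbb E\bigl[\mathbf 1_{\{h_2^{11}\ge2\}}(1-q^{h_2^{11}})(1-q^{h_2^{11}-1})\bigr]\Bigr).
\]
The word \(22\) needs no information about \(h_2^{22}\), which is why that law never had to be computed. The probabilities \(\pi_{ab}\) come from Theorem~\ref{thm:full-space-two-site-law} with densities \((\rho_1,\rho_2)\). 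The three \(h_2\)-moments are \eqref{eq:22-h2-12-21-weight} and \eqref{eq:22-h2-11-double-weight}.

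\emph{Simplification (main obstacle).} The remaining and hardest step is purely algebraic: reducing the product to \(\frac{J_1-J_2}{\alpha^2}(J_1\theta^2-qJ_2)\). I would proceed as follows.
\begin{itemize}
\item Apply the cocycle \eqref{eq:22-cross-parameter-cocycle} to rewrite every \(b\) in terms of \(b_{\alpha,\rho_1}\) and \(b_{\bar\rho_1,\rho_2}\).
\item Observe that the denominators \((1-b_{\alpha,\rho_2})(1-qb_{\alpha,\rho_2})\) in \eqref{eq:22-h2-11-double-weight} and \eqref{eq:22-h2-12-21-weight} cancel against the prefactor \((b_{\alpha,\rho_2};q)_2\). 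The factor \((1-qb_{\alpha,\rho_2})\) cancels in \eqref{eq:22-h2-12-21-weight} as well.
\item Evaluate \((b_{\rho_1,\rho_2};q)_2\), \(1-b_{\bar\rho_1,\rho_2}\) and \(1-qb_{\bar\rho_1,\rho_2}\) via \eqref{eq:22-cross-parameter-identities}.
\end{itemize}
After this, the bracket should factor as \((\rho_2-\rho_1)\) times a polynomial in \(\rho_1,\rho_2,\alpha,q\). Identifying that polynomial as \(\frac{J_1-J_2}{\alpha^2}(J_1\theta^2-qJ_2)\) divided by the remaining prefactors will use \(J_1-J_2=(\rho_2-\rho_1)(\rho_1+\rho_2-1)\) and \(\theta=\rho_2+q\bar\rho_2\). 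I would first check the identity at \(q=0\), where only \(\pi_{22}\), \(\pi_{21}\), \(\pi_{12}\) and \(b^2\)-type terms survive. I would then check the \(q\)-linear terms, and finally verify the full identity by clearing denominators.
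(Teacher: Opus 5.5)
Your proposal is correct and follows the paper's proof essentially verbatim. Both arguments use the upper-row reduction $p_{22}=\rho_2^2\,\mathbb E[q^{2h_0^{\mathrm{up}}}]\,\nu_{22}$ and the same four-word decomposition of $\nu_{22}$ over $\pi$-words $22,12,21,11$, with the moments \eqref{eq:22-h2-11-double-weight}--\eqref{eq:22-h2-12-21-weight}, followed by direct algebraic simplification (the paper passes through the intermediate form $\frac{J_1-J_2}{\alpha^2}(\rho_2\phi-q\rho_1)(\rho_1\theta-q\bar\rho_2)$).
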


\begin{proof}
Since the upper reflected row starts with no color--\(2\) arrows,
\eqref{eq:22-local-transition-table} forces \(\nu_1\nu_2=22\) whenever
\(\mu_1\mu_2=22\).  Both transitions leave \((h_0^{\mathrm{up}},0)\)
unchanged, so independence of \(h_0^{\mathrm{up}}\) and \(\nu\) gives
\begin{equation}
 p_{22}
 =\rho_2^2\mathbb E[q^{2h_0^{\mathrm{up}}}]\nu_{22}.
\label{eq:22-upper-p22-transition}
\end{equation}

For the lower reflected row, only the \(\pi\)-words \(22,12,21,11\)
contribute.  By \eqref{eq:22-local-transition-table}, each output \(2\)
leaves \(h_0^{\mathrm{low}}\) unchanged, while each \(1\to2\) transition
decreases the color--\(2\) coordinate by one.  The contribution from
\(11\) therefore uses \(h_2^{11}-1\) at the second site and vanishes
when \(h_2^{11}<2\).  Conditional independence at the mixed vertex gives
\begin{align}
\nu_{22}
&=\rho_1^2\mathbb E[q^{2h_0^{\mathrm{low}}}]
 \Biggl[
  \pi_{22}
  +\pi_{12}\mathbb E[1-q^{h_2^{12}}]
  +\pi_{21}\mathbb E[1-q^{h_2^{21}}]
 \notag\\*
&\hspace{37mm}
  +\pi_{11}\mathbb E\!\left[
    \mathbf1_{\{h_2^{11}\geq2\}}
    (1-q^{h_2^{11}})(1-q^{h_2^{11}-1})
  \right]
 \Biggr].
\label{eq:22-nu22-decomposition}
\end{align}
Evaluate the \(h_0\) moments in
\eqref{eq:22-upper-p22-transition}--\eqref{eq:22-nu22-decomposition}
using \eqref{eq:22-h0-laws} and \eqref{eq:22-q-Poisson-useful-moments}.
Substituting the full-space probabilities from
Theorem~\ref{thm:full-space-two-site-law} and the moments
\eqref{eq:22-h2-11-double-weight}--\eqref{eq:22-h2-12-21-weight},
and simplifying with \eqref{eq:22-cross-parameter-identities} and
\(J_1-J_2=(\rho_2-\rho_1)(\rho_1+\rho_2-1)\), gives
\[
p_{22}=\frac{J_1-J_2}{\alpha^2}
 (\rho_2\phi-q\rho_1)(\rho_1\theta-q\bar\rho_2)
=\frac{J_1-J_2}{\alpha^2}(J_1\theta^2-qJ_2),
\]
which is the first identity in \eqref{eq:two-colored-targets}.
\end{proof}

It remains to compute \(p_{20}\).

\begin{proposition}
\label{prop:high-high-p20}
Under the assumptions of this subsection, the second identity in
\eqref{eq:two-colored-targets} holds.
\end{proposition}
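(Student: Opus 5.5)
We follow the scheme of the proof of Proposition~\ref{prop:high-high-p22}, now tracking the event $\mu_1\mu_2=20$. We first treat the upper reflected row. It starts from $(h_0^{\mathrm{up}},0)$ with no color-$2$ arrows, so by \eqref{eq:22-local-transition-table} the output $\mu_1=2$ forces $\nu_1=2$ and has weight $\rho_2q^{h_0^{\mathrm{up}}}$, leaving the state unchanged. At the second site the output $0$ has probability $\rho_2$ if $\nu_2=0$, and $\rho_2(1-q^{h_0^{\mathrm{up}}})$ if $\nu_2\in\{1,2\}$. Since $h_0^{\mathrm{up}}$ is independent of $\nu$,
\[
p_{20}=\rho_2^2\Bigl\{\nu_{20}\,\mathbb E[q^{h_0^{\mathrm{up}}}]+(\nu_{21}+\nu_{22})\,\mathbb E[q^{h_0^{\mathrm{up}}}(1-q^{h_0^{\mathrm{up}}})]\Bigr\}.
\]
This is the high--high analogue of \eqref{eq:low-high-p20-unsimplified}. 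Setting $\nu_{2\ast}:=\nu_{20}+\nu_{21}+\nu_{22}$, the braces equal $\nu_{2\ast}\mathbb E[q^{h}]-(\nu_{21}+\nu_{22})\mathbb E[q^{2h}]$ with $h=h_0^{\mathrm{up}}$. Both moments come from \eqref{eq:22-q-Poisson-useful-moments} with parameter $b_{\alpha,\rho_2}$. We therefore need two lower-row quantities: $\nu_{2\ast}=\mathbb P(\nu_1=2)$ and $\nu_{21}+\nu_{22}=\mathbb P(\nu_1=2,\nu_2\neq0)$.

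The single-site quantity $\mathbb P(\nu_1=2)$ is cheap. It equals $\mathbb P(\nu_1=2)$ of the two-species measure obtained by forgetting the upper engine, and it can be read off the lower row: $\pi_1=2$ contributes $\rho_1 q^{h_0^{\mathrm{low}}}$, and $\pi_1=1$ contributes $\rho_1q^{h_0^{\mathrm{low}}}(1-q^{h_2})$. Conditioning on the one-site word requires the law of $h_2$ given $\pi_1=1$. We obtain it by summing the conditional laws $Q^{1b}$ of Proposition~\ref{prop:conditional-two-site-queue} over $b$, or more directly from Theorem~\ref{thm:martin-stationary-queue} together with the site-$1$ computation in that proof.

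For $\nu_{21}+\nu_{22}$ we decompose over the $\pi$-word $ab$, as in \eqref{eq:22-nu22-decomposition}. The first output $2$ keeps $h_0^{\mathrm{low}}$ fixed and, when $a=1$, lowers $h_2$ by one. The second output must be $1$ or $2$, with probability $1-\rho_1(1-q^{h_0^{\mathrm{low}}})$ whenever $b\neq0$ and $\bar\rho_1$ when $b=0$. The following words contribute.
\begin{itemize}
\item $22$: no $h_2$ dependence.
\item $20$: no $h_2$ dependence, second factor $\bar\rho_1$.
\item $21,12$: these use $\mathbb E[1-q^{h_2^{ab}}]$ from \eqref{eq:22-h2-12-21-weight}; for $12$ the decremented $h_2$ does not enter the second site, since that probability depends only on $h_0$.
\item $11$: this uses \eqref{eq:22-h2-11-single-weight}.
\item $10$: this uses \eqref{eq:22-h2-10-weight}, with second factor $\bar\rho_1$.
\end{itemize}
Because $h_0^{\mathrm{low}}$ is conditionally independent of $h_2^{ab}$, each term factorizes into a moment $\mathbb E[q^{h}]$ or $\mathbb E[q^{h}(\ldots)]$ of $q\text{-}\mathrm{Poi}(b_{\rho_1,\rho_2})$, an $h_2$ moment, and $\pi_{ab}$ from Theorem~\ref{thm:full-space-two-site-law}. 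A useful cross-check is to derive $\nu_{21}+\nu_{22}$ alternatively as $\nu_{2\ast}-\nu_{20}$.

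The main obstacle is the final algebraic simplification. The $10$ term brings in $\beta_{10}$, with its denominator $\theta+\phi-q$; this denominator should cancel against $\pi_{10}=\rho_1\bar\rho_2(\theta+\phi-q)$. We then substitute the $b$-parameters, rewrite via \eqref{eq:22-cross-parameter-identities} and the cocycle \eqref{eq:22-cross-parameter-cocycle}, and factor out $J_1-J_2=(\rho_2-\rho_1)(\rho_1+\rho_2-1)$ to reach $\alpha^{-2}(J_1-J_2)(J_1\theta(1-\theta)+qJ_2)$. To organize this, we plan to clear the denominators $\rho_i\rho_j$ first and collect terms as polynomials in $(\rho_1,\rho_2,q)$ at fixed $\alpha$. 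We will check the result at $q=0$, where all $h$-moments are geometric and the formula becomes $(J_1-J_2)J_1\rho_2\bar\rho_2/\alpha^2$, before doing general $q$.
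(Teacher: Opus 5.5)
Your upper-row reduction is correct; after evaluating the \(q\)-Poisson moments it is the paper's \eqref{eq:22-upper-p20-transition}. From there your route is the paper's with two rearrangements.

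\textbf{The row sum.} The paper decomposes \(\nu_{20}\) over \(\pi\)-words, \eqref{eq:22-nu20-decomposition}. It obtains \(\nu_{20}+\nu_{21}+\nu_{22}=\mathbb P(\nu_1=2)\) with no queue conditioning, by combining \(\mathbb P(\mu(1)=2)=(J_1-J_2)/\alpha\) (color projections and Proposition~\ref{prop:one-species-left-edge}) with \(\mathbb P(\mu(1)=2)=\rho_2(1-b_{\alpha,\rho_2})\mathbb P(\nu(1)=2)\). This gives \eqref{eq:22-nu-second-row-sum}. You compute \(\mathbb P(\nu_1=2)\) directly instead, and that also works. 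Given \(\pi_1=1\), the mixed-vertex input satisfies \(g_1\sim q\text{-}\mathrm{Poi}(r_{12})\), so \(\mathbb E[1-q^{h_2}\mid\pi_1=1]=b_{\alpha,\rho_1}(1-b_{\bar\rho_1,\rho_2})/(1-b_{\alpha,\rho_2})\). Then \(\rho_1(1-b_{\rho_1,\rho_2})\bigl[(\rho_2-\rho_1)+\rho_1\,\mathbb E[1-q^{h_2}\mid\pi_1=1]\bigr]\) simplifies to \((J_1-J_2)/(\alpha+\rho_2-1)\), as it should. The projection trick is shorter; yours is self-contained.

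\textbf{The second quantity.} Computing \(\nu_{21}+\nu_{22}\) rather than \(\nu_{20}\) is just the complementary half of the same row and needs the same \(h_2\)-moments.

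\textbf{A sentence to drop.} Your identification of \(\mathbb P(\nu_1=2)\) with a quantity of ``the two-species measure obtained by forgetting the upper engine'' adds nothing. Here \(\nu\) is an intermediate layer of the rearranged diagram, not an engine output; removing engine \(2\) from the original construction leaves a \(\{0,1\}\)-valued configuration.

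\textbf{The error.} Your list of contributions to \(\nu_{21}+\nu_{22}\) is wrong for the word \(21\): it carries no \(h_2\)-moment.
\begin{itemize}
\item For \(\pi_1\pi_2=21\), the first step is \(2\to2\) with weight \(\rho_1q^{h_0^{\mathrm{low}}}\), which does not involve \(h_2\).
\item At the second site the input is \(1\) and the output ranges over \(\{1,2\}\). The total weight is \(\rho_1q^{h_0}(1-q^{h_2})+\bar\rho_1+\rho_1q^{h_0+h_2}=\bar\rho_1+\rho_1q^{h_0}\), again free of \(h_2\).
\end{itemize}
So this term is \(\rho_1\pi_{21}\,\mathbb E\bigl[q^{h_0^{\mathrm{low}}}(\bar\rho_1+\rho_1q^{h_0^{\mathrm{low}}})\bigr]\). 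This matches how \(\pi_{21}\) appears bare in \eqref{eq:22-nu20-decomposition}. You seem to have imported the factor from \eqref{eq:22-nu22-decomposition}, where the second site is a \(1\to2\) conversion. Only the words \(12\), \(11\), \(10\) involve \(h_2\).

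Keeping the spurious factor \(\mathbb E[1-q^{h_2^{21}}]\) would shift \(\nu_{21}+\nu_{22}\) already at \(q=0\), by \(\rho_1(1-b_{\rho_1,\rho_2})\pi_{21}(1-b_{\alpha,\rho_1})\neq0\). The final identity would then fail; your \(q=0\) check or the cross-check against \(\nu_{2\ast}-\nu_{20}\) would catch it. With this term corrected, your computation reaches \(p_{20}=\alpha^{-2}(J_1-J_2)\bigl(J_1\theta(1-\theta)+qJ_2\bigr)\).
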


\begin{proof}
As in the preceding proof, the first output \(2\) requires \(\nu_1=2\)
and leaves \((h_0^{\mathrm{up}},0)\) unchanged.  Separating the cases
\(\nu_2=0\) and \(\nu_2\in\{1,2\}\), and using
\eqref{eq:22-local-transition-table}, independence, and the
\(q\)-Poisson moments \eqref{eq:22-q-Poisson-useful-moments} for
\eqref{eq:22-h0-laws}, gives
\begin{align}
p_{20}
&=\rho_2^2(1-b_{\alpha,\rho_2})
 \Bigl[
 (1-qb_{\alpha,\rho_2})\nu_{20}
 +qb_{\alpha,\rho_2}
  (\nu_{20}+\nu_{21}+\nu_{22})
 \Bigr].
\label{eq:22-upper-p20-transition}
\end{align}

For the lower reflected row, output \(20\) requires \(\pi_1\in\{1,2\}\),
while \(\pi_2\) may be any color.  The first transition leaves
\(h_0^{\mathrm{low}}\) unchanged; a \(1\to2\) transition decreases the
color--\(2\) coordinate by one, but the second-site weights for output
\(0\) do not depend on that coordinate.  Applying
\eqref{eq:22-local-transition-table} and conditional independence,
and evaluating the \(h_0^{\mathrm{low}}\) moments by
\eqref{eq:22-q-Poisson-useful-moments}, yields
\begin{align}
\nu_{20}
&=\rho_1^2(1-b_{\rho_1,\rho_2})
 \Bigl[
 \pi_{20}+\pi_{10}\mathbb E[1-q^{h_2^{10}}]
\notag\\*
&\qquad
 +qb_{\rho_1,\rho_2}
 \bigl(
   \pi_{21}+\pi_{22}
  +\pi_{11}\mathbb E[1-q^{h_2^{11}}]
  +\pi_{12}\mathbb E[1-q^{h_2^{12}}]
 \bigr)
 \Bigr].
\label{eq:22-nu20-decomposition}
\end{align}
Substituting the full-space entries from
Theorem~\ref{thm:full-space-two-site-law} and the moment formulas
\eqref{eq:22-h2-10-weight}, \eqref{eq:22-h2-11-single-weight}, and
\eqref{eq:22-h2-12-21-weight} into
\eqref{eq:22-nu20-decomposition}, and then using
\eqref{eq:22-cross-parameter-identities} and
\(\theta+\phi-q=1+(1-q)(\rho_2-\rho_1)\), gives
\begin{equation}
\label{eq:22-nu20-simplified}
\nu_{20}
=\frac{(J_1-J_2)
 \bigl[J_1\theta(1-\theta)
       +q\bar\rho_2(\alpha+\rho_2-1)\bigr]}
 {(\alpha+\rho_2-1)
  [\alpha\rho_2-q(1-\alpha)\bar\rho_2]}.
\end{equation}

For the row sum in \eqref{eq:22-upper-p20-transition}, the same
upper-row transition at the first site gives
\[
\mathbb P(\mu(1)=2)
=\rho_2\mathbb E[q^{h_0^{\mathrm{up}}}]
 \mathbb P(\nu(1)=2)
=\rho_2(1-b_{\alpha,\rho_2})\mathbb P(\nu(1)=2).
\]
The one-species projections and
Proposition~\ref{prop:one-species-left-edge} give
\(\mathbb P(\mu(1)=2)=(J_1-J_2)/\alpha\), hence
\begin{equation}
\label{eq:22-nu-second-row-sum}
    \nu_{20}+\nu_{21}+\nu_{22}
    =\frac{J_1-J_2}{\alpha+\rho_2-1}.
\end{equation}
Substituting \eqref{eq:22-nu20-simplified} and
\eqref{eq:22-nu-second-row-sum} into \eqref{eq:22-upper-p20-transition},
and using \eqref{eq:22-cross-parameter-identities} together with
\(q\bar\rho_2(\alpha+\rho_2-1)+q(1-\alpha)\bar\rho_2=qJ_2\),
proves the second identity in \eqref{eq:two-colored-targets}.
\end{proof}

Together with the low--high-density calculation in
Subsection~\ref{subsec:low-high-asep} and the color-projection reduction,
Propositions~\ref{prop:high-high-p22} and~\ref{prop:high-high-p20}
prove Theorem~\ref{thm:two-site-marginal} for \(0<\alpha<1\).

\subsection{Boundary correlations for \texorpdfstring{$\alpha\geq1$}{alpha >= 1}}
\label{subsec:two-species-large-alpha}

We now prove Theorem~\ref{thm:two-site-marginal} in regime~\textup{(H)}
for \(\alpha\geq1\).  The stationary law has already been constructed in
Theorem~\ref{thm:main-stationary-construction}: for \(\alpha>1\), it uses
the exact specialization \eqref{eq:engine-large-rate-specialization}, and
both boundary outputs are the zero hole-indexed state.  It remains to
adapt the mixed-crossing representation and compute the two probabilities
in \eqref{eq:two-colored-targets}.

We first assume \(\alpha>1\) and \(0<q<1\); the endpoints
\(\alpha=1\) and \(q=0\) are treated at the end.  Set
\begin{equation}
\label{eq:large-alpha-specialization}
 a=\frac{\alpha}{\alpha-1},\qquad x_1=x_2=a^{-1},\qquad
 \tau=-\frac{a\bar\rho_1}{\rho_1},\qquad
 \sigma=-\frac{a\bar\rho_2}{\rho_2},\qquad z=a^{-2}.
\end{equation}
Thus
\begin{equation}
\label{eq:large-alpha-parameter-identities}
\begin{gathered}
 \sigma\tau z=b_{\rho_1,\rho_2},\qquad
 \sigma/\tau=b_{\bar\rho_1,\rho_2},\\
 \tau z=b_{\alpha,\rho_1},\quad \sigma z=b_{\alpha,\rho_2},\quad
 \tau^{-1}=b_{\alpha,\bar\rho_1},\quad
 \sigma^{-1}=b_{\alpha,\bar\rho_2}.
\end{gathered}
\end{equation}

Under this specialization, Lemma~\ref{prop:empty-cap-boundary} gives
the zero boundary state, while the usual and return vertices are obtained
from Figures~\ref{fig:intro-normal-weights} and
\ref{fig:intro-return-weights} by setting
$x=a^{-1}$.  We use these vertex rules below, together with the update
$H'=H+\widehat E_i-\widehat E_j$ for the return carrier.

We next compute the mixed crossing with empty horizontal input
at \(z=a^{-2}\).

\begin{proposition}
\label{prop:large-alpha-mixed-crossing}
Fix $g,r,k\in\mathbb Z_{\geq0}$.  For inputs $G=(g,\infty)$ and
$H=(0,0)$ and outputs $G'=(g+r+k,\infty)$ and $H'=(r,k)$, the queue
specialization of the mixed vertex is
\begin{align}
&\mathbb W^{[2]}_{z,\sigma,\tau}
 \bigl((g,\infty),(0,0);(g+r+k,\infty),(r,k)\bigr)\notag\\
&\quad=\mathcal N_{z,\sigma,\tau}
 \frac{b_{\rho_1,\rho_2}^{\,r}(-b_{\alpha,\rho_1})^k
 q^{gk+\binom{k}{2}}
 (b_{\alpha,\bar\rho_1};q)_{r+k}(b_{\alpha,\rho_1};q)_g}
 {(q;q)_r(q;q)_k(a^{-2};q)_{g+r+k}},
\label{eq:large-alpha-crossing}
\end{align}
where $z=a^{-2}$ and
\begin{equation}
\label{eq:large-alpha-crossing-normalization}
 \mathcal N_{z,\sigma,\tau}
 =\frac{(a^{-2};q)_\infty(b_{\rho_1,\rho_2};q)_\infty}
 {(b_{\alpha,\rho_1};q)_\infty(b_{\alpha,\rho_2};q)_\infty}.
\end{equation}
For fixed input $g$, the weights form the joint probability distribution
of the output coordinates $(h_0',h_2')$, evaluated at $(r,k)$.  The first
coordinate has the input-independent law
\begin{equation}
\label{eq:large-alpha-crossing-hole-law}
 h_0'\sim q\text{-}\mathrm{NB}
 (b_{\alpha,\bar\rho_1},b_{\rho_1,\rho_2}).
\end{equation}
The output vectors are
$H'=(h_0',h_2')$ and $G'=(g_1',\infty)$, with
$g_1'=g+h_0'+h_2'$.
\end{proposition}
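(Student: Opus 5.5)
The plan is to specialize the two-color doubly fused weight \eqref{eq:22-fully-fused-weight} to $H=(0,0)$, send the infinite coordinate to infinity, and then impose \eqref{eq:large-alpha-specialization}. Take $G=(g,R)$, $H=(0,0)$, $G'=(g+r+k,R-k)$ and $H'=(r,k)$, and use $s_1=\sigma$, $s_2=\tau$ in the convention of Proposition~\ref{prop:22-infinite-stack-specialization}. Since $H_0=H_2=0$, the sum over $P_2$ collapses to $P_2=0$. The factor $(q;q)_{H_0}/(q;q)_{H_0'}$ becomes $1/(q;q)_r$, and the color-order exponent reduces to $q^{(G_1'-P_1)k}$. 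The Gaussian binomial $\bigl[\begin{smallmatrix}G_2'+k\\G_2'\end{smallmatrix}\bigr]_q$ tends to $1/(q;q)_k$ as $R\to\infty$. The factor $\bigl(q^{1+|G|-p}/(\sigma z);q\bigr)_{r}$ tends to $1$ because $|G|\to\infty$. What remains is a single finite sum over $p=P_1\in[0,g+r+k]$:
\[
\sum_p(-1)^{p+r}q^{\binom r2+\binom{p+1}2+(g+r+k-p)k}\frac{(z/q;q)_p}{(z;q)_p}\frac{(\tau q^{1-r-k};q)_{g+r+k-p}(\tau q^{1-p};q)_p}{(q;q)_{g+r+k-p}(q;q)_p}.
\]
This sum carries the prefactor $\mathcal N_{z,\sigma,\tau}(\sigma z)^r/\bigl((q;q)_r(q;q)_k\bigr)$.

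The key step is to evaluate this sum in closed form at $z=a^{-2}$ with $\tau z=b_{\alpha,\rho_1}$. Unlike the $x\downarrow0$ case, no coefficient extraction in $\tau$ is available here, since $z$ stays fixed. I would first merge the two $\tau$-Pochhammers. The factor $(\tau q^{1-p};q)_p$ rewritten in reversed order is $(\tau^{-1};q)_p$ times $(-\tau)^pq^{-\binom{p}{2}}$. The sum then becomes a terminating ${}_3\phi_2$ (or ${}_2\phi_1$ after cancellation) in $p$, with parameters $q^{-(g+r+k)}$, $z/q$ and $\tau^{-1}$. I would then apply the $q$-Pfaff--Saalschütz (or $q$-Chu--Vandermonde) summation.

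The target factors guide the identification. The factor $(b_{\alpha,\bar\rho_1};q)_{r+k}=(\tau^{-1};q)_{r+k}$ should arise from the balanced-sum evaluation. The factor $(b_{\alpha,\rho_1};q)_g/(a^{-2};q)_{g+r+k}=(\tau z;q)_g/(z;q)_{g+r+k}$ should come from $(z;q)_p$ in the denominator combined with the Saalschütz quotient. The factors $(-b_{\alpha,\rho_1})^kq^{gk+\binom k2}$ and $b_{\rho_1,\rho_2}^r=(\sigma\tau z)^r$ come from collecting $\tau^{r+k}$ against $(\sigma z)^r$ and using the identities \eqref{eq:large-alpha-parameter-identities}. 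This evaluation is the main obstacle. I expect it may need to be split into two steps, first a $q$-Chu--Vandermonde step in the $r+k$ block and then one in the $g$ block, because the product form factorizes exactly along $r+k$ versus $g$. As a fallback, I would prove the identity by checking that both sides satisfy the same first-order recursion in $g$, obtained from the Pascal rule for the $\tau$-Pochhammers, together with the base case $g=0$.

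Once \eqref{eq:large-alpha-crossing} is established, the normalization \eqref{eq:large-alpha-crossing-normalization} follows by direct substitution into \eqref{eq:hat-W-normalization} using \eqref{eq:large-alpha-parameter-identities}. For the probabilistic claims, nonnegativity is immediate from the signs. I would not sum \eqref{eq:large-alpha-crossing} over $(r,k)$ by hand. Instead, stochasticity follows from the sum-to-one property of the continued weights, passed through the $R\to\infty$ limit by analytic continuation as in Proposition~\ref{prop:continued-K-stochasticity}. To justify the interchange, I would note that the sum over $r,k$ is dominated geometrically by $b_{\rho_1,\rho_2}^r$ and $b_{\alpha,\rho_1}^k$.

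For the input-independent law \eqref{eq:large-alpha-crossing-hole-law}, sum over $k$ at fixed $r$. Writing $(\tau^{-1};q)_{r+k}=(\tau^{-1};q)_r(\tau^{-1}q^r;q)_k$, the $k$-sum is a $q$-binomial sum in $k$ with argument $-b_{\alpha,\rho_1}q^{g}$. Combined with $(\tau z;q)_g/(z;q)_{g+r+k}$, it should telescope via the $q$-Gauss sum into $(b_{\alpha,\rho_1};q)_\infty$-type ratios that cancel the $g$-dependence. This yields $p^{(q)}_{b_{\alpha,\bar\rho_1},b_{\rho_1,\rho_2}}(r)$, and the conservation law \eqref{eq:two-color-doubly-fused-conservation} gives $g_1'=g+h_0'+h_2'$.
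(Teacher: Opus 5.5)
Your reduction to a single terminating $p$-sum, including $(\tau q^{1-p};q)_p=(-\tau)^pq^{-\binom p2}(\tau^{-1};q)_p$, is exactly the paper's, and the target factors you identify are correct. The gap is the evaluation you yourself call the main obstacle: the summation formulas you propose do not apply.

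Put $n=g+r+k$. Up to the factor $(-1)^rq^{\binom r2+nk}$, your sum is $S:=\sum_{p=0}^n\frac{(\tau q^{1-r-k};q)_{n-p}(\tau^{-1};q)_p}{(q;q)_{n-p}(q;q)_p}\frac{(z/q;q)_p}{(z;q)_p}(\tau q^{1-k})^p$. Converting $(\tau q^{1-r-k};q)_{n-p}/(q;q)_{n-p}$ into Pochhammer symbols in $p$ gives
\[
S=\frac{(\tau q^{1-r-k};q)_n}{(q;q)_n}\,{}_3\phi_2\!\left(q^{-n},\,\tau^{-1},\,z/q;\ q^{-g}\tau^{-1},\,z;\ q,\ q^{r+1}\right).
\]
This series is balanced only when $r+k=0$, and its argument is $q^{r+1}$ rather than $q$, so $q$-Pfaff--Saalsch\"utz covers only $r=k=0$. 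It collapses to a ${}_2\phi_1$ only at $g=0$. Even then, the argument $q^{r+1}=q^{n-k+1}$ is a $q$-Chu--Vandermonde argument ($q$ or $q^{n+1}$) only when $r=0$ or $k=0$. So no splitting into Chu--Vandermonde steps is available. Your fallback recursion in $g$, which you do not actually write down, would still need the base case $g=0$, and for $r,k>0$ that base case is already the nontrivial part of the identity.

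The missing idea is to use $(z/q;q)_p/(z;q)_p=(1-z/q)/(1-zq^{p-1})$ and expand geometrically in $z$ for $|z|<q$. Then $S=(1-z/q)\sum_{j\ge0}(z/q)^jM_j$, where, by the $q$-binomial theorem, $M_j$ is the coefficient of $t^n$ in
\[
\frac{(\tau q^{1-r-k}t;q)_\infty\,(q^{1-k+j}t;q)_\infty}{(t;q)_\infty\,(\tau q^{1-k+j}t;q)_\infty}.
\]
For $j<k$ this is a polynomial in $t$ of degree $r+k-1<n$, so $M_j=0$; this is where the factor $(z/q)^k$ comes from. The surviving terms are again evaluated with the $q$-binomial theorem, and rational continuation then reaches $z=a^{-2}$.

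A smaller point concerns your last step. The $k$-sum there is not a plain $q$-binomial sum; it is the confluent $q$-Gauss sum $\sum_k\frac{(u;q)_kq^{\binom k2}(-v/u)^k}{(v;q)_k(q;q)_k}=\frac{(v/u;q)_\infty}{(v;q)_\infty}$ with $u=q^r/\tau$ and $v=zq^{g+r}$. Once you have it, summing the resulting $g$-independent marginal over $r$ gives stochasticity directly. Your analytic-continuation argument for sum-to-one is therefore unnecessary, and the paper never establishes sum-to-one for the doubly fused weights with generic spins anyway.
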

\begin{proof}
Substitute $G=(g,R)$, $H=(0,0)$, $G'=(g+r+k,R-k)$,
$H'=(r,k)$ in \eqref{eq:22-fully-fused-weight}.  Only $P_2=0$
survives, with $p=P_1$ in the notation of that formula.  After $R\to\infty$, the identity
$(\tau q^{1-p};q)_p=(-\tau)^pq^{-\binom p2}(1/\tau;q)_p$
reduces the sum to the following finite identity, with $n=g+r+k$:
\[
\sum_{p=0}^{n}
 \frac{(\tau q^{1-r-k};q)_{n-p}(1/\tau;q)_p}
 {(q;q)_{n-p}(q;q)_p}
 \frac{(z/q;q)_p}{(z;q)_p}(\tau q^{1-k})^p
=(z/q)^k(\tau q^{1-r-k};q)_{r+k}
 \frac{(\tau z;q)_g}{(z;q)_{g+r+k}}.
\]
It follows by expanding $(z/q;q)_p/(z;q)_p$ geometrically for
$|z|<q$ and applying the $q$-binomial theorem, then by rational
continuation.  Restoring the prefactors gives
\begin{equation}
\label{eq:large-alpha-crossing-spectral}
 \mathcal N_{z,\sigma,\tau}
 \frac{(\sigma\tau z)^r(-\tau z)^kq^{gk+\binom k2}
 (1/\tau;q)_{r+k}(\tau z;q)_g}
 {(q;q)_r(q;q)_k(z;q)_{g+r+k}},\qquad
 \mathcal N_{z,\sigma,\tau}
 =\frac{(z;q)_\infty(\sigma\tau z;q)_\infty}
 {(\tau z;q)_\infty(\sigma z;q)_\infty}.
\end{equation}
For its sum over $k$, use the confluent $q$-Gauss identity
\[
 \sum_{k\geq0}\frac{(u;q)_kq^{\binom k2}(-v/u)^k}
 {(v;q)_k(q;q)_k}=\frac{(v/u;q)_\infty}{(v;q)_\infty}
\]
with $u=q^r/\tau$, $v=zq^{g+r}$.  The result is
\[
 \frac{(\sigma\tau z;q)_\infty}{(\sigma z;q)_\infty}
 \frac{(1/\tau;q)_r(\sigma\tau z)^r}{(q;q)_r}
 =p_{b_{\alpha,\bar\rho_1},b_{\rho_1,\rho_2}}^{(q)}(r),
\]
proving the marginal law and the sum-to-one property.
\end{proof}

\begin{proposition}
\label{prop:large-alpha-mixed-crossing-representation}
Under \eqref{eq:large-alpha-specialization}, the two constructions in
Figure~\ref{fig:mixed-engine-limit} have the same local limiting output law.
Equivalently, Figure~\ref{fig:mixed-engine-layers}, with both boundary
outputs equal to the zero hole-indexed state, represents the stationary
two-engine law of Theorem~\ref{thm:main-stationary-construction}.
\end{proposition}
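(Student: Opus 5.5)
The plan is to repeat the proof of Proposition~\ref{prop:22-mixed-crossing-representation}, with the density limit replaced by the exact specialization \eqref{eq:large-alpha-specialization}.

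\emph{Finite-volume identity.} Fix $N$ and take the two partition functions $\widetilde{\mathfrak H}^{(N)}_{\mathrm R}$ and $\widetilde{\mathfrak H}^{(N)}_{\mathrm L}$ of Figure~\ref{fig:mixed-engine}. The identity \eqref{eq:finite-two-engine-crossing-train} comes from repeated use of the continued Yang--Baxter equation, Proposition~\ref{prop:continued-colored-ybe}. It is a rational identity in the parameters, so it holds at any point where all weights are regular. At $x_1=x_2=a^{-1}$ with $a>1$ the relevant quantities are finite and lie in $(0,1)$: $z=a^{-2}$, $\tau z=b_{\alpha,\rho_1}$, $\sigma z=b_{\alpha,\rho_2}$, $\sigma\tau z=b_{\rho_1,\rho_2}$ and $1/\tau$. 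Hence the normalizations $\mathcal N$ and the Pochhammer denominators do not vanish.

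\emph{Stochasticity of the kernels.} The boundary vertices become $\mathbf 1_{\{H=0\}}$ by Lemma~\ref{prop:empty-cap-boundary}, which also guarantees that the local identities persist. The row vertices are the stochastic kernels of Proposition~\ref{prop:density-array-convergence}. The mixed vertex is stochastic by Proposition~\ref{prop:large-alpha-mixed-crossing}. That proposition only covers horizontal input $H=(0,0)$, which is exactly what the boundary delivers when $\alpha>1$. So both diagrams define probability kernels, and their finite-$N$ output laws agree.

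\emph{Passing to $N\to\infty$.} Put the crossing at the right endpoint, with inputs $G=(0,\infty)$ and $H$ equal to whatever the lower reflected row carries at site $N$. This is the step I expect to be the main obstacle. When $\alpha>1$, that return carrier need not be $(0,0)$ at the right end, and Proposition~\ref{prop:large-alpha-mixed-crossing} does not cover nonzero $H$.

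I would handle this in two stages.
\begin{enumerate}
\item By Proposition~\ref{prop:qualitative-localization}, the lower return carrier $h_0$ at site $N$ is a positive recurrent birth--death chain started from zero. Its law is therefore tight in $N$.
\item For the crossing with input $(h,0)$, I would bound the output $g_1'$ by conservation, $g_1'=h_0'+g-\psi\le h_0'+h+g$ with $g=0$. I would then control $h_0'$ by computing its marginal from \eqref{eq:22-fully-fused-weight} at $z=a^{-2}$. I expect, as in \eqref{eq:large-alpha-crossing-hole-law}, a $q$-negative binomial law with parameters not depending on $N$. If that computation is awkward, I would fall back on tightness from the sum-to-one property together with an explicit geometric tail in $r$ read off from the factor $(\sigma\tau z)^r$.
\end{enumerate}

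Either route gives tight laws for the extra entrance $g_1'$ of the upper engine. As in Proposition~\ref{prop:22-mixed-crossing-representation}, I would then invoke the erasing-block coupling after Lemma~\ref{lem:scalar-clearing}. This applies because the upper engine's recorded inputs have a stationary tail of mean $\rho_1<\rho_2$. It removes any entrance of at most $B$ arrows before a fixed window, uniformly in colors. Letting $N\to\infty$ and then $B\to\infty$ identifies the left diagram's limit, which is $\mu^{\alpha,\rho_1,\rho_2}$ by Theorem~\ref{thm:main-stationary-construction}, with the limit of the right diagram.

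\emph{Identification with Figure~\ref{fig:mixed-engine-layers}.} In the right diagram the mixed vertex receives $H=(0,0)$ from the deterministic lower boundary, so Proposition~\ref{prop:large-alpha-mixed-crossing} applies. The two unreflected rows are Martin's two-level queue, which has a local limit by Proposition~\ref{prop:martin-yang-baxter-steady-state}. This gives the equivalent description of Figure~\ref{fig:mixed-engine-layers} with both boundary outputs zero.
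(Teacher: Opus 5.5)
Your overall architecture (finite-$N$ Yang--Baxter equality, tightness of the new right entrance, erasing-block coupling) is the paper's. However, the step you flag as the main obstacle is not resolved by what you propose, and it is where the real content of the proof lies.

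In $\widetilde{\mathfrak H}^{(N)}_{\mathrm R}$ the crossing receives $G=(0,\infty)$ and $H=(h,0)$, where $h$ is the lower return carrier at site $N$ and is generally nonzero. Proposition~\ref{prop:large-alpha-mixed-crossing} covers only $H=(0,0)$, so your sentence ``both diagrams define probability kernels'' justifies only $\widetilde{\mathfrak H}^{(N)}_{\mathrm L}$. Since \eqref{eq:22-fully-fused-weight} is an alternating sum, nonnegativity for $h>0$ is not automatic. Sum-to-one alone does not make $g_1'$ a random variable to which the erasing-block coupling applies.

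More seriously, your tightness bound $g_1'=h_0'+g-\psi\le h_0'+h$ imports the density-limit output map \eqref{eq:22-infinite-stack-output-map}, which is false at $z=a^{-2}$. Conservation \eqref{eq:two-color-doubly-fused-conservation} gives $g_1'=g+h_0'+h_2'-h$, and $h_2'$ is not bounded by $h$. Already for $h=0$, \eqref{eq:large-alpha-crossing} gives positive mass to every $h_2'=k\ge0$, since $-b_{\alpha,\rho_1}>0$ when $\alpha>1$. So your fallback, a geometric tail in $r=h_0'$, controls only part of the new entrance $g_1'=r+k$.

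The missing ingredient is an explicit stochastic description of the right-end crossing for general $h$. The paper gets it from a terminating Heine transformation of \eqref{eq:22-fully-fused-weight}:
\begin{itemize}
\item sample $\psi_0$ with $\mathbb P(\psi_0=j)\propto\begin{bmatrix}h\\j\end{bmatrix}_{\!q}(-\sigma z)^{h-j}q^{\binom{h-j}{2}}$, normalized by $(\sigma z;q)_h$ via the $q$-binomial theorem, with all terms positive since $\sigma z<0$;
\item independently sample $(\widetilde h_0,\widetilde h_2)$ from \eqref{eq:large-alpha-crossing-spectral} with $g=0$ and $(z,\sigma,\tau)$ replaced by $(zq^h,\sigma,\tau q^{-h})$.
\end{itemize}
Then $H'=(h-\psi_0+\widetilde h_0,\psi_0+\widetilde h_2)$ and $G'=(\widetilde h_0+\widetilde h_2,\infty)$. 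Both factors are probability laws, which gives stochasticity. Also $g_1'=\widetilde h_0+\widetilde h_2$ is tight once $h$ is tight, and your positive-recurrence step supplies that. After this, your erasing-block argument goes through.

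Two smaller points:
\begin{itemize}
\item $\tau z$, $\sigma z$ and $1/\tau$ are negative in this regime, not in $(0,1)$. The Pochhammer denominators still do not vanish.
\item At $x=a^{-1}>0$ the unreflected rows are not Martin's queue, so Proposition~\ref{prop:martin-yang-baxter-steady-state} cannot be invoked for them. You do not need it: the local limit of the right-hand construction is inherited from the finite-$N$ equality with the left one.
\end{itemize}
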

\begin{proof}
Use the same finite-volume Yang--Baxter argument as in the proof of
Proposition~\ref{prop:22-mixed-crossing-representation}, now with the exact
specialization \eqref{eq:large-alpha-specialization}.  The engine and
boundary kernels are stochastic by
Proposition~\ref{prop:density-array-convergence}.  It remains only to check
the crossing at the right endpoint and the effect of its outgoing carrier.

For input $(0,\infty),(h,0)$, sample
\[
 \mathbb P(\psi_0=j)=\begin{bmatrix}h\\j\end{bmatrix}_{\!q}
 \frac{(-\sigma z)^{h-j}q^{\binom{h-j}{2}}}{(\sigma z;q)_h},
 \qquad 0\leq j\leq h.
\]
Independently of $\psi_0$, sample the output coordinates
$(\widetilde h_0,\widetilde h_2)$ from
\eqref{eq:large-alpha-crossing-spectral} with $g=0$ and
$(z,\sigma,\tau)$ replaced by $(zq^h,\sigma,\tau q^{-h})$.
The full output vectors are
\[
 H'=(h-\psi_0+\widetilde h_0,\psi_0+\widetilde h_2),
 \qquad G'=(\widetilde h_0+\widetilde h_2,\infty).
\]
This identity follows from the terminating Heine transformation
of \eqref{eq:22-fully-fused-weight}.  Both factors are probability
distributions, so the right-end crossing is stochastic.  The
positive-recurrence argument in the proof of
Proposition~\ref{prop:qualitative-localization} gives tightness of the
incoming family $h$, and the displayed factorization then gives tightness
of its finite outgoing coordinates.  Given \(\varepsilon>0\), restrict
these coordinates to a fixed finite set with probability at least
\(1-\varepsilon\).  On this event, the erasing-block coupling following
Lemma~\ref{lem:scalar-clearing} removes the resulting right entrance
before any fixed output window.  Letting the cutoff tend to infinity and
then \(\varepsilon\downarrow0\) completes the same last step as in the
proof of Proposition~\ref{prop:22-mixed-crossing-representation}.
\end{proof}

Unlike in Subsection~\ref{subsec:high-high}, the unreflected rows under
the exact specialization \eqref{eq:large-alpha-specialization} are not in
the Martin specialization of
Proposition~\ref{prop:martin-yang-baxter-steady-state}: here
$x=a^{-1}>0$, rather than $x\to0$.  Consequently, their internal queue
lengths have $\alpha$-dependent stationary laws, and the identification
of their two-site output with the corresponding marginal of
$\pi^{\rho_1,\rho_2}$ does not follow directly from that proposition.
The only new input is the stationary queue law; the two-site
conditioning is the same calculation as in
Proposition~\ref{prop:conditional-two-site-queue}.

\begin{lemma}
\label{lem:large-alpha-conditioned-queue-laws}
Start the upper unreflected row at site $3$ with
$g_3\sim q\text{-}\mathrm{NB}
(b_{\alpha,\bar\rho_2},b_{\bar\rho_1,\rho_2})$, independently of the
incoming colors and vertex randomness at sites $2$ and $1$.  Process
site $2$ and then site $1$, and denote the output colors by $d(2),d(1)$
and the resulting queue length by $g_1$.  Then $g_1$ has the stationary
law
\begin{equation}
\label{eq:large-alpha-normal-queue-law}
 g_1\sim q\text{-}\mathrm{NB}
 (b_{\alpha,\bar\rho_2},b_{\bar\rho_1,\rho_2}).
\end{equation}
Moreover, $(d(1),d(2))$ has the same law as the two-site marginal of
$\pi^{\rho_1,\rho_2}$ in
Theorem~\ref{thm:full-space-two-site-law}.  If $g_1^{ij}$ denotes the
conditional law
\[
 g_1^{ij}\overset d=
 \bigl(g_1\mid(d(1),d(2))=(i,j)\bigr),
\]
then
\begin{equation}
\label{eq:large-alpha-conditioned-queues}
 g_1^{11}\sim q\text{-}\mathrm{NB}
 (b_{\alpha,\bar\rho_2},b_{\bar\rho_1,\rho_2}),\qquad
 g_1^{12},g_1^{21}\sim q\text{-}\mathrm{NB}
 (q^{-1}b_{\alpha,\bar\rho_2},qb_{\bar\rho_1,\rho_2}).
\end{equation}
\end{lemma}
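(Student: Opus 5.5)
We will mimic the proof of Proposition~\ref{prop:conditional-two-site-queue}, replacing the $q$-Poisson queue law by the $q$-negative binomial law and the Martin vertex rule by the usual vertex $\mathbb W^{(2)}$ at $x=a^{-1}$. Throughout, write $u:=b_{\alpha,\bar\rho_2}$, $v:=b_{\bar\rho_1,\rho_2}$ and $P_m:=p^{(q)}_{u,v}(m)$. By \eqref{eq:large-alpha-parameter-identities}, $u=\sigma^{-1}$ and $v=\sigma/\tau$.

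The input colors at sites $2,1$ are the outputs of the lower unreflected row (engine $1$ with $x=a^{-1}$). By Lemma~\ref{lem:saturated-color-invisibility} and the graphical rule of Proposition~\ref{def:engine-graphical-realization}, that row emits color $1$ exactly when $S=1$. Its outputs are therefore i.i.d.\ $\operatorname{Ber}(\rho_1)$, independent of everything else. The upper row then acts on a queue $g$ of first-class arrows with the following one-step kernel:
\begin{itemize}
\item if the input is $1$, output $1$ with probability $\rho_2(1-x)$ and keep $g$; otherwise output $0$ and set $g\mapsto g+1$;
\item if the input is $0$, output $1$ with probability $\rho_2(1-q^g)$ and set $g\mapsto g-1$; output $2$ with probability $\rho_2 q^g$; otherwise output $0$.
\end{itemize}
Here $1-\rho_2+\rho_2 x$ is the probability of the $1\to0$ branch.

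\textbf{Stationarity of the queue law.} First we verify that $q$-$\mathrm{NB}(u,v)$ is invariant for one step of this kernel averaged over the input. Write the one-step balance
\[
P'_m=\rho_1[\rho_2(1-x)P_m+(1-\rho_2+\rho_2x)P_{m-1}]+\bar\rho_1[\rho_2q^mP_m+\bar\rho_2P_m+\rho_2(1-q^{m+1})P_{m+1}].
\]
To reduce this to $P'_m=P_m$, we need $q$-NB analogues of the identities used in Proposition~\ref{prop:conditional-two-site-queue}:
\[
(1-q^{m+1})P_{m+1}=v(1-uq^m)P_m,\qquad q^mP_m=\tfrac{1-v}{1-uv}\,p^{(q)}_{u,qv}(m),
\]
together with the ratio $P_{m-1}/P_m$. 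After dividing by $P_m$, the balance becomes a linear identity in $q^m$. Its two coefficients vanish by the parameter relations $x=a^{-1}$, $\sigma=-a\bar\rho_2/\rho_2$ and $\tau=-a\bar\rho_1/\rho_1$. This gives \eqref{eq:large-alpha-normal-queue-law}: starting from the law at site $3$, the law is preserved after site $2$ and again after site $1$.

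\textbf{Joint two-site law.} Next we track $\mathbb P(g_2=m,d(2)=j)$ for each $j$, in the same three-case form as in Proposition~\ref{prop:conditional-two-site-queue}. The target simplifications are
\[
\mathbb P(g_2=m,d(2)=1)=\rho_1P_m,\qquad \mathbb P(g_2=m,d(2)=2)=(\rho_2-\rho_1)\,p^{(q)}_{u/q,qv}(m),
\]
and summing gives $\mathbb P(d(2)=0)=1-\rho_2$. Both simplifications again reduce to linear identities in $q^m$. For $d(2)=2$, the relevant identity is $q^mp^{(q)}_{u,v}(m)\propto p^{(q)}_{u/q,qv}(m)$, which holds because $(u;q)_m q^m$ is proportional to $(u/q;q)_m$ times a factor linear in $q^m$; we check that this factor is absorbed by the normalizing constants.

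We then process site $1$ on each of these three conditional laws. This gives $\mathbb P(g_1=m,d(1)=i,d(2)=j)$ in the cases $ij=11,12,21$. Summing over $m$ reproduces the entries of Theorem~\ref{thm:full-space-two-site-law}: the sums telescope via the $q$-binomial theorem, and $\theta$ arises from $\mathbb E[q^{g}]$ under the shifted law. For the entries with $d(1)=0$ we use complements and the one-species projections, where Proposition~\ref{prop:one-species-left-edge} with $\alpha$-independent full-space marginals fixes the remaining degrees of freedom. Dividing by the marginals yields \eqref{eq:large-alpha-conditioned-queues}.

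\textbf{Main obstacle.} The hard step is the algebra of the parameter relations: showing that the $x$-dependent terms cancel exactly. In particular, the two-site law must come out independent of $\alpha$ even though the queue law depends on $\alpha$. If the direct linear-in-$q^m$ check is messy, the fallback is to compare probability generating functions. Using \eqref{eq:22-q-negative-binomial-pgf}, the stationarity equation reduces to a functional equation relating $\mathbb E[z^{g}]$ at $z$ and $qz$, which we would verify from the product formula.
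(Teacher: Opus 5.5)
Your overall strategy (site-by-site balance driven by $(1-q^{m+1})P_{m+1}=v(1-uq^m)P_m$) is the paper's. However, the kernel you feed into it is wrong for input $1$, and with that kernel every claimed identity fails.

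At $x=a^{-1}>0$, the vertex $\mathbb W^{(2)}$ of Figure~\ref{fig:intro-normal-weights} (equivalently, the $F=1$ branch in Proposition~\ref{def:engine-graphical-realization}) sends input $1$ with carrier $(g,\infty)$ to:
\begin{itemize}
\item output $1$, carrier $g$, with probability $\rho_2(1-xq^{g})$;
\item output $2$, carrier $g+1$, with probability $\rho_2xq^{g}$;
\item output $0$, carrier $g+1$, with probability $\bar\rho_2$.
\end{itemize}
You replaced the first probability by $\rho_2(1-x)$, moved the remaining mass $\rho_2x$ into the $1\to0$ branch, and lost the $1\to2$ transition entirely.

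This breaks each step of your argument:
\begin{itemize}
\item After dividing by $P_m$, your $P_{m-1}$-term is $\rho_1(\bar\rho_2+\rho_2x)(1-q^m)/\bigl(v(1-uq^{m-1})\bigr)$, which is not linear in $q^m$.
\item As $m\to\infty$, your $P'_m/P_m$ tends to $1+x\rho_2(\rho_2-\rho_1)/\bar\rho_2\neq1$, so $q\text{-}\mathrm{NB}(u,v)$ is not invariant for your kernel.
\item Your kernel gives $\mathbb P(d(2)=0)=\bar\rho_2+\rho_1\rho_2x$ instead of $\bar\rho_2$, and $\mathbb P(g_2=m,d(2)=1)=\rho_1P_m-\rho_1\rho_2x(1-q^m)P_m$ instead of $\rho_1P_m$.
\item It also gives $\mathbb P(g_2=m,d(2)=2)=\bar\rho_1\rho_2q^mP_m$, which is not a multiple of $p^{(q)}_{u/q,qv}(m)$. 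The proportionality $q^mp^{(q)}_{u,v}(m)\propto p^{(q)}_{u/q,qv}(m)$ you invoke to rescue this is false. The ratio is $(1-v)(1-uq^{m-1})/(1-u/q)$, and this $m$-dependent factor cannot be absorbed into normalizing constants.
\end{itemize}

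With the correct weights your plan goes through and coincides with the paper's proof. The key point is the coefficient of $P_{m-1}$, which is now $\bar\rho_2+\rho_2xq^{m-1}$. Since $u=-x\rho_2/\bar\rho_2$, this equals $\bar\rho_2(1-uq^{m-1})$ and cancels the denominator of $P_{m-1}/P_m=(1-q^m)/\bigl(v(1-uq^{m-1})\bigr)$; this is the cancellation of $x$-terms the paper refers to.

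That cancellation does three things:
\begin{itemize}
\item it makes the balance hold identically;
\item it gives $\mathbb P(g_2=m,d(2)=1)=\rho_1P_m$;
\item it combines the two contributions $\rho_1\rho_2xq^{m-1}P_{m-1}+\bar\rho_1\rho_2q^mP_m$ to output $2$ into $(\rho_2-\rho_1)p^{(q)}_{u/q,qv}(m)$.
\end{itemize}
At site $1$ the same computation gives $\rho_1^2P_m$, $\rho_1(\rho_2-\rho_1)\theta\,p^{(q)}_{u/q,qv}(m)$ and $\rho_1(\rho_2-\rho_1)p^{(q)}_{u/q,qv}(m)$ for the words $11,12,21$. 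Here $\theta$ appears pointwise, as $\rho_2+q\bar\rho_2$, not through an expectation.

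For the remaining entries, Proposition~\ref{prop:one-species-left-edge} is the wrong tool, since it concerns the half-line measure. Use instead that output $0$ occurs exactly when $S=0$, so the hole projection is i.i.d.\ with parameter $\bar\rho_2$. Together with the three computed entries and $\mathbb P(d(1)=1)=\mathbb P(d(2)=1)=\rho_1$, this determines the whole table.
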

\begin{proof}
Detailed balance for the usual vertex in
Figure~\ref{fig:intro-normal-weights} gives
\eqref{eq:large-alpha-normal-queue-law}.  For the conditional laws,
repeat the proof of Proposition~\ref{prop:conditional-two-site-queue}
with
\[
 u=b_{\alpha,\bar\rho_2},\qquad
 v=b_{\bar\rho_1,\rho_2},\qquad x=a^{-1}.
\]
The defining mass formula \eqref{eq:q-negative-binomial-law} gives
\[
 (1-q^{m+1})p_{u,v}^{(q)}(m+1)
 =v(1-uq^m)p_{u,v}^{(q)}(m),
 \qquad
 v=\frac{\rho_1\bar\rho_2}{\bar\rho_1\rho_2},quad
 uv=-\frac{\rho_1x}{\bar\rho_1}.
\]
Thus the additional $x$-terms cancel in the same site-by-site case
split.  The output probabilities are those in
\eqref{eq:22-full-space-two-site-table}; conditioning on $11$ leaves
$(u,v)$ unchanged, while conditioning on $12$ or $21$ replaces it by
$(q^{-1}u,qv)$.  This proves
\eqref{eq:large-alpha-conditioned-queues}.
\end{proof}

\begin{lemma}
\label{lem:large-alpha-crossing-moments}
Conditional on the output word $ab$, feed
$G=(g_1^{ab},\infty)$ and $H=(0,0)$ through the crossing and write its
hole-indexed output as $H'=(h_0^{\mathrm{low}},h_2^{ab})$.
For $m\geq0$, the required moments are
\begin{align}
 \mathbb E[q^{m h_0^{\mathrm{low}}}]
 &=\frac{(b_{\rho_1,\rho_2};q)_m}{(b_{\alpha,\rho_2};q)_m},
 \qquad
 \mathbb E[q^{m(h_0^{\mathrm{low}}+h_2^{11})}]
 =\frac{(b_{\rho_1,\rho_2};q)_m}{(b_{\alpha,\rho_1};q)_m},
 \label{eq:large-alpha-hole-moments}\\
 \mathbb E[q^{2h_0^{\mathrm{low}}+h_2^{11}}]
 &=\frac{(b_{\rho_1,\rho_2};q)_2
 [1-qb_{\alpha,\rho_1}-(1-q)b_{\alpha,\rho_2}]}
 {(1-b_{\alpha,\rho_1})(1-b_{\alpha,\rho_2})
 (1-qb_{\alpha,\rho_1})},
 \label{eq:large-alpha-11-mixed-moment}\\
 \mathbb E[q^{2h_0^{\mathrm{low}}+h_2^{12}}]
 =\mathbb E[q^{2h_0^{\mathrm{low}}+h_2^{21}}]
 &=\frac{(b_{\rho_1,\rho_2};q)_2}
 {(1-b_{\alpha,\rho_1})(1-b_{\alpha,\rho_2})}.
 \label{eq:large-alpha-12-mixed-moment}
\end{align}
\end{lemma}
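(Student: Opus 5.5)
The plan is to compute all moments from the explicit crossing weight \eqref{eq:large-alpha-crossing}, averaged over the conditioned queue laws of Lemma~\ref{lem:large-alpha-conditioned-queue-laws}. For fixed input $g$, Proposition~\ref{prop:large-alpha-mixed-crossing} gives the joint law of $(r,k)=(h_0',h_2')$. Two of its factors control everything. The $h_0$-marginal is the input-independent $q\text{-}\mathrm{NB}(b_{\alpha,\bar\rho_1},b_{\rho_1,\rho_2})$. The $g$-dependence enters only through $q^{gk}(b_{\alpha,\rho_1};q)_g/(a^{-2};q)_{g+r+k}$.

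The first identity in \eqref{eq:large-alpha-hole-moments} should then be immediate from \eqref{eq:22-q-negative-binomial-moments}. That formula gives $(b_{\rho_1,\rho_2};q)_m/(b_{\alpha,\bar\rho_1}b_{\rho_1,\rho_2};q)_m$, and the cocycle \eqref{eq:22-cross-parameter-cocycle} in the form $b_{\alpha,\bar\rho_1}b_{\rho_1,\rho_2}=b_{\alpha,\rho_2}$ turns this into the claimed expression.

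For the moments involving $h_2$, I will average the weight over $g\sim q\text{-}\mathrm{NB}(u,v)$, where:
\begin{itemize}
\item for the word $11$, $u=b_{\alpha,\bar\rho_2}$ and $v=b_{\bar\rho_1,\rho_2}$;
\item for the words $12$ and $21$, $u=q^{-1}b_{\alpha,\bar\rho_2}$ and $v=qb_{\bar\rho_1,\rho_2}$.
\end{itemize}
The joint generating function $\mathbb E[X^{h_0}Y^{h_2}]$ is a triple sum over $g,r,k$.

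\begin{enumerate}
\item Sum over $g$ first, at fixed $n=r+k$. The ratio $(b_{\alpha,\rho_1};q)_g(u;q)_g/((q;q)_g(a^{-2}q^{n};q)_g)\cdot(vq^k)^g$ is a ${}_2\phi_1$. Since $u\cdot b_{\alpha,\rho_1}$ and $a^{-2}$ are related through \eqref{eq:large-alpha-parameter-identities}, I expect the $q$-Gauss sum (or Heine) to evaluate it in closed product form.
\item Sum over $k$ using the confluent $q$-Gauss identity, exactly as in the proof of Proposition~\ref{prop:large-alpha-mixed-crossing}.
\item For \eqref{eq:large-alpha-hole-moments} with $X=Y=q^m$, the law of $h_0+h_2^{11}$ should collapse to a $q$-negative binomial with parameters $(b_{\alpha,\bar\rho_1}',b_{\rho_1,\rho_2})$ whose product is $b_{\alpha,\rho_1}$. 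Then \eqref{eq:22-q-negative-binomial-moments} gives the second formula.
\item For the mixed moments \eqref{eq:large-alpha-11-mixed-moment}--\eqref{eq:large-alpha-12-mixed-moment}, take $X=q^2$ and $Y=q$. Use the total $h_0+h_2$ from step 3 together with one extra factor $q^{h_0}$. I will write $\mathbb E[q^{2h_0+h_2}]=\mathbb E[q^{h_0+h_2}\,q^{h_0}]$ and apply the shift identity for $q\text{-}\mathrm{NB}$: multiplying the mass by $q^{h_0}$ rescales $v\mapsto qv$ up to the constant $(1-v)/(1-uv)$. This reduces each mixed moment to a two-term finite expression.
\item In the $12/21$ case, the parameter shift $(q^{-1}u,qv)$ should cancel the $(1-qb_{\alpha,\rho_1})$ factor, giving the simpler product \eqref{eq:large-alpha-12-mixed-moment}.
\end{enumerate}

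The main obstacle is step 1, namely the $g$-sum together with its interaction with the $(a^{-2};q)_{g+r+k}$ denominator. If the direct ${}_2\phi_1$ evaluation does not close, I will fall back on a different route. Proposition~\ref{prop:large-alpha-mixed-crossing-representation} already shows stochasticity, and the mixed crossing commutes with the unreflected rows. So the averaged output law for a stationary $g$ should be computable via Yang--Baxter as the law obtained with the crossing moved to the right endpoint, where $g=0$. This would yield the $11$ moments, since conditioning on $11$ leaves $(u,v)$ unchanged. The $12/21$ case would then follow by rerunning the same computation with $(q^{-1}u,qv)$. Finally, the resulting rational expressions are matched to the stated forms using \eqref{eq:large-alpha-parameter-identities}; I expect this last step to be routine algebra.
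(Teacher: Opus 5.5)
Your first identity in \eqref{eq:large-alpha-hole-moments} is correct and matches the paper: the input-independent $q\text{-}\mathrm{NB}$ marginal plus the cocycle. Your overall framework, averaging \eqref{eq:large-alpha-crossing} over the conditioned queue laws, is also the paper's. The gap is in the core summation that everything else rests on.

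By \eqref{eq:large-alpha-parameter-identities}, the $11$-conditioned law has $u=b_{\alpha,\bar\rho_2}=\sigma^{-1}$, $v=b_{\bar\rho_1,\rho_2}=\sigma/\tau$, and $b_{\alpha,\rho_1}=\tau z$. So your step~1 sum at fixed $(r,k)$ is
\[
\sum_{g\ge0}\frac{(\sigma^{-1};q)_g(\tau z;q)_g}{(q;q)_g(zq^{r+k};q)_g}\Bigl(\frac{\sigma}{\tau}q^{k}\Bigr)^{g}.
\]
The $q$-Gauss point for these parameters is $zq^{r+k}/(\sigma^{-1}\tau z)=(\sigma/\tau)q^{r+k}$. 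Your argument is off from it by a factor $q^{-r}$, so $q$-Gauss applies only when $r=0$, and step~1 gives no closed product form. (Heine's transformation with $abx/c=q^{-r}$ degenerates into $0\cdot\infty$.)

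Step~2 then sums over $k$ at fixed $r$. That is the grouping which yields the $h_0$-marginal in Proposition~\ref{prop:large-alpha-mixed-crossing}, not the law of $h_0+h_2$. So step~3 is asserted rather than derived.

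The missing idea is to group by $\ell=r+k=h_0'+h_2'$ and do the finite $q$-binomial sum over $k$ \emph{before} averaging over $g$:
\[
\sum_{k=0}^{\ell}\frac{(\sigma\tau z)^{\ell-k}(-\tau z)^kq^{gk+\binom k2}}{(q;q)_{\ell-k}(q;q)_k}=\frac{(\sigma\tau z)^{\ell}}{(q;q)_\ell}\,(q^g/\sigma;q)_\ell .
\]
Then $(\sigma^{-1};q)_g(q^g/\sigma;q)_\ell=(\sigma^{-1};q)_\ell(q^\ell/\sigma;q)_g$. The $g$-sum becomes exactly balanced, with argument $\sigma/\tau$, and $q$-Gauss evaluates it as $(q^\ell/\tau;q)_\infty(\sigma z;q)_\infty/\bigl((zq^\ell;q)_\infty(\sigma/\tau;q)_\infty\bigr)$. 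This yields $h_0'+h_2'\sim q\text{-}\mathrm{NB}(b_{\alpha,\bar\rho_2},b_{\rho_1,\rho_2})$.

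Steps~4 and~5 inherit the problem. After averaging over $g$, $h_0'$ and $h_2'$ are dependent. So the marginal rescaling $v\mapsto qv$ of the $h_0$-law does not determine $\mathbb E[q^{h_0'+h_2'}q^{h_0'}]$. What you need is $\mathbb E[q^{h_0'}\mid h_0'+h_2'=\ell]$. You get it by inserting $q^{r}=q^{\ell-k}$ into the $k$-sum above, which turns $(q^g/\sigma;q)_\ell$ into $(q^{g-1}/\sigma;q)_\ell$; this splits into two balanced $q$-Gauss sums.

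For the words $12$ and $21$, the input law carries $(q^{-1}\sigma^{-1};q)_g$ instead. The cancellation above then leaves a factor $(1-q^{g-1}/\sigma)^{-1}$, and the $g$-sum is no longer of $q$-Gauss type. The paper handles this with a separate recursion comparing inputs $g_1$ and $g_1+1$. Your expectation that the parameter shift ``cancels $(1-qb_{\alpha,\rho_1})$'' has no mechanism behind it.

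Finally, the Yang--Baxter fallback cannot work. Yang--Baxter equates partition functions with fixed exterior data, but the output of the left crossing and the word $ab$ are internal edges. Moreover, at the right endpoint the crossing's horizontal input is the lower return carrier $(h,0)$ after $N$ sites, not $(0,0)$, and its hole output is summed. No conditional moment of $(h_0^{\mathrm{low}},h_2^{ab})$ can be read off from that picture.
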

\begin{proof}
The first identity in \eqref{eq:large-alpha-hole-moments} follows from
the input-independent marginal in
Proposition~\ref{prop:large-alpha-mixed-crossing} and
Lemma~\ref{lem:22-q-negative-binomial-moments}.

For input $(g_1,\infty)$, use
$H'=(h_0',h_2')$ for the generic crossing output, and fix a value
$\ell\in\mathbb Z_{\geq0}$ of $h_0'+h_2'$.
The finite $q$-binomial theorem sums
\eqref{eq:large-alpha-crossing-spectral} over $k$, and averaging
over $g_1$ then uses
\[
 \sum_{g\geq0}\frac{(\tau z;q)_g(q^\ell/\sigma;q)_g}
 {(q;q)_g(zq^\ell;q)_g}(\sigma/\tau)^g
 =\frac{(q^\ell/\tau;q)_\infty(\sigma z;q)_\infty}
 {(zq^\ell;q)_\infty(\sigma/\tau;q)_\infty}.
\]
This gives $h_0'+h_2'\sim
q\text{-}\mathrm{NB}(b_{\alpha,\bar\rho_2},b_{\rho_1,\rho_2})$,
proving the second identity in \eqref{eq:large-alpha-hole-moments}.  Inserting $q^{h_0'}$
in the same sums gives
\[
 \mathbb E[q^{h_0'}\mid h_0'+h_2'=\ell]
 =q^\ell\left[1+\frac{1-\sigma/\tau}{1-\sigma z}
 \frac{q^\ell-1}{\sigma q(1-q^{\ell-1}/\sigma)}\right].
\]
Multiplication by $q^\ell$ and summation with
$p_\ell=p_{1/\sigma,\sigma\tau z}^{(q)}(\ell)$ uses
$p_\ell(q^\ell-1)/(1-q^{\ell-1}/\sigma)
=-\sigma\tau z\,p_{\ell-1}$, proving
\eqref{eq:large-alpha-11-mixed-moment}.
Finally, write $\mathbb E_{g_1}$ and $\mathbb E_{g_1+1}$ for expectations
with bottom input $(g_1,\infty)$ and $(g_1+1,\infty)$, respectively, and
horizontal input $(0,0)$.  Shifting the bottom input gives
\[
 \sigma zq\,\mathbb E_{g_1+1}[q^{2h_0'+h_2'}]
 =\mathbb E_{g_1}[q^{2h_0'+h_2'}]
 -(1-qb_{\rho_1,\rho_2})\mathbb E_{g_1}[q^{h_0'+h_2'}].
\]
This follows by shifting adjacent indices in
\eqref{eq:large-alpha-crossing-spectral}.  Since $g_1^{12}$ and
$g_1^{21}$ have the second law in
\eqref{eq:large-alpha-conditioned-queues}, the defining mass formula
\eqref{eq:q-negative-binomial-law} shows that the corresponding
difference of the $g_1$ and $g_1+1$ expectations gives
\eqref{eq:large-alpha-12-mixed-moment}.
\end{proof}

These crossing moments allow us to compute \(p_{22}\) and \(p_{20}\)
for the present specialization.

\begin{proposition}
\label{prop:large-alpha-two-colored-probabilities}
For \(\alpha>1\) and \(0<q<1\), the stationary law
$\mu^{\alpha,\rho_1,\rho_2}$ of
Theorem~\ref{thm:main-stationary-construction} satisfies the two identities in
\eqref{eq:two-colored-targets}.  By the color-projection reduction,
its two-site marginal is the one stated in
Theorem~\ref{thm:two-site-marginal}.
\end{proposition}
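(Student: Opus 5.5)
We follow the structure of the high--high computation in Subsection~\ref{subsec:high-high}, now using the representation of Proposition~\ref{prop:large-alpha-mixed-crossing-representation}. In Figure~\ref{fig:mixed-engine-layers} both boundary outputs are zero, so the upper reflected row starts from \((h_0^{\mathrm{up}},0)=(0,0)\). The lower reflected row starts from the crossing output \((h_0^{\mathrm{low}},h_2^{ab})\), with bottom input \(g_1^{ab}\) from Lemma~\ref{lem:large-alpha-conditioned-queue-laws}. The return vertices at \(x=a^{-1}\neq0\) are not the table \eqref{eq:22-local-transition-table}, so the first step is to write the two-color return-vertex probabilities from Figure~\ref{fig:intro-return-weights} with general \(x\). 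The relevant entries are: \(2\to2\) with weight \(\rho q^{h_0}(1-xq^{h_2})\); \(1\to2\) with \(\rho q^{h_0}(1-q^{h_2})\); \(k\to 0\) for \(k\in\{1,2\}\) with \(\rho(1-q^{h_0})\); and \(0\to 2\) with \(\rho x q^{h_0}(1-q^{h_2})\).

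For \(p_{22}\), the upper row has \(h_2=0\), so \(\mu_1\mu_2=22\) forces \(\nu_1\nu_2=22\). This gives \(p_{22}=\rho_2^2(1-x)^2\nu_{22}\), since the \(2\to2\) weight at \(H=0\) is \(\rho_2(1-x)\). For \(\nu_{22}\) we decompose over the \(\pi\)-words \(22,12,21,11\) and also \(02,20,00,\dots\), because \(0\to2\) now has positive weight \(x\rho q^{h_0}(1-q^{h_2})\). Each term is a mixed moment of \((h_0^{\mathrm{low}},h_2)\) of the form \(\mathbb E[q^{2h_0}\cdot\text{polynomial in }q^{h_2}]\). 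The terms coming from word \(11\) and from words \(12,21\) are exactly those supplied by \eqref{eq:large-alpha-hole-moments}--\eqref{eq:large-alpha-12-mixed-moment}. For the remaining words we need \(\mathbb E[q^{2h_0^{\mathrm{low}}+jh_2^{ab}}]\) for \(ab\in\{00,02,20,10,22\}\). To get these, we extend Lemma~\ref{lem:large-alpha-conditioned-queue-laws} to obtain conditional queue laws for these words, repeating the site-by-site split of Proposition~\ref{prop:conditional-two-site-queue}. We then apply the shift identity from the proof of Lemma~\ref{lem:large-alpha-crossing-moments} together with Proposition~\ref{prop:large-alpha-mixed-crossing}.

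For \(p_{20}\), we condition the same way: \(\mu_1=2\) with \(h^{\mathrm{up}}=0\) requires \(\nu_1=2\), and then \(\mu_2=0\) requires \(\nu_2=0\), since \(h_0^{\mathrm{up}}\) stays \(0\) and every transition to \(0\) has weight \(\rho(1-q^{0})=0\). So \(p_{20}=\rho_2(1-x)\,\mathbb P(\nu_1=2,\nu_2=0)\cdot\rho_2\). This is simpler than \eqref{eq:22-upper-p20-transition}, and the row-sum trick \eqref{eq:22-nu-second-row-sum} is not needed. We then compute \(\nu_{20}\) by splitting over \(\pi_1\in\{0,1,2\}\) and all \(\pi_2\). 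The second-site transition to \(0\) costs \(\rho_1(1-q^{h_0'})\), where \(h_0'\) is \(h_0^{\mathrm{low}}\) shifted by \(\widehat E_{\pi_1}-\widehat E_{2}\); this reduces everything to the moments \(\mathbb E[q^{h_0}]\), \(\mathbb E[q^{2h_0}]\) and \(\mathbb E[q^{h_0+h_2}]\)-type quantities. Finally we substitute \eqref{eq:22-full-space-two-site-table} and the identities \eqref{eq:large-alpha-parameter-identities}, including \(x=a^{-1}\) and \(1-x=1/\alpha\). The factors \((1-x)^2=\alpha^{-2}\) should produce the prefactor \(1/\alpha^2\) in \eqref{eq:two-colored-targets}, with the remaining rational functions of \(b\)'s simplifying to \((J_1-J_2)(J_1\theta^2-qJ_2)\) and \((J_1-J_2)(J_1\theta(1-\theta)+qJ_2)\). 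The reduction to two probabilities at the start of the section then yields the full table.

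The main obstacle is the bookkeeping of the extra \(x\)-dependent channels, namely \(0\to2\) and the \((1-xq^{h_2})\) factors. These force moments conditioned on words not covered by Lemma~\ref{lem:large-alpha-crossing-moments}, and the subsequent rational simplification is heavy. My first attempt would be to avoid new words by first proving a joint generating function \(\mathbb E[u^{h_0^{\mathrm{low}}}w^{h_2}\mid ab]\) for all nine words at once. I would do this by averaging \eqref{eq:large-alpha-crossing-spectral} over the joint law of \((g_1,d(1),d(2))\), which is computed as in Proposition~\ref{prop:conditional-two-site-queue}. If the simplification resists, I would check it symbolically for generic parameters.
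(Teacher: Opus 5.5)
Your overall route is the paper's: condition on the two-letter word of the unreflected rows, using Lemma~\ref{lem:large-alpha-conditioned-queue-laws} for their two-site law and queue laws. Then run the lower reflected row from the crossing output $(h_0^{\mathrm{low}},h_2^{ab})$ over all words, evaluate the mixed moments via Proposition~\ref{prop:large-alpha-mixed-crossing}, and finish with the trivial upper row started from the zero state.

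As written, however, your computation of $p_{20}$ and $\nu_{20}$ would fail, because you dropped the $x$-dependence of the $0\to0$ return weight. In Figure~\ref{fig:intro-return-weights} the $k\to k$ weight with $k=0$ is $\rho\,(1-xq^{h_0})$, since no recorded color precedes $0$ in the return order. It is not $\rho$, which is its value only at $x=0$. Nor is it $\rho(1-q^{h_0})$, which is the weight of $1\to0$ and $2\to0$. This has two consequences:
\begin{itemize}
\item In the upper row at $H=0$, the step $\nu_2=0\mapsto\mu_2=0$ has weight $\rho_2(1-x)=\rho_2/\alpha$. So $p_{20}=(\rho_2/\alpha)^2\nu_{20}$, exactly parallel to $p_{22}$. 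Your $p_{20}=\rho_2^2(1-x)\nu_{20}$ is off by a factor $\alpha$, and it contradicts your own closing remark that $(1-x)^2$ produces the prefactor $\alpha^{-2}$.
\item In the lower row, the second-site weight for output $0$ is $\rho_1(1-xq^{h_0'})$ when $\pi_2=0$. It equals $\rho_1(1-q^{h_0'})$ only when $\pi_2\in\{1,2\}$. With your uniform $\rho_1(1-q^{h_0'})$, the $\pi_2=0$ contributions (which do not vanish at $h_0'=0$) come out wrong. Then $\nu_{20}$ would not equal $\frac{J_1-J_2}{\rho_2^2}\bigl[J_1\theta(1-\theta)+qJ_2\bigr]$.
\end{itemize}
With this weight corrected, the plan produces the correct identities.

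A smaller bookkeeping point: the words $11,12,21$ are not covered exactly by Lemma~\ref{lem:large-alpha-crossing-moments}. For instance, word $12$ contributes $\rho_1^2q^{2h_0}(1-q^{h_2})(1-xq^{h_2-1})$ to $\nu_{22}$. Its expansion contains $x\,\mathbb E[q^{2h_0^{\mathrm{low}}+2h_2^{12}}]$, which the lemma does not provide. Your list of remaining words also omits $01$, which does contribute through $0\to2$ followed by $1\to2$. Your fallback of a joint generating function over all nine words would absorb both issues, at the cost of heavy algebra.

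The paper avoids deriving conditional queue laws for the other words. It combines the path weights with the joint queue/output probabilities and regroups the nine contributions into \eqref{eq:large-alpha-nu22-collected} and \eqref{eq:large-alpha-nu20-plus-nu22-collected}. In these only $\mathbb E[q^{mh_0^{\mathrm{low}}}]$ and the $11$ and $12$ moments of that lemma appear, and it works with $\nu_{20}+\nu_{22}$ rather than with $\nu_{20}$ directly.
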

\begin{proof}
\medskip\noindent\emph{The lower output $22$.}
Condition on the two-letter output $ab$ of the unreflected rows and apply
the return-vertex rules in Figure~\ref{fig:intro-return-weights} twice.
For example, input $00$, with carrier state $(r,k)$, produces output $22$
with weight $\rho_1^2a^{-2}q^{2r+1}(1-q^k)(1-q^{k-1})$: the first
conversion changes the carrier state to $(r+1,k-1)$ before the second
vertex is applied.  Combining the path weights with the joint
queue/output probabilities from the two-site calculation in
Lemma~\ref{lem:large-alpha-conditioned-queue-laws}, and then using the
moments in Lemma~\ref{lem:large-alpha-crossing-moments}, gives
\begin{align}
\frac{\nu_{22}}{\rho_1^2}
&=\frac{\pi_{22}}{\alpha^2}
 \mathbb E[q^{2h_0^{\mathrm{low}}}]\notag\\
&\quad +(\rho_2-\rho_1)(\rho_1+a^{-1}\bar\rho_1)
 \left(\frac{1+\theta}{\alpha}+\frac{1-q}{a}\right)
 \left(
 \mathbb E[q^{2h_0^{\mathrm{low}}}]
 -\mathbb E[q^{2h_0^{\mathrm{low}}+h_2^{12}}]
 \right)\notag\\
&\quad +\frac{(\rho_1+a^{-1}\bar\rho_1)
                    (\rho_1+qa^{-1}\bar\rho_1)}q
 \mathbb E\!\left[
 q^{2h_0^{\mathrm{low}}}(q^{h_2^{11}}-1)(q^{h_2^{11}}-q)
 \right].
\label{eq:large-alpha-nu22-collected}
\end{align}

\medskip\noindent\emph{The lower output $20$.}
The same conditioning and sequential state update, now summing the paths
with output $20$, gives the following convenient expression for the row
sum containing $\nu_{20}$:
\begin{align}
\frac{\nu_{20}+\nu_{22}}{\rho_1^2}
&=\frac{\rho_2-\rho_1}{\alpha}
 \left(
 \mathbb E[q^{h_0^{\mathrm{low}}}]
 -(\rho_1+a^{-1}\bar\rho_1)
 \mathbb E[q^{2h_0^{\mathrm{low}}+h_2^{12}}]
 \right)\notag\\
&\quad+\frac{(\rho_1+a^{-1}\bar\rho_1)
                    (\rho_1+qa^{-1}\bar\rho_1)}q
 \left(
 \mathbb E[q^{2h_0^{\mathrm{low}}+2h_2^{11}}]
 -\mathbb E[q^{2h_0^{\mathrm{low}}+h_2^{11}}]
 \right)\notag\\
&\quad+(\rho_1+a^{-1}\bar\rho_1)
 \left(
 \mathbb E[q^{h_0^{\mathrm{low}}}]
 -\mathbb E[q^{h_0^{\mathrm{low}}+h_2^{11}}]
 \right).
\label{eq:large-alpha-nu20-plus-nu22-collected}
\end{align}

Substitute \eqref{eq:large-alpha-hole-moments}--%
\eqref{eq:large-alpha-12-mixed-moment}.  For the first expression,
use $\pi_{22}=(\rho_2-\rho_1)(\rho_2-\rho_1\theta)$ and
\[
 \rho_1(b_{\alpha,\rho_2}-b_{\alpha,\rho_1})
 =\frac{\rho_2-\rho_1}{a\rho_2},\qquad
 \rho_1(qb_{\alpha,\rho_2}-b_{\alpha,\rho_1})
 =\frac{\rho_2-\rho_1\theta}{a\rho_2}.
\]
The factor multiplying
$\rho_1^2(b_{\rho_1,\rho_2};q)_2\pi_{22}/(b_{\alpha,\rho_2};q)_2$
is exactly
\[
 \alpha^{-2}+\frac1{a\rho_2}
 \left(\frac{1+\theta}{\alpha}+\frac{1-q}{a}\right)
 +\frac q{a^2\rho_2^2}=(b_{\alpha,\rho_2};q)_2.
\]
The second expression simplifies using
$\alpha^{-1}+(a\rho_2)^{-1}=1-b_{\alpha,\rho_2}$.
Consequently
\[
 \nu_{22}=\rho_1^2(b_{\rho_1,\rho_2};q)_2\pi_{22},\qquad
 \nu_{20}+\nu_{22}=\rho_1^2(1-b_{\rho_1,\rho_2})
 (\rho_2-\rho_1)(\bar\rho_1+q\rho_1b_{\rho_1,\rho_2}),
\]
or, by \eqref{eq:22-cross-parameter-identities},
\begin{equation}
\label{eq:large-alpha-nu-current-form}
 \nu_{22}=\frac{J_1-J_2}{\rho_2^2}(J_1\theta^2-qJ_2),\qquad
 \nu_{20}=\frac{J_1-J_2}{\rho_2^2}
 [J_1\theta(1-\theta)+qJ_2].
\end{equation}

\medskip\noindent\emph{The upper reflected row.}
It starts with the zero hole-indexed state.  By
Figure~\ref{fig:intro-return-weights}, each output $0$ or $2$ requires
the same input color, has weight
$\rho_2(1-a^{-1})=\rho_2/\alpha$, and leaves the carrier state equal to
zero.  Hence
\[
 p_{22}=(\rho_2/\alpha)^2\nu_{22},\qquad
 p_{20}=(\rho_2/\alpha)^2\nu_{20}.
\]
Together with \eqref{eq:large-alpha-nu-current-form}, these are precisely
the two identities in \eqref{eq:two-colored-targets}.
\end{proof}

\paragraph{Endpoints.}
At $\alpha=1$, the common specialization in
Section~\ref{sec:queue-specialization-stationarity} has zero boundary
state.  Set $a^{-1}=0$ in the row kernels; no finite value of
$a,\tau,\sigma$ is needed.  All
$b_{1,\rho_i}$ and $b_{1,\bar\rho_i}$ vanish, and the crossing law is
$\mathbf1_{\{k=0\}}p_{b_{\rho_1,\rho_2}}^{(q)}(r)$.
Thus $h_2^{ab}=0$, and
$h_0^{\mathrm{low}}\sim q\text{-}\mathrm{Poi}(b_{\rho_1,\rho_2})$.
Substitution in the preceding proof gives
\eqref{eq:two-colored-targets} at $\alpha=1$. The result extends to \(q=0\) 
as the expressions are continuous at \(q=0\), which complete the proof of 
Theorem~\ref{thm:two-site-marginal}. 

\section{Application: speeds of colored particles in half-space}
\label{sec:colored-particle-speeds}

Throughout this section, we set \(q=0\) and fix \(0<\alpha\leq1\). We first recall the necessary preliminary results and determine the limiting distribution of the signed speed of color \(1\). Assuming a two-species local-equilibrium conjecture, we then use the stationary measure constructed earlier to compute the joint limiting law of the first two particles, conditional on both particles remaining on the positive half-line.

\subsection{The colored process, one-color projections, and hydrodynamic inputs}

Let $\mathbb Z^\ast=\mathbb Z\setminus\{0\}$.  The multispecies half-space TASEP is the continuous-time Markov process $(\eta_t)_{t\geq 0}$ on signed permutations of $\mathbb Z^\ast$, satisfying $\eta_t(-x)=-\eta_t(x)$ for $x\in\mathbb Z^\ast$, with initial condition  $\eta_0(x)=x$. Here $\eta_t(x)$ denotes the color occupying site $x$ at time $t$ and $X_i(t):=\eta_t^{-1}(i)$ denotes the signed position of color $i\geq1$.  For every $x\geq 1$, a rate-one clock exchanges the colors at $x,x+1$ and $-x,-x-1$ simultaneously when $\eta_t(x)<\eta_t(x+1)$.  A rate-$\alpha$ clock exchanges the colors at $-1,1$ whenever $\eta_t(-1)<\eta_t(1)$. 

We have the following color-position symmetry property of the signed permutation.
\begin{proposition}[\cite{bufetov2020interacting}, Proposition~2.1]
\label{prop:half-space-color-position}
For every $t\geq 0$,
\[
    \eta_t\overset{d}=\eta_t^{-1}.
\]
\end{proposition}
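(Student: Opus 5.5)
The plan is to realize $\eta_t$ as a random product in the $0$-Hecke (Demazure) monoid of the hyperoctahedral group. The symmetry $\eta_t\overset d=\eta_t^{-1}$ will then follow from two facts: inversion reverses Demazure products, and Poisson clocks on $[0,t]$ are invariant in law under time reversal.

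\emph{Group setup.} Let $B$ be the group of signed permutations $w$ of $\mathbb Z^\ast$ with $w(-x)=-w(x)$ that move only finitely many sites. It is a Coxeter group of type $B$, generated by the involutions
\begin{itemize}
\item $s_0$, which exchanges $-1$ and $1$;
\item $s_x$ for $x\ge1$, which exchanges $x,x+1$ and simultaneously $-x,-x-1$.
\end{itemize}
Write the state as $\eta_t\in B$, with $\eta_t(x)$ the color at site $x$. Exchanging the contents of sites $x,x+1$ (and $-x,-x-1$) is right multiplication $\eta\mapsto\eta s_x$. The update condition $\eta(x)<\eta(x+1)$, and at the boundary $\eta(-1)<\eta(1)$, is exactly the condition $\ell(\eta s_x)>\ell(\eta)$ in the type-$B$ length function. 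The first step is to check this carefully for $s_0$ using the standard description of descents of signed permutations.

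\emph{Demazure product.} Define the Demazure product by $w*s=ws$ if $\ell(ws)>\ell(w)$ and $w*s=w$ otherwise. Each clock ring at bond $x$ therefore maps $\eta\mapsto\eta*s_x$. Starting from $\eta_0=\mathrm{id}$, we get
\[
\eta_t=s_{i_1}*s_{i_2}*\cdots*s_{i_k},
\]
where $i_1,\dots,i_k$ lists the clock rings in $[0,t]$ in chronological order. The needed algebraic fact is that the $0$-Hecke monoid carries the anti-involution $w\mapsto w^{-1}$, so that $(a*b)^{-1}=b^{-1}*a^{-1}$. Since $s_i^{-1}=s_i$, this gives
\[
\eta_t^{-1}=s_{i_k}*\cdots*s_{i_1}.
\]
I would prove the anti-involution property by observing that the Demazure product of a word equals the maximal element of the set of subword products in Bruhat order, and that inversion preserves both Bruhat order and reversal of subwords.

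\emph{Time reversal.} The clocks are independent Poisson processes: rate $1$ at each bond $x\ge1$ and rate $\alpha$ at bond $0$. The time-reversed family $t-\tau$ on $[0,t]$ has the same joint law. So the reversed word $(i_k,\dots,i_1)$ has the same distribution as $(i_1,\dots,i_k)$, and hence $\eta_t^{-1}\overset d=\eta_t$.

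\emph{Main obstacle: infinitely many clocks.} On all of $\mathbb Z^\ast$ infinitely many clocks ring in $[0,t]$, so the Demazure product above is not literally a finite word. I would handle this by truncation. Restrict to the bonds $0,\dots,N-1$, giving a finite type-$B_N$ system in which the argument above applies verbatim. Then show that for fixed $t$ and any fixed finite set of sites and colors, the truncated and full processes agree with probability tending to one as $N\to\infty$. This follows from the usual Harris argument: with probability tending to one there is a bond between the window and $N$ whose clock does not ring on $[0,t]$, and no information crosses such a bond.

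Both $\eta_t$ and $\eta_t^{-1}$ are determined locally in this sense. This is the delicate point for $\eta_t^{-1}$, since it requires that colors in a window originate from a bounded region; that again follows from a clock at a separating bond never ringing. Hence the finite-dimensional distributions of $\eta_t$ and $\eta_t^{-1}$ coincide, which proves the claim.
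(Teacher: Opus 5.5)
Your proof is correct, and it is essentially the argument behind the cited result. The paper gives no proof of its own; \cite{bufetov2020interacting} encodes the law of $\eta_t$ as $\exp\bigl(t\sum_s r_s(T_s-1)\bigr)$ in the type-$B$ Hecke algebra and uses that the anti-involution $T_w\mapsto T_{w^{-1}}$ fixes this element, which at $q=0$ is exactly your Demazure-product reversal combined with time-reversal of the Poisson clocks. Your separating-bond truncation correctly handles the infinite-volume step for both $\eta_t$ and $\eta_t^{-1}$.
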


For a color threshold $i\in\mathbb Z_{>0}$, a site $x\in\mathbb Z_{<0}$, and $t\geq0$, define the \emph{height function}
\[
    h^{\geq i}(t,x)
    :=\#\{y\in\mathbb Z^\ast : y\leq x,\ \eta_t(y)\geq i\}.
\]
Thus, $h^{\geq i}(t,x)$ counts the particles with color at least $i$ located at or to the left of $x$ at time $t$. This count is finite at every finite time.

\begin{theorem}[\cite{MR5008991}, Theorem~1.6]
\label{thm:half-space-shift-invariance}
Fix $t\geq 0$.  Let
\[
 i_1,\ldots,i_{K_1},\ i'_1,\ldots,i'_{K_2}\in\mathbb Z_{>0},
 \qquad
 x_1,\ldots,x_{K_1},\ x'_1,\ldots,x'_{K_2}\in\mathbb Z_{<0},
\]
and suppose that $1\leq i_k\leq i'_l$ and $-1\geq x_k>x'_l$ for every $k,l$.  Then
\[
 \left(\bigl(h^{\geq i_k}(t,x_k)\bigr)_{k=1}^{K_1},
       \bigl(h^{\geq i'_l}(t,x'_l)\bigr)_{l=1}^{K_2}\right)
 \overset{d}=
 \left(\bigl(h^{\geq i_k}(t,x_k)\bigr)_{k=1}^{K_1},
       \bigl(h^{\geq i'_l+1}(t,x'_l+1)\bigr)_{l=1}^{K_2}\right).
\]
\end{theorem}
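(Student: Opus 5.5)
We would not prove this from scratch with interacting-particle tools. Instead we would follow the integrable route that the rest of the paper uses: realize the half-space colored TASEP as a degeneration of a colored half-space stochastic six-vertex model built from the bulk weights $R_z$ of Definition~\ref{def:elementary-colored-R} and the boundary weights $K_x$ of Definition~\ref{def:elementary-colored-K}. The plan is to prove the shift-invariance identity for that vertex model at the level of partition functions, and then pass to the continuous-time limit.

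\emph{Step 1: translate to a vertex model.} Consider a half-quadrant (or finite triangle) model with incoming colors $1,2,\dots$ on the boundary, in the colored "rainbow" initial condition. Its double rows are the operators $\mathfrak T^{N}_{u}$ of Definition~\ref{def:finite-fundamental-double-row}, so by Proposition~\ref{prop:double-row-to-half-space-ASEP} and \eqref{eq:finite-row-Euler-limit}--\eqref{eq:ASEP-row-continuous-time-limit} (at $q=0$ and with the $\alpha$-boundary) $M$ double rows with $u=1-t/(2M)$ converge to the signed-permutation TASEP at time $t$. Under this dictionary, $h^{\ge i}(t,x)$ for $x<0$ becomes the number of paths of color $\ge i$ crossing a cut located at height $x$ on the reflected side. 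Color-merging (colors $\ge i$ versus $<i$) is a legitimate projection because the weights only see the priority order. The inequalities $i_k\le i'_l$ and $x_k>x'_l$ say that the primed cuts lie "strictly inside" the unprimed ones, both in color and in position.

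\emph{Step 2: prove the identity for the inhomogeneous model.} Here we would give every row its own rapidity and adapt the Borodin--Gorin--Wheeler shift-invariance argument to the half-space setting. The colored Yang--Baxter equation (Proposition~\ref{prop:elementary-colored-ybe}) and the reflection equation (Proposition~\ref{prop:elementary-colored-reflection}) let us exchange adjacent rapidities by threading an $R$-vertex through the whole region. On the reflected side, this is where the reflection equation enters. The effect is to move one incoming color line past a block of others while keeping the joint law of height functions along the relevant cuts unchanged. This produces the elementary move $(i',x')\mapsto(i'+1,x'+1)$ for the inner cuts, with the outer cuts untouched. The key lemma we would isolate is a "local flip": for a single pair of adjacent rapidities, the joint law of all heights except those at the crossing is invariant under the swap, because the inserted vertex is stochastic and sums to one over its outputs.

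\emph{Step 3: specialize and take limits.} Specialize to homogeneous rapidities, apply the Euler/Harris limit of Step 1, and use \eqref{eq:ASEP-row-continuous-time-limit} to pass the finite-dimensional distributional identity to the continuous-time process. Since only finitely many height functions are involved and they are bounded at finite $t$, convergence in distribution suffices.

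The main obstacle is Step 2 on the boundary. There the threaded vertex must be pushed through the reflection point, and the reflection equation couples the spectral parameters $x/y$ and $xy$. The resulting identity holds only after summing over the free reflected outputs, which is why the hypotheses require the moved cuts to sit strictly below the fixed ones (so that the threaded line never interacts with them). Our first attempt would be to mimic the proof of Proposition~\ref{prop:finite-boundary-train}, followed by a careful check that the color-merging projection commutes with these moves.
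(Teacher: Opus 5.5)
The paper does not prove this statement. It imports it from He \cite{MR5008991}, who works in the fully colored, signed (type~B) half-space six-vertex model, uses Hecke algebras together with the Yang--Baxter and reflection equations, and then degenerates to ASEP. Your outline points in that direction, but Step~1 fails as written.

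The weight $K_x$ of Definition~\ref{def:elementary-colored-K} is the creation-only boundary, which sends every color other than $1$ to the single color $1$. By Proposition~\ref{prop:double-row-to-half-space-ASEP} and \eqref{eq:ASEP-row-continuous-time-limit}, the double rows $\mathfrak T^N_u$ built from it converge to the half-line ASEP with injection of color~$1$, not to the signed-permutation TASEP. That limit is the image of the signed-permutation dynamics on $\mathbb Z_{>0}$ under the projection that merges \emph{all} negative colors: negative colors $\mapsto1$, positive colors $c\le r-1$ $\mapsto c+1$, and colors $\ge r$ $\mapsto0$.

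For $x<0$, however, the symmetry $\eta_t(-y)=-\eta_t(y)$ gives $h^{\geq i}(t,x)=\#\{z\ge|x|:\ \eta_t(z)\le -i\}$. For $i\ge2$ this separates the colors $\le -i$ from $-1,\ldots,-(i-1)$, which is exactly the information the merged model has discarded, and no further color merging can restore it. Since $i'_l+1\ge2$, no nontrivial instance of the theorem can even be stated in your model. You need the signed boundary in which a positive color $c$ reflects to $-c$ (the paper's $K_x$ is its merged image), with its own double row and continuous-time limit. Also, $h^{\geq i}(t,x)$ is a tail count rather than a bounded cylinder function, so passing it through \eqref{eq:ASEP-row-continuous-time-limit} additionally requires a truncation estimate.

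Step~2 is missing its key idea. Threading a stochastic crossing through the lattice with the Yang--Baxter and reflection equations shows only one thing: observables whose cuts do not pass between the two exchanged lines, and whose thresholds do not separate the colors those lines carry, have a law invariant under swapping the two rapidities. At homogeneous rapidities the inserted crossing has spectral parameter $1$ and this identity is empty. In no case does it move a color threshold.

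The simultaneous shift $(i',x')\mapsto(i'+1,x'+1)$ has to come from combining such exchanges with the colored boundary data. Concretely, you must:
\begin{itemize}
\item single out a particular line;
\item transport it, through the reflection on the half-space side, out of the part of the lattice seen by the inner heights, and discard it;
\item check that this relabels color and position by exactly one at once;
\item verify that the outer heights are insensitive to every exchange used. This is where $i_k\le i'_l$ and $x_k>x'_l$ must enter.
\end{itemize}
Your ``local flip'' lemma, and the heuristic that the threaded line stays away from the fixed cuts, do not supply any of this. As it stands, Step~2 asserts the conclusion rather than deriving it.
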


For an integer $m\geq 0$, define the color projection on $\mathbb Z_{>0}$ by
\begin{equation}
\label{eq:positive-color-projection}
    \zeta_t^{[m]}(x):=\mathbf 1_{\{\eta_t(x)\leq m\}},\qquad x\geq 1.
\end{equation}

\begin{lemma}
\label{lem:one-color-projection}
For every $t\geq 0$ and $m\geq 0$,
\begin{equation}
\label{eq:positive-tail-projection}
    \mathbb P\bigl(X_1(t)\leq m\bigr)
    =\mathbb P\bigl(\zeta_t^{[m]}(1)=1\bigr).
\end{equation}
\end{lemma}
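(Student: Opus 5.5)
The identity will follow from the color--position symmetry of Proposition~\ref{prop:half-space-color-position} applied to a single site. The event $\{X_1(t)\le m\}$ is $\{\eta_t^{-1}(1)\le m\}$, while $\{\zeta_t^{[m]}(1)=1\}$ is $\{\eta_t(1)\le m\}$. Since $\eta_t\overset d=\eta_t^{-1}$ as random signed permutations, the random variables $\eta_t(1)$ and $\eta_t^{-1}(1)$ have the same law. Hence
\[
 \mathbb P\bigl(X_1(t)\le m\bigr)=\mathbb P\bigl(\eta_t^{-1}(1)\le m\bigr)=\mathbb P\bigl(\eta_t(1)\le m\bigr)=\mathbb P\bigl(\zeta_t^{[m]}(1)=1\bigr).
\]

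Two points need care. The first is the sign convention. $X_1(t)$ is a signed position in $\mathbb Z^\ast$, and $\eta_t(1)$ is a signed color, possibly negative. So $\{X_1(t)\le m\}$ includes negative positions, and $\{\eta_t(1)\le m\}$ includes negative colors. The definition \eqref{eq:positive-color-projection} uses $\eta_t(x)\le m$ without absolute values, so both events are over all of $\mathbb Z^\ast\cap(-\infty,m]$ and they match exactly.

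The second point is the case $m=0$. Both sides then become events about negativity, $X_1(t)<0$ and $\eta_t(1)<0$, and the same equality of laws covers them.

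Nothing else is needed: equality in distribution of the whole permutation trivially implies equality of the one-dimensional marginals at the fixed argument $1$. The only point I would double-check is that the cited symmetry holds for the present boundary convention, with the rate-$\alpha$ swap of $\pm1$, the order used, and initial condition the identity. I expect no real obstacle beyond confirming this.
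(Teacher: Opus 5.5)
Your proposal is correct and is the same argument as the paper's. The paper likewise applies $\eta_t\overset{d}{=}\eta_t^{-1}$ at the single site $1$ to get $\mathbb P(X_1(t)\le m)=\mathbb P(\eta_t(1)\le m)$, then reads this off as $\mathbb P(\zeta_t^{[m]}(1)=1)$ directly from the signed definition of $\zeta_t^{[m]}$.
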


\begin{proof}
Proposition~\ref{prop:half-space-color-position} and \eqref{eq:positive-color-projection} give
\[
 \mathbb P(X_1(t)\leq m)=\mathbb P(\eta_t(1)\leq m)
 =\mathbb P(\zeta_t^{[m]}(1)=1),
\]
which proves \eqref{eq:positive-tail-projection}.
\end{proof}

We first record a consequence of Bahadoran’s hydrodynamic theorem for half-line TASEP with a step initial condition.

\begin{theorem}[Hydrodynamic limit]
\label{thm:half-line-hydrodynamic-inputs}
Let $m_N$ be nonnegative integers such that $m_N/N\to u\in[0,1]$.
For $x>0$, define the density profile as follows.
If $u\geq1-2\alpha$, set
\begin{equation}
\label{eq:packed-hydrodynamic-profile-section}
 \rho^{\alpha,u}(x)=
 \begin{cases}
  \dfrac{1+u-x}{2},&0<x\leq u+1,\\[2mm]
  0,&x>u+1.
 \end{cases}
\end{equation}
If $u<1-2\alpha$, set
\begin{equation}
\label{eq:packed-hydrodynamic-profile-low-alpha-section}
 \rho^{\alpha,u}(x)=
 \begin{cases}
  \alpha,
    &0<x<\bigl(\sqrt{1-2\alpha}-\sqrt u\bigr)^2,\\[2mm]
  \dfrac{1+u-x}{2},
    &\bigl(\sqrt{1-2\alpha}-\sqrt u\bigr)^2\leq x\leq u+1,\\[2mm]
  0,&x>u+1.
 \end{cases}
\end{equation}
Then, for every $\varphi\in C_c((0,\infty))$,
\begin{equation}
\label{eq:packed-empirical-section}
 \frac1N\sum_{j\geq1}\varphi\left(\frac jN\right)\zeta_N^{[m_N]}(j)
 \xrightarrow{\mathbb P}
 \int_0^\infty\varphi(x)\rho^{\alpha,u}(x)\,dx.
\end{equation}
\end{theorem}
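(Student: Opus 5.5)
The approach is to reduce \eqref{eq:packed-empirical-section} to Bahadoran's hydrodynamic theorem for the one-species half-line TASEP with a boundary reservoir. The only work specific to this paper is to identify the process $\zeta^{[m]}_t$ and compute the entropy solution explicitly.

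\emph{Step 1: identify the process.} By the color-position symmetry of Proposition~\ref{prop:half-space-color-position} and the definition \eqref{eq:positive-color-projection}, $\zeta^{[m]}_t$ is the projection of the signed permutation onto the set of colors $\{\pm1,\ldots,\pm m\}$ and everything below it. On the positive sites I will check directly from the dynamics that $\zeta^{[m]}$ is a half-line TASEP with jumps to the right. The boundary clock of rate $\alpha$ swaps $\eta(-1)$ and $\eta(1)$ exactly when $\eta(-1)<\eta(1)$. In projected terms, a particle is injected at site $1$ at rate $\alpha$ whenever site $1$ is empty, since then $\eta(1)>m\ge \eta(-1)$ in the relevant case. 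This is the process of \eqref{eq:multispecies-half-space-ASEP-generator} with $q=0$ and one species. Its initial condition is $\zeta_0^{[m]}=\mathbf1_{[1,m]}$, a packed block of $m_N\approx uN$ particles.

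\emph{Step 2: invoke Bahadoran.} I will cite Bahadoran's hydrodynamic limit for attractive systems with a boundary reservoir of density $\alpha$. Under Euler scaling it yields convergence in probability of empirical measures to the entropy solution of $\partial_t\rho+\partial_x(\rho(1-\rho))=0$ on $(0,\infty)$. The initial datum is $\mathbf1_{(0,u]}$, and the boundary condition is of Bardos--LeRoux--Nédélec type with boundary density $\alpha$. The initial block is deterministic, so the required initial-profile hypothesis holds trivially. Attractiveness holds by the basic coupling, and the boundary is a rate-$\alpha$ creation, which fits the reservoir framework of that theorem.

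\emph{Step 3: solve the Riemann/boundary problem at time $1$.} This is the main computation. The right edge at $u$ is a rarefaction fan $(1+u-x)/2$ on $[u-1,u+1]$, which meets the boundary. In BLN form, the boundary flux is $\min\{\alpha(1-\alpha)$ capped at $1/4$ when $\alpha\ge1/2\}$ together with the outgoing characteristic condition. If the trace density from the interior satisfies $\rho\ge 1/2$, or if the boundary supply is large enough, the fan is unaffected on $(0,\infty)$. I will verify this by tracking the boundary trace $\rho(t,0+)=(1+u-t\cdot)$ along the fan: at $x=0$ and time $t$ the fan gives $(t+u)/(2t)\ge1/2$, so the boundary imposes the flux $\min(J(\rho),J(\alpha))$ only when the trace density is $>1-\alpha$. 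That yields the condition $u\ge 1-2\alpha$ at $t=1$, which explains the case split.

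When $u<1-2\alpha$, a region of density $\alpha$ forms and is separated from the fan. The separating point is where the left characteristics of the fan leaving the boundary match density $\alpha$. I expect this to be a contact discontinuity (rarefaction) instead of a shock, because densities $\alpha<1/2$ and $1-\alpha$ have equal flux. The position of the interface $s(t)$ then solves the ODE $s'=(\rho_{\rm fan}(s)\text{ flux difference})/\ldots$. Solving it should give $s(1)=(\sqrt{1-2\alpha}-\sqrt u)^2$, and I will carry out the computation via the Rankine--Hugoniot condition between $\alpha$ and $(t+u-x)/(2t)$. This ODE, and getting the sign and admissibility conditions right, is the step I expect to be most delicate.

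Finally, uniqueness of the entropy solution and Bahadoran's theorem give \eqref{eq:packed-empirical-section} with $t=N$. Weak convergence against $\varphi\in C_c((0,\infty))$ avoids the boundary layer at $x=0$.
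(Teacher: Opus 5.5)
Your route is the paper's. You identify $\zeta^{[m]}$ as the one-species half-line TASEP fed by a reservoir of density $\alpha$, apply Bahadoran's theorem with the deterministic initial block $\mathbf 1_{\{1\le j\le m_N\}}$, and read off the entropy solution at time $1$. The paper simply asserts that this solution is $\rho^{\alpha,u}$, so your Step~3 is extra work. Step~1 also needs no color--position symmetry, since $\zeta^{[m]}$ is directly a Markov projection of $\eta_t$ on the positive sites. One claim in Step~3 is wrong, though, and it would derail the admissibility check if taken seriously.

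The interface between the density-$\alpha$ region and the fan is not a contact discontinuity or a rarefaction. A strictly concave flux has no contact discontinuities, and no rarefaction joins $\alpha$ on the left to a larger density on the right. It is a Lax shock, born with speed zero at $t_0=u/(1-2\alpha)$ exactly because $J(\alpha)=J(1-\alpha)$; it then moves right as the fan thins.

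Rankine--Hugoniot gives $s'=1-\alpha-\rho_{\mathrm{fan}}(t,s)$ with $\rho_{\mathrm{fan}}(t,x)=\frac12\bigl(1-\frac{x-u}{t}\bigr)$. For $w=s-u$ this is the linear equation $w'=\frac{1-2\alpha}{2}+\frac{w}{2t}$, whose solution with $s(t_0)=0$ is $s(t)=\bigl(\sqrt{(1-2\alpha)t}-\sqrt u\bigr)^2$. The right state at the shock is $\alpha+\sqrt{u(1-2\alpha)/t}>\alpha$, which is the Oleinik condition for concave $J$; the jump has zero strength only when $u=0$.

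Also, in the first part of Step~3, the fan is unaffected by the boundary iff $\rho(t,0+)\ge\max\{1/2,1-\alpha\}$, not merely $\rho(t,0+)\ge1/2$. Your resulting criterion $u\ge1-2\alpha$ at $t=1$ is nevertheless correct. With these fixes the argument goes through, given Bahadoran's theorem.
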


\begin{proof}
We apply \cite[Theorem~2.2]{Bahadoran2012} to the
totally asymmetric simple exclusion process on
$\Omega=(0,\infty)$, which is included in the framework
of \cite[Section~2]{Bahadoran2012}.
The Bernoulli product measures of density $r$ for the
full-line process give the flux $J(r)=r(1-r)$.
To verify the boundary and initial assumptions, take
the constant reservoir density $\lambda_N\equiv\alpha$
on $\mathbb Z_{\leq0}$ and the initial density
$\rho^N(j)=\mathbf1_{\{1\leq j\leq m_N\}}$.
This boundary choice gives exactly our entrance dynamics and satisfies \cite[(12)]{Bahadoran2012}, while
$m_N/N\to u$ verifies \cite[(13)--(14)]{Bahadoran2012}.
Thus the hydrodynamic limit is the unique entropy
solution of
\[
\partial_t\rho+\partial_xJ(\rho)=0,\qquad x>0,
\]
with initial condition $\mathbf1_{(0,u)}$ and boundary
condition $\alpha$. The profile of the entropy
solution at time $t=1$ is precisely
$\rho^{\alpha,u}$, which proves
\eqref{eq:packed-empirical-section}.
\end{proof}

\begin{corollary}
\label{cor:half-line-injection-current}
Let $N_t^{\mathrm{in},[m]}$ denote the number of successful injections
into $\zeta^{[m]}$ by time $t$.
\begin{enumerate}
\item[(i)]
Let $m_T$ be nonnegative integers such that $m_T/T\to u\in[0,1]$.
Then
\begin{equation}
\label{eq:injection-shape-section}
 \frac{N_T^{\mathrm{in},[m_T]}}T
 \xrightarrow{\mathbb P}
 \begin{cases}
  \alpha(1-\alpha-u),&u<1-2\alpha,\\[1mm]
  \dfrac{(1-u)^2}{4},&\text{otherwise}.
 \end{cases}
\end{equation}
\item[(ii)]
For every $x>0$,
\begin{equation}
\label{eq:empty-moving-site-marginal-section}
 \lim_{T\to\infty}\mathbb E\zeta_T^{[0]}(\lceil xT\rceil)
 =\rho^{\alpha,0}(x).
\end{equation}
\end{enumerate}
\end{corollary}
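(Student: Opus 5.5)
For (i) and (ii) I would work entirely with the one-color projection $\zeta^{[m]}$. It is a half-line TASEP with injection rate $\alpha$ at site $1$ and initial condition $\mathbf 1_{\{1\le j\le m\}}$. Since $q=0$ and there is no exit on the half-line, particles are never removed. Particle conservation therefore gives
\[
 N_T^{\mathrm{in},[m_T]}=\sum_{j\ge1}\zeta^{[m_T]}_T(j)-m_T .
\]
So (i) amounts to computing $T^{-1}\sum_j\zeta^{[m_T]}_T(j)$. I would obtain this from Theorem~\ref{thm:half-line-hydrodynamic-inputs}, applied with $N=T$ and $\varphi$ approximating $\mathbf 1_{(0,u+1+\delta)}$ from inside.

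Two cutoffs are needed, and both are crude. Near the origin, occupation variables are at most $1$, so sites in $[1,\varepsilon T]$ contribute at most $\varepsilon$ after division by $T$. At the right end, the rightmost particle of $\zeta^{[m_T]}$ starts at $m_T$ and jumps at rate at most $1$. Its displacement is therefore dominated by a Poisson$(T)$ variable, so with probability tending to one there are no particles beyond $(u+1+\delta)T$. Combining these with \eqref{eq:packed-empirical-section} yields
\[
 T^{-1}\sum_j\zeta_T^{[m_T]}(j)\to\int_0^\infty\rho^{\alpha,u}(x)\,dx
\]
in probability. The rest is the integral. If $u\ge1-2\alpha$, it equals $(1+u)^2/4$, and subtracting $u$ leaves $(1-u)^2/4$. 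If $u<1-2\alpha$, put $\ell=(\sqrt{1-2\alpha}-\sqrt u)^2$. Then $1+u-\ell=2\alpha+2\sqrt{u(1-2\alpha)}$, so the integral is $\alpha\ell+(\alpha+\sqrt{u(1-2\alpha)})^2$. This expands to $\alpha-\alpha^2+u-\alpha u$, and subtracting $u$ gives $\alpha(1-\alpha-u)$, which is \eqref{eq:injection-shape-section}.

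For (ii), take $m=0$, so the process starts empty. Theorem~\ref{thm:half-line-hydrodynamic-inputs} with bounded test functions, together with bounded convergence, gives convergence of spatial averages:
\[
 \frac1{\delta T}\sum_{xT\le j\le(x+\delta)T}\mathbb E\zeta_T^{[0]}(j)\to\frac1\delta\int_x^{x+\delta}\rho^{\alpha,0}.
\]
Since $\rho^{\alpha,0}$ is continuous on $(0,\infty)$, it is enough to show that $j\mapsto\mathbb E\zeta^{[0]}_T(j)$ is nonincreasing. That monotonicity lets me sandwich the pointwise value between averages over windows $[(x-\delta)T,xT]$ and $[xT,(x+\delta)T]$, and then send $\delta\downarrow0$.

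The main obstacle is this spatial monotonicity for the empty initial condition. My first attempt would use attractiveness. Starting from the empty configuration, the process is stochastically increasing in $t$. I would then look for a coupling in which the configuration shifted by one site, $(\zeta_t(j+1))_{j\ge1}$, is dominated by $(\zeta_t(j))_{j\ge1}$. The shifted process is a half-line TASEP fed by the departures from site $1$, and I would try to compare that feeding with injection at rate $\alpha$, via a basic coupling of the Harris clocks.

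If direct domination fails, for instance because the comparison of injection rates is not uniform, I would fall back on time monotonicity together with self-similarity. Time monotonicity gives $\mathbb E\zeta_{T}(j)\le\mathbb E\zeta_{T'}(j)$ for $T\le T'$. Scaling $T'=(1+\varepsilon)T$ moves the spatial profile by a factor $1+\varepsilon$, and the limiting profile $\rho^{\alpha,0}$ is nonincreasing in $x$. These two facts can replace exact spatial monotonicity when sandwiching $\mathbb E\zeta^{[0]}_T(\lceil xT\rceil)$ between window averages of slightly rescaled times, with errors that vanish as $\varepsilon,\delta\to0$.
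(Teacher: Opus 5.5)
Part (i) is correct and matches the paper's argument: the two cutoffs, mass conservation $N_T^{\mathrm{in},[m_T]}=\sum_j\zeta_T^{[m_T]}(j)-m_T$, and the two integrals, which you evaluate correctly. In (ii) your sandwich between window averages is also what the paper does. However, it rests entirely on the monotonicity of $n\mapsto\mathbb E\zeta_T^{[0]}(n)$, and neither of your routes proves it, so there is a genuine gap.

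\emph{The shift coupling fails at the boundary.} Suppose you try to keep $\zeta_t(j+1)\le\zeta'_t(j)$ for a second copy $\zeta'$. The state $\zeta(1)=1$, $\zeta(2)=0=\zeta'(1)$ is compatible with this order and occurs with positive probability. In that state the shifted process gains a particle at its first site at rate $1$, while $\zeta'$ gains one only at rate $\alpha$. For $\alpha<1$, a basic (clock-identification) coupling therefore cannot preserve the order.

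On the half-line there is no translation invariance to replace this step. Spatial monotonicity is also not a soft consequence of attractiveness there. By \eqref{eq:liggett-convergence-section} and Proposition~\ref{prop:one-species-left-edge}, starting from the constant profile of $\pi^\rho$ one reaches near the boundary a law with $\mathbb P(\eta(1)=1)=1-J(\rho)/\alpha$. This is smaller than the bulk density $\rho$ whenever $\alpha<\rho$.

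\emph{The fallback does not close the gap either.} Attractiveness from the empty configuration gives monotonicity in $t$ at a fixed site. Theorem~\ref{thm:half-line-hydrodynamic-inputs}, by contrast, controls only spatial averages at a fixed time. No exact self-similarity at finite $T$ converts one into the other. A profile that is nondecreasing in time but oscillates on the lattice scale in space is consistent with everything you invoke, so no pointwise statement follows.

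\emph{The missing idea} is the exact identity \eqref{eq:negative-speed-exact}, which the paper proves inside Theorem~\ref{thm:first-color-signed-speed}. It embeds $\zeta^{[0]}$ in the multispecies half-space TASEP, where the negative colors on the positive half-line are the particles. Combining color--position symmetry (Proposition~\ref{prop:half-space-color-position}), signed symmetry, and the shift invariance of Theorem~\ref{thm:half-space-shift-invariance} applied to the pair $(n,-1)$, $(n,-2)$ gives
\[
\mathbb E\zeta_t^{[0]}(n)=\mathbb P(X_n(t)<0)
=\mathbb E\bigl[h^{\ge n}(t,-1)-h^{\ge n+1}(t,-1)\bigr]
=\mathbb E\bigl[h^{\ge n}(t,-1)-h^{\ge n}(t,-2)\bigr]
=\mathbb P(X_1(t)\le -n).
\]
The last expression is trivially nonincreasing in $n$. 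With this monotonicity in hand, your window sandwich and the continuity of $\rho^{\alpha,0}$ on $(0,\infty)$ finish (ii) exactly as you describe.
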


\begin{proof}
It suffices to prove the assertions for $T=N\in\mathbb N$.

For (i), we first extend
Theorem~\ref{thm:half-line-hydrodynamic-inputs} to the total mass.
Since occupations are bounded by one,
\[
 \frac1N\sum_{1\leq j\leq\varepsilon N}
 \zeta_N^{[m_N]}(j)\leq\varepsilon.
\]
Moreover, the rightmost occupied site at time $N$ is stochastically
bounded by $m_N+1+\operatorname{Pois}(N)$. Thus, for every $R>u+1$,
\[
 \mathbb P\bigl(\zeta_N^{[m_N]}(j)=1
       \text{ for some }j>RN\bigr)\longrightarrow0.
\]
Applying the hydrodynamic limit to continuous, compactly supported
cutoffs equal to one on $[\varepsilon,R]$, and then letting
$\varepsilon\downarrow0$, gives
\[
 \frac1N\sum_{j\geq1}\zeta_N^{[m_N]}(j)
 \xrightarrow{\mathbb P}
 \int_0^\infty\rho^{\alpha,u}(x)\,dx.
\]
Using the fact that $\rho^{\alpha,u}$ has compact support and
\[
 N_N^{\mathrm{in},[m_N]}
 =\sum_{j\geq1}\zeta_N^{[m_N]}(j)-m_N,
\]
and subtracting $m_N/N$ and integrating the profiles in
\eqref{eq:packed-hydrodynamic-profile-section} and
\eqref{eq:packed-hydrodynamic-profile-low-alpha-section}
proves (i).

For (ii), \eqref{eq:negative-speed-exact}, derived below in the proof
of Theorem~\ref{thm:first-color-signed-speed}, shows that
$n\mapsto\mathbb E\zeta_t^{[0]}(n)$ is nonincreasing.
Let $n_N$ be positive integers such that $n_N/N\to x>0$.
For $0<\varepsilon<x$, this monotonicity bounds
$\mathbb E\zeta_N^{[0]}(n_N)$ between the mean occupation
probabilities on the blocks of $\lfloor\varepsilon N\rfloor$
sites immediately to its right and left.
The hydrodynamic limit and boundedness therefore give
\[
 \frac1\varepsilon
 \int_x^{x+\varepsilon}\rho^{\alpha,0}(y)\,dy
 \leq\liminf_{N\to\infty}\mathbb E\zeta_N^{[0]}(n_N)
 \leq\limsup_{N\to\infty}\mathbb E\zeta_N^{[0]}(n_N)
 \leq\frac1\varepsilon
 \int_{x-\varepsilon}^{x}\rho^{\alpha,0}(y)\,dy.
\]
Since $\rho^{\alpha,0}$ is continuous on $(0,\infty)$, letting
$\varepsilon\downarrow0$ and taking $n_N=\lceil xN\rceil$
proves (ii).
\end{proof}

\subsection{Boundary local equilibrium for step initial contion}
\label{subsec:packed-boundary-local-equilibrium}
Throughout this subsection, we consider only the particle-hole TASEP on $\mathbb Z_{>0}$. Let $S_t^\alpha$ denote its Markov semigroup, acting on bounded measurable functions by
\[
(S_t^\alpha f)(\xi)=\mathbb E[f(\zeta_t^\xi)],
\]
where $\zeta_t^\xi$ is the process started from $\xi\in\{0,1\}^{\mathbb Z_{>0}}$. For a probability measure $\nu$ and a bounded measurable
function $f$, write $\nu(f)=\int f\,d\nu$. Define $\nu S_t^\alpha$ to be the probability measure satisfying
\[
(\nu S_t^\alpha)(f)=\int (S_t^\alpha f)(\xi)\,\nu(d\xi).
\]
Thus $\nu S_t^\alpha$ is the law at time $t$ with initial
distribution $\nu$. For deterministic $\xi$, we write
$\xi S_t^\alpha=\delta_\xi S_t^\alpha$.

For any probability measure $\nu$, let $\nu|_{[1,k]}$ denote
its marginal on the first $k$ sites. We recall the notation $\mu^{\alpha,\rho}$ for the high density half-space stationary measures
from the preceding section. \(\pi^\rho\) is the bernoulli product measure in half-space. Liggett proved in \cite[Theorem~1.8]{Liggett1975} the following local convergence of measures:
\begin{equation}
\label{eq:liggett-convergence-section}
    \lim_{t\to\infty} \pi^\rho S_t^\alpha\Longrightarrow\mu^{\alpha,\rho}.
\end{equation}

\begin{proposition}
\label{prop:packed-boundary-local-equilibrium}
Let $n_T$ be nonnegative integers with $n_T/T\to v$ and set
$\rho=(1+v)/2$. Assume $0<\alpha\leq1$ and $\max\{0,1-2\alpha\}<v<1$. Then, for every fixed $k\geq 1$,
\[
 \bigl(\zeta_0^{[n_T]}S_T^\alpha\bigr)|_{[1,k]}
 \Longrightarrow
 \mu^{\alpha,\rho}|_{[1,k]}.
\]
\end{proposition}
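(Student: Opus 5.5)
We prove the statement by sandwiching the step initial condition between two Bernoulli product initial conditions, using attractiveness (monotone coupling) of one-species half-line TASEP together with Liggett's convergence \eqref{eq:liggett-convergence-section}. The density $\rho=(1+v)/2$ is the value of the rarefaction profile $\rho^{\alpha,v}(x)=(1+v-x)/2$ as $x\downarrow0$ (Theorem~\ref{thm:half-line-hydrodynamic-inputs}; the condition $v>1-2\alpha$ puts us in the non-boundary-layer branch), and $\rho>\max(1/2,1-\alpha)$ is exactly the high-density regime in which $\mu^{\alpha,\rho}$ is defined.

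\emph{Step 1: the ordering of the limiting measures.} The family $\mu^{\alpha,\rho}$ is stochastically increasing in $\rho$ and weakly continuous in $\rho$ on the high-density range. To see this, note that Liggett's limits preserve the order of the $\pi^{\rho}$ under the basic coupling. Continuity follows from the explicit matrix product formula in Proposition~\ref{prop:half-space-mpa}, or alternatively from the engine construction, whose kernels depend continuously on $\rho$.

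\emph{Step 2: local comparison.} Fix $\delta>0$ small, with $\rho\pm\delta$ still in the admissible range.
\begin{itemize}
\item[(a)] \emph{Upper bound.} Start the process from $\pi^{\rho+\delta}$ restricted to $[1,\infty)$, but with all sites in $[1,n_T]$ set to $1$. This configuration dominates $\zeta_0^{[n_T]}$. Its hydrodynamic profile at time $T$ near $x=0$ is $\max(\rho+\delta,(1+v)/2)=\rho+\delta$. A rarefaction from the level $1$ down to $\rho+\delta$ starts at speed $-1$ and ends at speed $1-2(\rho+\delta)<0$, so it has left the bulk by the relevant time scale. The boundary cannot sustain a larger density, so the local law near site $1$ should be $\mu^{\alpha,\rho+\delta}$.
\item[(b)] \emph{Lower bound.} Start from $\pi^{\rho-\delta}$ on $[1,n_T]$ and $0$ to the right of $n_T$. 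This is dominated by the step configuration.
\end{itemize}
To make both local limits rigorous rather than heuristic, use the finite-speed-of-propagation coupling. Over a time window $[T-sT,T]$ with $s$ small, the configuration near the boundary depends only on the sites within distance $O(sT)$. At time $T-sT$, the hydrodynamic limit (Theorem~\ref{thm:half-line-hydrodynamic-inputs}) shows that the step evolution has density within $\delta$ of $\rho$ on $[1,csT]$ in the macroscopic sense. Then apply attractiveness from time $T-sT$, comparing with product measures of density $\rho\pm\delta$.

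\emph{Main obstacle.} Macroscopic density closeness does not give microscopic stochastic domination by $\pi^{\rho+\delta}$. We must therefore pass from averaged density to a local statement. The plan is to use monotonicity in the initial condition together with a current/mass-counting argument, in the spirit of Liggett's and Bahadoran's proofs:
\begin{itemize}
\item Any weak subsequential limit $\lambda$ of $(\zeta_0^{[n_T]}S^\alpha_T)|_{[1,k]}$ is stationary for the half-line dynamics, since $sT\to\infty$.
\item Stationary measures of half-line TASEP with boundary rate $\alpha$ are classified by Liggett as mixtures of the $\mu^{\alpha,\rho'}$ together with the low-density limit.
\item The parameter $\rho'$ is identified by the asymptotic density at infinity, which by the hydrodynamics and the injection current in Corollary~\ref{cor:half-line-injection-current}(i) must give current $J(\rho')=(1-v)^2/4\cdot$(appropriate derivative), i.e.\ $\rho'(1-\rho')=(1-v^2)/4$, hence $\rho'=(1+v)/2$. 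More precisely, comparing $N^{\mathrm{in},[n_T]}$ across nearby $v$ identifies the local current as $J(\rho)$.
\item Within the high-density family, $\rho'\mapsto J(\rho')$ is injective. Together with the sandwich $\mu^{\alpha,\rho-\delta}\preceq\lambda\preceq\mu^{\alpha,\rho+\delta}$, which rules out mixtures, this forces $\lambda=\mu^{\alpha,\rho}$.
\end{itemize}
Finally, letting $\delta\downarrow0$ and using the continuity from Step 1 completes the argument.
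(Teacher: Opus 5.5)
Your comparison configurations are exactly the right ones: Bernoulli$(\rho+\delta)$ with $[1,n_T]$ filled, and Bernoulli$(\rho-\delta)$ emptied beyond $n_T$. The ending is also right: attractiveness, Liggett's convergence and continuity of $\mu^{\alpha,\rho}$ from the matrix product ansatz. The gap is the step in between. You never show that these modifications leave the configuration on $[1,k]$ at time $T$ unchanged with probability tending to one, i.e. that $(\pi^{\rho-\delta}S^\alpha_T)(f)-o(1)\le\mathbb E f(\zeta_T^{[n_T]})\le(\pi^{\rho+\delta}S^\alpha_T)(f)+o(1)$ for increasing $f$. For (a) you give only a macroscopic rarefaction heuristic (``should be''), and for (b) you give nothing. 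As you note yourself, hydrodynamics and finite propagation speed carry no microscopic information at sites $1,\dots,k$.

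The fallback plan does not repair this, for three reasons.
\begin{itemize}
\item Subsequential limits of $\zeta_0^{[n_T]}S_T^\alpha$ are not automatically invariant, and $sT\to\infty$ does not help. You would need the local law to vary slowly over $O(1)$ time windows, which is essentially the claim.
\item Corollary~\ref{cor:half-line-injection-current}(i) controls the time-integrated injection count. Turning it into the instantaneous boundary current $\alpha\mathbb P(\zeta_T(1)=0)$ requires differentiating a limit, which you do not justify. Even then, a current identity only fixes the average of $J(\rho')$ over a mixture.
\item To exclude mixtures you invoke $\mu^{\alpha,\rho-\delta}\preceq\lambda\preceq\mu^{\alpha,\rho+\delta}$, which is precisely the unproved step. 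If you had it, continuity alone would finish, so the stationarity, classification and current steps would be unnecessary.
\end{itemize}
The argument is therefore circular at its key point.

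What is missing is a mechanism showing that the boundary window at time $T$ depends only on initial data in a localized region. The paper uses the variational coupling $h(x,T)=\min_{y\ge0}F_T(y)$ with $F_T(y)=h_{\xi}(y)+h^{\mathrm{step}}_{y,0}(x,T)$. Its minimizer is the endpoint $B^{\xi;x,T}(0)$ of the rightmost backward path.
\begin{itemize}
\item For Bernoulli$(\rho)$ data, the law of large numbers and the step-height shape $\gamma_\alpha$ give uniform convergence of $T^{-1}F_T(\lfloor uT\rfloor)$ to $\Phi(u)=\gamma_\alpha(u)-(2\rho-1)u$. This is where the injection current of Corollary~\ref{cor:half-line-injection-current}(i) actually enters.
\item $\Phi$ has the unique minimizer $2\rho-1$, so $B^{\rho;x,T}(0)/T\to 2\rho-1$.
\item Since $2(\rho-\delta)-1<v<2(\rho+\delta)-1$, with high probability the minimizers lie below $n_T$ in the lower case and above $n_T$ in the upper case.
\item Truncation, or filling up to an additive constant, adds to $F_T$ a nonnegative term that vanishes at the minimizer. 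Hence the heights at $x=0,\dots,k$, and so the occupations on $[1,k]$, agree pathwise with those from pure Bernoulli data.
\end{itemize}
This localization is the input that makes your Step 2 rigorous.
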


We prove the proposition after two lemmas.  The first identifies the only
step-height shape needed in the argument, and the second localizes a backward
path for a Bernoulli initial configuration.

For a one-color configuration $\xi$, anchor its height by
\[
    h_\xi(0)=0,
    \qquad h_\xi(x)-h_\xi(x-1)=1-2\xi(x).
\]
Write $N_t^{\mathrm{in}}$ for the number of successful injections in the
process $\zeta_t^\xi$.  For any initial vertical shift $C$, define its
evolving height by
\[
 h(x,t):=C+2N_t^{\mathrm{in}}+x-2\sum_{j=1}^x\zeta_t^\xi(j),
 \qquad x\in\mathbb Z_{\geq0}.
\]
Thus $h(x,0)=h_\xi(x)+C$ and $h(0,t)=C+2N_t^{\mathrm{in}}$;
additive constants are preserved by the evolution.  At a restart time $s$,
we use the actual profile $h(\cdot,s)$, without resetting its boundary
height.  All height processes below are under basic coupling, driven by
the same boundary and bulk clocks.

For $0\leq s\leq t$ and $y\in\mathbb Z_{\geq0}$, let
$h^{\mathrm{step}}_{y,s}(\,\cdot\,,r)$, $r\geq s$, be the half-line TASEP
height function with boundary rate $\alpha$, started at time $s$ from the
step profile centered at $y$:
\[
    h^{\mathrm{step}}_{y,s}(z,s)=|z-y|,
    \qquad z\in\mathbb Z_{\geq0}.
\]
This is the packed configuration on $[1,y]$, with initial boundary
height $y$ rather than zero.  The dependence on $\alpha$ is suppressed
from the notation.  Under the basic coupling, the variational formula gives, for every half-line TASEP height function $h$,
every $0\leq s\leq t$, and every $x\geq0$,
\begin{equation}
\label{eq:half-line-envelope-section}
    h(x,t)=\min_{y\geq0}
    \{h(y,s)+h^{\mathrm{step}}_{y,s}(x,t)\}.
\end{equation}
This is the standard step-height form of the variational coupling.  Under the
present height normalization, it is equivalent to the half-space
Poisson-avoiding-metric formulation in \cite[(4)--(5)]{DauvergneZhang2026}:
the variational kernel between $(y,s)$ and $(x,t)$ there is
$h^{\mathrm{step}}_{y,s}(x,t)$.

We also recall the corresponding backward-path interpretation.  Starting
from $(x,t)$, trace the clock rings backward.  When the path is at $z$ and
encounters a ring of the clock associated with the height at $z$, it stays at
$z$ if the height was raised at that ring; otherwise it moves to a neighboring
site $w\in\mathbb Z_{\geq0}$ with
\[
    h(w,r)=h(z,r)-1.
\]
At the boundary this neighbor is $w=1$.  Whenever two neighbors are
admissible, choose the larger one.  Between clock rings the path stays put.
This rule defines the canonical rightmost backward path.  For the height
process started from $h_\xi$, denote it by $B^{\xi;x,t}$.  Then
\begin{equation}
\label{eq:backward-path-identity-section}
 h(x,t)=h(B^{\xi;x,t}(r),r)
       +h^{\mathrm{step}}_{B^{\xi;x,t}(r),r}(x,t),
 \qquad 0\leq r\leq t.
\end{equation}
In particular, for each $s\in[0,t]$, the point $B^{\xi;x,t}(s)$ is a
minimizer in \eqref{eq:half-line-envelope-section} with starting time $s$;
its time-zero endpoint is the corresponding minimizer when $s=0$.
This is the classic backward-path
form of the variational coupling; under the present height normalization it is
equivalent to the formulation of \cite{DauvergneZhang2026} recalled above.

We shall use the elementary bounds
\begin{align}
 h^{\mathrm{step}}_{y,s}(x,t)&\geq |y-x|,
 \label{eq:step-height-lower-section}\\
 |h^{\mathrm{step}}_{y,s}(x,t)-h^{\mathrm{step}}_{z,s}(x,t)|&\leq |y-z|,
 \label{eq:step-height-lip-start-section}\\
 |h^{\mathrm{step}}_{y,s}(x,t)-h^{\mathrm{step}}_{y,s}(z,t)|&\leq |x-z|.
 \label{eq:step-height-lip-terminal-section}
\end{align}
The first follows because a height evolves only by raising local minima.  For
the second, the two initial wedges differ uniformly by at most $|y-z|$, and
the coupled height evolution is order preserving and commutes with addition
of constants.  The last bound follows because every height profile has slopes
in $\{-1,1\}$.

For $0<\alpha\leq1$ and $u\in[0,1]$, define
\begin{equation}
\label{eq:step-shape-casewise-section}
\gamma_\alpha(u)
=\frac{1+u^2-(1-2\alpha-u)_+^2}{2}.
\end{equation}

\begin{lemma}
\label{lem:step-height-shape-section}
Let $m_T$ be nonnegative integers such that $m_T/T\to u\in[0,1]$.  Then,
for every fixed integer $k\geq0$,
\begin{equation}
\label{eq:pointwise-step-shape-section}
 \max_{0\leq x\leq k}
 \left|\frac1T h^{\mathrm{step}}_{m_T,0}(x,T)-\gamma_\alpha(u)\right|
 \xrightarrow{\mathbb P}0.
\end{equation}
\end{lemma}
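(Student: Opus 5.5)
The plan is to evaluate the height at the boundary site $x=0$ exactly through the injection count, and then pass to the finitely many sites $0\le x\le k$ using the Lipschitz bound \eqref{eq:step-height-lip-terminal-section}.

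First, we identify $h^{\mathrm{step}}_{m_T,0}$ with the height process of the packed configuration $\zeta_0^{[m_T]}=\mathbf 1_{[1,m_T]}$ with vertical shift $C=m_T$. Indeed, the initial profile $|z-m_T|$ has slope $-1$ on $[1,m_T]$ (occupied sites), slope $+1$ beyond (empty sites), and boundary value $m_T$. By the definition of the evolving height, $h(0,t)=C+2N_t^{\mathrm{in}}$. Hence
\[
 \frac1T h^{\mathrm{step}}_{m_T,0}(0,T)=\frac{m_T}{T}+\frac{2N_T^{\mathrm{in},[m_T]}}{T}.
\]
Here the injections are those of the particle-hole projection $\zeta^{[m_T]}$. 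This is the same half-line TASEP with boundary rate $\alpha$ started from the step of length $m_T$, which is exactly the setting of Corollary~\ref{cor:half-line-injection-current}(i).

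Second, we apply Corollary~\ref{cor:half-line-injection-current}(i), which gives convergence in probability of $N_T^{\mathrm{in},[m_T]}/T$, and check that the two cases match $\gamma_\alpha(u)$.

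If $u\ge 1-2\alpha$, the limit is $u+\tfrac{(1-u)^2}{2}=\tfrac{1+u^2}{2}$. This equals $\gamma_\alpha(u)$, since the positive part vanishes.

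If $u<1-2\alpha$, the limit is $u+2\alpha(1-\alpha-u)$. Expanding $(1-2\alpha-u)^2=1+4\alpha^2+u^2-4\alpha-2u+4\alpha u$ gives
\[
 \tfrac12\bigl(1+u^2-(1-2\alpha-u)^2\bigr)=u+2\alpha-2\alpha^2-2\alpha u,
\]
which is the same quantity. So in both cases $T^{-1}h^{\mathrm{step}}_{m_T,0}(0,T)\to\gamma_\alpha(u)$ in probability.

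Third, by \eqref{eq:step-height-lip-terminal-section}, $|h^{\mathrm{step}}_{m_T,0}(x,T)-h^{\mathrm{step}}_{m_T,0}(0,T)|\le k$ for all $0\le x\le k$. After dividing by $T$, this deterministic error is $k/T\to0$, which gives \eqref{eq:pointwise-step-shape-section}.

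The only point needing care is the first step: the height normalization must match the one under which the injection count enters, namely boundary height $C+2N^{\mathrm{in}}$ with $C=m_T$. We also need that Corollary~\ref{cor:half-line-injection-current}(i) applies along real $T$ rather than only integer $N$; its proof already reduces to integer times, and the statement is phrased for $m_T/T\to u$.
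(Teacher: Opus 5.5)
Your proposal is correct and follows the paper's proof step for step. The paper likewise writes $h^{\mathrm{step}}_{n,0}(0,T)=n+2N_T^{\mathrm{in},[n]}$, applies Corollary~\ref{cor:half-line-injection-current}(i) together with the definition of $\gamma_\alpha$, and then uses the bound \eqref{eq:step-height-lip-terminal-section} to pass from $x=0$ to all $0\le x\le k$. Your explicit algebraic check of the two cases of $\gamma_\alpha$, and your remark on real versus integer $T$, fill in details that the paper leaves implicit.
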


\begin{proof}
The step profile centered at $n$ is the packed configuration on $[1,n]$,
with boundary height $h^{\mathrm{step}}_{n,0}(0,0)=n$.  Consequently,
\begin{equation}
\label{eq:step-height-current-section}
    h^{\mathrm{step}}_{n,0}(0,T)=n+2N_T^{\mathrm{in},[n]}.
\end{equation}
Corollary~\ref{cor:half-line-injection-current}\textup{(i)} and the definition of $\gamma_\alpha$ therefore give
\[
 \frac1T h^{\mathrm{step}}_{m_T,0}(0,T)
 \xrightarrow{\mathbb P}\gamma_\alpha(u).
\]
The bound \eqref{eq:step-height-lip-terminal-section} makes the difference between the values at $0$ and at any $x\in\{0,\ldots,k\}$ at most $k$, which proves \eqref{eq:pointwise-step-shape-section}.
\end{proof}

We denote $B^{\xi^\rho;x,T}$ by $B^{\rho;x,T}$ in short.

\begin{lemma}
\label{lem:backward-characteristic-section}
Assume $0<\alpha\leq1$ and $\max\{\frac12,1-\alpha\}<\rho<1$.
Then, for every fixed $x\in\mathbb Z_{\geq0}$,
\begin{equation}
\label{eq:backward-characteristic-section}
 \frac{B^{\rho;x,T}(0)}T
 \xrightarrow{\mathbb P}2\rho-1.
\end{equation}
\end{lemma}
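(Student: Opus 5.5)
The plan is to read off $B^{\rho;x,T}(0)$ from the variational formula \eqref{eq:half-line-envelope-section} at $s=0$. By \eqref{eq:backward-path-identity-section}, the time-zero endpoint of the backward path is a minimizer of
\[
 y\longmapsto h_{\xi^\rho}(y)+h^{\mathrm{step}}_{y,0}(x,T),\qquad y\in\mathbb Z_{\geq0}.
\]
So it suffices to show that, after dividing by $T$ and setting $y=uT$, this random function concentrates around a deterministic function with a unique, well-separated minimizer at $u^*=2\rho-1$. Any (in particular the chosen) minimizer then lies in $[(u^*-\delta)T,(u^*+\delta)T]$ with probability tending to one.

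The limiting function is
\[
 f(u)=(1-2\rho)u+\gamma_\alpha(u)\quad(0\le u\le1),
\]
since $h_{\xi^\rho}(y)/y\to1-2\rho$ by the law of large numbers and $h^{\mathrm{step}}_{y,0}(x,T)/T\to\gamma_\alpha(y/T)$ by Lemma~\ref{lem:step-height-shape-section}. Its minimizer is located as follows.
\begin{itemize}
\item On $u<1-2\alpha$ (only present when $\alpha<1/2$) one has $f'(u)=1-2\rho+(1-2\alpha)=2(1-\rho-\alpha)<0$, because $\rho>1-\alpha$. So $f$ is strictly decreasing there.
\item On $u\ge(1-2\alpha)_+$ one has $f(u)=(1-2\rho)u+(1+u^2)/2$, which is strictly convex with critical point $u^*=2\rho-1$.
\item The point $u^*$ lies in this region and in $(0,1)$ because $\rho>1-\alpha$, $\rho>1/2$ and $\rho<1$.
\end{itemize}
Since $f$ is continuous, $u^*$ is the unique minimizer on $[0,1]$. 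For $\delta>0$ there is then $\epsilon>0$ with $f(u)\ge f(u^*)+\epsilon$ whenever $|u-u^*|\ge\delta$.

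For $y\ge T$ I will not use the shape at all. I combine \eqref{eq:step-height-lower-section}, $h^{\mathrm{step}}_{y,0}(x,T)\ge y-x$, with a law-of-large-numbers lower bound $h_{\xi^\rho}(y)\ge(1-2\rho-\eta)y-C$ valid for all $y$. This gives a lower bound $\approx 2(1-\rho)u\,T$. That bound is at least $2(1-\rho)T=f(1)T>f(u^*)T$ with a margin, so such $y$ are not minimizers with high probability. The same crude bound, with $R$ large, also rules out any trouble uniformly in $y$.

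The main step, which I expect to be the obstacle, is upgrading the pointwise convergence in Lemma~\ref{lem:step-height-shape-section} (which is along one sequence $m_T/T\to u$) to uniform convergence in $y\in[0,T]$. For this I take a finite grid $u_1,\dots,u_K$ of mesh $\delta'$ and apply the lemma with $m_T=\lfloor u_iT\rfloor$ at each grid point. I then interpolate with the Lipschitz bound \eqref{eq:step-height-lip-start-section}, $|h^{\mathrm{step}}_{y,0}-h^{\mathrm{step}}_{z,0}|\le|y-z|$, together with the continuity of $\gamma_\alpha$. This yields
\[
 \sup_{0\le y\le T}\Bigl|\tfrac1T h^{\mathrm{step}}_{y,0}(x,T)-\gamma_\alpha(y/T)\Bigr|\to0\quad\text{in probability}.
\]
Similarly, $\sup_{y\le T}|h_{\xi^\rho}(y)-(1-2\rho)y|/T\to0$ almost surely.

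On the intersection of these events, the minimum over $|y/T-u^*|\ge\delta$ exceeds the value at $y=\lfloor u^*T\rfloor$ by at least $\epsilon T/2$. Every minimizer, including $B^{\rho;x,T}(0)$, therefore satisfies $|B^{\rho;x,T}(0)/T-(2\rho-1)|<\delta$, which proves \eqref{eq:backward-characteristic-section}.
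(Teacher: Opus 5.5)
Your proposal is correct and follows essentially the same route as the paper. Like the paper, you minimize $F_T(y)=h_{\xi^\rho}(y)+h^{\mathrm{step}}_{y,0}(x,T)$, upgrade Lemma~\ref{lem:step-height-shape-section} to uniform convergence on $[0,T]$ via a finite grid and the Lipschitz bounds, discard $y\ge T$ using \eqref{eq:step-height-lower-section} and the law of large numbers, and conclude from the unique minimizer $2\rho-1$ of $\gamma_\alpha(u)-(2\rho-1)u$. Your explicit case analysis of that minimizer (strict decrease on $u<1-2\alpha$ because $\rho>1-\alpha$, strict convexity beyond) fills in a step the paper only asserts.
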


\begin{proof}
Fix $x\in\mathbb Z_{\geq0}$ and write
\[
F_T(y):=h_{\xi^\rho}(y)+h^{\mathrm{step}}_{y,0}(x,T),
\qquad
\Phi(u):=\gamma_\alpha(u)-(2\rho-1)u.
\]
By the strong law of large numbers,
\[
\frac{h_{\xi^\rho}(y)}y\longrightarrow 1-2\rho
\qquad\text{almost surely as }y\to\infty.
\]
Together with Lemma~\ref{lem:step-height-shape-section},
this gives, for each $u\in[0,1]$,
\[
\frac{F_T(\lfloor uT\rfloor)}T
\xrightarrow{\mathbb P}\Phi(u).
\]
We claim that this convergence is uniform on $[0,1]$.
Indeed, $F_T$ and $\Phi$ are both Lipschitz functions.
For any finite $\delta$-net $\{u_0,\ldots,u_M\}$ of $[0,1]$,
\[
\sup_{0\leq y\leq T}
\left|\frac{F_T(y)}T-\Phi\left(\frac yT\right)\right|
\leq
\max_{0\leq j\leq M}
\left|\frac{F_T(\lfloor u_jT\rfloor)}T-\Phi(u_j)\right|
+4\delta+\frac2T,
\]
where the supremum is over integer $y$.
The maximum tends to zero in probability for fixed $\delta$.
Letting $\delta\downarrow0$ proves
\begin{equation}
\label{eq:random-objective-uniform-section}
\sup_{0\leq y\leq T}
\left|\frac{F_T(y)}T-\Phi\left(\frac yT\right)\right|
\xrightarrow{\mathbb P}0.
\end{equation}

To exclude minimizers beyond $T$, we apply the preceding almost sure limit and \eqref{eq:step-height-lower-section} to derive
\[
\liminf_{T\to\infty}\inf_{y\geq T}
\frac{F_T(y)}T\geq 2(1-\rho)
\qquad\text{almost surely}.
\]
On the other hand,
\[
\frac{F_T(\lfloor(2\rho-1)T\rfloor)}T
\xrightarrow{\mathbb P}2\rho(1-\rho)
<2(1-\rho).
\]
Hence every minimizer lies in $[0,T]$ with probability
tending to one.

Finally, $\Phi$ has a unique minimizer at $2\rho-1$,
as follows from the definition of $\gamma_\alpha$.
By \eqref{eq:backward-path-identity-section},
$B^{\rho;x,T}(0)$ minimizes $F_T$.
The uniform convergence therefore imply \eqref{eq:backward-characteristic-section}.
\end{proof}

\begin{proof}[Proof of Proposition~\ref{prop:packed-boundary-local-equilibrium}]
Choose $\varepsilon>0$ so that $\rho_-:=\rho-\varepsilon$ and
$\rho_+:=\rho+\varepsilon$ both lie in $(\frac12,1)$ if
$\frac12\leq\alpha\leq1$, and in
$(1-\alpha,1)$ if $0<\alpha<\frac12$.  Then
$2\rho_--1<v<2\rho_+-1$.

\emph{Truncation.}
For a configuration $\xi$, define
$(\mathcal T_n\xi)(j)=\xi(j)\mathbf 1_{\{j\leq n\}}$, so that
\[
 h_{\mathcal T_n\xi}(y)-h_\xi(y)=
 \begin{cases}0,&y\leq n,\\2\sum_{j=n+1}^y\xi(j),&y>n,\end{cases}
\]
and the added term is nonnegative.  Suppose that
$B^{\xi;x,T}(0)\leq n$.  By \eqref{eq:backward-path-identity-section}, this
endpoint attains the minimum in \eqref{eq:half-line-envelope-section} with
$s=0$; since
the truncation term vanishes there and is nonnegative everywhere, truncation
does not change the height at $(x,T)$.  Consequently, if this holds for every
$x=0,\ldots,k$, then
\[
    \zeta_T^{\mathcal T_n\xi}|_{[1,k]}
    =\zeta_T^\xi|_{[1,k]},
\]
because the two height functions agree at $x=0,\ldots,k$.

Let
\[
 G_T^-:=\bigcap_{x=0}^k
 \{B^{\rho_-;x,T}(0)\leq n_T\}.
\]
Put $a_-:=2\rho_--1$ and $\eta_-:=(v-a_-)/3>0$.  Since $n_T/T\to v$,
for all sufficiently large $T$ a union bound and
Lemma~\ref{lem:backward-characteristic-section} give
\[
 \mathbb P((G_T^-)^c)
 \leq\sum_{x=0}^k
 \mathbb P\bigl(B^{\rho_-;x,T}(0)>n_T\bigr)
 \leq\sum_{x=0}^k
 \mathbb P\left(
  \frac{B^{\rho_-;x,T}(0)}T>a_-+\eta_-
 \right)
 \longrightarrow0.
\]
Moreover, pathwise,
$\mathcal T_{n_T}\xi^{\rho_-}\leq\zeta_0^{[n_T]}$.

\emph{Filling.}
Define
\[
 (\mathcal F_n\xi)(j)=
 \begin{cases}1,&j\leq n,\\ \xi(j),&j>n,\end{cases}
 \qquad
 \Lambda_n(\xi):=2\sum_{j=1}^n(1-\xi(j)).
\]
Then
\[
 h_{\mathcal F_n\xi}(y)+\Lambda_n(\xi)-h_\xi(y)=
 \begin{cases}2\sum_{j=y+1}^n(1-\xi(j)),&y<n,\\0,&y\geq n.\end{cases}
\]
Again the added term is nonnegative.  Hence, if
$B^{\xi;x,T}(0)\geq n$ for every $x=0,\ldots,k$, then
\[
    \zeta_T^{\mathcal F_n\xi}|_{[1,k]}
    =\zeta_T^\xi|_{[1,k]}.
\]
Indeed, apply the same variational argument to
$h_{\mathcal F_n\xi}+\Lambda_n(\xi)$.  Adding this constant commutes
with the height evolution and disappears upon taking adjacent height
differences.

Let
\[
 G_T^+:=\bigcap_{x=0}^k
 \{B^{\rho_+;x,T}(0)\geq n_T\}.
\]
Put $a_+:=2\rho_+-1$ and $\eta_+:=(a_+-v)/3>0$.  Since $n_T/T\to v$,
for all sufficiently large $T$ a union bound and
Lemma~\ref{lem:backward-characteristic-section} give
\[
 \mathbb P((G_T^+)^c)
 \leq\sum_{x=0}^k
 \mathbb P\bigl(B^{\rho_+;x,T}(0)<n_T\bigr)
 \leq\sum_{x=0}^k
 \mathbb P\left(
  \frac{B^{\rho_+;x,T}(0)}T<a_+-\eta_+
 \right)
 \longrightarrow0.
\]
Moreover, pathwise,
$\zeta_0^{[n_T]}\leq\mathcal F_{n_T}\xi^{\rho_+}$.

\emph{The squeeze.}
Couple all evolutions using the clocks already fixed above.  On $G_T^-$ the
truncated process agrees with $\zeta_T^{\xi^{\rho_-}}$ on $[1,k]$, and on
$G_T^+$ the filled process agrees with $\zeta_T^{\xi^{\rho_+}}$ there.
Attractiveness therefore gives, for every increasing $0\leq f\leq1$
depending only on $[1,k]$,
\begin{equation}
\label{eq:finite-time-squeeze-section}
 (\pi^{\rho_-}S_T^\alpha)(f)-o(1)
 \leq\mathbb E f(\zeta_T^{[n_T]})
 \leq(\pi^{\rho_+}S_T^\alpha)(f)+o(1).
\end{equation}
Here we used that $\xi^{\rho_\pm}$ are independent of the clocks and have
laws $\pi^{\rho_\pm}$; the two error terms are bounded by
$\mathbb P((G_T^\pm)^c)$.

Letting $T\to\infty$ and using \eqref{eq:liggett-convergence-section} yields
\[
 \mu^{\alpha,\rho_-}(f)
 \leq\liminf_{T\to\infty}\mathbb E f(\zeta_T^{[n_T]})
 \leq\limsup_{T\to\infty}\mathbb E f(\zeta_T^{[n_T]})
 \leq\mu^{\alpha,\rho_+}(f).
\]
By the matrix-product formula in Proposition~\ref{prop:half-space-mpa},
the finite-dimensional marginals of $\mu^{\alpha,\rho}$ depend continuously
on $\rho$, so letting $\varepsilon\downarrow0$ in this sandwich gives
the claimed convergence.
\end{proof}

\subsection{The signed speed of color 1}

\begin{theorem}
\label{thm:first-color-signed-speed}
As $t\to\infty$,
\[
    \frac{X_1(t)}t\Longrightarrow V_\alpha,
\]
whose absolutely continuous part has density
\begin{equation}
\label{eq:first-color-speed-density}
 f_\alpha(v)=
 \begin{cases}
 \displaystyle
  \frac12\mathbf 1_{(-1,0)}(v)
  +\frac{v}{2\alpha}\mathbf 1_{(0,1)}(v),
     &\frac12\leq\alpha\leq1,\\[3mm]
 \displaystyle
  \frac12\mathbf 1_{(-1,\,2\alpha-1)}(v)
  +\frac{v}{2\alpha}\mathbf 1_{(1-2\alpha,\,1)}(v),
     &0<\alpha<\frac12.
 \end{cases}
\end{equation}
If $\alpha>\frac12$, the limiting law also has an atom at zero of mass
$(2\alpha-1)/(4\alpha)$; this mass vanishes at $\alpha=\frac12$.  For
$0<\alpha<\frac12$, the limiting law has no atoms.
\end{theorem}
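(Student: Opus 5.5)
The plan is to compute the distribution function of $X_1(t)/t$ separately on the positive and negative half-lines, reducing each side to a one-color half-line TASEP quantity already controlled in this section. The free atom at zero is then pinned down by comparing the two one-sided limits.

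\emph{Positive side.} For $v>0$ take $m_t=\lfloor vt\rfloor$. By Lemma~\ref{lem:one-color-projection},
\[
 \mathbb P\bigl(X_1(t)\le m_t\bigr)=\mathbb P\bigl(\zeta^{[m_t]}_t(1)=1\bigr).
\]
Here $\zeta^{[m]}$ is half-line TASEP started from the packed configuration on $[1,m]$. For $\max\{0,1-2\alpha\}<v<1$, Proposition~\ref{prop:packed-boundary-local-equilibrium} (with $k=1$) shows that this probability converges to $\mu^{\alpha,\rho}(\eta(1)=1)$, where $\rho=(1+v)/2$. Proposition~\ref{prop:one-species-left-edge} at $q=0$ then gives
\[
 \mu^{\alpha,\rho}(\eta(1)=1)=1-j(\rho)=1-\frac{1-v^2}{4\alpha}.
\]
Hence the limiting distribution function is $F(v)=1-(1-v^2)/(4\alpha)$ on this range, with density $v/(2\alpha)$. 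At $v=1$ it equals one, so there is no mass beyond $1$ (this can also be checked directly from the hydrodynamics).

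\emph{Negative side.} Color-position symmetry (Proposition~\ref{prop:half-space-color-position}) gives $\mathbb P(X_1(t)\le -n)=\mathbb P(\eta_t(1)\le -n)$. I would then use the shift invariance of Theorem~\ref{thm:half-space-shift-invariance}, applied to a single height function at negative sites (equivalently to $\eta_t(-1)\ge n$ via the sign symmetry), to move the color threshold $n$ into the site index. The target identity is the exact relation
\[
 \mathbb P\bigl(X_1(t)\le -n\bigr)=\mathbb E\,\zeta^{[0]}_t(n),
\]
which is presumably the display \eqref{eq:negative-speed-exact} invoked earlier. Since the left side is nonincreasing in $n$, this identity also supplies the monotonicity used in Corollary~\ref{cor:half-line-injection-current}(ii). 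Getting the index bookkeeping in this shift-invariance step exactly right is the step I expect to be the main obstacle.

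Given the identity, Corollary~\ref{cor:half-line-injection-current}(ii) yields $\mathbb P(X_1(t)/t\le -x)\to\rho^{\alpha,0}(x)$ for every $x>0$. For $\alpha\ge 1/2$ the profile is $\rho^{\alpha,0}(x)=(1-x)/2$ on $(0,1)$, which gives density $1/2$ on $(-1,0)$. For $\alpha<1/2$ the profile is $\alpha$ on $(0,1-2\alpha)$ and $(1-x)/2$ on $(1-2\alpha,1)$, which gives density $1/2$ on $(-1,2\alpha-1)$ and no mass on $(2\alpha-1,0)$.

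\emph{Gluing and atoms.} Both one-sided limits are continuous on their ranges, so the convergence of distribution functions established above at continuity points proves weak convergence to $V_\alpha$ with density $f_\alpha$.
\begin{itemize}
 \item For $\alpha>1/2$: the negative side gives $\mathbb P(V_\alpha<0)=1/2$, while the positive side gives $F(0+)=1-1/(4\alpha)$. The difference is an atom at zero of mass $(2\alpha-1)/(4\alpha)$, which vanishes at $\alpha=1/2$.
 \item For $\alpha<1/2$: the negative side has total mass $\rho^{\alpha,0}(0+)=\alpha$. By monotonicity of the limit, $F$ is constant on $[2\alpha-1,1-2\alpha]$. At $v=1-2\alpha$ the positive formula gives $1-(4\alpha-4\alpha^2)/(4\alpha)=\alpha$. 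The two values match, so there is no atom.
\end{itemize}
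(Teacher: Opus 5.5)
Your proposal follows the paper's proof essentially step for step. The positive side uses Lemma~\ref{lem:one-color-projection}, Proposition~\ref{prop:packed-boundary-local-equilibrium} and Proposition~\ref{prop:one-species-left-edge}. The negative side uses an exact identity together with Corollary~\ref{cor:half-line-injection-current}(ii). The gluing is the same monotonicity argument, giving the atom for $\alpha>\tfrac12$ and the flat stretch for $\alpha<\tfrac12$. Your computations of $1-j(\rho)$, of the profile $\rho^{\alpha,0}$, and of the atom mass $(2\alpha-1)/(4\alpha)$ are all correct.

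The one piece you leave as a target is \eqref{eq:negative-speed-exact} itself. Here your sketch, read literally, would not work: shift invariance alone cannot ``move the threshold into the site index.'' Theorem~\ref{thm:half-space-shift-invariance} concerns only heights at negative sites with positive thresholds, and it shifts color and site together by one. No sequence of such shifts reaches a statement about site $n$.

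The paper closes the step as follows.
\begin{itemize}
\item From $\mathbb P(X_1(t)\le -n)=\mathbb P(\eta_t(-1)\ge n)$, write this probability as $\mathbb E\bigl[h^{\geq n}(t,-1)-h^{\geq n}(t,-2)\bigr]$.
\item Apply shift invariance once, replacing $h^{\geq n}(t,-2)$ by $h^{\geq n+1}(t,-1)$. One can use the pair $(n,-1),(n,-2)$, or, by linearity of expectation, just the single height $h^{\geq n}(t,-2)\overset{d}{=}h^{\geq n+1}(t,-1)$.
\item The difference $h^{\geq n}(t,-1)-h^{\geq n+1}(t,-1)$ is exactly the indicator that color $n$ lies on the negative half-line. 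So the expectation equals $\mathbb P(X_n(t)<0)$.
\item Apply color--position symmetry a second time to obtain $\mathbb P(\eta_t(n)<0)=\mathbb E\,\zeta_t^{[0]}(n)$.
\end{itemize}
This second inversion is the idea missing from your sketch. With it in place, your argument is complete.
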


\begin{proof}
We first consider negative speeds. For $n\geq1$, signed symmetry and
Proposition~\ref{prop:half-space-color-position} give
\[
 \mathbb P(X_1(t)\leq-n)
 =\mathbb P(\eta_t(-1)\geq n)
 =\mathbb E\bigl[
    h^{\geq n}(t,-1)-h^{\geq n}(t,-2)
   \bigr].
\]
By Theorem~\ref{thm:half-space-shift-invariance}, applied to
$(n,-1)$ and $(n,-2)$, this expectation equals
\[
 \mathbb E\bigl[
    h^{\geq n}(t,-1)-h^{\geq n+1}(t,-1)
   \bigr].
\]
The difference inside the expectation is the indicator that color $n$
lies on the negative half-line. Another application of
Proposition~\ref{prop:half-space-color-position} therefore yields
\begin{equation}
\label{eq:negative-speed-exact}
 \mathbb P(X_1(t)\leq-n)
 =\mathbb P(X_n(t)<0)
 =\mathbb P(\eta_t(n)<0)
 =\mathbb E\zeta_t^{[0]}(n).
\end{equation}
Taking $n=\lceil st\rceil$ and applying
Corollary~\ref{cor:half-line-injection-current}\textup{(ii)}, we obtain
\begin{equation}
\label{eq:negative-speed-limit}
 \lim_{t\to\infty}
 \mathbb P\left(\frac{X_1(t)}t\leq-s\right)
 =\rho^{\alpha,0}(s),
 \qquad s>0.
\end{equation}

For positive speeds, let $v>0$ and put $m_t=\lfloor vt\rfloor$.
By \eqref{eq:positive-tail-projection},
\begin{equation}
\label{eq:positive-speed-exact}
 \mathbb P\left(\frac{X_1(t)}t\leq v\right)
 =\mathbb P\bigl(\zeta_t^{[m_t]}(1)=1\bigr).
\end{equation}
If $\max\{0,1-2\alpha\}<v<1$,
Proposition~\ref{prop:packed-boundary-local-equilibrium} applies with
$\rho=(1+v)/2$, and Proposition~\ref{prop:one-species-left-edge} gives
\[
 \mu^{\alpha,\rho}(\xi(1)=0)
 =\frac{1-v^2}{4\alpha}.
\]
Consequently,
\begin{equation}
\label{eq:positive-speed-limit}
 \lim_{t\to\infty}
 \mathbb P\left(\frac{X_1(t)}t\leq v\right)
 =1-\frac{1-v^2}{4\alpha},
 \qquad \max\{0,1-2\alpha\}<v<1.
\end{equation}

For $0<\alpha<\frac12$, the limits in
\eqref{eq:negative-speed-limit} and
\eqref{eq:positive-speed-limit} both approach $\alpha$
as $v\uparrow 2\alpha-1$ and $v\downarrow 1-2\alpha$,
respectively. By monotonicity,
\[
 \lim_{t\to\infty}
 \mathbb P\left(\frac{X_1(t)}t\leq v\right)
 =\alpha,\qquad 2\alpha-1\leq v\leq1-2\alpha.
\]

Differentiating the limiting distribution function gives \eqref{eq:first-color-speed-density}. Its jump at zero has size $(2\alpha-1)/(4\alpha)$ when $\alpha\geq\frac12$ and is zero otherwise. This proves the theorem.
\end{proof}

\subsection{Joint speeds of two boundary particles}
\label{subsec:joint-boundary-speeds}

For integers $0\leq m\leq n$, let
$\chi^{[m,n]}=(\chi_t^{[m,n]})_{t\geq0}$ be the two-species half-line
TASEP with priorities $1\succ2\succ0$, boundary transitions
$0\to1$ and $2\to1$ at rate $\alpha$, and initial configuration
\[
 \chi_0^{[m,n]}(x)=
 \begin{cases}
  1,&1\leq x\leq m,\\
  2,&m<x\leq n,\\
  0,&x>n.
 \end{cases}
\]

The process $\chi^{[m,n]}$ is the basic coupling of its two nested one-color
projections:
\[
 \mathbf 1_{\{\chi_t^{[m,n]}(x)=1\}}=\zeta_t^{[m]}(x),
 \qquad
 \mathbf 1_{\{\chi_t^{[m,n]}(x)\in\{1,2\}\}}=\zeta_t^{[n]}(x),
 \qquad x\geq1, t\geq0,
\]
Proposition~\ref{prop:packed-boundary-local-equilibrium} identifies the
boundary limit of each projection separately.  It is therefore natural to expect the
coupled process to converge to this measure near the boundary as well.
We state it as a conjecture.

\begin{conjecture}
\label{conj:two-species-boundary-local-equilibrium}
Let $m_T,n_T$ be nonnegative integers such that $0\leq m_T\leq n_T$,
$m_T/T\to u$, and $n_T/T\to v$, where
\[
 \max\{0,1-2\alpha\}<u<v<1.
\]
Set $\rho_1=(1+u)/2$ and $\rho_2=(1+v)/2$.
Then, for every fixed $k\geq1$,
\[
 \bigl(\chi_0^{[m_T,n_T]}S_T^\alpha\bigr)|_{[1,k]}
 \Longrightarrow
 \mu^{\alpha,\rho_1,\rho_2}|_{[1,k]}.
\]
Here $\mu^{\alpha,\rho_1,\rho_2}$ is the high--high stationary two-species
measure constructed in Subsection~\ref{subsec:high-high}.
\end{conjecture}

We now consider the joint limiting distribution of $X_1(t)/t$ and $X_2(t)/t$, and set $\mathcal S_t=\{X_1(t)>0,\ X_2(t)>0\}$.

\begin{theorem}
\label{thm:joint-boundary-speeds}
Set $\hat{\alpha}=\max\{0,1-2\alpha\}$ and assume
Conjecture~\ref{conj:two-species-boundary-local-equilibrium}.
Then, conditional on $\mathcal S_t$, the pair
$\bigl(X_1(t)/t,X_2(t)/t\bigr)$ converges weakly to a probability
measure supported on $[\hat{\alpha},1]^2$, with off-diagonal density
\[
 \frac{1}{(1-\hat{\alpha}^2)^2}
 \begin{cases}
  2uv(v^2-u^2),&\hat{\alpha}<u<v<1,\\[1mm]
  v(1+2u+3u^2-2v^2),&\hat{\alpha}<v<u<1,
 \end{cases}
\]
and diagonal line density at $(u,u)$ given by
\[
 \frac{u(1+u)^2(1-u^2)}{2(1-\hat{\alpha}^2)^2},
 \qquad \hat{\alpha}<u<1.
\]
\end{theorem}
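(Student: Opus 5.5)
The plan is to reduce the joint law of $(X_1(t),X_2(t))$ to boundary occupation probabilities of two-species projections. Then I evaluate those with Conjecture~\ref{conj:two-species-boundary-local-equilibrium} and the explicit two-site table of Theorem~\ref{thm:two-site-marginal} at $q=0$. Finally I recover the density by inclusion–exclusion and differentiation.

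\emph{Step 1 (from speeds to the first two sites).} By Proposition~\ref{prop:half-space-color-position}, $(X_1(t),X_2(t))=(\eta_t^{-1}(1),\eta_t^{-1}(2))$ has the law of $(\eta_t(1),\eta_t(2))$. For $1\le m<n$ the event $\{\eta_t(1)\le m,\ \eta_t(2)\le n\}$ is exactly $\{\chi_t^{[m,n]}(1)=1,\ \chi_t^{[m,n]}(2)\in\{1,2\}\}$. Here negative colors, i.e.\ injected particles, count as class $1$, which matches the boundary rule $0,2\to1$. Symmetrically, for $m>n$ the event $\{\eta_t(1)\le m,\eta_t(2)\le n\}$ is $\{\chi_t^{[n,m]}(1)\in\{1,2\},\ \chi_t^{[n,m]}(2)=1\}$. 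Taking $m=\lfloor ut\rfloor$, $n=\lfloor vt\rfloor$ with $\hat\alpha<u,v<1$, $u\neq v$, the conjecture gives
\[
G(u,v):=\lim_{t\to\infty}\mathbb P\bigl(X_1(t)\le ut,\ X_2(t)\le vt\bigr)
\]
as $p_{11}+p_{12}$ (for $u<v$) or $p_{11}+p_{21}$ (for $u>v$) from \eqref{eq:two-site-marginal}. Here $q=0$, $\theta=\rho_2$, $\rho_1=(1+\min(u,v))/2$, $\rho_2=(1+\max(u,v))/2$, so $J_i=(1-w_i^2)/4$ with $w_i$ the corresponding speed. These are explicit polynomials in $u,v$ divided by $\alpha$ or $\alpha^2$.

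\emph{Step 2 (densities).} For $\hat\alpha<\varepsilon<u,v$, inclusion–exclusion gives
\[
\mathbb P(\varepsilon t<X_1\le ut,\ \varepsilon t<X_2\le vt)\to G(u,v)-G(\varepsilon,v)-G(u,\varepsilon)+G(\varepsilon,\varepsilon).
\]
Here $G$ on the diagonal is obtained as the common one-sided limit, justified by monotonicity in $m,n$. Mixed derivatives $\partial_u\partial_v G$ on $\{u<v\}$ and $\{u>v\}$ should produce, after simplification, the two off-diagonal densities, each times the common factor $1/(4\alpha)^2$ or similar. The jump of $\partial_vG$ (resp.\ $\partial_uG$) across the diagonal should produce the line density $u(1+u)^2(1-u^2)/2$. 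As a consistency check, the marginals of the limiting measure in $u$ must reproduce the positive part $v/(2\alpha)$ of Theorem~\ref{thm:first-color-signed-speed}.

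\emph{Step 3 (normalization and the region near $0$).} The delicate part is identifying what the conditioning on $\mathcal S_t$ does near the lower edge of speeds. The region $\{X_1\le\varepsilon t\}\cup\{X_2\le\varepsilon t\}$ intersected with $\mathcal S_t$ has to be controlled. For $\alpha<1/2$, Theorem~\ref{thm:first-color-signed-speed} shows that $X_1/t$ puts vanishing mass on $(0,\hat\alpha]$. The same holds for $X_2$ by the symmetric one-color projection argument. So the limit on $(\hat\alpha,1)^2$ exhausts $\lim\mathbb P(\mathcal S_t)$, and the normalization is the total integral, which should equal $(1-\hat\alpha^2)^2/(16\alpha^2)$.

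For $\alpha\ge1/2$ the positive atom of $X_1/t$ at $0$, of mass $(2\alpha-1)/(4\alpha)$, must be handled. I would show, via the $\varepsilon\downarrow0$ limit of $G(\varepsilon,v)$ and the one-species marginals from Proposition~\ref{prop:one-species-left-edge}, how this atom interacts with $X_2>0$. The conditioned limit is then normalized by the mass carried on $(0,1)^2$, which is the reading under which the stated density integrates to one independently of $\alpha$.

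I expect this near-zero bookkeeping, namely the atom and the interchange of $\varepsilon\downarrow\hat\alpha$ with $t\to\infty$ using only the conjecture (valid for $u>\hat\alpha$), to be the main obstacle. The rest is explicit algebra with the table \eqref{eq:two-site-marginal}.
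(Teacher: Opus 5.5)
Your Steps 1 and 2 are the paper's proof. Colour--position symmetry turns $\mathbb P(X_1(t)\le ut,\,X_2(t)\le vt)$ into $p_{11}+p_{12}$ (resp.\ $p_{11}+p_{21}$) of the conjectured limit. The off-diagonal density is the mixed derivative, and only $p_{12}$ (resp.\ $p_{21}$) contributes because $p_{11}$ depends on one speed. Your jump of $\partial_vG$ across the diagonal equals the paper's $\lim_{\varepsilon\downarrow0}p_{22}(u,u+\varepsilon)/\varepsilon=uJ\theta^2/(2\alpha^2)$.

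\emph{The gap is Step 3.} The theorem conditions on $\mathcal S_t$, so you must prove that $\lim_t\mathbb P(\mathcal S_t)$ equals the total mass of the limiting densities on $(\hat\alpha,1)^2$. Normalizing ``by the mass carried on $(0,1)^2$'' is a different statement unless this is shown, and you leave it open for $\alpha\ge\frac12$.

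\emph{The fix is one line, and it is what the paper uses.} By colour--position symmetry,
\[
\mathbb P(\mathcal S_t)=\mathbb P\bigl(\zeta_t^{[0]}(1)=\zeta_t^{[0]}(2)=0\bigr),
\]
and $\zeta^{[0]}$ is half-line TASEP started from the empty configuration. Liggett's theorem and Proposition~\ref{prop:one-species-left-edge}, at density $\max\{\frac12,1-\alpha\}$ where $j=(1-\hat\alpha^2)/(4\alpha)$, give
\[
\mathbb P(\mathcal S_t)\to j^2=(1-\hat\alpha^2)^2/(16\alpha^2).
\]
Next, the unnormalized densities integrate over $(a,1)^2$ to $(1-a^2)^2/(16\alpha^2)$. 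To see this, differentiate in $a$: the three boundary contributions sum to $4a(1-a^2)/(16\alpha^2)$. At $a=\hat\alpha$ this mass is exactly $\lim_t\mathbb P(\mathcal S_t)$. Hence no mass of $\mathcal S_t$ escapes to the lower edge, and the conditional limit is the stated normalized measure.

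In particular, your worry about the atom dissolves. For $\alpha\ge\frac12$,
\[
\mathbb P(X_1(t)>0)=\mathbb P(\zeta_t^{[0]}(1)=0)\to\frac{1}{4\alpha}=\lim_{v\downarrow0}\lim_{t\to\infty}\mathbb P(X_1(t)>vt),
\]
so the atom of $V_\alpha$ at $0$ sits on $\{X_1(t)<0\}$ and never meets $\mathcal S_t$. Your own marginal argument from the case $\alpha<\frac12$ then works verbatim, once you supply the analogous computation for $X_2$ via site $2$.

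Separately, your proposed consistency check is wrong. The $u$-marginal of the first-quadrant limit is the density of $X_1/t$ on $\{X_2(t)>0\}$, not $u/(2\alpha)$. For $\alpha\ge\frac12$ the unnormalized marginal is $(2u+3u^2)/(32\alpha^2)$. The remainder of $u/(2\alpha)$ comes from configurations with $X_2(t)<0$. Had you run that check, it would have failed.
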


\begin{proof}
Set $\rho_w=(1+w)/2$.
By color--position symmetry, \cite[Theorem~1.8]{Liggett1975},
and Proposition~\ref{prop:one-species-left-edge},
\[
 \mathbb P(\mathcal S_t)
 =\mathbb P\bigl(\zeta_t^{[0]}(1)=\zeta_t^{[0]}(2)=0\bigr)
 \longrightarrow
 c_\alpha:=\frac{(1-\hat{\alpha}^2)^2}{16\alpha^2}.
\]

For $\hat{\alpha}<u<v<1$, write $p_{ij}(u,v)$ for the probability
of the word $ij$ under $\mu^{\alpha,\rho_u,\rho_v}$.
Color--position symmetry and
Conjecture~\ref{conj:two-species-boundary-local-equilibrium}
identify the limiting joint distribution functions with
$p_{11}(u,v)+p_{12}(u,v)$ when $u<v$, and
$p_{11}(v,u)+p_{21}(v,u)$ when $v<u$.
Thus, the unnormalized density in the first quadrant can be computed by
\[
 f_0(u,v)=
 \begin{cases}
  \partial_u\partial_v p_{12}(u,v),
    &\hat{\alpha}<u<v<1,\\[1mm]
  \partial_u\partial_v p_{21}(v,u),
    &\hat{\alpha}<v<u<1.
 \end{cases}
\]
Likewise, the word $22$ corresponds to both positions lying in $(u,u+\varepsilon]$, so the diagonal line density is
\[
 d_0(u)=
 \lim_{\varepsilon\downarrow0}
 \frac{p_{22}(u,u+\varepsilon)}{\varepsilon},
 \qquad \hat{\alpha}<u<1.
\]

Substituting the two-site probabilities from
Theorem~\ref{thm:two-site-marginal} gives the densities in the theorem after division by the normalization constant $c_\alpha$.
\end{proof}

\appendix
\section{Conjectural full two-speed distribution}
\label{app:full-two-speed-law}

We record a conjectural expression for the unconditioned joint law of
the first two signed asymptotic speeds: We have not addressed its negative-speed sectors in this paper. See Figure \ref{fig:half-space-two-speed-simulation} for the simulation.

\begin{figure}[htbp]
	\centering
	\includegraphics[width=\textwidth]{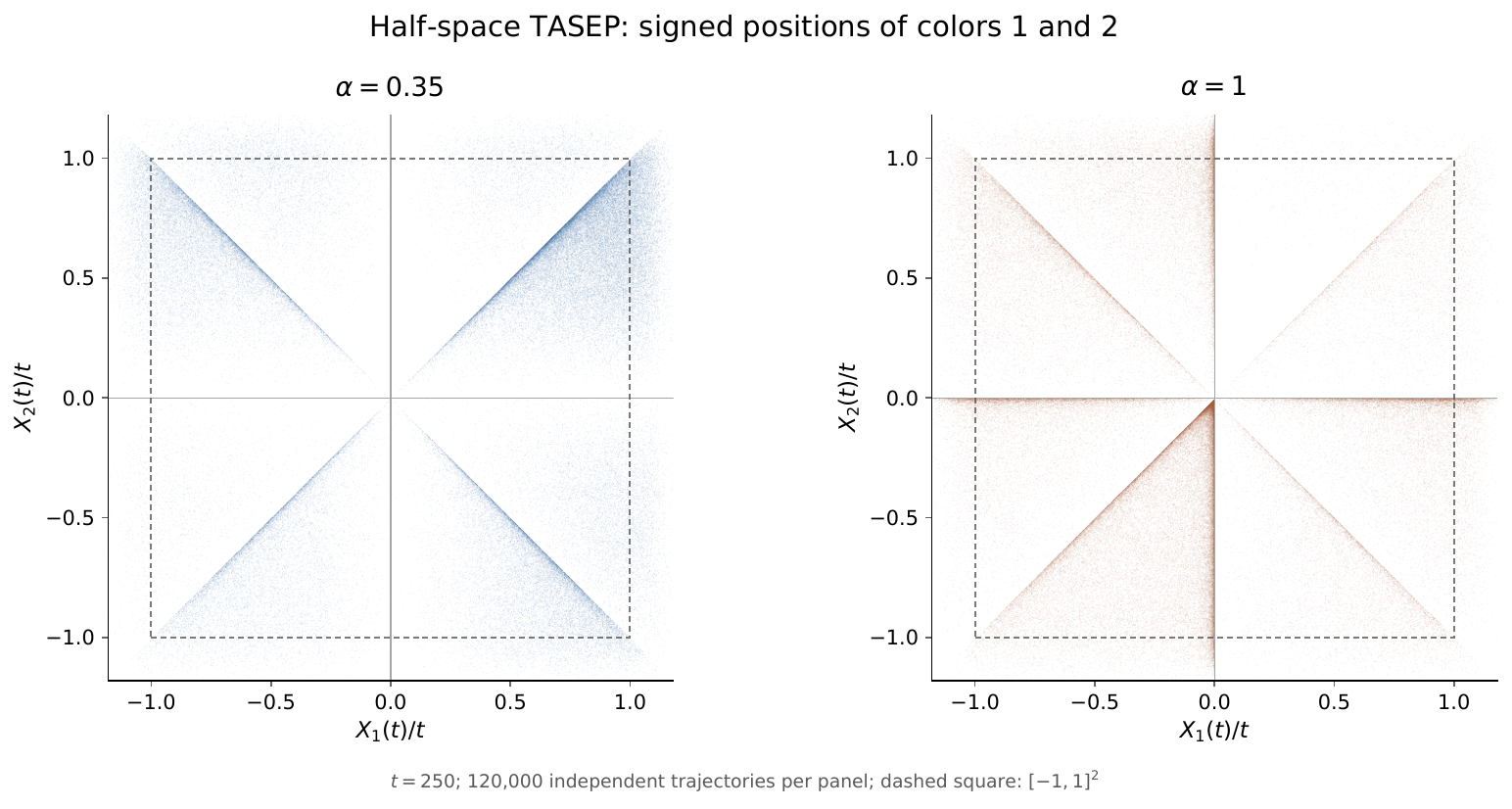}
	\caption{Simulation of
		$\bigl(X_1(t)/t,X_2(t)/t\bigr)$ in the half-space TASEP at $t=250$,
		with $120{,}000$ independent samples.} 
	\label{fig:half-space-two-speed-simulation}
\end{figure}

\begin{conjecture}
\label{conj:full-two-speed-law}
Assume Conjectures~\ref{conj:half-space-asymptotic-speeds}
and~\ref{conj:two-species-boundary-local-equilibrium}.
Set $\hat\alpha=\max\{0,1-2\alpha\}$.
The absolutely continuous part of the joint law of $(U_1,U_2)$
has density $f_\alpha^{(2)}$ described, for
$\hat\alpha<u,v<1$ with $u\ne v$, by
\begin{align*}
 f_\alpha^{(2)}(u,v)
 &=\begin{cases}
 \dfrac{uv(v^2-u^2)}{8\alpha^2},&u<v,\\[2mm]
 \dfrac{v(1+2u+3u^2-2v^2)}{16\alpha^2},&v<u,
 \end{cases}\\[2mm]
 f_\alpha^{(2)}(-u,-v)
 &=\begin{cases}
 \dfrac14,&u<v,\\[2mm]
 \dfrac{u-v}{4},&v<u,
 \end{cases}\\[2mm]
 f_\alpha^{(2)}(u,-v)
 &=\begin{cases}
 \dfrac{u}{4\alpha},&u<v,\\[2mm]
 \dfrac{(u-v)(2+3u+3v)}{16\alpha},&v<u,
 \end{cases}\\[2mm]
 f_\alpha^{(2)}(-u,v)
 &=\begin{cases}
 \dfrac{v(\alpha+u)}{4\alpha},&v<u,\\[2mm]
 \dfrac{(v-u)\bigl(2\alpha+3\alpha(u+v)
  +2u^2+5uv+2v^2-1\bigr)}{16\alpha},&u<v.
 \end{cases}
\end{align*}
The joint law also has singular components along the four diagonals,
with the following densities with respect to the parameter $ds$,
for $\hat\alpha<s<1$:
\[
\begin{array}{c|c}
\text{Location}&\text{Density}\\ \hline
(s,s)&\displaystyle\frac{s(1+s)^2(1-s^2)}{32\alpha^2}\\[2mm]
(-s,-s)&\displaystyle\frac{1-s^2}{8}\\[2mm]
(s,-s)&\displaystyle\frac{s(1-s^2)}{8\alpha}\\[2mm]
(-s,s)&\displaystyle\frac{s(\alpha+s)(1-s^2)}{8\alpha}
\end{array}
\]
If $\alpha>\tfrac12$, there are additionally singular components
along the coordinate axes, with densities with respect to $ds$,
for $0<s<1$:
\[
\begin{array}{c|c}
\text{Location}&\text{Density}\\ \hline
(s,0)&\displaystyle\frac{(2\alpha-1)s(3s+2)}{32\alpha^2}\\[2mm]
(-s,0)&\displaystyle\frac{(2\alpha-1)s}{8\alpha}\\[2mm]
(0,s)&\displaystyle\frac{(2\alpha-1)s(2s^2+3\alpha s+2\alpha-1)}{32\alpha^2}\\[2mm]
(0,-s)&\displaystyle\frac{2\alpha-1}{8\alpha}
\end{array}
\]
and an atom at the origin of mass
\[
\mathbb P(U_1=U_2=0)=\frac{2\alpha-1}{16\alpha}.
\]
For $0<\alpha\le\tfrac12$, there are no axis components or
atom at the origin. 
\end{conjecture}

\section*{}
\noindent
(\textsc{Alexey Bufetov})\\
Institute of Mathematics, Leipzig University, Germany\\
Email address: \texttt{alexey.bufetov@gmail.com}\\

\noindent
(\textsc{Yuan Tian})\\
Max Planck Institute for Mathematics in the Sciences, Germany\\
 Institute of Mathematics, Leipzig University, Germany\\
Email address: \texttt{yuan.tian@mis.mpg.de}
\end{document}